\def\+{\oplus}
\newcommand{\R}{{\mathbb R}}
\newcommand{\N}{{\mathbb N}}
\newcommand{\cG}{{\mathcal G}}
\newcommand{\cS}{{\mathcal S}}
\newcommand{\cC}{{\mathcal C}}
\newcommand{\cP}{{\mathcal P}}
\newcommand{\cT}{{\mathcal T}}
\newcommand{\cR}{{\mathcal R}}
\renewcommand{\epsilon}{\varepsilon}
\newcommand{\ph}{\varphi}
\renewcommand{\a}{{\alpha}}
\newcommand{\Ga}{{\Gamma}}
\newcommand{\fl}{ {\widetilde{f\!\ell}}}
\newcommand{\FL}{{\rm{FL}}}
\newcommand{\tH}{\widetilde{\rm{H}}}
\newcommand{\ds}{\displaystyle}
\def\squareforqed{\hbox{\rlap{$\sqcap$}$\sqcup$}}
\def\qed{\ifmmode\else\unskip\quad\fi\squareforqed}
\def\smartqed{\def\qed{\ifmmode\squareforqed\else{\unskip\nobreak\hfil
\penalty50\hskip1em\null\nobreak\hfil\squareforqed
\parfillskip=0pt\finalhyphendemerits=0\endgraf}\fi}}
\newcommand{\supp}{\mathrm{supp}\;}
\newtheorem{remark}{\textbf{Remark}}[section]
\newtheorem{property}{\textbf{Property}}[section]
\newtheorem{lemma}{\textbf{Lemma}}[section]
\newtheorem{theorem}{\textbf{Theorem}}[section]
\newtheorem{corollary}{\textbf{Corollary}}[section]
\newtheorem{proposition}{\textbf{Proposition}}[section]
\newtheorem{definition}{\textbf{Definition}}[section]
\numberwithin{equation}{section}
\title{Hamilton-Jacobi equations for optimal control on multidimensional junctions
}
\author{Salom{\'e} Oudet \thanks {IRMAR, Universit{\'e} de Rennes 1, Rennes, France, salome.oudet@univ-rennes1.fr}
}
\begin{document}

\maketitle

\abstract{We consider continuous-state and continuous-time control problems   
 where  the  admissible  trajectories of the system are constrained to remain on a union of half-planes which share a common straight line. This set will be named a junction. We define a notion of constrained viscosity solution of Hamilton-Jacobi equations on the junction
 and we propose a comparison principle whose proof is based on arguments from the optimal control theory.
}

\paragraph{Keywords}
Optimal control, junctions, Hamilton-Jacobi equations, viscosity solutions

\section{Introduction}

We are interested in optimal control problems whose trajectories  are constrained to remain on
a multidimensional junction. We define a {\sl junction} in $\R^d$, $d\ge 2$, as  
a union of half-hyperplanes sharing an affine space of dimension $d-2$, see Figure \ref{fig: the_geometry_half_plans} for a junction in $\R^3$. 
For simplicity, we shall limit ourselves to junctions in $\R^3$, although all what follows can be generalized for $d\ge 3$.
We shall name {\sl interface}  the straight line $\Gamma$  shared by the half-planes.
\begin{figure}[H]
\begin{center}
\begin{tikzpicture}[scale=0.65]
\draw (0,0) -- ++(30:5);
\draw (0,0) -- (0:5);
\draw (0:5) -- ++(30:4);
\draw (0,0) -- (90:5);
\draw (90:5) -- ++(30:4);
\draw (0,0) -- (135:4);
\draw (135:4) -- ++(30:3.26);
\draw (0,0) -- (240:3);
\draw (240:3) -- ++(30:3.15);
\draw (0,0) -- (320:4);
\draw (320:4) -- ++(30:4);
\draw[->] (255:2.4) to[bend right] (315:3.5);
\draw (30:5.2) node[right]{$\Gamma$} ;
\draw (5:5) node[left]{$\mathcal{P}_1$} ;
\draw (89:4.5) node[right]{$\mathcal{P}_2$} ;
\draw (132:3.6) node[right]{$\mathcal{P}_3$} ;
\draw (265:1.8) node[above]{$\mathcal{P}_4$} ;
\draw (324:3.9) node[above]{$\mathcal{P}_N$} ;

\draw[red, thick] [->] (0,0) -- (30:1);
\draw (30:1.2) node[above, red]{$e_0$} ;
\draw[blue, thick] [->] (0,0) -- (0:1);
\draw (-7:1.4) node[above, blue]{$e_1$} ;
\draw[blue, thick] [->]  (0,0) -- (90:1);
\draw (95:1.2) node[right, blue]{$e_2$} ;
\draw[blue, thick] [->] (0,0) -- (135:1);
\draw (135:1.6) node[right, blue]{$e_3$} ;
\draw[blue, thick] [->] (0,0) -- (240:1);
\draw (240:0.9) node[left, blue]{$e_4$} ;
\draw[blue, thick] [->] (0,0) -- (320:1);
\draw (322:1.7) node[above, blue]{$e_N$} ;
\end{tikzpicture}
\caption{The junction $\cS$ in $\R^3$}
\label{fig: the_geometry_half_plans}
\end{center}
\end{figure}
The junctions in $\R^d$  are particular {\sl ramified sets}, which we define as closed and connected subsets of $\R^d$ obtained as the union of embedded manifolds with dimension strictly smaller than $d$. 
 Figure \ref{fig: examples_of_ramified_spaces} below supplies  examples of ramified sets.
\begin{figure}[H]
\begin{center}
\subfigure[]{
    \label{fig: network}
\begin{tikzpicture}[scale=0.40]
\draw (0,0) -- (-1,3) -- (3,1) -- (0,0);
\draw (0,0) -- (-1.5,-2) -- (-1,3);
\draw (3,1) -- (1,-1) -- (3,-2.5);
\draw (0,0) node{$\bullet$};
\draw (3,1) node{$\bullet$};
\draw (-1.5,-2) node{$\bullet$};
\draw (-1,3) node{$\bullet$};
\draw (1,-1) node{$\bullet$};
\draw (3,-2.5) node{$\bullet$};
\end{tikzpicture}
}
\quad \quad \quad \quad
\subfigure[]{
    \label{fig: cylinder}
   \begin{tikzpicture}[scale=0.50]
\draw ({1.5*cos(-15)},0,{1.5*sin(-15)}) -- ++ (0,3,0);
\draw ({1.5*cos(160)},0,{1.5*sin(160)}) -- ++ (0,3,0);
\draw [domain=pi:2*pi+0.2, samples=80, smooth]
plot ( {1.5*cos(-\x r)},0, {1.5*sin(-\x r)}) ;
\draw [dashed, domain=-0.2:-pi-0.2, samples=80, smooth]
plot  ( {1.5*cos(\x r)},0, {1.5*sin(\x r)}) ;
\draw [domain=0:2*pi, samples=80, smooth]
plot ( {1.5*cos(\x r)},3, {1.5*sin(\x r)}) ;
\end{tikzpicture}
}
\quad \quad \quad \quad
\subfigure[]{
    \label{fig: cube}
    \begin{tikzpicture}[scale=0.50]
\draw [dashed] (0,3,0)--(0,0,0)--(3,0,0); 
\draw (3,0,0) -- (3,3,0) -- (0,3,0); 
\draw (0,0,3)--(3,0,3)--(3,3,3)--(0,3,3)--cycle; 
\draw [dashed] (0,0,0) -- (0,0,3); 
\draw (3,0,0) -- (3,0,3); 
\draw (3,3,0) -- (3,3,3); 
\draw (0,3,0) -- (0,3,3); 
\end{tikzpicture}
}
\caption[Optional caption for list of figures]{Examples of ramified sets}
\label{fig: examples_of_ramified_spaces}
\end{center}
\end{figure}
While there is a wide literature on control problems with state constrained in closures of open sets  (\cite{MR838056,MR861089}, \cite{MR951880}, \cite{IK}),  the interest on  problems with state constrained in  
closed sets with empty interior is more recent.   The results of Frankowska and Plaskacz \cite{MR1794772l,MR1713859} do apply to some closed sets with empty interior, but not to ramified sets except in very particular cases.  \\
The case of Hamilton-Jacobi equations on networks, see Figure \ref{fig: examples_of_ramified_spaces} \subref{fig: network}, is now well understood.  The first work 
seems to be the thesis of Schieborn in 2006,  \cite{Sch}. It was focused on eikonal equations. These results were improved later in \cite{MR3018167}.
The notion of viscosity solutions in these works are restricted to eikonal equations and cannot be used for more general control problems.
 The first two articles on optimal control problems  whose dynamics are constrained to a network were published in 2013: in \cite{MR3057137} Achdou, Camilli, Cutr{\`\i} and Tchou  proposed a Hamilton-Jacobi equation and  a definition of viscosity solution.  Independently, in \cite{MR3023064}  Imbert, Monneau and Zidani proposed a Hamilton-Jacobi approach to junction problems and traffic flows and gave an equivalent notion of viscosity solution.
 Both  \cite{MR3057137} and  \cite{MR3023064} contain the first comparison and uniqueness results for Hamilton-Jacobi equations on networks,
 but these results needed rather strong assumptions on the Hamiltonians behaviour.  A general comparison result has finally been obtained in the recent paper  by Imbert-Monneau \cite{imbert2013vertex}. In the latter, the Hamiltonians  in the edges are completely independent from each other; the main assumption is that the Hamiltonian in each edge, say $H_i(x,p)$ for the edge indexed $i$, is coercive and bimonotone, i.e. non increasing (resp. non decreasing)  for $p $ smaller (resp. larger) than a given threshold $p_i^0(x)$. Of course, convex Hamiltonian coming from optimal control theory
are bimonotone. Moreover,  in \cite{imbert2013vertex},  the authors consider more general  transmission conditions  than in  \cite{MR3057137,MR3023064}, allowing an additional running cost at the junctions. Soon after, Y. Achdou, S. Oudet and N. Tchou,  \cite{uniqueness2013},  proposed a different  proof of a general  comparison result in the case of control problems.  In \cite{uniqueness2013},  the dynamics and running costs be different on each edges, and the Hamiltonians  in the edges are a priori completely independent from each other, as in \cite{imbert2013vertex}. Whereas the proof of the 
comparison result  in \cite{imbert2013vertex} it is only based on arguments from the theory of partial differential equations, the proof in \cite{uniqueness2013} is  based on arguments from the theory of control which were first introduced by G. Barles, A. Briani and E. Chasseigne
 in \cite{barles2011bellman, barles2013bellman}. In the latter articles, the authors study control  problems in $\R^N$ whose
 dynamics and running costs  may be discontinuous across an hyperplane. The problems studied in \cite{barles2011bellman, barles2013bellman} 
and in \cite{uniqueness2013} have the common point that the data are discontinuous across a low dimensional subregion.
\\
There is even less literature on  Hamilton-Jacobi equations posed on more general ramified spaces. In their recent article  \cite{MR3062576},  F. Camilli, D. Schieborn and C. Marchi deal with eikonal equations, generalize the special notion of viscosity solutions proposed in  \cite{Sch,MR3018167}, and prove existence and uniqueness theorems.  The work by Y. Giga, N. Hamamuki and A. Nakayasu, see \cite{MR3271253}, is devoted to  eikonal equations in general metric spaces, and their results apply to ramified  spaces. The difficulty with
 general metric space $(\chi, d)$ is that the gradient $D u$ of a function $u:\chi \to \R$ is not well-defined in general.  Yet,  eikonal equation can be studied since a definition for the 
modulus of the gradient can be given.  More general results in geodesic metric spaces have been recently given by L. Ambrosio and J. Feng, see \cite{MR3160441}, see also 
the recent paper of \cite{Nakayasu}, who considered  evolutionary Hamilton-Jacobi  equations of  the form $u_t+H(x,|Du|)=0$  in a metric space.  
For  optimal control problems on ramified sets, we can mention the recent article by C. Hermosilla and H. Zidani \cite{Hermosilla_Zidani2014}, 
in which they study infinite horizon problems whose trajectories are constrained to remain in a set with a stratified structure. The authors  obtain existence and uniqueness results 
 with  weak controllability assumptions, but they assume that the dynamics is continuous.
\\
The present work is a continuation of \cite{uniqueness2013} (which was focused on networks),
but since the interface $\Gamma$ is now a straight line instead of a point, the  trajectories that stay on $\Gamma$ have a richer structure than in  \cite{uniqueness2013}.
We will have to introduce a tangential Hamiltonian $H_\Gamma^T$ to take these admissible trajectories into account. 
Different controllability assumptions can be made
\begin{enumerate}
\item strong controllability  in a neighborhood of $\Gamma$
\item a weaker controllability assumption  in a neighborhood of $\Gamma$, namely normal controllability to $\Gamma$,  see $[\tH_3]$ in \S~\ref{sec: framework1}.
\end{enumerate} 
As in  \cite{uniqueness2013}, the proof of the comparison results will be inspired by the arguments contained in \cite{barles2011bellman, barles2013bellman}.\\

In \S~\ref{section: classical_controllability}, we discuss the case when strong controllability  is assumed in a neighborhood of $\Gamma$. We 
propose a Bellman equation and a notion of viscosity solutions, prove that the value function is indeed a continuous viscosity solution of this equation, and give a comparison result. In \S~\ref{section_normal_controllability}, the same program is carried out when only normal controllability holds in a neighborhood of $\Gamma$. As in \cite{barles2013bellman}, we first prove that the value function is a discontinuous viscosity solution of the Bellman equation. We then prove a comparison result. The latter implies 
 the continuity of the  value function. Finally, in \S~\ref{sec:extens-more-gener}, we extend the results by assuming that in addition to the dynamics and costs related to the hyperplanes, there is a pair of  tangential dynamics  and tangential running cost defined on $\Gamma$. 
\\
Although we will not discuss it, the results obtained below can be generalized to ramified sets for which the interfaces are non intersecting manifolds of dimension $d-2$, see for example  Figure \ref{fig: examples_of_ramified_spaces} \subref{fig: cylinder}. On the contrary, it is not obvious to apply them to the ramified sets for  which 
interfaces of dimension $d-2$ cross each other, see Figure \ref{fig: examples_of_ramified_spaces} \subref{fig: cube}. This topic will hopefully be discussed in a forecoming work.

\section{First case : full controllability near the interface}\label{section: classical_controllability}

\subsection{Setting of the problem and basic assumptions}\label{sec:setting}

\subsubsection{The geometry}

We are going to study optimal control problems in $\R^d$, $d=3$, with constraints on the state of the system. The state is constrained to lie in the union $\cS$ of $N$ half-planes, $N>1$. Let $(e_i)_{i=0,\dots, N}$,  be some respectively distinct unit vectors in $\R^d$ such that $e_i.e_0=0$ for any $i\in\{1, \dots, N\}$, where for $x,y\in \R^d$, $x.y$ denotes the usual scalar product of the Euclidean space $\R^d$. The notation $|.|$ will be used for the usual Euclidean norm in $\R^d$. For $i=1, \ldots , N$, $\cP_i$ is the closed half-plane $\R e_0 \times \R^+ e_i$. We denote by $\Ga$ the straight line $\R e_0$.
The half-planes $\cP_i$ are glued at the straight line $\Ga$ to form the set $\cS$, see Figure \ref{fig: the_geometry_half_plans}:
  \begin{displaymath}
  \cS=\bigcup_{i=1}^N \cP_i.
\end{displaymath}

For any $x\in \cS$, we denote by $T_x(\cS)\subset \R^d$ the set of the tangent directions to $\cS$, i.e.
$T_x(\cS)=\R e_0 \times \R e_i$, for any $x\in \cP_i\backslash \Ga$ and $T_x(\cS)=\cup_{i=1}^N (\R e_0 \times \R e_i)$ for any $x\in \Ga$. \\
If $x\in \cS\setminus \Ga$, $\exists !\; i\in \{1,\ldots,N \}$, $\exists !\; x_0\in \R$ and $\exists !\; x_i\in \R_+\setminus \{0\}$ such that 
\begin{equation}\label{eq: decomposition_of_x1}
x=x_0e_0+x_ie_i.
\end{equation}
We will use often this decomposition. Sometime, it will be convenient to extend this decomposition to the whole set $\cS$, by writing $x=x_0e_0+0e_i$ for any $i\in \{1,\ldots,N\}$ if $x\in \Ga$. When we will not want to specify in which half-plan $\cP_i$ is $x$ belonging to $\cS$, we will use the notation 
\begin{equation}\label{eq: decomposition_of_x2}
x=x_0e_0+x',
\end{equation}
where $x'$ denotes $x_ie_i$ if $x=x_0e_0+x_ie_i$.\\
The geodesic distance $d(x,y)$ between two points $x,y$ of $\cS$ is 
\begin{equation}
\label{geodesic_distance}
    d(x,y)=\left\{  \begin{array}[c]{ll}
|x-y|\quad \hbox{if } x,y \hbox{ belong to the same half-plane } \cP_i \\
\min_{z\in \Ga}\{|x-z| + |z-y| \}  \quad \hbox{if } x,y \hbox{ belong to different half-planes } \cP_i \hbox{ and } \cP_j.
  \end{array}\right.
\end{equation}
More classically, if $x\in \R^d$ and $C$ is a closed subset of $\R^d$, $dist(x,C)$ will denote
\begin{displaymath}
dist(x,C)= \inf\{|x-z|: z\in C \},
\end{displaymath}
the distance between $x$ and $C$. The notation $B(\Ga,r)$ will be used to denote the set $\{x\in \R^d : dist(x,\Ga)<r\}$.  

\subsubsection{The optimal control problem}\label{sec: framework1}
We consider infinite-horizon optimal control problems which have different dynamics and running costs in the half-planes. We are going to describe the assumptions on the dynamics and costs in each half-plane $\cP_i$: the sets of controls are denoted by $A_i$, the system is driven by the dynamics $f_i$ and the running costs are given by $\ell_i$. Our main assumptions are as follows

\begin{description}
\item{[H0]} $A$ is a metric space (one can take $A=\R^m$). For $i=1,\dots,N$, $A_i$ is a non empty compact subset of $A$ and $f_i: \cP_i\times A_i \to \R e_0\times \R e_i$ is a continuous bounded function. The sets $A_i$ are disjoint.
 Moreover, there exists $L_f>0$ such that for any $i$, $x,y\in \cP_i$ and $a\in A_i$,
  \begin{displaymath}
    |f_i(x,a)-f_i(y,a)|\le L_f |x-y|.
  \end{displaymath}
 We note $M_f$ the minimal constant such that for any $x\in \cS$, $i\in \{1, \ldots, N\}$ and $a\in A_i$,  \begin{displaymath}
| f_i(x,a)| \le M_f.
  \end{displaymath}
   We will use also the notation $F_i(x)$ for the set $\{f_i(x,a), a\in A_i\} $.
\item{[H1]} For $i=1,\dots,N$, the function $\ell_i: \cP_i\times A_i \to \R$ is a continuous and bounded function. 
There is a modulus of continuity $\omega_\ell$ 
such that for all $i\in \{1, \ldots, N\}$, $x,y\in \cP_i$ and $a\in A_i$, 
\begin{displaymath}
|\ell_i(x,a)-\ell_i(y,a)|\le \omega_{\ell} (|x-y|).
\end{displaymath}
 We denote $M_\ell$ the minimal constant such that for any $i\in \{1, \ldots, N\}$,  $x\in \cP_i$ and $a\in A_i$, 
 \begin{displaymath}
| \ell_i(x,a)| \le M_\ell.
  \end{displaymath}
\item{[H2] }  For $i=1,\dots,N$, $x\in \cP_i$, the non empty and closed  set \[\FL_i(x)= \{ (f_i(x,a), \ell_i(x,a) ) , a\in A_i\} \] is convex. 
\item{[H3] } There is a real number $\delta>0$ such that for any $i=1,\dots,N$ and for all $x\in \Ga$, 
  \begin{displaymath}
    B(0,\delta)\cap \left(\R e_0 \times \R e_i\right) \subset F_i(x).
  \end{displaymath}
  In \S~\ref{section_normal_controllability} below, we will weaken assumption [H3] and use only the assumption on normal controllability
  \item{[$\tH$3] } There is a real number $\delta>0$ such that for any $i=1,\dots,N$ and for all $x\in \Ga$, 
  \begin{displaymath}
    [-\delta,\delta] \subset \left\lbrace f_i(x,a).e_i : a\in A_i \right\rbrace.
  \end{displaymath}
\end{description}
\begin{remark}
In $[\rm{H}0]$ the assumption that the sets $A_i$ are disjoint is  not restrictive: it is made only for simplifying the proof of Theorem \ref{sec:optim-contr-probl} below. The assumption $[\rm{H}2]$ is not essential : it is made in order to avoid the use of relaxed controls.
\end{remark}

Thanks to the Filippov implicit function lemma, see  \cite{MR0208590}, we obtain:
\begin{theorem}\label{sec:optim-contr-probl-1}
  Let $I$ be an interval of $\R$ and $\gamma: I\to \R^d \times \R^d$ be a measurable function. Let $K$ be a closed subset of $\R^d\times  A$ and $\Psi: K\to  \R^d \times \R^d$ be continuous.
Assume that $\gamma(I)\subset \Psi(K)$, then there is a measurable function $\Phi: I\to K$ with 
\begin{displaymath}
  \Psi\circ \Phi(t)=\gamma(t) \quad  \hbox{for a.a. } t\in I.
\end{displaymath}
\end{theorem}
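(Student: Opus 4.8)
The plan is to recast the statement as a measurable selection problem and to invoke the Kuratowski--Ryll-Nardzewski selection theorem. To this end I would introduce the set-valued map $G: I \to 2^K$ defined by
\[
 G(t) = \{ k \in K : \Psi(k) = \gamma(t) \} = K \cap \Psi^{-1}(\{\gamma(t)\}).
\]
The assumption $\gamma(I) \subset \Psi(K)$ says precisely that $G(t) \neq \emptyset$ for every $t \in I$, while the continuity of $\Psi$ and the closedness of $K$ ensure that each value $G(t)$ is a closed subset of $K$ (hence Polish, once $A$ is complete and separable). A measurable selection $\Phi$ of $G$ is then exactly a measurable map with $\Psi \circ \Phi = \gamma$, which is what we want.

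The crux is to prove that $G$ is a measurable multifunction, i.e. that $\{t \in I : G(t) \cap U \neq \emptyset\}$ is measurable for every open $U \subset K$. The key observation is the identity
\[
 \{t \in I : G(t) \cap U \neq \emptyset\} = \{ t \in I : \gamma(t) \in \Psi(U) \} = \gamma^{-1}(\Psi(U)),
\]
so that measurability of this set follows from the measurability of $\gamma$ once we know that $\Psi(U)$ is a Borel subset of $\R^d \times \R^d$. Here I would use that $\R^d \times A$ is locally compact and second countable --- which holds for the choice $A = \R^m$ mentioned in $[\mathrm{H}0]$, and more generally whenever $A$ is $\sigma$-compact and locally compact. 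In such a space every open subset $U$ of the closed set $K$ is $\sigma$-compact, so, writing $U = \bigcup_n C_n$ with $C_n$ compact, we get $\Psi(U) = \bigcup_n \Psi(C_n)$, a countable union of compact sets. Thus $\Psi(U)$ is $F_\sigma$, in particular Borel, $\gamma^{-1}(\Psi(U))$ is measurable, and $G$ is a measurable multifunction.

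With nonempty closed values and measurability established, I would apply the Kuratowski--Ryll-Nardzewski theorem (applicable because $K$, being a closed subset of the Polish space $\R^d \times A$, is itself Polish) to produce a measurable selection $\Phi: I \to K$ with $\Phi(t) \in G(t)$ for every $t \in I$. By construction $\Psi(\Phi(t)) = \gamma(t)$ for every $t$, which is even stronger than the ``almost every $t$'' required in the statement.

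I expect the measurability of the multifunction $G$ to be the main obstacle: it is the only step where the measurability of $\gamma$, the continuity of $\Psi$, and the topological nature of $A$ interact, and it is exactly the point where one must ensure that $\Psi(U)$ stays Borel. A fully self-contained alternative, closer to Filippov's original argument, would avoid quoting the selection theorem and instead construct $\Phi$ directly as the uniform limit of countably-valued approximate selections obtained by a nearest-point scheme on a countable dense subset of $K$; the same $\sigma$-compactness and Borel-image property is what would guarantee that each approximation is measurable and that the limit lands in $K$ with $\Psi \circ \Phi = \gamma$.
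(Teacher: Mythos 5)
Your argument is correct, but there is nothing in the paper to compare it against: the paper does not prove Theorem \ref{sec:optim-contr-probl-1} at all. It is quoted as the Filippov implicit function lemma, with a citation to \cite{MR0208590}, and is then used as a black box in the proof of Theorem \ref{sec:optim-contr-probl}. Your proposal therefore supplies a proof the paper omits, and it follows the standard modern route: recast the problem as a measurable selection problem for $G(t)=K\cap\Psi^{-1}(\{\gamma(t)\})$, establish weak measurability through the identity $\{t\in I: G(t)\cap U\neq\emptyset\}=\gamma^{-1}(\Psi(U))$ together with the fact that $\Psi(U)$ is $F_\sigma$ (images of the compact pieces of the $\sigma$-compact set $U$), and invoke Kuratowski--Ryll-Nardzewski; as you note, this even yields $\Psi\circ\Phi=\gamma$ everywhere, not just almost everywhere. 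The one point you should make explicit rather than leave parenthetical is the caveat you flag: as written, $[\mathrm{H}0]$ only requires $A$ to be a metric space, whereas both halves of your argument (the $\sigma$-compactness of open subsets of $K$, and the Polishness of $K$ needed for the selection theorem) require separability-type hypotheses on $A$; indeed, for a genuinely non-separable $A$ the statement itself becomes doubtful. This is harmless where the theorem is actually applied in the paper: there $K$ is the set $M$ defined in \eqref{eq:1}, whose control component lies in the compact set $\cup_{i=1}^N A_i$, so $K$ is a closed subset of $\R^d\times\bigl(\cup_{i=1}^N A_i\bigr)$ and is Polish, locally compact and $\sigma$-compact. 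A clean write-up should either record this reduction or restate the lemma for separable (e.g.\ Polish or compact) $A$, which is the setting of the classical versions of Filippov's lemma.
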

Let $M$ denote the set:
\begin{equation}
  \label{eq:1}
  M=\left\{(x,a);\; x\in \cS,\quad  a\in A_i  \hbox{ if } x\in \cP_i\backslash \Ga,  \hbox{ and } a\in \cup_{i=1}^N A_i   \hbox{ if } x \in \Ga \right\}.
\end{equation}
The set $M$ is closed. We also define the function $f$ on $M$ by 
\begin{equation}\label{def_f}
\forall (x,a)\in M,\quad\quad   f(x, a)=\left\{
    \begin{array}[c]{ll}
      f_i(x,a) \quad &\hbox{ if } x\in \cP_i\backslash \Ga, \\
      f_i(x,a) \quad &\hbox{ if } x\in \Ga \hbox{ and }     a\in A_i.
    \end{array}
\right.
\end{equation}
\begin{remark}\label{rmk: f bounded continuous}
The function $f $ is well defined on $M$, because the sets $A_i$ are disjoint, and is continuous on $M$.
\end{remark}
  Let $\tilde F(x)$ be defined by
\begin{displaymath}
  \tilde F(x)=\left\{ 
\begin{array}{ll}
F_i(x)\quad  &\hbox{if } x \hbox{ belongs to the open half-plane } \cP_i\backslash\Ga \\
\cup_{i=1}^N F_i(x) \quad  &\hbox{if } x\in \Ga.
\end{array}
\right. 
\end{displaymath}
For $x\in \cS$, the set of admissible  trajectories starting from $x$ is 
\begin{equation}
  \label{eq:2}
Y_x=\left\{  y_x\in \rm{Lip}(\R^+; \cS)\;:\left|
    \begin{array}[c]{ll}
       \dot y_x(t)   \in \tilde F(y_x(t))  ,\quad& \hbox{for a.a. } t>0,\\
       y_x(0)=x,
    \end{array}\right.  \right\},
\end{equation}
where $\rm{Lip}(\R^+; \cS)$ is the set of the Lipschitz continuous functions from $\R_+$ to $\cS$.

As in \cite{barles2011bellman} and \cite{uniqueness2013} the following statement holds :
\begin{theorem}\label{sec:optim-contr-probl}
 Assume  $[\rm{H}0]$, $[\rm{H}1]$, $[\rm{H}2]$ and $[\rm{H}3]$. Then 
  \begin{enumerate}
  \item For any $x\in \cS$, $Y_x$ is not empty.
  \item  For any $x\in \cS$, for each trajectory $y_x$ in $Y_x$, there exists a measurable function $\Phi:[0,+\infty)\to M$, $\Phi(t)=(\ph_1(t),\ph_2(t))$ with
    \begin{displaymath}
      (y_x(t),\dot y_x(t))= ( \ph_1(t), f (\ph_1(t),\ph_2(t))) ,\quad \hbox{for a.a. }t,
    \end{displaymath}
which means in particular that $y_x$ is a continuous representation of $\ph_1$.
\item  Almost everywhere on $\{t: y_x(t)\in \Ga\}$, $f(y_x(t), \ph_2(t))\in \R e_0$.
\end{enumerate}
\end{theorem}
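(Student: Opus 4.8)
The plan is to treat the three assertions separately, using elementary differential-inclusion theory for the existence, the Filippov lemma (Theorem \ref{sec:optim-contr-probl-1}) for the control synthesis, and a level-set argument for the tangency on $\Ga$. For the first assertion I would argue constructively. If $x\in\Ga$, then by $[\mathrm{H}3]$ one has $0\in B(0,\delta)\cap(\R e_0\times\R e_i)\subset F_i(x)$, so $0\in\tilde F(x)$ and the constant curve $y_x\equiv x$ already belongs to $Y_x$. If $x\in\cP_i\setminus\Ga$, I would extend $f_i$ to the whole plane $\R e_0\times\R e_i$ by freezing the $e_i$-coordinate at $0$ for negative values (a bounded continuous extension that coincides with $f_i$ on $\cP_i$) and solve the inclusion $\dot y\in F_i(y)$ in that plane. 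Existence of at least one solution follows from $[\mathrm{H}0]$ (continuity and boundedness) together with the convexity of $F_i(x)$, which is inherited from $[\mathrm{H}2]$ by projecting $\FL_i(x)$ onto its first factor. Letting $\tau=\inf\{t>0:\ y(t)\in\Ga\}$ be the first hitting time of $\Ga$, continuity keeps the $e_i$-component of $y$ nonnegative on $[0,\tau]$, so $y$ stays in $\cP_i$ there; I would then glue $y|_{[0,\tau]}$ with the constant curve equal to $y(\tau)\in\Ga$ on $[\tau,+\infty)$, which is admissible because $0\in\tilde F(y(\tau))$ by $[\mathrm{H}3]$. This produces an element of $Y_x$ (if $\tau=+\infty$ the gluing is vacuous).

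The second assertion is a direct application of Theorem \ref{sec:optim-contr-probl-1}. Given $y_x\in Y_x$, I would set $\gamma(t)=(y_x(t),\dot y_x(t))$, which is measurable and bounded (by $M_f$) since $y_x$ is Lipschitz; take $K=M$, which is closed; and let $\Psi(z,a)=(z,f(z,a))$, which is continuous by Remark \ref{rmk: f bounded continuous}. The inclusion $\dot y_x(t)\in\tilde F(y_x(t))$ says precisely that for almost every $t$ there is a control $a$ with $(y_x(t),a)\in M$ and $f(y_x(t),a)=\dot y_x(t)$, so $\gamma(t)\in\Psi(M)$ on a set of full measure. Filippov's lemma then yields a measurable selection $\Phi=(\ph_1,\ph_2)$ with $\Psi\circ\Phi=\gamma$, whence $\ph_1=y_x$ almost everywhere (so $y_x$ is a continuous representative of $\ph_1$) and $f(\ph_1(t),\ph_2(t))=\dot y_x(t)$.

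For the third assertion I would use the classical fact that a Lipschitz function has vanishing derivative almost everywhere on any of its level sets. Writing $g(t)=y_x(t)-(y_x(t)\cdot e_0)e_0$ for the component of $y_x$ orthogonal to $\Ga=\R e_0$, the set $\{t:\ y_x(t)\in\Ga\}$ coincides with $\{t:\ g(t)=0\}$; applying the level-set lemma to each scalar component of the Lipschitz map $g$ gives $\dot g(t)=0$ for almost every $t$ in this set. Consequently $\dot y_x(t)=(\dot y_x(t)\cdot e_0)e_0\in\R e_0$ there, and by the second assertion $f(y_x(t),\ph_2(t))=\dot y_x(t)\in\R e_0$.

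The only genuinely delicate point is the first assertion near $\Ga$: the multifunction $\tilde F$ is not convex-valued on $\Ga$, being the union of the convex sets $F_i$, so a direct appeal to a viability theorem is unavailable. The constructive gluing above sidesteps this by never solving the inclusion across $\Ga$, and the controllability hypothesis $[\mathrm{H}3]$ is exactly what makes the constant continuation on $\Ga$ admissible. I expect the careful verification that the glued curve remains in $\cS$ and satisfies $\dot y_x\in\tilde F(y_x)$ almost everywhere to be the step demanding the most attention.
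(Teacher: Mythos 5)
Your proof is correct and is essentially the argument the paper intends: the paper states Theorem \ref{sec:optim-contr-probl} without proof, deferring to \cite{barles2011bellman} and \cite{uniqueness2013}, but the tools you use are exactly the ones it sets up — the constant/gluing construction exploiting $0\in\tilde F(x)$ for $x\in\Ga$ under $[\rm{H}3]$ (the same device as Lemma \ref{lem: normal_controllability16} in the normal-controllability case, where the paper notes that the difficulty there is precisely that $0\notin\tilde F(x)$ may occur), Filippov's lemma (Theorem \ref{sec:optim-contr-probl-1}) for point 2, and the Stampacchia level-set argument for point 3, which the paper itself invokes in Appendix \ref{sec: appendix1}. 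The one detail worth adding is that Theorem \ref{sec:optim-contr-probl-1} assumes $\gamma(I)\subset\Psi(K)$ for \emph{every} $t\in I$, not merely almost every $t$, so you should first redefine $\gamma$ on the exceptional null set (measurably, e.g. replacing $\dot y_x(t)$ there by $f(y_x(t),a)$ for a fixed control $a$ admissible at $y_x(t)$) before applying it; this is routine and does not affect the conclusion.
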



We introduce the set of admissible controlled trajectories starting from the initial \mbox{datum $x$ :}
\begin{equation}
  \label{eq:3}
\cT_x=\left\{
 ( y_x, \alpha)  \ds \in L_{\rm{Loc}}^\infty( \R^+; M): \left|
  \begin{array}[c]{l}
\ds  y_x\in \rm{Lip}(\R^+; \cS),  \\  \ds  y_x(t)=x+\int_0^t f( y_x(s), \alpha(s)) ds \quad  \hbox{ in } \R^+ 
  \end{array} \right. \right\}.
\end{equation}

\begin{remark}
  \label{sec:optim-contr-probl-3}
If  two different half-planes are  parallel to each other, say the half-planes $\cP_1$ and $\cP_2$, many other assumptions can be made on the dynamics and costs:
 \begin{itemize}
 \item a trivial case in which the assumptions [H1]-[H3] are satisfied is when the dynamics and costs
are continuous at the origin, i.e.  $A_1 =A_2$;  $f_1$ and $f_2$ are respectively
   the restrictions to $\cP_1\times A_1$ and
 $\cP_2\times A_2$ of a continuous and bounded function $f_{1,2}$ defined in $\R e_0 \times\R e_1 \times A_1$, 
which is Lipschitz continuous with respect to the first variable;  $\ell_1$ and $\ell_2$ 
are respectively the restrictions to $\cP_1\times A_1$ and $\cP_2\times A_2$ of a continuous and bounded 
function $\ell_{1,2}$ defined in $\R e_0 \times\R e_1 \times A_1$.
\item In this particular geometrical setting,  one can allow some mixing (relaxation) at the vertex with several possible rules:  
More precisely, in \cite{barles2011bellman, barles2013bellman}, Barles et al  introduce several kinds of trajectories
which stay at the interface:  the {\sl regular} trajectories are obtained by mixing outgoing dynamics from $\cP_1$ and $\cP_2$, whereas 
{\sl singular} trajectories are obtained by mixing strictly ingoing   dynamics from $\cP_1$ and $\cP_2$.
Two different value functions are obtained whether singular mixing is permitted or not. 
 \end{itemize}
\end{remark}

\paragraph{The cost functional}
The cost associated to the trajectory $ ( y_x, \alpha)\in \cT_x$ is 
\begin{equation}\label{eq: cost functional}
  J(x;( y_x, \alpha) )=\int_0^\infty \ell(y_x(t),\alpha(t)) e^{-\lambda t} dt,
\end{equation}
where $\lambda>0$ is a  real number  and the Lagrangian $\ell$ is defined on $M$ by
\begin{equation}\label{def_l}
\forall (x,a)\in M,\quad\quad   \ell(x, a)=\left\{
    \begin{array}[c]{ll}
      \ell_i(x,a) \quad &\hbox{ if } x\in \cP_i\backslash \Ga, \\
      \ell_i(x,a) \quad &\hbox{ if } x\in \Ga \hbox{ and }     a\in A_i.
    \end{array}\right.
\end{equation}
\paragraph{The value function}
 The value function of the infinite horizon optimal control problem is 
\begin{equation}
  \label{eq:4}
v(x)= \inf_{( y_x, \alpha)\in \cT_x}  J(x;( y_x, \alpha) ).
\end{equation}

\begin{proposition}\label{prop:dpp}
 Assume  $[\rm{H}0]$ and $[\rm{H}1]$. We have the dynamic programming principle:
\begin{equation}\label{DPP}
 \forall t\ge 0, \quad  v(x)=\inf_{( y_x, \alpha)\in \cT_x}\left\{\int_0^t \ell(y_x(s), \a(s))e^{-\lambda s}ds+ e^{-\lambda t}v(y_x(t))\right\}.
\end{equation}
\end{proposition}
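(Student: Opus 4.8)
The plan is to prove the two inequalities obtained by comparing $v(x)$ with the right-hand side of \eqref{DPP}, which I denote $w(x,t)$ for the fixed $t\ge 0$. Both rest on the two elementary operations on controlled trajectories — \emph{restriction-and-shift} and \emph{concatenation} — combined with the additivity of the discounted cost \eqref{eq: cost functional} under the time shift $s\mapsto s+t$. Since $|\ell|\le M_\ell$ and $\lambda>0$, every admissible cost $J(x;(y_x,\a))$ is bounded by $M_\ell/\lambda$ in absolute value, so all infima involved are finite wherever $\cT_x\ne\emptyset$ and the manipulations below are legitimate.

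For the inequality $v(x)\ge w(x,t)$, I would fix any $(y_x,\a)\in\cT_x$ and split the integral in \eqref{eq: cost functional} at time $t$. On the tail I substitute $s=t+\tau$ and factor out $e^{-\lambda t}$, so that the tail contributes $e^{-\lambda t}J(y_x(t);(\tilde y,\tilde\a))$, where $\tilde y(\tau)=y_x(t+\tau)$ and $\tilde\a(\tau)=\a(t+\tau)$. The point to check is that $(\tilde y,\tilde\a)$ is an admissible controlled trajectory \emph{starting from} $y_x(t)$: the shifted pair still lies in $M$ almost everywhere, $\tilde y$ is Lipschitz with the same constant, and the integral equation of \eqref{eq:3} for $\tilde y$ follows by differencing the one for $y_x$. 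Hence the tail cost is $\ge v(y_x(t))$, giving $J(x;(y_x,\a))\ge\int_0^t\ell(y_x(s),\a(s))e^{-\lambda s}\,ds+e^{-\lambda t}v(y_x(t))\ge w(x,t)$; taking the infimum over $(y_x,\a)$ yields $v(x)\ge w(x,t)$.

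For the reverse inequality $v(x)\le w(x,t)$, I would fix $\epsilon>0$ and an arbitrary $(y_x,\a)\in\cT_x$, set $z=y_x(t)$, and choose a near-optimal trajectory $(\hat y_z,\hat\a)\in\cT_z$ with $J(z;(\hat y_z,\hat\a))\le v(z)+\epsilon$. I then concatenate, letting $\bar y(s)=y_x(s)$, $\bar\a(s)=\a(s)$ on $[0,t]$ and $\bar y(s)=\hat y_z(s-t)$, $\bar\a(s)=\hat\a(s-t)$ for $s>t$. The additivity of the discounted cost then gives $J(x;(\bar y,\bar\a))=\int_0^t\ell(y_x(s),\a(s))e^{-\lambda s}\,ds+e^{-\lambda t}J(z;(\hat y_z,\hat\a))$, so that $v(x)\le\int_0^t\ell(y_x(s),\a(s))e^{-\lambda s}\,ds+e^{-\lambda t}(v(z)+\epsilon)$; taking the infimum over $(y_x,\a)$ and letting $\epsilon\to 0$ gives $v(x)\le w(x,t)$.

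The genuinely delicate step, and the one I expect to be the main obstacle, is verifying that the shifted tail $(\tilde y,\tilde\a)$ and the glued pair $(\bar y,\bar\a)$ are admissible in the sense of \eqref{eq:3}. Each ingredient is routine but must be confirmed in the junction setting: $\bar y$ remains in $\cS$ and is Lipschitz (both pieces share the bound $M_f$ and agree at $s=t$ since $y_x(t)=z=\hat y_z(0)$), $\bar\a$ is measurable, and the integral equation is recovered by splitting at $t$ and using the two pieces. The only point needing attention is that $(\bar y,\bar\a)\in M$ almost everywhere; this is unaffected by the behaviour at the single gluing instant, so that even when $z\in\Ga$ the admissible switch of control from some $A_i$ to some $A_j$ across $s=t$ causes no difficulty, it being permitted by the definition \eqref{eq:1} of $M$ on $\Ga$. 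Once admissibility is secured on both sides, the additivity of the discounted cost closes both inequalities.
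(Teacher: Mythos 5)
Your proof is correct and is precisely the classical argument the paper invokes: the paper gives no proof of Proposition \ref{prop:dpp} beyond remarking that it "can be proved with the same arguments as in \cite{MR1484411}", and those arguments are exactly your restriction-and-shift plus concatenation scheme, with the only junction-specific point being the admissibility checks you carry out (stability of $M$ under time shift and gluing, including the control switch on $\Ga$). Nothing is missing; your hedge that the shifted tail itself witnesses $\cT_{y_x(t)}\ne\emptyset$ closes the one place where the concatenation step could have had a gap.
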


\begin{proposition}\label{sec:assumption}
 Assume  $[\rm{H}0]$, $[\rm{H}1]$, $[\rm{H}2]$ and $[\rm{H}3]$. Then the value function $v$ is bounded and continuous on $ \cS$.
\end{proposition}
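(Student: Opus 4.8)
The plan is to dispatch boundedness first, which is immediate, and then to obtain continuity by a trajectory--coupling argument in the spirit of \cite{barles2011bellman} and \cite{uniqueness2013}, with the controllability near $\Ga$ doing the essential work. For boundedness, note that by $[\rm{H}1]$ the Lagrangian $\ell$ is bounded by $M_\ell$ on $M$, so for every $(y_x,\alpha)\in\cT_x$ one has $|J(x;(y_x,\alpha))|\le M_\ell\int_0^\infty e^{-\lambda t}\,dt=M_\ell/\lambda$; taking the infimum gives $\|v\|_\infty\le M_\ell/\lambda$. This uniform bound is also what lets me discard the tails of the cost integrals in the continuity estimate.

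For continuity I would show that for every $x\in\cS$ and every $\epsilon>0$ there is $\eta>0$ with $|v(x)-v(y)|\le C\epsilon$ whenever $d(x,y)<\eta$; by symmetry it suffices to bound $v(y)-v(x)$. Fix $\epsilon$ and, using $\|v\|_\infty\le M_\ell/\lambda$, choose $T=T(\epsilon)$ so large that $(2M_\ell/\lambda)e^{-\lambda T}<\epsilon$, which makes the contribution of $[T,\infty)$ to any cost difference $<\epsilon$. Pick an $\epsilon$-optimal pair $(y_x,\alpha)\in\cT_x$, so $J(x;(y_x,\alpha))\le v(x)+\epsilon$. The core step is to construct, for $y$ close to $x$, an admissible pair $(y_y,\beta)\in\cT_y$ whose trajectory \emph{shadows} $y_x$ on $[0,T]$ in geodesic distance, $d(y_x(t),y_y(t))\le g(d(x,y),T)$ with $g(\cdot,T)\to0$ as its first argument tends to $0$, and which lies on the same half-plane as $y_x$ for a.e. $t$. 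Granting this, on $[0,T]$ the two trajectories share a common $\cP_i$ for a.e. $t$, so the running-cost difference is controlled by the modulus $\omega_\ell$ of $[\rm{H}1]$; integrating against $e^{-\lambda t}$ and adding the tail gives $J(y;(y_y,\beta))-J(x;(y_x,\alpha))\le (1/\lambda)\,\omega_\ell(g(d(x,y),T))+\epsilon$, whence $v(y)-v(x)\le (1/\lambda)\,\omega_\ell(g(d(x,y),T))+2\epsilon$. Letting $d(x,y)\to0$ with $\epsilon,T$ fixed, and then $\epsilon\to0$, yields continuity.

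It remains to build the shadowing pair, and this is where I expect the only genuine difficulty, precisely because the running costs on distinct half-planes are completely unrelated: the shadow must therefore stay on the half-plane carrying $y_x$, and the constraint $y_y(t)\in\cS$ may never be violated. On the time intervals where $y_x$ remains in the deep interior $\{\,{\rm dist}(\cdot,\Ga)\ge r_0\}$ of some $\cP_i$, I feed the control $\alpha$ into $f_i$ started at $y$; for $d(x,y)$ small the solution stays in $\cP_i$, and by $[\rm{H}0]$ and Gronwall $|y_x(t)-y_y(t)|\le d(x,y)\,e^{L_f t}\le d(x,y)\,e^{L_f T}$. On the intervals where $y_x$ enters the tube $B(\Ga,r_0)$, I instead use $[\rm{H}3]$ --- first extended from $\Ga$ to the whole $B(\Ga,r_0)\cap\cP_i$ by combining the convexity $[\rm{H}2]$ of $\FL_i$ with the continuity and Lipschitz regularity of $f_i$, so that $F_i$ still contains a fixed ball of $\R e_0\times\R e_i$ there --- to select velocities that drive $y_y$ onto the correct side of $\Ga$ and back toward $y_x$ while remaining in $\cS$.

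Because the corrective velocity field is prescribed pointwise, measurability of the associated control $\beta$ is supplied by the Filippov implicit-function lemma (Theorem \ref{sec:optim-contr-probl-1}); the geodesic distance $d$ is the right quantity to track exactly because two nearby points on different half-planes can only be joined through $\Ga$. Arranging the reconnection so that both the time spent near $\Ga$ and the induced gap are $O(d(x,y))$-small for fixed $T$, and verifying that the cost accumulated during these corrections is likewise small, is the technical heart of the argument; the dynamic programming principle (Proposition \ref{prop:dpp}) may be used to localize the whole construction to $[0,T]$ and to absorb the corrections cleanly. This near-$\Ga$ shadowing, rather than boundedness or the routine Gronwall estimates, is the step I expect to be the main obstacle.
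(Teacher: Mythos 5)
Your boundedness argument is fine, and the truncation at time $T$ is routine; but the continuity argument has a genuine gap, and it sits exactly where you located it. The shadowing plan, as stated, cannot be carried out: you require the shadow $y_y$ to stay on the same half-plane as $y_x$ for a.e.\ $t$ with the running-cost difference controlled by $\omega_\ell$, but consider the case where the $\epsilon$-optimal trajectory $y_x$ travels \emph{along} $\Ga$ using controls from $A_j$ (so paying $\ell_j$), at a speed that can be as large as $M_f$. A shadow that lies strictly inside another half-plane $\cP_i$ can only track the $e_0$-motion through the controllability ball, i.e.\ at guaranteed speed $\delta/2$; since nothing forbids $M_f>\delta/2$, the geodesic gap grows linearly in time, not like $O(d(x,y))$. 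Worse, while the shadow is off $\Ga$ in $\cP_i$ it pays $\ell_i$, and $\ell_i$, $\ell_j$ are completely unrelated, so the $\omega_\ell$ estimate collapses. The only repair is to steer the shadow \emph{exactly onto} $\Ga$ (time $O(\mathrm{gap}/\delta)$), then copy the controls $\alpha(t)\in A_j$ verbatim, using $[\rm{H}2]$-mixing with an inward dynamic from $[\rm{H}3]$ to cancel the small outward normal component $f_j(\cdot,\alpha(t)).e_j=O(L_f\cdot\mathrm{gap})$ so the shadow stays in $\cS$ — and this must be managed across the possibly infinitely many intervals on which $y_x$ enters and leaves $\Ga$. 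That construction (essentially a Filippov-type stability lemma of the kind proved in the paper's appendix) is the actual content of the proof, and your proposal describes its difficulty rather than supplying it.

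Note also that the paper itself does not write out a proof: it states that both Propositions \ref{prop:dpp} and \ref{sec:assumption} are classical and follow the arguments of Bardi--Capuzzo-Dolcetta. The classical route is structured so as to avoid your trap entirely: (i) for $x,y\in B(\Ga,R/2)\cap\cS$, use Property \ref{prop: R_controllability} to build an admissible trajectory from $y$ that \emph{reaches $x$ exactly} along the geodesic in time $\tau\le 2d(x,y)/\delta$, then concatenate with the $\epsilon$-optimal trajectory of $x$ itself; since the continuation is literally the same trajectory with the same costs, Proposition \ref{prop:dpp} gives $v(y)\le v(x)+Cd(x,y)+\epsilon$, hence $v$ is Lipschitz near $\Ga$ with no cost comparison across half-planes; (ii) away from $\Ga$, run your Gronwall shadowing with the same control only up to the first entry time $\theta$ of $y_x$ into the tube (on $[0,\theta]$ the shadow provably stays in $\cP_i\setminus\Ga$), and conclude at time $\theta$ by the dynamic programming principle together with the Lipschitz estimate of step (i). Connection-then-copy replaces tracking, and the whole near-$\Ga$ reconnection you flag as the "technical heart" disappears. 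I recommend restructuring your proof along these lines.
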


\mbox{Both propositions above are classical and can be proved with the same arguments as in \cite{MR1484411}.}

\subsection{The Hamilton-Jacobi equation}
\label{sec:hamilt-jacobi-equat}

\subsubsection{Test-functions}\label{sec:test-functions}
To define viscosity solutions on the irregular set $\cS$, it is necessary to first define a class of admissible  test-functions 
\begin{definition}\label{adtest}
A function $\ph: \cS\to \R$ is an admissible test-function if
\begin{itemize}
\item $\ph$ is continuous in $\cS$
\item for any $j$, $j=1,\dots, N$, $\ph|_{\cP_j} \in \cC^1(\cP_j)$.
\end{itemize}
The set of admissible test-functions is denoted by $\cR(\cS)$.
 If $\ph \in \cR(\cS)$, $x\in \cS$ and $\zeta\in T_x(\cS)$,  let $D\ph(x, \zeta)$ be defined by
 
 \begin{displaymath}
D\varphi(x, \zeta) = \left\lbrace
\begin{array}{rll}
D(\varphi|_{\cP_i})(x).\zeta  & \mbox{if} & x\in \cP_i\backslash \Ga,\\
D(\varphi|_{\cP_i})(x).\zeta & \mbox{if} & x\in \Ga \mbox{ and } \zeta \in \R e_0 \times \R e_i,
\end{array}
\right.
\end{displaymath}
 
where for $u$, $v\in \R e_0\times\R e_i$, $u.v$ denotes the usual Euclidean scalar product in $\R e_0 \times \R e_i$ and for $x\in \Ga$, $D\left( \ph|_{\cP_i}\right) (x).\xi$ is defined by 
\begin{equation}\label{eq_def: D_phi}
D\left( \ph|_{\cP_i}\right) (x).\xi =\lim_{y\to x, y\in \cP_i\backslash \Ga} D\left( \ph|_{\cP_i}\right) (y).\xi.
\end{equation}
If $\ph \in \cR(\cS)$, $x\in \Ga$ and $\xi \in \R e_0$, we will use also the notations $D(\ph|_{\Ga})(x).\xi$ for the differential of $\ph|_{\Ga}$ at the point $x$ evaluated in $\xi$.\\
Other notations which will be useful are the following: for $j\in \{1, \ldots , N \}$, $x\in \cP_j$ and $\phi\in \cR(\cS)$
\begin{equation}\label{eq_def: D_phi2}
\partial_{x_j}\left(\phi|_{\cP_j}\right)(x)=
\left\lbrace\begin{array}{ll}
\lim_{h\to 0}\frac{\phi(x+h e_j)-\phi(x)}{h} & \mbox{ if } x\in \cP_i\setminus\Ga,\\
\lim_{\overset{h>0}{h\to 0}}\frac{\phi(x+h e_j)-\phi(x)}{h} & \mbox{ if } x\in \Ga,
\end{array}
\right.
\end{equation}
 and for $x\in \Ga$
 \begin{equation}\label{eq_def: D_phi3}
 \partial_{x_0}\phi(x)=\lim_{h\to 0}\frac{\phi(x+h e_0)-\phi(x)}{h}.
 \end{equation}

 \end{definition}
 \begin{remark}\label{rmk: D_phi_Ga}
 If $\ph \in \cR(\cS)$, $x\in \Ga$ and $\xi \in \R $, then for all $i\in \{1,\ldots,N\}$,
\begin{displaymath}
D(\ph|_{\cP_i})(x).\xi e_0=D(\ph|_{\Ga})(x).\xi e_0= \partial_{x_0}\phi(x) \xi.
\end{displaymath}
Particularly, the tangential component of $D(\ph|_{\cP_i})(x)$ is independent of $i\in \{1,\ldots,N\}$.
 \end{remark}
 \begin{property}\label{composition}
If $\ph=g\circ\psi$ with $g\in \cC^1(\R)$ and $\psi\in\cR (\cS)$, then $\ph\in\cR (\cS)$ and for any $x\in \cS$, $\zeta\in T_x(\cS)$
\begin{equation*}
D\ph(x,\zeta)=g^\prime(\psi(x))D\psi(x,\zeta).
\end{equation*}
\end{property}

\subsubsection{Vector fields}\label{sec:prel-defin}
 For $i=1,\dots,N$ and $x$ in $\Ga$, we denote by $F_i^+(x)$ and $\FL_i^+(x)$ the sets
\begin{displaymath}
  F_i^+(x)=F_i(x)\cap \left( \R e_0\times \R^+e_i \right) ,\quad\quad   \FL_i^+(x)=\FL_i(x)\cap \left( (\R e_0\times \R^+e_i )\times \R\right),
\end{displaymath}
which are non empty thanks to assumption $[\rm{H}3]$. Note that $0\in \cap _{i=1}^N F_i(x)$. From assumption $[\rm{H}2]$, these sets are compact and convex. 
For $x\in \cS$, the sets $F(x)$ and $\FL(x)$ are defined by
\begin{displaymath}
  F(x)=\left\{ 
\begin{array}{ll}
F_i(x)\quad  &\hbox{if } x \hbox{ belongs to } \cP_i\backslash\Ga \\
\cup_{i=1,\dots,N} F_i^+(x) \quad  &\hbox{if }x \in  \Ga
\end{array} 
\right.
\end{displaymath}
and 
\begin{displaymath}
     \FL(x)=\left\{ 
\begin{array}{ll}
\FL_i(x)\quad  &\hbox{if } x \hbox{ belongs to } \cP_i\backslash\Ga \\
\cup_{i=1,\dots,N} \FL_i^+(x) \quad  &\hbox{if }x \in \Ga.
\end{array}\right.
\end{displaymath}

\subsubsection{Definition of viscosity solutions}\label{sec:definitiona}
We now introduce the definition of  a viscosity solution of
\begin{equation}\label{HJa}
    \lambda u(x)+\sup_{(\zeta,\xi)\in \FL(x)}\{- Du(x, \zeta) -\xi\}=0 \quad \hbox{in } \cS.
\end{equation}

\begin{definition}\label{netviscoa}
\begin{itemize}
\item An upper semi-continuous function $u:\cS\to\R$ is a subsolution of \eqref{HJa} in $\cS$
 if for any $x\in\cS$, any $\ph\in\cR(\cS)$ s.t. $u-\ph$ has a local maximum point at $x$, then
 \begin{equation}
\label{eq:5}
\lambda u(x)+\sup_{ (\zeta,\xi) \in \FL(x)}\{-D\ph(x,\zeta) -\xi\}\le 0.
 \end{equation}
\item  A lower semi-continuous function $u:\cS\to\R$ is a  supersolution of \eqref{HJa} if for any $x\in\cS$,
 any $\ph\in\cR(\cS)$ s.t. $u-\ph$ has a local minimum point at $x$, then
\begin{equation}
  \label{eq:6}
\lambda u(x)+\sup_{ (\zeta,\xi) \in \FL(x)}\{-D\ph(x,\zeta) -\xi\}\ge 0.
\end{equation}
  \item A continuous function $u:\cS\to\R$ is a   viscosity solution of \eqref{HJa} 
in  $\cS$
 if it is both a viscosity subsolution  and a viscosity supersolution of \eqref{HJa} in $\cS$.
\end{itemize}
\end{definition}

\begin{remark}\label{IKsola}
At $x\in \cP_i\backslash\Ga$, the notion of sub, respectively  super-solution
  in Definition \ref{netviscoa} 
 is equivalent to the standard definition of viscosity   sub, respectively  super-solution of 
\[\lambda u(x)+\sup_{a\in A_i}\{-f_i(x,a)\cdot Du(x)-\ell_i(x,a)\}= 0.\]
\end{remark}

\subsubsection{Hamiltonians}
We define the Hamiltonian $H_i: \cP_i\times(\R e_0 \times \R e_i) \to \R$  by
\begin{equation}
  \label{eq:7}
H_i(x,p)= \max_{a\in A_i} (-p. f_i(x,a) -\ell_i(x,a)),
\end{equation} 
and the Hamiltonian $H_{\Ga}: \Ga \times \left(\prod_{i=1, \dots , N}(\R e_0 \times \R e_i) \right) \to \R$ by 
\begin{equation}
  \label{eq:8}
H_{\Ga}(x, p_1,\dots,p_N)=  \max_{i=1,\dots,N} \;H_i^+(x,p_i),
\end{equation} 
where the Hamiltonian $H^+_i: \cP_i\times(\R e_0 \times \R e_i) \to \R$ is defined by
\begin{equation}
  \label{eq:7bis}
H^+_i(x,p)= \max_{a\in A_i \hbox{ s.t. }  f_i(x,a).e_i \geq 0} (-p. f_i(x,a) -\ell_i(x,a)).
\end{equation} 
We also define what may be called the tangential Hamiltonian at $\Ga$,  \mbox{$H_{\Ga}^T: \Ga \times \R e_0 \to \R$,} by 
\begin{equation}
  \label{eq:9}
H_{\Ga}^T(x,p)=   \max_{i=1,\dots,N} \;H_{\Ga,i}^T(x,p),
\end{equation}
where the Hamiltonian $H_{\Ga,i}^T: \Ga \times \R e_0 \to \R$ is defined by
\begin{equation}
  \label{eq:9bis}
H_{\Ga,i}^T(x,p)=  \max_{a\in A_i \hbox{ s.t. }  f_i(x,a).e_i=0} ( -f_i(x,a).p-\ell_i(x,a)).
\end{equation}

The following  definitions are equivalent to Definition \ref{netviscoa}:
\begin{definition}\label{netviscoa2}
\begin{itemize}
\item An upper semi-continuous function $u:\cS\to\R$ is a subsolution of \eqref{HJa} in $\cS$
 if for any $x\in\cS$, any $\ph\in\cR(\cS)$ s.t. $u-\ph$ has a local maximum point at $x$, then
 \begin{equation}
\label{eq:10}
\begin{array}[c]{ll}
\lambda u(x)+ H_i(x, D\left( \ph|_{\cP_i}\right) (x))  \le 0  \quad &\hbox{if } x\in \cP_i\backslash \Ga,\\
\lambda u(x)+ H_\Ga\left( x, D\left( \ph|_{\cP_1}\right) (x), \dots, D\left( \ph|_{\cP_N}\right) (x) \right)  \le 0  \quad &\hbox{if } x\in \Ga.
\end{array}
 \end{equation}
\item  A  lower semi-continuous function $u:\cS\to\R$ is a  supersolution of \eqref{HJa} if for any $x\in\cS$,
 any $\ph\in\cR(\cS)$ s.t. $u-\ph$ has a local minimum point at $x$, then
\begin{equation}\label{eq:11}
\begin{array}[c]{ll}
\lambda u(x)+ H_i(x, D\left( \ph|_{\cP_i}\right) (x))   \ge 0  \quad &\hbox{if } x\in \cP_i\backslash \Ga,\\
\lambda u(x)+ H_\Ga\left( x, D\left( \ph|_{\cP_1}\right) (x), \dots, D\left( \ph|_{\cP_N}\right) (x) \right)   \ge 0  \quad &\hbox{if } x\in \Ga.
\end{array}
\end{equation}
\end{itemize}
\end{definition}
The Hamiltonian $H_i$ are continuous with respect to $x\in \cP_i$, convex with respect to $p$.
Moreover, if $x$ belongs to $\Ga$, the function $p\mapsto H_i(x,p)$ is coercive,
 i.e. $\lim_{|p|\to +\infty } H_i(x,p)=+\infty$ from the controllability assumption [H3].
 \\
 The following lemma is the counterpart of Lemma 2.1 in \cite{uniqueness2013}.
 \begin{lemma} \label{sec:hamiltonians}
 Assume  $[\rm{H}0]$, $[\rm{H}1]$, $[\rm{H}2]$ and $[\rm{H}3]$. Take $i \in \{1, \dots, N \}$, $x \in\Ga$ and $p\in \R e_0 \times \R e_i$. Let $\ph_{i,x,p}: \R \to \R$ be the function defined by  $\ph_{i,x,p}(\delta)=H_i(x,p+\delta e_i)$. We denote by $\Delta_{i,x,p}$ the set 
    \begin{equation}
   \label{eq:12}
   \Delta_{i,x,p}= \{ \delta \in \R \hbox{  s.t.  } \ph_{i,x,p}(\delta)= \min_{d\in \R}  (\ph_{i,x,p}(d))  \}.
 \end{equation}
   \begin{enumerate}
   \item The set $\Delta_{i,x,p}$ is not empty.
   \item $\delta\in  \Delta_{i,x,p}$ if and only if  there exists $a^*\in A_i$ such that $f_i(x,a^*).e_i=0$ and\\
   $\ph_{i,x,p}(\delta) = - f_i(x,a^*).p-\ell_i(x,a^*)$.
   \item For any $x\in \Ga$, $p=p_0e_0+p_ie_i$, with $p_0,p_i\in \R$ and $\delta\in  \Delta_{i,x,p}$ we have
     \begin{equation}
 H_i(x,p+\delta e_i)= H_i^+(x,p+\delta e_i)=  H_{\Ga,i}^T(x,p_0e_0).
     \end{equation}
     \item For any $x\in \Ga$, $p=p_0e_0+p_ie_i$, with $p_0,p_i\in \R$ and $\delta\ge \min\{ d : d\in \Delta_{i,x,p}\}$ we have
     \begin{equation}
 H_i^+(x,p+\delta e_i)=  H_{\Ga,i}^T(x,p_0e_0).
     \end{equation}
   \end{enumerate}
 \end{lemma}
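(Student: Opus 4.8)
The plan is to exploit the convexity of $\delta\mapsto\varphi_{i,x,p}(\delta)$. Writing $f_i(x,a)=(f_i(x,a)\cdot e_0)e_0+(f_i(x,a)\cdot e_i)e_i$ and $p=p_0e_0+p_ie_i$, one has
\[\varphi_{i,x,p}(\delta)=\max_{a\in A_i}\big(-p_0\, f_i(x,a)\cdot e_0-(p_i+\delta)\, f_i(x,a)\cdot e_i-\ell_i(x,a)\big),\]
so $\varphi_{i,x,p}$ is a supremum of affine functions of $\delta$, the one indexed by $a$ having slope $-f_i(x,a)\cdot e_i$; hence $\varphi_{i,x,p}$ is convex. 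For the first assertion I would invoke the coercivity of $p\mapsto H_i(x,p)$ on $\Ga$ recalled just before the lemma: since $\varphi_{i,x,p}(\delta)=H_i(x,p+\delta e_i)\to+\infty$ as $|\delta|\to\infty$, the convex coercive function $\varphi_{i,x,p}$ attains its minimum, so $\Delta_{i,x,p}$ is a nonempty compact interval and in particular $\min\Delta_{i,x,p}$ exists.

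For the heart of the lemma (assertions 2 and 3) I would combine the convexity $[\mathrm H2]$ of $\FL_i(x)$ with a minimax argument. Since $\FL_i(x)$ is compact (continuous image of the compact $A_i$) and convex, and the bracketed expression is affine in $(f,\ell)\in\FL_i(x)$ and in $\delta$, a minimax exchange gives
\[\min_{\delta\in\R}\varphi_{i,x,p}(\delta)=\max_{(f,\ell)\in\FL_i(x)}\ \min_{\delta\in\R}\big(-p_0\,f\cdot e_0-(p_i+\delta)\,f\cdot e_i-\ell\big).\]
The inner minimum is $-\infty$ when $f\cdot e_i\neq0$ and equals $-p_0\,f\cdot e_0-\ell$ when $f\cdot e_i=0$; therefore the minimum value of $\varphi_{i,x,p}$ equals $H_{\Ga,i}^T(x,p_0e_0)$, and a maximizer furnishes a control $a^*\in A_i$ with $f_i(x,a^*)\cdot e_i=0$ realizing this value. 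This simultaneously identifies the minimum value and produces the control of assertion 2. For the converse I would note that if such an $a^*$ exists, its affine function is constant in $\delta$ (slope $-f_i(x,a^*)\cdot e_i=0$) and bounds $\varphi_{i,x,p}$ from below, so any $\delta$ attaining its value is a global minimizer. Assertion 3 then follows: for $\delta\in\Delta_{i,x,p}$ one has $H_i(x,p+\delta e_i)=\varphi_{i,x,p}(\delta)=H_{\Ga,i}^T(x,p_0e_0)$, and since $f_i(x,a^*)\cdot e_i=0\ge0$ the control $a^*$ is admissible in the maximum defining $H_i^+$, whence $H_i^+(x,p+\delta e_i)\ge\varphi_{i,x,p}(\delta)$; as $H_i^+\le H_i$ always, the three quantities coincide.

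For assertion 4 I would split the controls according to the sign of $f_i(x,a)\cdot e_i$ and use monotonicity. Setting $G^+(\delta)=H_i^+(x,p+\delta e_i)=\max_{a:\,f_i(x,a)\cdot e_i\ge0}(\cdots)$, this is a maximum of affine functions with nonpositive slopes, hence $G^+$ is non-increasing in $\delta$. The controls with $f_i(x,a)\cdot e_i=0$ are admissible both for $G^+$ and for the symmetric non-decreasing envelope $G^-$ (controls with $f_i(x,a)\cdot e_i\le0$), and their contribution is the constant $H_{\Ga,i}^T(x,p_0e_0)=:m$; thus $G^+\ge m$, $G^-\ge m$, while $\varphi_{i,x,p}=\max(G^+,G^-)$. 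At $\delta_0:=\min\Delta_{i,x,p}$ one has $\max(G^+(\delta_0),G^-(\delta_0))=\varphi_{i,x,p}(\delta_0)=m$, forcing $G^+(\delta_0)=m$. Since $G^+$ is non-increasing and bounded below by $m$, it equals $m$ for every $\delta\ge\delta_0$, which is exactly $H_i^+(x,p+\delta e_i)=H_{\Ga,i}^T(x,p_0e_0)$.

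I expect the main obstacle to be assertion 2, specifically extracting a single tangential control $a^*$ with $f_i(x,a^*)\cdot e_i=0$ that realizes the minimum. A naive subdifferential computation only yields that $0$ lies in the convex hull of the active slopes $-f_i(x,a)\cdot e_i$, i.e. a convex combination of controls whose $e_i$-components cancel. It is precisely the convexity $[\mathrm H2]$ of $\FL_i(x)$ (equivalently, the minimax exchange above) that upgrades this convex combination to an honest control $a^*\in A_i$ with vanishing normal component, and one must check that this same $a^*$ realizes the optimal value so that assertion 3 closes.
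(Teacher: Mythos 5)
Your proof is correct, but it reaches the crucial assertion 2 by a genuinely different route from the paper's. The paper argues locally at a minimizer: it uses the standard formula for the subdifferential of a pointwise supremum of affine functions, $\partial\ph_{i,x,p}(\delta)=\overline{\rm co}\,\{-f_i(x,a).e_i:\ a\ \hbox{active at }\delta\}$, and then invokes $[\rm{H}2]$ to note that the active set $\FL_i(x)\cap\{(f,\xi): -f.(p+\delta e_i)-\xi=\ph_{i,x,p}(\delta)\}$ is the intersection of a compact convex set with a hyperplane, hence itself compact and convex, so the convex hull in the subdifferential formula is superfluous; $0\in\partial\ph_{i,x,p}(\delta)$ then directly yields an honest active control with $f_i(x,a^*).e_i=0$. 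You instead identify the optimal value globally through the exchange $\inf_\delta\sup_{\FL_i(x)}=\sup_{\FL_i(x)}\inf_\delta$, which simultaneously gives $\min_d\ph_{i,x,p}(d)=H^T_{\Ga,i}(x,p_0e_0)$ and produces $a^*$ as a maximizer of the dual (tangential) problem. Both proofs consume the same structural hypothesis $[\rm{H}2]$ through different standard tools, a subdifferential formula versus a minimax theorem, and your closing diagnosis is exactly right: convexity of $\FL_i(x)$ is what makes the exchange true. Indeed, with a nonconvex $\FL_i(x)$ consisting of three isolated controls with normal speeds $1$, $-1$, $0$ and costs $0$, $0$, $10$ (and $p=0$), one gets $\ph_{i,x,p}(\delta)=\max(|\delta|,-10)$, so $\min_\delta\ph_{i,x,p}=0$ while the tangential value is $-10$; the two sides of your exchange then differ. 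For this reason, the one step you should tighten is the minimax exchange itself: since the $\delta$-variable ranges over the noncompact set $\R$ and the inner infimum equals $-\infty$ off the slice $\{f.e_i=0\}$, you need a theorem that tolerates this, e.g. Sion's minimax theorem, whose hypotheses do hold here because the coupling is bilinear, $\FL_i(x)$ is convex and compact (closed by $[\rm{H}2]$, bounded by $[\rm{H}0]$ and $[\rm{H}1]$), and compactness is required only on the maximization side. Granting that citation, the rest closes cleanly: your backward implication in assertion 2 (a zero-slope affine minorant touching $\ph_{i,x,p}$ forces global minimality), your squeeze $H^T_{\Ga,i}\le H_i^+\le H_i=H^T_{\Ga,i}$ for assertion 3, and your monotone-envelope argument for assertion 4 (writing $\ph_{i,x,p}=\max(G^+,G^-)$ with $G^+$ non-increasing, $G^-$ non-decreasing, both bounded below by the tangential value) are all valid; the last is in fact a fleshed-out version of the paper's one-line appeal to the decreasing character of $d\mapsto H_i^+(x,p+de_i)$, while assertion 1 is handled in both proofs by the same coercivity argument.
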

  \begin{proof}
Point 1 is easy, because the Hamiltonian $H_i$ is continuous and coercive with respect to $p$. The function $\ph_{i,x,p}$ reaches its minimum at $\delta$ if and only if $0\in \partial\ph_{i,x,p}(\delta)$. The subdifferential of $\ph_{i,x,p}$ at $\delta$ is characterized by
   \begin{displaymath}
     \partial \ph_{i,x,p}(\delta)=\overline{\rm co}  \{ -f_i(x,a).e_i ; a\in A_i \hbox { s.t. }  \ph_{i,x,p}(\delta)= -f_i(x,a).(p+\delta e_i) -\ell_i(x,a)    \},
   \end{displaymath}
see \cite{MR0241975}.
But from [H2], 
\begin{displaymath}
   \{ (f_i(x,a),\ell_i(x,a)) ; a\in A_i \hbox { s.t. }  \ph_{i,x,p}(\delta)= -f_i(x,a).(p+\delta e_i) -\ell_i(x,a)    \}
\end{displaymath}
is compact and convex. Hence, 
 \begin{displaymath}
   \partial \ph_{i,x,p}(\delta)= \{ -f_i(x,a).e_i ; a\in A_i \hbox { s.t. }  \ph_{i,x,p}(\delta)= -f_i(x,a).(p+\delta e_i) -\ell_i(x,a)    \}.
   \end{displaymath}
Therefore, $0\in \partial \ph_{i,x,p}(\delta)$ if and only if there exists $a^*\in A_i$ such that $f_i(x,a^*).e_i=0$ and $\ph_{i,x,p}(\delta)=   -f_i(x,a^*).(p+\delta e_i) -\ell_i(x,a^*)$. We have proved point 2.
\\
Points 3 is a direct consequence of point 2. Point 4 is a consequence of point 3 and of the decreasing character of the function $d\longmapsto H_i^+(x,p+d e_i)$.
 \end{proof}
 
 \begin{remark}
 The conclusions of Lemma \ref{sec:hamiltonians} hold if we replace $[\rm{H}3]$ with $[\tH3]$. Indeed, we actually just need that the Hamiltonian $H_i$ be coercive with respect to $p_i$, where $p_i=p.e_i$.
 \end{remark}
 
 \subsubsection{Existence}\label{sec:existence-uniqueness-1}
\begin{theorem}
  \label{sec:existence-uniqueness-3}
 Assume  $[\rm{H}0]$, $[\rm{H}1]$, $[\rm{H}2]$ and $[\rm{H}3]$.
The value function $v$ defined in (\ref{eq:4}) is a  bounded  viscosity solution of \eqref{HJa} in $ \cS$.
\end{theorem}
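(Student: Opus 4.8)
The plan is to use the dynamic programming principle (Proposition \ref{prop:dpp}) to show directly that $v$ is both a sub- and a supersolution in the sense of Definition \ref{netviscoa2}; boundedness is already contained in Proposition \ref{sec:assumption}. At a point $x\in\cP_i\setminus\Ga$ the equation reduces to the classical Bellman equation on the manifold $\cP_i$, so by Remark \ref{IKsola} the sub/supersolution inequalities follow from the standard theory (as in \cite{MR1484411}); the only genuinely new work is at the interface $x\in\Ga$.

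For the subsolution inequality at $x\in\Ga$ I would fix $i$ and a control $a\in A_i$ with $f_i(x,a).e_i>0$, solve $\dot y=f_i(y,a)$ with $y(0)=x$, and note that since the $e_i$-component increases, $y$ remains in $\cP_i$ for small $t>0$, so $(y,a)\in\cT_x$ is admissible. Feeding this trajectory into the upper bound furnished by the DPP, using that $v-\ph$ has a local maximum at $x$ (so $v(y(t))-v(x)\le\ph(y(t))-\ph(x)$), dividing by $t$ and letting $t\to 0^+$, yields $\lambda v(x)-D(\ph|_{\cP_i})(x).f_i(x,a)-\ell_i(x,a)\le 0$. The boundary controls with $f_i(x,a).e_i=0$ are recovered by approximation: by $[\rm H2]$ and $[\rm H3]$, the pair $(f_i(x,a),\ell_i(x,a))$ is a limit, inside the convex compact set $\FL_i(x)$, of pairs coming from strictly inward controls (convex-combining with a control realizing an element of $F_i(x)$ with positive $e_i$-component, which exists by $[\rm H3]$), and the map $(w,m)\mapsto -D(\ph|_{\cP_i})(x).w-m$ is continuous. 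Taking the supremum over admissible $a$ and then the maximum over $i$ gives $\lambda v(x)+H_\Ga(x,D(\ph|_{\cP_1})(x),\dots,D(\ph|_{\cP_N})(x))\le 0$.

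For the supersolution inequality at $x\in\Ga$ I would, for each small $t$ and each $\eta>0$, invoke the DPP to pick a near-optimal trajectory $(y,\alpha)\in\cT_x$ with $\int_0^t\ell e^{-\lambda s}ds+e^{-\lambda t}v(y(t))\le v(x)+\eta t$. Writing $\ph(y(t))-\ph(x)=\int_0^t D\ph(y(s),\dot y(s))ds$ (the chain rule being licit since $\ph|_{\cP_j}\in\cC^1$ and, on $\Ga$, the tangential derivative is branch-independent by Remark \ref{rmk: D_phi_Ga}), using the local minimum of $v-\ph$, dividing by $t$ and letting $t\to 0^+$, gives $\frac1t\int_0^t[\ell(y(s),\alpha(s))+D\ph(y(s),\dot y(s))]ds\le\lambda v(x)+\eta+o(1)$. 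The delicate point is to extract from this the inequality with $H_\Ga=\max_i H_i^+$ rather than the weaker $\max_i H_i$ produced by the naive pointwise bound. Here I would split $[0,t]$ into the time spent on $\Ga$ and the excursions into each $\cP_i\setminus\Ga$. On $\Ga$, Theorem \ref{sec:optim-contr-probl}(3) forces $\dot y(s)\in\R e_0$ and the active control to satisfy $f_j(y(s),\alpha(s)).e_j=0$, so the integrand is controlled by the tangential Hamiltonian $H_{\Ga,j}^T\le\max_i H_i^+$; on an excursion into $\cP_i$ the trajectory enters and leaves through $\Ga$, so $\int(\dot y.e_i)$ over a completed excursion vanishes, which is exactly what upgrades the normal part of the bound from $H_i$ to $H_i^+$ via the monotonicity encoded in Lemma \ref{sec:hamiltonians}(3)--(4).

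The main obstacle is precisely this last step: controlling the behaviour of arbitrary near-optimal trajectories in a neighbourhood of $\Ga$ as $t\to 0$, in particular handling the excursion boundary terms $y(t).e_i$ (which survive division by $t$ and must be absorbed, e.g. by choosing a sequence $t_n\to 0$ at which $y$ returns to $\Ga$, or by using the sign of the associated normal slope), and combining the tangential and branch contributions into the single Hamiltonian $\max_i H_i^+$. This is the analogue, for a one-dimensional interface, of the argument of Barles--Briani--Chasseigne in \cite{barles2011bellman}, and is the place where the structural Lemma \ref{sec:hamiltonians} and the tangentiality statement of Theorem \ref{sec:optim-contr-probl} do the real work.
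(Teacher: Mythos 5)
Your overall architecture is sound, and your subsolution argument at $\Ga$ is essentially correct (it is the same device the paper uses, under $[\tH3]$, in Lemma \ref{lem: discontinuous_existence_2} and Corollary \ref{cor: discontinuous_existence_1}): strictly entering constant-control trajectories give the inequality for every $a\in A_i$ with $f_i(x,a).e_i>0$, and the controls with $f_i(x,a).e_i=0$ are recovered by convex combination with a strictly entering control, using $[\rm{H}2]$ and $[\rm{H}3]$. The supersolution half, however, is not a proof: the step you defer as ``the main obstacle'' --- controlling the averages of near-optimal trajectories near $\Ga$ and upgrading $\max_i H_i$ to $\max_i H_i^+$ --- is precisely the mathematical content of the theorem at points of $\Ga$, so flagging it does not discharge it. Moreover, of the two patches you suggest for the unfinished final excursion, the first (passing to times $t_n\to 0$ at which $y$ returns to $\Ga$) is not available: a near-optimal trajectory may leave $\Ga$ immediately into some $\cP_k$ and never return, and in any case the trajectory in the DPP argument is chosen after $t$ is fixed. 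The correct mechanism is your second one, and it should be stated positively: by Stampacchia's theorem, $\int_0^t 1_{\{y(s)\in\cP_j\setminus\Ga\}}\dot y(s).e_j\,ds$ vanishes unless $y(t)\in\cP_j\setminus\Ga$, in which case it equals the positive normal coordinate of $y(t)$; hence the time-average of $(f_j,\ell_j)$ over $\{s: y(s)\in\cP_j\setminus\Ga\}$ has nonnegative $e_j$-component and is controlled by $H_j^+$ rather than $H_j$ --- the sign works in your favour, nothing needs to be absorbed. Even granting this, your sketch glosses a genuine point: closeness of that average to $\FL_j(x)$ together with the sign (or zero) condition on one component does not by itself give closeness to $\FL_j^+(x)$, respectively to $\FL_j(x)\cap(\R e_0\times\R)$; one needs the convexity $[\rm{H}2]$ together with the transversality provided by the controllability $[\rm{H}3]$ to turn that into a distance estimate to the intersection.

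For comparison, the paper never runs this trajectory analysis inside the viscosity argument. It factors the proof into three steps: Proposition \ref{sec:existence} (quoted from \cite{MR3057137}) shows that $v$ is a viscosity solution of the relaxed equation \eqref{HJa2}, whose nonlinearity is built on the limit sets $\fl(x)$ of trajectory averages; Lemma \ref{sec:existence-2}, proved in appendix \ref{sec: appendix1} and containing exactly the Stampacchia/convexity work described above, identifies $\fl(x)$ on $\Ga$ as $\bigcup_{i}\overline{\rm co}\left\{\FL_i^+(x)\cup\bigcup_{j\neq i}\left(\FL_j(x)\cap(\R e_0\times\R)\right)\right\}$; and Lemma \ref{sec:existence-1} shows, by piecewise linearity of $(\zeta,\mu)\mapsto -D\ph(x,\zeta)-\mu$ and convexity, that the supremum over $\fl(x)$ equals the maximum over $\FL(x)$. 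Your direct route through the DPP could in principle be completed, but completing it amounts to reproving Lemma \ref{sec:existence-2}; as written, the proposal identifies the crux without resolving it.
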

The proof of Theorem \ref{sec:existence-uniqueness-3} is made in several steps: the first step consists of proving that the value function is a viscosity solution of a Hamilton-Jacobi 
equation with a more general definition of the Hamiltonian: for that, we introduce larger relaxed vector fields:
for $x\in  \cS$,
\begin{displaymath}
  \begin{split}
    \tilde f(x)=\left\{ \eta\in \R^d :
      \begin{array}[c]{l}
        \exists ( y_{x,n},\alpha_n)_{n\in \N}, \\( y_{x,n},\alpha_n)\in \cT_x, \\
\exists (t_n)_{n\in \N}
      \end{array} \;{\rm s.t.}\;
      \left|
      \begin{array}[c]{l}
       \ds  t_n\to 0^+
 \;{\rm and}\;\\ \ds \lim_{n\to \infty} \frac{1}{t_n}\int_0^{t_n}f(y_{x,n} (t),\a_n(t))dt=\eta
      \end{array} \right. \right \}
  \end{split}
\end{displaymath}
and
\begin{displaymath}
  \begin{split}
  &\fl (x)=
\\ &\left\{ (\eta,\mu)\in \R^d \times \R:
      \begin{array}[c]{l}
        \exists ( y_{x,n},\alpha_n)_{n\in \N}, \\ ( y_{x,n},\alpha_n)\in \cT_x, \\
\exists (t_n)_{n\in \N}
      \end{array} \;{\rm s.t.}\;\left|
      \begin{array}[c]{l}
       \ds  t_n\to 0^+,\\
 \ds \lim_{n\to \infty} \frac{1}{t_n}\int_0^{t_n}f(y_{x,n}(t),\a_n(t))dt=\eta,
 \\ \ds \lim_{n\to \infty} \frac{1}{t_n}\int_0^{t_n}\ell(y_{x,n}(t),\a_n(t))dt=\mu
      \end{array} \right. \right \};
  \end{split}
\end{displaymath}
where $\cT_x$ is the set of admissible controlled trajectories starting from the point $x$ which was introduced in (\ref{eq:3}).

\begin{proposition}
  \label{sec:existence}
Assume  $[\rm{H}0]$, $[\rm{H}1]$, $[\rm{H}2]$ and $[\rm{H}3]$. The value function $v$ defined in (\ref{eq:4}) is a   viscosity solution of 
\begin{equation}\label{HJa2}
    \lambda u(x)+\sup_{(\zeta,\xi)\in \fl(x)}\{- Du(x, \zeta) -\xi\}=0 \quad \hbox{in } \cS,
\end{equation}
where the definition of viscosity solution is exactly the same as in Definition \ref{netviscoa}, replacing $\FL(x)$ with $\fl(x)$.
\end{proposition}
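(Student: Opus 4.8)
The plan is to deduce the result from the dynamic programming principle (Proposition \ref{prop:dpp}) together with a chain-rule computation adapted to the junction, in the spirit of \cite{barles2011bellman,uniqueness2013}, proving the subsolution and the supersolution properties separately. At a point $x\in\cP_i\setminus\Ga$ the argument is classical: for $t$ small every trajectory issued from $x$ stays in the relatively open half-plane $\cP_i\setminus\Ga$, so $\fl(x)$ coincides with the usual relaxed field and one recovers the standard interior statement (cf. Remark \ref{IKsola}). Hence I would concentrate on $x\in\Ga$. I would first record that every $(\zeta,\xi)\in\fl(x)$ has $\zeta\in T_x(\cS)$, so that $D\ph(x,\zeta)$ makes sense: writing $y_{x,n}(t_n)=x+\int_0^{t_n}f(y_{x,n},\alpha_n)\,ds$, the endpoint belongs to a single half-plane $\cP_{i_n}$, so after extracting a subsequence with $i_n\equiv i$ one gets $\zeta=\lim_n (y_{x,n}(t_n)-x)/t_n\in\R e_0\times\R^+e_i$.

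\textbf{Subsolution.} Let $\ph\in\cR(\cS)$ be such that $v-\ph$ has a local maximum at $x$, and fix $(\zeta,\xi)\in\fl(x)$ with its approximating data $(y_{x,n},\alpha_n)\in\cT_x$, $t_n\to0^+$. Since $|y_{x,n}(t)-x|\le M_f t_n$, for $n$ large $v(y_{x,n}(t_n))\le v(x)-\ph(x)+\ph(y_{x,n}(t_n))$. Inserting $(y_{x,n},\alpha_n)$ into \eqref{DPP} at time $t_n$, using this inequality and dividing by $t_n$, I would let $n\to\infty$. The factor $(1-e^{-\lambda t_n})/t_n\to\lambda$ gives $\lambda v(x)$; the cost term $\frac1{t_n}\int_0^{t_n}\ell(y_{x,n},\alpha_n)\,e^{-\lambda s}\,ds\to\xi$, since replacing $e^{-\lambda s}$ by $1$ costs only $O(t_n)$ by the boundedness in $[\mathrm{H}1]$; and the decisive term satisfies $(\ph(y_{x,n}(t_n))-\ph(x))/t_n\to D\ph(x,\zeta)$ (next paragraph). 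This yields $\lambda v(x)-D\ph(x,\zeta)-\xi\le0$, and taking the supremum over $(\zeta,\xi)\in\fl(x)$ gives the subsolution inequality.

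\textbf{The decisive term (main obstacle).} Because $\ph$ is Lipschitz and $\cC^1$ on each closed piece, and because a.e. on $\{y_{x,n}\in\Ga\}$ one has $\dot y_{x,n}\in\R e_0$ by point 3 of Theorem \ref{sec:optim-contr-probl}, the chain rule gives $\ph(y_{x,n}(t_n))-\ph(x)=\int_0^{t_n}D\ph(y_{x,n},\dot y_{x,n})\,ds$. I would split the integrand into its tangential and normal parts. With $\zeta=\zeta_0e_0+\zeta_ie_i$, the tangential part converges to $\zeta_0\,\partial_{x_0}\ph(x)$, since $\partial_{x_0}\ph$ is continuous on $\cS$ and independent of the half-plane on $\Ga$ (Remark \ref{rmk: D_phi_Ga}). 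For the normal part I would introduce $\rho(s)=dist(y_{x,n}(s),\Ga)$, a Lipschitz function with $\rho(0)=0$ that coincides with the intrinsic normal coordinate inside each $\cP_k$ and vanishes on $\Ga$; on the open set where $y_{x,n}\in\cP_k\setminus\Ga$ the normal part integrates $\dot\rho$ against $\partial_{x_k}(\ph|_{\cP_k})(y_{x,n})$. Replacing this derivative by its value at $x$ (the error tends to $0$ because $D(\ph|_{\cP_k})$ is uniformly continuous near $x$ and $\int_0^{t_n}|\dot\rho|\le M_f t_n$), each excursion into a half-plane $\cP_k$ with $k\ne i$ starts and ends on $\Ga$, hence contributes zero net variation of $\rho$, whereas for $k=i$ only the final excursion survives and contributes $\rho(t_n)$. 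Thus the normal part tends to $\zeta_i\,\partial_{x_i}(\ph|_{\cP_i})(x)$, and the two parts sum to $D\ph(x,\zeta)$. This cancellation of the excursions into the ``wrong'' half-planes, where there may be infinitely many such excursions, is the step I expect to require the most care.

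\textbf{Supersolution.} Let now $v-\ph$ have a local minimum at $x$. For each $n$ I would pick $t_n\to0^+$ and a trajectory $(y_{x,n},\alpha_n)\in\cT_x$ that is $\epsilon_n$-optimal in \eqref{DPP} at time $t_n$, with $\epsilon_n=o(t_n)$. By the boundedness in $[\mathrm{H}0]$ and $[\mathrm{H}1]$ the averages $\frac1{t_n}\int_0^{t_n}f(y_{x,n},\alpha_n)\,ds$ and $\frac1{t_n}\int_0^{t_n}\ell(y_{x,n},\alpha_n)\,ds$ are bounded, so along a subsequence they converge to some $(\zeta,\xi)$, which belongs to $\fl(x)$ by definition. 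Running the computation above with the reversed inequality and $\epsilon_n/t_n\to0$ yields $\lambda v(x)-D\ph(x,\zeta)-\xi\ge0$, hence $\lambda v(x)+\sup_{(\zeta,\xi)\in\fl(x)}\{-D\ph(x,\zeta)-\xi\}\ge0$. The only genuinely new ingredient here is the compactness extraction producing $(\zeta,\xi)\in\fl(x)$; the convergence of the difference quotient is handled exactly as in the subsolution case.
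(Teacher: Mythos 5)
Your proof is correct, but note that the paper does not actually prove Proposition \ref{sec:existence}: its entire proof is the line ``See \cite{MR3057137}'', so you are supplying an argument the paper outsources. Your route --- inserting the approximating trajectories from the definition of $\fl(x)$ (respectively, $\epsilon_n$-optimal trajectories with $\epsilon_n=o(t_n)$) into the dynamic programming principle of Proposition \ref{prop:dpp}, and reducing everything to the convergence of $(\ph(y_{x,n}(t_n))-\ph(x))/t_n$ to $D\ph(x,\zeta)$ --- is essentially the argument of \cite{MR3057137}, and it runs on exactly the machinery the paper deploys elsewhere: your cancellation of excursions into the half-planes $\cP_k$, $k\neq i$, is identity \eqref{eq:A1}, which the paper proves by Stampacchia's theorem in Appendix \ref{sec: appendix1} (apply it to $\kappa_k:y\mapsto (y.e_k)1_{y\in\cP_k\setminus\Ga}$, so that $\kappa_k\circ y_{x,n}\in W^{1,\infty}(0,t_n)$ has weak derivative $1_{\{y_{x,n}\in\cP_k\setminus\Ga\}}\,\dot y_{x,n}.e_k$ a.e.), and the global term-by-term structure mirrors the paper's direct proof of Theorem \ref{th: v_discontinuous_solution} in Appendix \ref{appendix: v_discontinuous_solution}, which treats the harder case where $v$ is a priori discontinuous. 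Two points in your sketch should be made precise in a write-up, and both can be. First, the excursion argument survives the possibility of infinitely many excursions: $\{s:y_{x,n}(s)\in\cP_k\setminus\Ga\}$ is relatively open in $[0,t_n]$, hence a countable union of intervals; each interval with both endpoints interior contributes zero net variation of $\rho$ since $\rho$ vanishes at its ends, only the interval ending at $t_n$ (if any, and only for $k=i$) contributes $\rho(t_n)$, and the sum over intervals converges absolutely because $\int_0^{t_n}|\dot\rho|\,ds\le M_f t_n$; alternatively one quotes Stampacchia as above. Second, the chain rule a.e.\ on the closed set $\{s:y_{x,n}(s)\in\Ga\}$ needs a word of justification: at a.e.\ point of that set (a density point at which $y_{x,n}$ and $\ph\circ y_{x,n}$ are both differentiable) the derivative of $\ph\circ y_{x,n}$ can be computed along the set itself, where $\ph|_\Ga$ is $C^1$ and $\dot y_{x,n}\in\R e_0$ by point 3 of Theorem \ref{sec:optim-contr-probl}. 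With these two justifications your proof is complete and self-contained, which is what it buys compared with the paper's citation.
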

\begin{proof}
See \cite{MR3057137}. 
\end{proof}

The second step consists of proving that for any $\ph\in \cR(\cS)$ and $x\in \cS$, $\sup_{(\zeta,\xi)\in \FL(x)}\{- D\ph(x, \zeta) -\xi\}$ and $\sup_{(\zeta,\xi)\in \fl(x)}\{- D\ph(x, \zeta) -\xi\}$ are equal. This is a consequence of the following lemma.
\begin{lemma}\label{sec:existence-2} For any $x\in \cS$,
  \begin{displaymath}
  \begin{array}[c]{lll}
      \fl(x)= &  \FL(x) & \hbox{if } x\in \cS\backslash \Ga,\\
       \fl(x)= &  \bigcup_{i=1,\dots,N}  \overline{\rm co} \left\{ \FL_i^+(x) \cup \bigcup_{j\not=i} \Bigl(\FL_j(x)\cap (\R e_0\times \R) \Bigr)     \right\} & \hbox{if } x\in \Ga.
  \end{array}
  \end{displaymath}
\end{lemma}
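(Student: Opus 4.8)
The statement for $x\in\cS\setminus\Ga$ is the classical relaxed-dynamics identity, so the plan is to dispatch it quickly and concentrate on $x\in\Ga$. For $x\in\cP_i\setminus\Ga$, when $t_n$ is small every trajectory issued from $x$ stays in $\cP_i$ away from $\Ga$, so $f=f_i$ and $\ell=\ell_i$ along it; freezing the coefficients at $x$ (using the Lipschitz/continuity bounds of [H0]--[H1] and $y_{x,n}(t)\to x$) shows every averaged couple lies at distance $o(1)$ from $\overline{\mathrm{co}}\,\FL_i(x)=\FL_i(x)$, whence $\fl(x)\subseteq\FL_i(x)=\FL(x)$; the reverse inclusion follows from constant controls. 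Henceforth fix $x\in\Ga$ and set $G_i(x)=\overline{\mathrm{co}}\bigl\{\FL_i^+(x)\cup\bigcup_{j\neq i}\bigl(\FL_j(x)\cap(\R e_0\times\R)\bigr)\bigr\}$, which is compact and convex by [H2]; the claim is $\fl(x)=\bigcup_{i=1}^N G_i(x)$, and I would prove the two inclusions separately.

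For $\bigcup_i G_i(x)\subseteq\fl(x)$ I would argue by explicit construction. Fix $(\eta,\mu)=\sum_k\theta_k(\eta_k,\mu_k)\in G_i(x)$, each $(\eta_k,\mu_k)$ being either an outgoing/tangential couple of $\cP_i$ in $\FL_i^+(x)$ or a tangential couple in $\FL_j(x)\cap(\R e_0\times\R)$ with $j\neq i$. For each small $t_n$, partition $[0,t_n]$ into intervals of length $\theta_k t_n$, ordering the tangential pieces first and the genuinely outgoing $\cP_i$-pieces last. On a tangential piece carried by a control $a_k\in A_{j_k}$ one has $f_{j_k}(x,a_k)\in\R e_0$, so applying $a_k$ from the current point $x'\in\Ga$ gives $f_{j_k}(x',a_k)=\eta_k+O(|x'-x|)e_{j_k}$ and the trajectory drifts off $\Ga$ by only $O(t_n^2)$; using the controllability [H3] at $\Ga$ (and its persistence in a neighborhood of $\Ga$ by continuity of $F_{j_k}$) I can add a velocity in $B(0,\delta)\cap(\R e_0\times\R e_{j_k})$ to keep the piece inside $\cP_{j_k}\subset\cS$ and to return it to $\Ga$, at a time- and cost-error that is $o(1)$. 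The outgoing $\cP_i$-pieces are realized directly, ending at a point of $\cP_i$ at distance $O(t_n)$ from $\Ga$. Concatenating, each piece contributes $\theta_k(\eta_k,\mu_k)+o(1)$ to the averaged velocity/cost couple, which therefore converges to $(\eta,\mu)$, so $(\eta,\mu)\in\fl(x)$.

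For the reverse inclusion $\fl(x)\subseteq\bigcup_i G_i(x)$ I would use an excursion decomposition. Let $(\eta,\mu)\in\fl(x)$ with witnesses $(y_{x,n},\alpha_n)\in\cT_x$ and $t_n\to0^+$. Since there are finitely many half-planes, after extracting a subsequence all endpoints $y_{x,n}(t_n)$ lie in one fixed $\cP_i$ (or on $\Ga$), whence $\eta\in\R e_0\times\R^+e_i$. For each $j$ the set $\{t:\;y_{x,n}(t)\in\cP_j\setminus\Ga\}$ is open, a countable union of excursions whose endpoints lie on $\Ga$; as $\mathrm{dist}(y_{x,n}(\cdot),\Ga)=y_{x,n}(\cdot).e_j$ on $\cP_j$ vanishes at the ends of each excursion, $\int f_{j}(y_{x,n},\alpha_n).e_j\,dt$ telescopes to $0$ for $j\neq i$ and to $\mathrm{dist}(y_{x,n}(t_n),\Ga)\ge0$ for $j=i$. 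Writing $\tau_j^n,\tau_\Ga^n$ for the time fractions spent in $\cP_j\setminus\Ga$ and on $\Ga$, and $A_j^n,A_\Ga^n$ for the corresponding averaged couples, [H2] together with the continuity bounds and $y_{x,n}(t)\to x$ place $A_j^n$ at distance $o(1)$ from $\FL_j(x)$ and $A_\Ga^n$ from $\overline{\mathrm{co}}\bigl(\bigcup_k\FL_k(x)\cap(\R e_0\times\R)\bigr)$ (on $\Ga$ the velocity is tangential by point 3 of Theorem \ref{sec:optim-contr-probl}). Passing to a further subsequence with $\tau^n_\bullet\to\tau_\bullet$ and, for indices with positive limiting fraction, $A^n_\bullet\to A_\bullet$, the $e_j$-identities force $A_j\in\FL_j(x)\cap(\R e_0\times\R)$ for $j\neq i$ and $A_i\in\FL_i^+(x)$, while $A_\Ga\in\overline{\mathrm{co}}\bigl(\bigcup_k\FL_k(x)\cap(\R e_0\times\R)\bigr)\subseteq G_i(x)$ (note $\FL_i(x)\cap(\R e_0\times\R)\subseteq\FL_i^+(x)$). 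Since $(\eta,\mu)=\tau_iA_i+\sum_{j\neq i}\tau_jA_j+\tau_\Ga A_\Ga$ is a convex combination of elements of the convex set $G_i(x)$, I conclude $(\eta,\mu)\in G_i(x)$.

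The main obstacle is the construction direction $\bigcup_i G_i(x)\subseteq\fl(x)$: because $\Ga$ is a line rather than a point, realizing the prescribed tangential velocities while keeping the concatenated trajectory inside $\cS$ is delicate, and is precisely where assumption [H3] (and its propagation to a neighborhood of $\Ga$) is needed to make the off-interface drift and the return-to-$\Ga$ corrections negligible. The reverse inclusion is conceptually clear but bookkeeping-heavy, the decisive point being that each excursion into $\cP_j\setminus\Ga$ starts and ends on $\Ga$, which makes the normal displacement telescope and thereby dictates the exact form of $G_i(x)$.
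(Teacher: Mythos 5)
Your proof is correct and, for the direction the paper actually proves in detail --- $\fl(x)\subseteq\bigcup_{i}G_i(x)$ for $x\in\Ga$, with $G_i(x)$ the sets you introduce --- it follows the same route as the paper: split $[0,t_n]$ into the times spent in each $\cP_j\setminus\Ga$ and on $\Ga$, freeze the coefficients at $x$ using [H0]--[H2], use point 3 of Theorem \ref{sec:optim-contr-probl} to make the on-$\Ga$ average tangential, and use the vanishing of the normal displacement to place the $\cP_j$-averages ($j\neq i$) near $\FL_j(x)\cap(\R e_0\times\R)$ and the $\cP_i$-average near $\FL_i^+(x)$, so that the total average is asymptotically a convex combination of elements of the convex set $G_i(x)$. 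The differences are minor but worth recording. First, you derive the normal-displacement identities by decomposing the open set $\{t: y_{x,n}(t)\in\cP_j\setminus\Ga\}$ into excursions with endpoints on $\Ga$ and telescoping, whereas the paper obtains the same identities \eqref{eq:A1} from Stampacchia's theorem applied to $y\mapsto y\,1_{y\in\cP_j\backslash\Ga}.e_j$; the two arguments are equivalent. Second, your upfront extraction of a subsequence along which all endpoints $y_{x,n}(t_n)$ lie in one fixed $\cP_i$ (or on $\Ga$) neatly unifies the paper's case distinction between $\zeta\notin\R e_0$ and $\zeta\in\R e_0$. Third, for the inclusion $\bigcup_i G_i(x)\subseteq\fl(x)$ the paper gives no proof at all (it cites \cite{MR3057137} and explicitly skips the construction), while you sketch a chattering construction; your sketch is morally right, but the step ``add a velocity in $B(0,\delta)\cap(\R e_0\times\R e_{j_k})$'' should be phrased correctly: admissible velocity--cost couples are not closed under addition, so one must instead use the convexity [H2] to mix the tangential control with a control whose velocity points strictly into $\cP_{j_k}$ (available by [H3]), in the spirit of Lemma \ref{lem: discontinuous_existence_2}. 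Finally, both you and the paper pass without comment from ``the on-$\Ga$ average is exactly tangential and close to $\overline{\rm co}\{\bigcup_k\FL_k(x)\}$'' to ``it is close to $\overline{\rm co}\{\bigcup_k(\FL_k(x)\cap(\R e_0\times\R))\}$''; this intersection step relies on the controllability assumption, so on this point your argument has no gap that the paper's does not also have.
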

\begin{proof}
  The proof being a bit long, we postpone it to the appendix \ref{sec: appendix1}.
\end{proof}

\begin{lemma}\label{sec:existence-1}
Assume  $[\rm{H}0]$, $[\rm{H}1]$, $[\rm{H}2]$ and $[\rm{H}3]$.
  For any function $\ph\in\cR(\cS)$ and $x\in \cS$,
  \begin{equation}
    \label{eq:14}
    \sup_{(\zeta,\xi)\in \fl(x)}\{- D\ph(x, \zeta) -\xi\}= \max_{(\zeta,\xi)\in \FL(x)}\{- D\ph(x, \zeta) -\xi\}.
  \end{equation}
\end{lemma}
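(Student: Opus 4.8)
The plan is to read off the result directly from the explicit formula for $\fl(x)$ furnished by Lemma \ref{sec:existence-2}, using two elementary facts: that $\FL(x)\subset\fl(x)$, and that a \emph{linear} functional attains the same supremum on a set and on its closed convex hull. Throughout, write $\Phi(\zeta,\xi)=-D\ph(x,\zeta)-\xi$ for the functional appearing in \eqref{eq:14}. Since, in the $i$-th term of the union describing $\fl(x)$, one has $\FL_i^+(x)\subset\overline{\rm co}\{\dots\}$, it follows that $\FL(x)=\bigcup_{i}\FL_i^+(x)\subset\fl(x)$, so $\max_{\FL(x)}\Phi\le\sup_{\fl(x)}\Phi$ automatically; and for $x\in\cS\setminus\Ga$ Lemma \ref{sec:existence-2} gives $\fl(x)=\FL(x)$ and there is nothing to prove. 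Hence only the reverse inequality $\sup_{\fl(x)}\Phi\le\max_{\FL(x)}\Phi$ at a point $x\in\Ga$ needs an argument.

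For $x\in\Ga$, set $S_i=\FL_i^+(x)\cup\bigcup_{j\neq i}\bigl(\FL_j(x)\cap(\R e_0\times\R)\bigr)$, so that $\fl(x)=\bigcup_{i}\overline{\rm co}(S_i)$ and $\sup_{\fl(x)}\Phi=\max_{i}\sup_{\overline{\rm co}(S_i)}\Phi$. The crucial observation I would establish is that, for each fixed $i$, every element of $S_i$ lies in the single linear subspace $(\R e_0\times\R e_i)\times\R$: indeed $\FL_i^+(x)\subset(\R e_0\times\R^+e_i)\times\R$, while any $(\zeta,\xi)\in\FL_j(x)\cap(\R e_0\times\R)$ has dynamics component $\zeta\in\R e_0\subset\R e_0\times\R e_i$. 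On that subspace $\Phi(\zeta,\xi)=-D(\ph|_{\cP_i})(x).\zeta-\xi$ is a single linear form in $(\zeta,\xi)$; in particular, for the tangential elements $\zeta\in\R e_0$ this value coincides with the one obtained through $\cP_j$, by the consistency stated in Remark \ref{rmk: D_phi_Ga}. Consequently $\Phi$ is linear and continuous on $(\R e_0\times\R e_i)\times\R\supset\overline{\rm co}(S_i)$, and therefore $\sup_{\overline{\rm co}(S_i)}\Phi=\sup_{S_i}\Phi$.

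It then remains to check that $S_i\subset\FL(x)$. The inclusion $\FL_i^+(x)\subset\FL(x)$ is immediate from the definition of $\FL$ on $\Ga$. For $j\neq i$, any $(\zeta,\xi)\in\FL_j(x)\cap(\R e_0\times\R)$ has $\zeta\in\R e_0$, so the $e_j$-component of the dynamics vanishes, in particular is nonnegative, which gives $(\zeta,\xi)\in\FL_j^+(x)\subset\FL(x)$; thus $\FL_j(x)\cap(\R e_0\times\R)\subset\FL(x)$. Hence $\sup_{S_i}\Phi\le\max_{\FL(x)}\Phi$, the right-hand side being a genuine maximum since $\FL(x)$ is a finite union of compact sets $\FL_i^+(x)$ on each of which $\Phi$ is continuous. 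Taking the maximum over $i$ yields $\sup_{\fl(x)}\Phi\le\max_{\FL(x)}\Phi$, which together with the trivial inequality proves \eqref{eq:14}.

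The main obstacle, and the only genuinely non-routine point, is the subspace observation in the second paragraph: although $S_i$ mixes a contribution from the $i$-th half-plane with tangential contributions borrowed from the other half-planes $\cP_j$, all of them actually live in one common tangent plane $\R e_0\times\R e_i$ (augmented by the cost axis), on which the differential of the test-function is a single linear form. The consistency of the tangential derivative provided by Remark \ref{rmk: D_phi_Ga} is precisely what makes $\Phi$ well-defined and linear there, and this linearity is exactly what allows the closed convex hull in Lemma \ref{sec:existence-2} to be discarded without changing the supremum.
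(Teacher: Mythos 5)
Your proof is correct and follows essentially the same route as the paper's: both reduce to the explicit formula of Lemma \ref{sec:existence-2}, remove the closed convex hull by (piecewise) linearity of the functional $(\zeta,\xi)\mapsto -D\ph(x,\zeta)-\xi$, and then absorb the sets $\FL_j(x)\cap(\R e_0\times\R)$ into $\FL_j^+(x)$, a vanishing $e_j$-component being in particular nonnegative. Your explicit identification of the common subspace $(\R e_0\times\R e_i)\times\R$ and the appeal to Remark \ref{rmk: D_phi_Ga} simply spell out what the paper compresses into the phrase ``piecewise linearity''.
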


\begin{proof}
For $x\in\cS \backslash \Ga$ there is nothing to prove because $\FL(x)=\fl(x)$. If $x\in\Ga$ we can prove that  $\FL(x)\subset \fl(x)$ for any $x\in\Ga$, in the same way as in \cite{MR3057137}. Hence
\begin{displaymath}
  \max_{(\zeta,\xi)\in \FL(x)}\{- D\ph(x, \zeta) -\xi\} \le \sup_{(\zeta,\xi)\in \fl(x)}\{- D\ph(x, \zeta) -\xi\}.
\end{displaymath}
\\
 From the piecewise linearity of the function $(\zeta, \mu)\mapsto -D\ph(x, \zeta) -\mu$, we infer that
  \begin{displaymath}
    \begin{array}[c]{l}
      \ds   \sup_{(\zeta,\mu) \in    \overline{\rm co} \left\{ \FL_i^+(x) \cup \bigcup_{j\not=i} \Bigl(\FL_j(x)\cap (\R e_0 \times \R) \Bigr) \right\}} (   -D\ph(x, \zeta) -\mu) \\ \ds=
      \max\left( \max_{(\zeta,\mu)\in \FL_i^+(x) }   (   -D\ph(x, \zeta) -\mu), 
        \max_{j\not=i} \left( \max_{(\zeta,\mu)\in \FL_j(x)\cap (\R e_0 \times \R) }   (   -D\ph(x, \zeta) -\mu) \right) \right)
      \\
      \le \max_{j=1,\dots,N} \max_{ (\zeta,\mu)\in \FL^+_j(x)} ( -D\ph(x, \zeta) -\mu) =  \max_{(\zeta,\xi)\in \FL(x)}\{- D\ph(x, \zeta) -\xi\}.
    \end{array}
  \end{displaymath}
  We conclude by using Lemma \ref{sec:existence-2}.
\end{proof}

\subsection{Properties of viscosity sub and supersolutions}
\label{sec:properties-sub-super}
In this part, we study sub and supersolutions of (\ref{HJa}), 
transposing
 ideas coming from Barles-Briani-Chasseigne \cite{barles2011bellman,barles2013bellman} 
to the present context.

\begin{property}
\label{prop: R_controllability}
Assume  $[\rm{H}0]$ and $[\rm{H}3]$. Then, there exists $R>0$ a positive real number such that  for all $i=1,\dots,N$ and  $x\in B(\Ga,R)\cap \cP_i$
\begin{equation}
\label{eq: R_controllability}
    B(x,\frac{\delta}{2})\cap (\R e_0 \times \R e_i) \subset F_i(x).
\end{equation}
\end{property}
\begin{remark}
This property means that the controlabillity assumption $[\rm{H}3]$, which focuses on $\Ga$, holds in a neighborhood of $\Ga$ thanks to the continuity properties of the functions $f_i$, $i\in \{1, \ldots,N\}$.
\end{remark}

\begin{lemma} \label{sec:prop-visc-sub-4}
 Assume  $[\rm{H}0]$, $[\rm{H}1]$, $[\rm{H}2]$ and $[\rm{H}3]$.
Let  $R>0$ be as in \eqref{eq: R_controllability}.
For any bounded viscosity subsolution $u$ of
  (\ref{HJa}), there exists a constant $C^*>0$ such that $u$ is a viscosity subsolution of
 \begin{equation}\label{eq: estimation_L_infty}
     | Du(x)  | \le C^*  \quad \hbox{in } B(\Ga,R) \cap \cS,
\end{equation}
i.e. for any $x\in B(\Ga,R)\cap \cS$ and  $\phi\in \cR(\cS)$ such that $u-\phi$ has a local
maximum point at $x$, 
\begin{eqnarray} \label{eq:36}
  |D(\phi |_{\cP_i}) (x)| \le C^*\quad & \hbox{ if } x\in   (B(\Ga,R)  \cap
  \cP_i) \backslash \Ga,\\
\label{eq:40}
\max_{i=1,\ldots,N}\left\lbrace|\partial_{x_0}\phi (x)|+\left[\partial_{x_i}(\phi |_{\cP_i}) (x)\right]_-\right\rbrace \le  C^* \quad & \hbox{ if } x\in \Ga,
\end{eqnarray}
where $\partial_{x_j}\left(\phi|_{\cP_j}\right)(x)$ and  $\partial_{x_0}\phi(x)$ are defined in \eqref{eq_def: D_phi2} and \eqref{eq_def: D_phi3} and $[ \cdot ]_-$ denote the negative part function, i.e.  for $x\in \R$, $[x]_-=\max\{0,-x\}$.
\end{lemma}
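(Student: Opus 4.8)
The plan is to exploit the coercivity of the Hamiltonians coming from the controllability assumption, exactly as in the classical argument that a controllable value function is Lipschitz. Fix a test function $\phi\in\cR(\cS)$ such that $u-\phi$ has a local maximum at a point $x\in B(\Ga,R)\cap\cS$. By Definition \ref{netviscoa2} the subsolution inequality for \eqref{HJa} holds at $x$ in Hamiltonian form, and since $u$ is bounded we may write $\|u\|_\infty=\sup_\cS|u|$ and use $-\lambda u(x)\le\lambda\|u\|_\infty$. Thus the relevant Hamiltonian is bounded above by $\lambda\|u\|_\infty$ at the test gradient, and the whole proof reduces to choosing an admissible velocity $\zeta\in F_i(x)$ that converts this bound into a bound on the gradient.

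In the interior case $x\in(B(\Ga,R)\cap\cP_i)\setminus\Ga$, write $p=D(\phi|_{\cP_i})(x)\in\R e_0\times\R e_i$. The inequality reads $H_i(x,p)\le\lambda\|u\|_\infty$, i.e. $-p\cdot f_i(x,a)-\ell_i(x,a)\le\lambda\|u\|_\infty$ for every $a\in A_i$. By Property \ref{prop: R_controllability}, $F_i(x)$ contains the ball of radius $\delta/2$ about the origin in $\R e_0\times\R e_i$, so $\zeta=-\frac{\delta}{2}\,p/|p|$ equals $f_i(x,a)$ for some $a$; substituting gives $\frac{\delta}{2}|p|-\ell_i(x,a)\le\lambda\|u\|_\infty$, and with $|\ell_i|\le M_\ell$ we obtain $|p|\le\frac{2}{\delta}(\lambda\|u\|_\infty+M_\ell)$, which is \eqref{eq:36}.

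In the interface case $x\in\Ga$, set $p_i=D(\phi|_{\cP_i})(x)=p_0e_0+q_ie_i$, where by Remark \ref{rmk: D_phi_Ga} the tangential part $p_0=\partial_{x_0}\phi(x)$ is independent of $i$, while $q_i=\partial_{x_i}(\phi|_{\cP_i})(x)$. Since $H_\Ga=\max_i H_i^+$, the subsolution inequality gives $H_i^+(x,p_i)\le\lambda\|u\|_\infty$ for each $i$, where now the admissible velocities are constrained to $f_i(x,a)\cdot e_i\ge0$. Writing $\zeta=\zeta_0e_0+\zeta_ie_i$ with $\zeta_i\ge0$ gives $-p_i\cdot\zeta=-p_0\zeta_0-q_i\zeta_i$. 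I would then distinguish two sub-cases according to the sign of $q_i$: if $q_i\ge0$ (so $[q_i]_-=0$) I take $\zeta_0=-\frac{\delta}{2}\operatorname{sgn}(p_0)$, $\zeta_i=0$, admissible by [H3], which yields $\frac{\delta}{2}|p_0|\le\lambda\|u\|_\infty+M_\ell$; if $q_i<0$ (so $[q_i]_-=-q_i$) I take $\zeta_0=-\frac{\delta}{2\sqrt2}\operatorname{sgn}(p_0)$, $\zeta_i=\frac{\delta}{2\sqrt2}\ge0$, which has norm $\delta/2$ and nonnegative $e_i$-component, hence lies in $F_i(x)$ by [H3], and yields $\frac{\delta}{2\sqrt2}(|p_0|+[q_i]_-)\le\lambda\|u\|_\infty+M_\ell$. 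In both sub-cases $|p_0|+[q_i]_-\le\frac{2\sqrt2}{\delta}(\lambda\|u\|_\infty+M_\ell)$, and as this holds for every $i$ we obtain \eqref{eq:40} with $C^*=\frac{2\sqrt2}{\delta}(\lambda\|u\|_\infty+M_\ell)$, which also dominates the interior bound.

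The conceptual point driving the estimate, and the only genuinely delicate one, is the asymmetry at $\Ga$: because $H_\Ga$ is assembled from the half-Hamiltonians $H_i^+$, the admissible velocities there may only point outward ($f_i\cdot e_i\ge0$), so the normal derivative $q_i=\partial_{x_i}(\phi|_{\cP_i})(x)$ can be controlled through its negative part $[q_i]_-$ alone, never through its modulus; this is exactly what forces the peculiar form of \eqref{eq:40}. Choosing the single radius $\delta/2$ as the common controllability scale — valid in the interior by Property \ref{prop: R_controllability} and on $\Ga$ by [H3], since $\delta/2<\delta$ — makes the constant $C^*$ uniform over $B(\Ga,R)\cap\cS$.
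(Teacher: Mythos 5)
Your proof is correct and follows essentially the same route as the paper: write the subsolution inequality in Hamiltonian form via Definition \ref{netviscoa2}, then exploit the controllability of Property \ref{prop: R_controllability} and $[\rm{H}3]$ to make $H_i$ (resp.\ $H_i^+$) coercive in $|p|$ (resp.\ in $|p_0|+[p_i]_-$), which converts the bound $\lambda\|u\|_\infty$ into a gradient bound. The only cosmetic difference is that you pick explicit admissible velocities (giving the constant $\frac{2\sqrt{2}}{\delta}(\lambda\|u\|_\infty+M_\ell)$) where the paper states the coercivity estimate $H_i^+(x,p_0e_0+p_ie_i)\ge\frac{\delta}{2}(|p_0|+[p_i]_-)-M_\ell$ directly, yielding $C^*=\frac{2}{\delta}(\lambda M_u+M_\ell)$; both are equally valid.
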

\begin{proof}
Let  $M_u$ (resp $ M_\ell$) be an upper bound on  $|u|$ (resp. $\ell_j$ for all
$j=1,\dots,N$).  The viscosity inequality (\ref{eq:10}) yields that 
\begin{eqnarray}
  \label{eq:37}
H_i(x, D(\phi |_{\cP_i}) (x))  &\le \lambda M_u  \quad     &\hbox{if } x  \in (B(\Ga,R)  \cap
  \cP_i) \backslash \Ga,\\
\label{eq:38}
 H_\Ga( x,D(\phi |_{\cP_1}) (x),\dots,D(\phi |_{\cP_N}) (x) ) & \le\lambda M_u  &\hbox{if } x\in \Ga.
\end{eqnarray}
From the controllability in $B(\Ga,R)\cap \cP_i$, we see that $H_i$ is coercive with
respect to its second argument uniformly in $x\in B(\Ga,R) \cap \cP_i$. More
precisely we have that
$H_i(x,p)\ge \frac \delta 2 |p| -M_\ell$.
\\
 Thus, if  $x \in
(B(\Ga,R) \cap \cP_i) \backslash
\Ga $, from (\ref{eq:37}), we have $ |D(\phi |_{\cP_i}) (x)| \le C^*$ with $C^*= 2 \frac {\lambda M_u +M_\ell} \delta$. \\
 If $x\in \Ga$, considering the controls $a^+_0,a^-_0, a_i^+$ and $a^0_i\in A_i$ such that $f_i(x,a^{\pm}_0)= \pm\delta e_0$, $f_i(x,a^+_i)= \delta e_i$ and $f_i(x,a^0_i)=0$ we can prove that $ H_i^+(x,p_0 e_0+p_ie_i) \ge  \frac \delta 2 (|p_0|+[p_i]_-)
 -M_\ell$.
 Then, the viscosity inequality (\ref{eq:38})  yields \eqref{eq:40} with the same constant $C^\star= 2 \frac {\lambda M_u +M_\ell} \delta$. 
\end{proof}

The following lemma gives us an explicit expression for the geodesic distance which will be convenient in future calculations.

\begin{lemma}\label{lem: geodesic_distance}
Let $x=x_0 e_0 +x_i e_i\in \cP_i$ and  $y=y_0 e_0 +y_j e_j\in \cP_j$. Then,
\begin{equation}\label{eq:  lem_geodesic_distance}
 d(x,y)=\left\{  \begin{array}[c]{ll}
\left[ (x_0-y_0)^2+(x_i-y_i)^2\right]^{\frac{1}{2}}\quad \hbox{if } i=j, \\
\left[ (x_0-y_0)^2+(x_i+y_j)^2\right]^{\frac{1}{2}}  \quad \hbox{if }  i\neq j.
  \end{array}\right.
\end{equation}
\end{lemma}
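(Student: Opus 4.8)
The plan is to prove the explicit formula for the geodesic distance \eqref{eq:  lem_geodesic_distance} by directly analyzing the two cases in the definition \eqref{geodesic_distance} and carrying out the minimization over $z\in\Ga$ in the second case. Recall that any point $z\in\Ga$ has the form $z=z_0e_0$ for some $z_0\in\R$, and that the half-planes $\cP_i$ carry the Euclidean metric induced from $\R^d$, with $e_i.e_0=0$ for each $i$.

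The case $i=j$ is immediate: when $x,y$ lie in the same half-plane $\cP_i$, the definition gives $d(x,y)=|x-y|$. Since $x-y=(x_0-y_0)e_0+(x_i-y_i)e_i$ and $\{e_0,e_i\}$ are orthonormal, the Pythagorean theorem yields $|x-y|=\left[(x_0-y_0)^2+(x_i-y_i)^2\right]^{1/2}$, which is the first line of \eqref{eq:  lem_geodesic_distance}.

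For the case $i\neq j$, I would substitute $z=z_0e_0$ into the expression $|x-z|+|z-y|$ and compute each term using orthogonality. First I observe that
\begin{displaymath}
|x-z|=\left[(x_0-z_0)^2+x_i^2\right]^{1/2},\qquad
|z-y|=\left[(z_0-y_0)^2+y_j^2\right]^{1/2},
\end{displaymath}
so that $d(x,y)=\min_{z_0\in\R}g(z_0)$ where $g(z_0)=\left[(x_0-z_0)^2+x_i^2\right]^{1/2}+\left[(z_0-y_0)^2+y_j^2\right]^{1/2}$. The key observation is that this is exactly the classical problem of finding the shortest broken path from the planar point $(x_0,x_i)$ to $(y_0,y_j)$ that touches the horizontal axis, since $x_i,y_j\ge 0$. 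By the reflection principle, reflecting $(y_0,y_j)$ across the axis to $(y_0,-y_j)$ turns the broken path into a straight segment, giving the minimal value $\left[(x_0-y_0)^2+(x_i+y_j)^2\right]^{1/2}$, which is precisely the second line of \eqref{eq:  lem_geodesic_distance}.

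To make this rigorous without invoking the reflection principle verbally, I would simply verify that $g$ attains this value and cannot go below it. An elementary lower bound follows from the triangle inequality in $\R^2$ applied to the three points $(x_0,x_i)$, $(z_0,0)$, $(y_0,-y_j)$, giving $g(z_0)\ge\left[(x_0-y_0)^2+(x_i+y_j)^2\right]^{1/2}$ for every $z_0$, with equality when $(z_0,0)$ lies on the segment joining $(x_0,x_i)$ and $(y_0,-y_j)$; since $x_i\ge 0$ and $-y_j\le 0$, such a crossing point on the axis exists, so the bound is achieved. I do not expect any real obstacle here, as the computation is entirely elementary; the only point requiring a line of care is confirming that the minimizing $z_0$ is genuinely attained on $\Ga$ (guaranteeing the infimum is a minimum), which is exactly what the existence of the axis-crossing point provides.
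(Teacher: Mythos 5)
Your proof is correct. The paper itself states this lemma without any proof, treating it as an elementary computation, so there is no argument of the author's to compare against; your write-up simply supplies the verification the paper leaves implicit. Both halves are sound: the case $i=j$ is the Pythagorean identity for the orthonormal pair $\{e_0,e_i\}$, and for $i\neq j$ your reduction of
$\min_{z_0\in\R}\bigl(\left[(x_0-z_0)^2+x_i^2\right]^{1/2}+\left[(z_0-y_0)^2+y_j^2\right]^{1/2}\bigr)$
to a planar problem, together with the triangle-inequality lower bound through the reflected point $(y_0,-y_j)$ and the observation that the segment from $(x_0,x_i)$ to $(y_0,-y_j)$ meets the axis because its endpoints have second coordinates of opposite sign, is complete; in particular it settles attainment of the minimum over $\Ga$, including the degenerate cases $x_i=0$ or $y_j=0$ where the crossing point is an endpoint, and it is consistent with the convention that points of $\Ga$ may be written with either index.
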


\begin{lemma}
   \label{sec:prop-visc-sub-3}
  Assume  $[\rm{H}0]$, $[\rm{H}1]$, $[\rm{H}2]$ and $[\rm{H}3]$. Any bounded  viscosity subsolution $u$ of (\ref{HJa})  is Lipschitz continuous in a neighborhood of $\Ga$, i.e there exists some strictly positive number $r$ such that $u$ is Lipschiz continuous on $B(\Ga, r)\cap \cS$, where $B(\Ga, r)$ denotes the set $\{y\in \R^d: dist(y, \Ga)<r\}$.
 \end{lemma}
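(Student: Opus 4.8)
The plan is to compare $u$ with a geodesic distance cone and to play the two gradient bounds of Lemma \ref{sec:prop-visc-sub-4} against each other: the interior bound \eqref{eq:36} will handle candidate contact points in $\cS\setminus\Ga$, while the interface bound \eqref{eq:40} will handle contact points on $\Ga$. Fix $r\in(0,R)$ (with $R$ as in Lemma \ref{sec:prop-visc-sub-4}) and a base point $y=y_0e_0+y_ke_k\in B(\Ga,r)\cap\cP_k$, and for $\epsilon>0$ set $\ph_\epsilon(x)=(C^*+\epsilon)\,d(x,y)$. By the explicit formula of Lemma \ref{lem: geodesic_distance}, $\ph_\epsilon$ is continuous on $\cS$ and, on each $\cP_i$, is given by a smooth expression whose radicand stays positive as long as $x\neq y$; hence $\ph_\epsilon\in\cR(\cS)$ away from $y$, and its one-sided derivatives at $\Ga$ can be computed directly. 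The goal is to prove $u(x)-u(y)\le\ph_\epsilon(x)$ locally; letting $\epsilon\to0$ and exchanging the roles of $x$ and $y$ (using the symmetry of $d$) then yields $|u(x)-u(y)|\le C^*d(x,y)$.

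The heart of the matter is to rule out a positive local maximum of $u-\ph_\epsilon$ at any point $x_0\neq y$ of $B(\Ga,R)\cap\cS$. If $x_0\in\cP_i\setminus\Ga$, then $\ph_\epsilon$ is smooth near $x_0$ and, since the geodesic distance has unit gradient off its base point (as one reads off Lemma \ref{lem: geodesic_distance} in both the same-plane and the cross-plane cases), $|D(\ph_\epsilon|_{\cP_i})(x_0)|=C^*+\epsilon$, contradicting \eqref{eq:36}. If $x_0=\xi_0e_0\in\Ga$, then with $d_0:=d(x_0,y)=\sqrt{(\xi_0-y_0)^2+y_k^2}$ one computes
\[
\partial_{x_0}\ph_\epsilon(x_0)=(C^*+\epsilon)\frac{\xi_0-y_0}{d_0},\qquad \partial_{x_k}(\ph_\epsilon|_{\cP_k})(x_0)=-(C^*+\epsilon)\frac{y_k}{d_0}\le 0,
\]
so that, keeping only the index $i=k$ in the maximum of \eqref{eq:40},
\[
\max_{i=1,\dots,N}\Bigl\{|\partial_{x_0}\ph_\epsilon(x_0)|+[\partial_{x_i}(\ph_\epsilon|_{\cP_i})(x_0)]_-\Bigr\}\ \ge\ (C^*+\epsilon)\,\frac{|\xi_0-y_0|+y_k}{d_0}\ \ge\ C^*+\epsilon,
\]
where the last step uses the elementary inequality $|a|+|b|\ge\sqrt{a^2+b^2}$ (the degenerate case $y\in\Ga$, i.e. $y_k=0$, giving ratio $1$). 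This contradicts \eqref{eq:40}. Thus $u-\ph_\epsilon$ has no positive local maximum away from $y$.

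It remains to convert this into the inequality. I would work on a small closed geodesic ball $\overline{B_d(y,\rho)}\cap\cS\subset B(\Ga,R)\cap\cS$, which is compact, so that the upper semicontinuous function $u-\ph_\epsilon$ attains its maximum there. A routine penalization by $-\eta\log(\rho-d(\cdot,y))$ (which stays in $\cR(\cS)$ by Property \ref{composition} and only multiplies $D d$ by a positive factor, reinforcing the contradiction above) forces the maximizer into the open ball; by the previous paragraph it can then only be $y$, where $u-\ph_\epsilon$ vanishes. Hence $u(x)-u(y)\le(C^*+\epsilon)d(x,y)$ on the ball, i.e. $u$ is locally $(C^*+\epsilon)$-Lipschitz for $d$. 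Since for $r<R$ any two points of $B(\Ga,r)\cap\cS$ are joined by a geodesic (the broken segment through the optimal point of $\Ga$) that stays in $B(\Ga,r)\cap\cS$, these local estimates chain into a global one on $B(\Ga,r)\cap\cS$; letting $\epsilon\to0$ gives the constant $C^*$. A by-product is continuity of $u$ across $\Ga$: sending an interior point to $z\in\Ga$ in the inequality gives $u(z)\le\lim u$, which together with upper semicontinuity forces equality.

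I expect the delicate point to be the interface analysis: one must verify that $d(\cdot,y)$ is a genuine admissible test-function up to $\Ga$ (so that \eqref{eq:40} may be invoked) and that its one-sided normal derivative in the half-plane $\cP_k$ containing $y$ is \emph{nonpositive}, which is precisely what activates the negative-part term $[\partial_{x_i}\ph_\epsilon]_-$ in \eqref{eq:40} and produces the contradiction; the direction $i=k$ is what saves the argument, the indices $i\neq k$ being harmless. The localization needed to place the maximum at an interior point, though standard, also deserves some care since $u$ is only assumed upper semicontinuous.
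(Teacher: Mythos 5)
Your proposal is correct and follows essentially the same route as the paper: both compare $u$ against $(C^*+\epsilon)$ times the geodesic distance, invoke the two gradient bounds \eqref{eq:36} and \eqref{eq:40} of Lemma \ref{sec:prop-visc-sub-4} through the explicit formula of Lemma \ref{lem: geodesic_distance}, run the identical three-case analysis (same half-plane, different half-plane, contact point on $\Ga$ with the inequality $|a|+|b|\ge\sqrt{a^2+b^2}$), and finish by chaining local estimates along geodesics. The only difference is cosmetic: the paper confines the maximum via Ishii's device of composing $d(\cdot,y)$ with a $\cC^1$ function $f$ blowing up at $3r$, whereas you use a compact geodesic ball plus a logarithmic barrier, two interchangeable localization tricks.
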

\begin{proof}
We adapt the proof of H.Ishii, see \cite{ishii2013short}.\\
Take $R$ as in \eqref{eq: R_controllability},  fix $z\in B(\Ga,R)\cap \cS $
and set $r= (R- dist(z,\Ga))/4$. Fix any $y\in \cS$ such that $d(y,z)<r$.  It can be
checked that for any $x\in \cS$, if $d(x,y)<3r$ then $dist(x,\Ga)< R$. Choose a
function $f\in \cC^1([0,3r))$ such  that $f(t)=t$ in $[0,2r]$, $f'(t)\ge 1$
for all $t\in [0,3r)$ and $\lim_{t\to 3r} f(t)=+\infty$. Fix any
$\epsilon>0$. We are going to show that
\begin{equation}
  \label{eq:39}
u(x)\le u(y)+(C^*+\epsilon) f(d(x,y)),\quad \forall x\in \cS \hbox{ such that } d(x,y)<3r,
\end{equation}
where $C^*$ is the constant in Lemma~\ref{sec:prop-visc-sub-4}. Let us proceed by contradiction. Assume that \eqref{eq:39} is not true. According to the properties of $f$, the function $x\mapsto u(x)- u(y)-(C^*+\epsilon) f(d(x,y))$ admits a maximum $\xi \in B(y,3r)\cap \cS$. However, since we assumed that \eqref{eq:39} is not true, necessarily $\xi \neq y$. Consequently, the function $\psi : \cS \to \R$, $x\longmapsto (C^\star + \epsilon)f(d(x,y))$ is an appropriate test function in a neighborhood of $\xi$ which can be used as test function in the viscosity inequality \eqref{eq: estimation_L_infty}, satisfied by $u$ from Lemma \ref{sec:prop-visc-sub-4}.
For the calculations, we need to distinguish several cases. Assume that $y=y_0 e_0 +y_i e_i \in \cP_i$.
\begin{enumerate}
\item \textbf{If $\xi=\xi_0 e_0 + \xi_i e_i \in \cP_i \setminus \Ga$ :} i.e. $\xi$ and $y$ belong to the same half-plane $\cP_i$. Then, from \eqref{eq:  lem_geodesic_distance} in Lemma \ref{lem: geodesic_distance}, we have $D(\psi|_{\cP_i})(\xi)=(C^\star + \epsilon)f'(d(\xi,y))\frac{\xi-y}{d(\xi,y)}$ and \eqref{eq:36} in Lemma \ref{sec:prop-visc-sub-4} gives us 
\begin{equation}\label{eq: proof_u_lipschitz1}
(C^*+\epsilon) f'(d(\xi, y))\frac{|\xi-y|}{d(\xi,y)} \le C^*.
\end{equation}
Since $\frac{|\xi-y|}{d(\xi,y)}=1$ and $f'(t)\ge 1$
for all $t\in [0,3r)$, \eqref{eq: proof_u_lipschitz1} leads to a contradiction.
\item \textbf{If $\xi=\xi_0 e_0 + \xi_j e_j \in \cP_j \setminus \Ga$ with $j\neq i$ :} i.e. $\xi$ and $y$ belong to different half-planes, respectively $\cP_j$ and $\cP_i$. Then, from \eqref{eq:  lem_geodesic_distance} in Lemma \ref{lem: geodesic_distance}, we have $D(\psi|_{\cP_j})(\xi)=(C^\star + \epsilon)f'(d(\xi,y))\frac{(\xi_0-y_0)e_0+(\xi_j+y_i)e_j}{d(\xi,y)}$ and \eqref{eq:36} in Lemma \ref{sec:prop-visc-sub-4} gives us 
\begin{equation}\label{eq: proof_u_lipschitz2}
(C^*+\epsilon) f'(d(\xi, y))\frac{|(\xi_0-y_0)e_0+(\xi_j+y_i)e_j|}{d(\xi,y)} \le C^*.
\end{equation}
Since $\frac{|(\xi_0-y_0)e_0+(\xi_j+y_i)e_j|}{d(\xi,y)}=1$ and $f'(t)\ge 1$
for all $t\in [0,3r)$, \eqref{eq: proof_u_lipschitz2} leads to a contradiction.
\item \textbf{If $\xi=\xi_0 e_0  \in  \Ga$ :} In this case, $\xi$ and $y$ belong to the same half-plane, but we have to deal with \eqref{eq:40} in Lemma \ref{sec:prop-visc-sub-4}. For $i\in \{1,\ldots,N\}$ be such that $y\in \cP_i$, from \eqref{eq:  lem_geodesic_distance} in Lemma \ref{lem: geodesic_distance} we have 
\begin{displaymath}
\partial_{x_i}\left( \psi|_{\cP_i}\right)(\xi)=(C^*+\epsilon)f'(d(\xi,y))\frac{-y_i}{d(\xi,y)} \le 0,
\end{displaymath}
and
\begin{displaymath}
\partial_{x_0}\psi(\xi)=(C^*+\epsilon)g'(d(\xi,y))\frac{\xi_0-y_0}{d(\xi,y)}.
\end{displaymath}
Then, \eqref{eq:40} in Lemma \ref{sec:prop-visc-sub-4} gives us
\begin{equation}\label{eq: proof_u_lipschitz3}
(C^*+\epsilon) f'(d(\xi, y))\frac{|\xi_0-y_0|+y_i}{d(\xi,y)} \le C^*.
\end{equation}
Since $\frac{|\xi_0-y_0|+y_i}{d(\xi,y)}\ge 1$ and $f'(t)\ge 1$
for all $t\in [0,3r)$, \eqref{eq: proof_u_lipschitz3} leads to a contradiction.
\end{enumerate}
This conclude the proof of \eqref{eq:39}.
Remark that if  $d(x,z)<r$ then $d(x,y)<2r$ and $f(d(x,y))=d(x,y)$. Then, (\ref{eq:39}) yields that 
\begin{displaymath}
  u(x)\le u(y )+(C^*+\epsilon) d(x,y),\quad \forall x,y \in \cS \hbox{ s.t }
  d(x,z)<r , \; d(y,z)<r.
\end{displaymath}
By symmetry, we get
\begin{displaymath}
  |u(x)-u(y)| \le (C^*+\epsilon) d(x,y),\quad \forall x,y \in \cS \hbox{ s.t }
  d(x,z)<r , \; d(y,z)<r,
\end{displaymath}
and by letting $\epsilon$ tend to zero, we get
\begin{equation}
  \label{eq:41}
|u(x)-u(y)| \le C^* d(x,y),\quad \forall x,y \in \cS \hbox{ s.t }
  d(x,z)<r , \; d(y,z)<r.
\end{equation}
Now, for two arbitrary points $x,y$ in $\cS\cap B(\Ga,R)$, we take $r=\frac 1 4
\min( R-dist(x,\Ga), R-dist(y,\Ga))$ and choose a finite sequence $(z_j)_{j=1,\dots, M}\in
\cG$ 
belonging to the geodesic between $x$ and $y$, such that $z_1=x$, $z_M=y$,
$d(z_i,z_{i+1})<r$ for all $i=1,\dots, M-1$ and $\sum_{i=1}^{M-1}
d(z_i,z_{i+1}) =d(x,y)$. From \eqref{eq:41}, we get that 
\begin{displaymath}
  |u(x)-u(y)| \le C^* d(x,y),\quad \forall x,y \in \cS\cap B(\Ga,R).
\end{displaymath}
\end{proof} 
 
 An important consequence of this lemma is the following result.
 
 \begin{lemma}\label{lem subsol inequality on gamma}
   Assume  $[\rm{H}0]$, $[\rm{H}1]$, $[\rm{H}2]$ and $[\rm{H}3]$. Let $u$ be a bounded viscosity subsolution of (\ref{HJa}) and $\ph: \Ga \to \R$ a $C^1$-function. Then, for any local maximum point $\bar x$ in $\Ga$ of $u-\ph$ on $\Ga$, one has
    \begin{displaymath}
   \lambda u(\bar x) + H_{\Ga}^T(\bar x,D(\ph|_{\Ga})(\bar x)) \le 0.
 \end{displaymath}
\end{lemma}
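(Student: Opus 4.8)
The plan is to convert the constrained maximum of $u-\ph$ along $\Ga$ into a genuine local maximum of $u$ minus an admissible test-function on all of $\cS$, by adding a term that penalizes displacement away from $\Ga$, and then to use the normal coercivity encoded in Lemma~\ref{sec:hamiltonians} to replace the large normal derivative of that penalization by the tangential Hamiltonian. The decisive preliminary input is the local Lipschitz regularity of $u$ near $\Ga$ furnished by Lemma~\ref{sec:prop-visc-sub-3}. Concretely, I first extend $\ph$ off the interface in the crudest possible way: for $x=x_0e_0+x_ie_i\in\cP_i$ I set $\tilde\ph(x)=\ph(x_0e_0)$, so that $\tilde\ph$ is constant in the normal direction on each half-plane. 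Since $\ph\in\cC^1(\Ga)$, one has $\tilde\ph\in\cR(\cS)$, and $x\mapsto dist(x,\Ga)$ also lies in $\cR(\cS)$ because its restriction to $\cP_i$ is the linear map $x\mapsto x_i$. For a constant $C$ to be fixed later I then introduce the admissible test-function
\[
\psi_C(x)=\tilde\ph(x)+C\,dist(x,\Ga)\in\cR(\cS).
\]

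The key step, and the one I expect to be the main obstacle, is to check that as soon as $C>C^*$, the constant of Lemma~\ref{sec:prop-visc-sub-4}, the point $\bar x$ is a local maximum of $u-\psi_C$ on the whole of $\cS$, and not merely on $\Ga$. For $x=x_0e_0+x_ie_i\in\cP_i$ near $\bar x$, its projection $x'=x_0e_0$ lies on $\Ga$, and $d(x,x')=x_i=dist(x,\Ga)$, so the Lipschitz estimate of Lemma~\ref{sec:prop-visc-sub-3} gives $u(x)\le u(x')+C^*x_i$. Using $\psi_C(x)=\ph(x')+Cx_i$ together with the local maximality of $u-\ph$ on $\Ga$, I obtain
\[
(u-\psi_C)(x)\le (u-\ph)(x')-(C-C^*)x_i\le (u-\ph)(\bar x)=(u-\psi_C)(\bar x),
\]
the case $x\in\Ga$ being immediate. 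Thus it is exactly the Lipschitz bound near $\Ga$ that keeps the maximum from escaping into the open half-planes; this is the heart of the proof, and the only place where a genuine estimate is needed.

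Finally I write the subsolution inequality \eqref{eq:10} of Definition~\ref{netviscoa2} at the point $\bar x\in\Ga$ with the test-function $\psi_C$. Since $D(\psi_C|_{\cP_i})(\bar x)=p_0e_0+Ce_i$ with $p_0e_0=D(\ph|_\Ga)(\bar x)$, it reads
\[
\lambda u(\bar x)+\max_{i=1,\dots,N}H_i^+(\bar x,p_0e_0+Ce_i)\le 0.
\]
Here I simply choose $C$ larger than $C^*$ and than $\max_{i}\min\{d:d\in\Delta_{i,\bar x,p_0e_0}\}$, which is finite since each $\Delta_{i,\bar x,p_0e_0}$ is nonempty by point~1 of Lemma~\ref{sec:hamiltonians}. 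Point~4 of Lemma~\ref{sec:hamiltonians} then gives $H_i^+(\bar x,p_0e_0+Ce_i)=H_{\Ga,i}^T(\bar x,p_0e_0)$ for every $i$, so that $\max_iH_i^+(\bar x,p_0e_0+Ce_i)=H_\Ga^T(\bar x,p_0e_0)$, and the claimed inequality $\lambda u(\bar x)+H_\Ga^T(\bar x,D(\ph|_\Ga)(\bar x))\le 0$ follows at once. A pleasant feature of this route is that no passage to the limit is required: because the Lipschitz bound lets $\bar x$ itself serve as the maximum point, one never has to move the contact point nor invoke any continuity of the tangential Hamiltonian.
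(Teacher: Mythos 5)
Your proof is correct and takes essentially the same route as the paper's: both extend $\ph$ off the interface by adding a linear term $C\,x_i$ in the normal direction (the paper takes $C=L_{u,r}$, the Lipschitz constant of $u$ near $\Ga$ given by Lemma \ref{sec:prop-visc-sub-3}), observe that this Lipschitz bound makes $\bar x$ a local maximum of $u$ minus the extended test-function on all of $\cS$, and then apply the subsolution inequality of Definition \ref{netviscoa2} at $\bar x$. The only cosmetic difference is the last step: you invoke point 4 of Lemma \ref{sec:hamiltonians} to get the exact identity $H_i^+(\bar x, p_0e_0+Ce_i)=H_{\Ga,i}^T(\bar x,p_0e_0)$, whereas the paper only needs the trivial one-sided inequality $H_i^+(\bar x, p_0e_0+Le_i)\ge H_{\Ga,i}^T(\bar x,p_0e_0)$, which holds for any slope $L$ because the controls defining $H_{\Ga,i}^T$ satisfy $f_i(\bar x,a).e_i=0$.
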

\begin{proof}
Let $\ph : \Ga\to \R$ be a $C^1$-function  and $\bar x\in \Ga$ be a local maximum point of $(u-\ph)\
|_\Ga$ on $\Ga$. Since $u$ is a subsolution of (\ref{HJa}), according to Lemma \ref{sec:prop-visc-sub-3}, the function $u$ is Lipschitz continuous on $B(\Ga,r)$ for some positive real $r$, with a Lipschitz constant $L_{u,r}$. We introduce the function $\overline{\ph}$ defined on $\cS$ by $\overline{\ph}(x_0 e_0+x_i e_i)=\ph(x_0e_0)+L_{u,r}x_i$.
 By construction, the function $\overline{\ph}$ belongs to $\cR(\cS)$ and $u-\overline{\ph}$ admits a local maximum at the point $\bar x$ on $\cS$. Then, since $u$ is a viscosity subsolution of (\ref{HJa}), we see that 
\begin{displaymath}
 \lambda u(\bar x) + H_{\Ga}(\bar x,D(\overline{\ph}|_{\cP_1})(\bar x),\ldots,D(\overline{\ph}|_{\cP_N})(\bar x)) \le 0,
 \end{displaymath}
  which implies that $\lambda u(\bar x) + H_{\Ga}^T(\bar x,D(\ph|_{\Ga})(\bar x)) \le 0$.
\end{proof}

 \begin{remark}\label{rmk subsol inequality on gamma}
 The conclusion of Lemma \ref{lem subsol inequality on gamma} holds if we replace $[\rm{H}3]$ with the assumption that the subsolution $u$ is Lipschitz continous.
 \end{remark}
 
 The following lemma can be found in \cite{barles2011bellman,barles2013bellman} in a different context:
\begin{lemma}\label{sec:prop-visc-sub-1}
 Assume  $[\rm{H}0]$, $[\rm{H}1]$, $[\rm{H}2]$ and $[\rm{H}3]$.  Let $v:\cS\to \R$ be a viscosity supersolution of (\ref{HJa}) in $\cS$ and $w:\cS \to \R$ be a continuous viscosity subsolution of (\ref{HJa}) in $\cS$.
Then if $x\in \cP_i\backslash \Ga$, we have for all $t>0$, 
\begin{equation}
  \label{eq:16}
v(x)\ge \inf_{\alpha_i(\cdot), \theta_i}\left( \int_0^{t\wedge \theta_i}   \ell_i(y_x^i(s),  \alpha_i(s)) e^{-\lambda s} ds + v (y^i_x(t\wedge \theta_i)) e^{-\lambda (t\wedge \theta_i)} \right),
\end{equation}
and 
\begin{equation}
\label{eq:17}
w(x)\le \inf_{\alpha_i(\cdot), \theta_i}\left( \int_0^{t\wedge \theta_i}   \ell_i(y_x^i(s),  \alpha_i(s)) e^{-\lambda s} ds + w (y^i_x(t\wedge \theta_i)) e^{-\lambda (t\wedge \theta_i)} \right),
\end{equation}
where $\alpha_i\in L^\infty(0,\infty; A_i)$, $y^i_x$ is the solution of $ y^i_x(t)= x+ \left(\int_0^t f_i( y^i_x(s), \alpha_i(s)) ds\right)$  and 
$\theta_i$ is such that $y^ i _x(\theta_i)\in \Ga$ and $\theta_i$ lies in $[\tau_i, \bar \tau_i]$, where $\tau_i$ is the exit time of $y_x ^ i$ from $\cP_i\backslash \Ga$ and  $\bar \tau_i$ is the exit time of $y_x ^ i$ from $\cP_i$.
\end{lemma}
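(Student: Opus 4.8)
The plan is to read both inequalities as a local dynamic programming principle for the trajectories generated by the single dynamics $f_i$ and confined to $\cP_i$ until they meet $\Ga$, and to prove the two inequalities by different routes. The starting observation is Remark~\ref{IKsola}: on the open half-plane $\cP_i\setminus\Ga$ the supersolution $v$ (resp. the subsolution $w$) is a viscosity supersolution (resp. subsolution) of the standard equation $\lambda u+H_i(x,Du)=0$, with $H_i(x,p)=\max_{a\in A_i}(-f_i(x,a)\cdot p-\ell_i(x,a))$. Hence away from $\Ga$ the classical sub/super\nobreakdash-optimality principles are available, and the only genuinely new issue is the behaviour of the trajectory on the singular line $\Ga$.

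For the supersolution inequality \eqref{eq:16} I would argue as follows. Fix $x\in\cP_i\setminus\Ga$ and, for a control $\alpha_i$, let $\tau_i$ be the first time at which $y^i_x$ reaches $\Ga$. On $[0,\tau_i)$ the trajectory stays in $\cP_i\setminus\Ga$, so the classical super-optimality principle for $\lambda u+H_i=0$ in the open half-plane yields
\[
v(x)\ge \inf_{\alpha_i}\left(\int_0^{t\wedge\tau_i}\ell_i(y^i_x(s),\alpha_i(s))e^{-\lambda s}\,ds+v(y^i_x(t\wedge\tau_i))e^{-\lambda(t\wedge\tau_i)}\right),
\]
the endpoint value at the boundary being handled by the lower semicontinuity of $v$. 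Since $y^i_x(\tau_i)\in\Ga$ and $\tau_i\in[\tau_i,\bar\tau_i]$, restricting the infimum in \eqref{eq:16} to the choice $\theta_i=\tau_i$ shows that its right-hand side is no larger than the quantity above, whence \eqref{eq:16}.

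The subsolution inequality \eqref{eq:17} is the main point, because here the inequality must hold for every admissible $\theta_i\in[\tau_i,\bar\tau_i]$, i.e. also while the trajectory travels along $\Ga$. Fix $\alpha_i$ and such a $\theta_i$ with $y^i_x(\theta_i)\in\Ga$, and set
\[
g(s)=\int_0^s\ell_i(y^i_x(r),\alpha_i(r))e^{-\lambda r}\,dr+w(y^i_x(s))e^{-\lambda s}.
\]
It suffices to prove that $g$ is nondecreasing on $[0,t\wedge\theta_i]$, for then $w(x)=g(0)\le g(t\wedge\theta_i)$ and an infimum over $(\alpha_i,\theta_i)$ yields \eqref{eq:17}. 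On the open set $\{s:\,y^i_x(s)\in\cP_i\setminus\Ga\}$ the classical sub-optimality principle for the continuous subsolution $w$ shows that $g$ is nondecreasing on each connected component. On the closed set $\{s:\,y^i_x(s)\in\Ga\}$ I would combine three facts: $w$ is Lipschitz in a neighbourhood of $\Ga$ (Lemma~\ref{sec:prop-visc-sub-3}), so that $g$ is locally Lipschitz there; the dynamics is tangential on $\Ga$, i.e. $f_i(y^i_x(s),\alpha_i(s))\cdot e_i=0$ for a.e. such $s$ (Theorem~\ref{sec:optim-contr-probl}, point~3); and the tangential subsolution inequality $\lambda w+H_{\Ga}^T(\cdot,D(w|_\Ga))\le 0$ of Lemma~\ref{lem subsol inequality on gamma}. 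Since $H_{\Ga}^T\ge H_{\Ga,i}^T$ and the control used satisfies $f_i\cdot e_i=0$, this last inequality gives $f_i(y^i_x,\alpha_i)\cdot D(w|_\Ga)\ge \lambda w-\ell_i$, and as $\dot y^i_x$ is then tangential, $\frac{d}{ds}w(y^i_x(s))=D(w|_\Ga)(y^i_x(s))\cdot f_i\ge \lambda w-\ell_i$ at a.e. $s$ in this set; hence $g'(s)\ge 0$ a.e. there as well.

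The hard part is precisely this analysis on $\Ga$: justifying the chain rule for the Lipschitz map $s\mapsto w(y^i_x(s))$ at almost every time the trajectory lies on the singular line, and reading the viscosity information at such points through the tangential Hamiltonian $H_\Ga^T$ rather than through $H_i$. This is where the new object $H_\Ga^T$ and Lemma~\ref{lem subsol inequality on gamma} enter, and where the geometry differs from the pointwise interface of \cite{uniqueness2013}. Once $g'\ge 0$ a.e. is established on both regimes, the final patching into global monotonicity of $g$ on $[0,t\wedge\theta_i]$ follows from the continuity of $g$ and its local Lipschitz character near $\Ga$; the decomposition of the time interval into the open set where $y^i_x\in\cP_i\setminus\Ga$ and the complementary closed set where $y^i_x\in\Ga$ is the only bookkeeping that needs care.
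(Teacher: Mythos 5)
The paper itself does not prove this lemma: its ``proof'' is the single line ``See \cite{barles2011bellman}'', so the real benchmark is the Barles--Briani--Chasseigne argument. Your treatment of the supersolution inequality \eqref{eq:16} is correct and matches what that reference supplies: on $\cP_i\setminus\Ga$ the equation reduces to the standard one (Remark \ref{IKsola}), the known super-optimality principle for lsc supersolutions in an open set applies up to the exit time, and since for each control the pair $(\alpha_i,\tau_i)$ is admissible in the infimum of \eqref{eq:16}, that infimum is no larger than your quantity $Q$, which gives the inequality in the right direction.

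The genuine gap is in the subsolution inequality \eqref{eq:17}, exactly at the step you flag as ``the hard part'' but then only assert. Lemma \ref{lem subsol inequality on gamma} is viscosity-type information: it is available at points where $w-\ph$ has a local maximum on $\Ga$ for a $C^1$ test function $\ph$, equivalently where the superdifferential $D^+(w|_\Ga)$ is nonempty, and in pointwise form only where the Lipschitz function $w|_\Ga$ is differentiable. To get $g'(s)\ge 0$ for a.e.\ $s$ in $E=\{s:\,y^i_x(s)\in\Ga\}$ you need $w|_\Ga$ to be differentiable at $y^i_x(s)$ for a.e.\ $s\in E$, and this is false in general: the trajectory can spend a set of times of \emph{positive measure} on the Lebesgue-null set $N\subset\Ga$ where $w|_\Ga$ fails to be differentiable --- for instance it can stop at a single point $z_0\in N$ where $D^+(w|_\Ga)(z_0)=\emptyset$, and at such times neither your chain rule nor Lemma \ref{lem subsol inequality on gamma} can be invoked. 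Repairing this requires (i) the area-formula fact that $\dot y^i_x=0$ a.e.\ on the preimage of a null set, which confines the bad times to zero-velocity times, and (ii) at those times a separate approximation argument (approach $y^i_x(s)$ along differentiability points of $w|_\Ga$ and use the stability of admissible controls as in Lemma \ref{lem: normal_controllability4}) to obtain $\lambda w\le \ell_i$ when $f_i=0$. The standard way to bypass all of this --- and the route of \cite{barles2011bellman}, for which the present paper even provides the tool in Lemma \ref{lem regularization} --- is to first mollify $w$ tangentially, so that $w_\epsilon|_\Ga$ is $C^1$ and $w_\epsilon-m(\epsilon)$ is still a subsolution near $\Ga$; then your computation on $E$ is legitimate for $w_\epsilon$, and one lets $\epsilon\to 0$ at the end. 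Two further points: your appeal to point 3 of Theorem \ref{sec:optim-contr-probl} is not literally applicable, since that statement concerns trajectories constrained to $\cS$ with the multivalued dynamics, whereas $y^i_x$ solves an ODE in the whole plane $\R e_0\times\R e_i$ (the Stampacchia argument does carry over, but must be redone); and the final patching of monotonicity is not free either, because $g$ is Lipschitz only near $E$ and merely continuous elsewhere, so one cannot simply integrate $g'\ge 0$ over $[0,t\wedge\theta_i]$.
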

\begin{proof}
  See  \cite{barles2011bellman}.
\end{proof}

 \begin{remark}
 The conclusions of Lemma \ref{sec:prop-visc-sub-1} hold if we replace $[\rm{H}3]$ with $[\tH3]$.   See  \cite{barles2013bellman}.
 \end{remark}

The following theorem is reminiscent of Theorem 3.3 in \cite{barles2011bellman}:
\begin{theorem}
  \label{sec:prop-visc-sub}
 Assume  $[\rm{H}0]$, $[\rm{H}1]$, $[\rm{H}2]$ and $[\rm{H}3]$. Let $v: \cS\to \R$ be a viscosity supersolution of (\ref{HJa}), bounded from below by $-c|x| -C$ for two positive numbers $c$ and $C$. Let $\phi\in\cR(\cS)$ and $\bar x\in\Ga$ be such that $v-\phi$ has a local minimum point at $\bar x$. Then, either [A] or [B] below is true:
 \begin{description}
 \item{[A]} There exists $\eta>0$, $i\in \{1,\dots, N\}$ and a sequence $x_k\in \cP_i\setminus \Ga$, 
 $\lim_{k\to +\infty} x_k=\bar x$
 such that $\lim_{k\to +\infty} v(x_k)=v(\bar x)$ and for each $k$, there exists a control law $\alpha_i^k$ such that the corresponding trajectory $y_{x_k}(s)\in \cP_i$ for all $s\in [0,\eta]$  and 
   \begin{equation}
     \label{eq:18}
v(x_k)\ge   \int_0^{\eta}   \ell_i(y_{x_k}(s),  \alpha^k_i(s)) e^{-\lambda s} ds + v (y_{x_k}(\eta)) e^{-\lambda \eta}.
   \end{equation}
 \item{[B]}
   \begin{equation}
     \label{eq:19}
 \lambda v(\bar x) + H_{\Ga}^T(\bar x,D(\phi|_\Gamma)(\bar x)) \ge 0.
   \end{equation}
 \end{description}
\end{theorem}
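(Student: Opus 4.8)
The plan is to prove the dichotomy by contradiction: I assume that [B] fails, that is
\[
\lambda v(\bar x) + H_{\Ga}^T(\bar x, D(\phi|_\Ga)(\bar x)) < 0 ,
\]
and show that [A] must then hold. After subtracting a constant from $\phi$ I may assume $v(\bar x)=\phi(\bar x)$ and $v\ge \phi$ on $B(\bar x,r_0)\cap\cS$. Writing $p_0=D(\phi|_\Ga)(\bar x)$ and expanding $H_{\Ga}^T=\max_i H_{\Ga,i}^T$, the failure of [B] means that for every $i$ and every tangential control $a\in A_i$ (i.e. $f_i(\bar x,a).e_i=0$) one has $\lambda v(\bar x)-f_i(\bar x,a).p_0-\ell_i(\bar x,a)<0$. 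Since the $f_i$ and $\ell_i$ are continuous and the tangential component of $D(\phi|_{\cP_i})$ depends continuously on the base point and is independent of $i$ on $\Ga$ (Remark \ref{rmk: D_phi_Ga}), this strict inequality survives with a uniform margin: there are $\beta>0$ and $r_1\in(0,r_0]$ such that
\[
\lambda v(\bar x)-f_i(x,a).D(\phi|_\Ga)(x)-\ell_i(x,a)\le -\beta
\]
for every $x\in\Ga\cap B(\bar x,r_1)$, every $i$ and every tangential $a\in A_i$.

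The second step is the control-theoretic computation that rules out staying on the interface. For any admissible trajectory $y$ that remains on $\Ga$ throughout $[0,t]$ --- which by point 3 of Theorem \ref{sec:optim-contr-probl} is driven by controls with $f.e_i=0$ --- I differentiate $s\mapsto e^{-\lambda s}\phi(y(s))$ and integrate to obtain
\[
\int_0^t \ell(y(s),\alpha(s)) e^{-\lambda s}\,ds + e^{-\lambda t}\phi(y(t)) - \phi(\bar x) = \int_0^t e^{-\lambda s}\left( \ell(y,\alpha) + f(y,\alpha).D(\phi|_\Ga)(y) - \lambda\phi(y) \right) ds .
\]
By the margin of Step 1 and the continuity of $\phi$ with $\phi(\bar x)=v(\bar x)$, the integrand is bounded below by $\beta/2>0$ once $t$ is small enough, so the right-hand side is strictly positive; combining this with $\phi\le v$ near $\bar x$ gives, for every such trajectory,
\[
\int_0^t \ell(y(s),\alpha(s)) e^{-\lambda s}\,ds + e^{-\lambda t} v(y(t)) > v(\bar x) .
\]
In other words, once [B] fails, motion along $\Ga$ is strictly suboptimal by a definite amount.

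Finally I would combine this with the super-optimality principle of Lemma \ref{sec:prop-visc-sub-1}, which for interior points $x\in\cP_i\setminus\Ga$ close to $\bar x$ provides, for each small horizon $t$, an almost optimal control $\alpha_i$ and stopping time $\theta_i$ with $v(x)\ge \int_0^{t\wedge\theta_i}\ell_i e^{-\lambda s}ds + e^{-\lambda(t\wedge\theta_i)} v(y^i_x(t\wedge\theta_i))-\epsilon$. I pick an index $i$ realizing $v(\bar x)=\liminf_{x\to\bar x,\,x\in\cP_i\setminus\Ga} v(x)$ --- that some index does so is a point to be checked, and is where lower semicontinuity of $v$ re-enters --- and a sequence $x_k\to\bar x$ in $\cP_i\setminus\Ga$ realizing this liminf. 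Applying super-optimality from each $x_k$ with a fixed horizon $\eta$, the strict suboptimality of interface motion obtained in Step 2 prevents the near optimal trajectory from reaching $\Ga$ and stopping there before time $\eta$: an exit at $\theta_i<\eta$ onto a point $z\in\Ga$ close to $\bar x$ would, after bounding $v(z)\ge\phi(z)$ and comparing with the interface estimate, force the running cost to exceed $v(\bar x)$ and contradict near optimality. Hence $\theta_i\ge\eta$, the trajectory stays in $\cP_i$ on $[0,\eta]$, and letting $k\to\infty$ along $x_k$ yields exactly \eqref{eq:18} together with $v(x_k)\to v(\bar x)$, which is [A].

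The step I expect to be the main obstacle is precisely this last extraction: converting the qualitative fact that interface motion is strictly suboptimal into a uniform quantitative guarantee that the near optimal interior trajectory cannot escape to $\Ga$ before the fixed time $\eta$, with $\eta$ and the margin independent of $k$. Making this rigorous requires a careful accounting of the cost of a trajectory that touches $\Ga$ and of whatever it does afterwards, using the boundedness of the $f_i,\ell_i$, the control of $v$ near $\Ga$ through the lower bound $-c|x|-C$ and lower semicontinuity (note that Lemma \ref{sec:prop-visc-sub-3} gives Lipschitz regularity only for subsolutions, so it is not available for the supersolution $v$), together with the controllability assumption $[\rm{H}3]$, which lets one keep the competitor trajectory inside $\cP_i$ and away from $\Ga$ on the whole interval $[0,\eta]$ and ensures the stopping time $\theta_i$ does not degenerate as $x_k\to\bar x$.
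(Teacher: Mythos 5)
Your strategy (assume [B] fails and produce [A]) is genuinely different from the paper's, which never argues by contradiction: the paper minimizes the penalized function $\psi_\epsilon(x)=v(x)-\phi(x)-q_ix_i+x_i^2/\epsilon^2$ on each $\cP_i$, where $q_i$ is a minimizer of $d\mapsto H_i(\bar x,D(\phi|_{\cP_i})(\bar x)+de_i)$, so that the identity \eqref{eq:20} of Lemma \ref{sec:hamiltonians} holds; if the minimum points $x_\epsilon$ lie on $\Ga$ along a subsequence, the supersolution inequality passes to the limit and gives [B], while if they lie in $\cP_i\setminus\Ga$ one gets [A] through the super-optimality principle, as in \cite{uniqueness2013}. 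The difference is not cosmetic, because your proof has a genuine gap exactly at the step you flag as the main obstacle. When the near-optimal trajectory furnished by Lemma \ref{sec:prop-visc-sub-1} exits onto $\Ga$ at a point $z_k=y_{x_k}(\theta_k)$ with $\theta_k<\eta$, the only estimates available to you are $v(z_k)\ge\phi(z_k)$ and $|\ell_i|\le M_\ell$, which give $v(x_k)\ge \phi(z_k)e^{-\lambda\theta_k}-M_\ell\theta_k-\epsilon_k$; since $v(x_k)\to v(\bar x)=\phi(\bar x)$ and $\phi$ is continuous, this inequality is satisfied with room to spare, even when $\theta_k\to 0$, so early exit produces no contradiction. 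Your Step 2 (strict suboptimality of motion along $\Ga$) can never enter the argument: Lemma \ref{sec:prop-visc-sub-1} stops trajectories at the exit time and provides no continuation along $\Ga$, and there is no super-optimality principle on $\Ga$ that would let you compare $v(z_k)$ with the cost of tangential trajectories issued from $z_k$ --- alternative [B] exists precisely because the supersolution property on the interface can only be expressed through $H_\Ga^T$, not through trajectories. Hence your scheme cannot rule out the case where every near-optimal trajectory reaches $\Ga$ in vanishing time, and the dichotomy is not established.

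What makes this ``early exit'' subcase tractable in the paper's route (via the proof of Theorem 4.1 of \cite{uniqueness2013}) is precisely the minimality of $x_\epsilon$ for $\psi_\epsilon$: comparing $\psi_\epsilon(x_\epsilon)\le \psi_\epsilon(z_\epsilon)$, where the penalization vanishes at the exit point $z_\epsilon\in\Ga$, yields the one-sided bound $v(x_\epsilon)-v(z_\epsilon)\le \phi(x_\epsilon)-\phi(z_\epsilon)+q_i\,(x_\epsilon .e_i)$, whose right-hand side is $O(\theta_\epsilon)$; dividing by $\theta_\epsilon$, using the convexity assumption $[\rm{H}2]$ to pass to a limit dynamics $f^*\in F_i(\bar x)$ with $f^*.e_i\le 0$, and invoking point 3 of Lemma \ref{sec:hamiltonians}, one lands exactly on [B]. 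Your sequence $x_k$ is only required to satisfy $v(x_k)\to v(\bar x)$, carries no such comparison, and the error $v(x_k)-v(\bar x)$ divided by $\theta_k\to 0$ is uncontrolled. A second, more minor, gap: the existence of an index $i$ and a sequence $x_k\in\cP_i\setminus\Ga$ with $v(x_k)\to v(\bar x)$ does not follow from lower semicontinuity, contrary to what you suggest (the lsc function equal to $1$ off $\Ga$ and $0$ on $\Ga$ admits no such sequence). It requires the supersolution property: if $\liminf_{x\to\bar x,\,x\notin\Ga}v(x)>v(\bar x)$, then for every large $K$ the function $x_0e_0+x_ie_i\mapsto \phi(x_0e_0)+Kx_i$ is an admissible test function touching $v$ from below at $\bar x$, and point 4 of Lemma \ref{sec:hamiltonians} turns the supersolution inequality into [B]. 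This part is fixable within your contradiction scheme, but the argument is missing, whereas the first gap requires importing the paper's penalization device (or an equivalent variational selection of $x_k$) and thus changes the proof in an essential way.
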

\begin{proof}
For any $i$ in $\{ 1, \dots, N\}$ we consider the function $\ph_i:\R \to \R$ defined as follows
  \begin{displaymath}
    \ph_i(d)= H_i(\bar x,D\left(\phi|_{\cP_i}\right) (\bar x) + d e_i)
  \end{displaymath}
  For $i$ in $\{1,\dots, N\}$, let $q_i$ be a real number such that $\ph_i(q_i)= \min_{d \in \R}\{\ph_i(d)\}$. We can already remark that according to Lemma \ref{sec:hamiltonians} we have,
\begin{equation}
  \label{eq:20}
H_\Ga(\bar x,D(\phi|_{\cP_1})(\bar x)+q_1e_1,\ldots,D(\phi|_{\cP_N})(\bar x)+q_Ne_N)=H_\Ga^T(\bar x,D(\phi|_\Gamma)(\bar x)).
\end{equation}
Consider the function
\begin{displaymath}
\psi _\epsilon(x)= v(x) - \phi(x) - q_i x_i  + \frac {x_i^2}{\epsilon^2} \quad \hbox{if } x\in \cP_i,
\end{displaymath}
defined on $\cS$, where $x_i=x.e_i$. Changing $\phi(x)$ into $\phi(x)-|x-\bar x|^2$ if necessary, we can assume that $\bar x$ is a strict local minimum point of $v-\phi$. Then, standard arguments show that for $\epsilon$ small enough, the function $\psi_\epsilon$ reaches its minimum close to $\bar x$, and that any sequence of such minimum points $x_\epsilon$ converges to $\bar x$ and satisfies $v(x_\epsilon)$ converges to $v(\bar x)$.\\
Up to the extraction of a subsequence, we can make out two cases.
\begin{enumerate}
\item \textbf{If $x_\epsilon \in \Ga$ :} Then, since $v$ is a viscosity supersolution of \eqref{HJa}, we have
\begin{displaymath}
\lambda v(x_\epsilon)+H_\Ga\left(x_\epsilon, D\left(\phi|_{\cP_1} \right)(x_\epsilon)+q_1 e_1, \ldots, D\left(\phi|_{\cP_N}+q_N e_N \right)(x_\epsilon)  \right) \ge 0,
\end{displaymath}
and letting $\epsilon$ tend to $0$, we obtain
\begin{equation}\label{eq: alt_A_B_20bis}
\lambda v(\bar x)+H_\Ga\left(\bar x, D\left(\phi|_{\cP_1} \right)(\bar x)+q_1 e_1, \ldots, D\left(\phi|_{\cP_N} \right)(\bar x) +q_N e_N \right) \ge 0.
\end{equation}
Then, using \eqref{eq:20} we deduce from \eqref{eq: alt_A_B_20bis} that
\begin{displaymath}
\lambda v(\bar x)+H^T_\Ga\left(\bar x, D\left(\phi|_{\Ga} \right)(\bar x)\right) \ge 0,
\end{displaymath}
hence $[B]$.
\item \textbf{If $x_\epsilon \in \cP_i\setminus\Ga$ for some $i\in \{1,\ldots,N\}$ :} We skip the writing of this case which is treated as in the corresponding case in the proof of \cite[Theorem~4.1]{uniqueness2013}, using the superoptimality \eqref{eq:16} of Lemma \ref{sec:prop-visc-sub-1}. 
\end{enumerate}
\end{proof}

 \begin{remark}
 The conclusions of Theorem \ref{sec:prop-visc-sub} hold if we replace $[\rm{H}3]$ with $[\tH3]$.   Indeed, the proof is based on Lemma \ref{sec:hamiltonians} and Lemma \ref{sec:prop-visc-sub-1} which stay true if we replace $[\rm{H}3]$ with $[\tH3]$.
 \end{remark}

\subsection{Comparison principle and uniqueness}
\label{sec:comparison-principle}

\begin{lemma}
\label{lem regularization}
Assume  $[\rm{H}0]$, $[\rm{H}1]$, $[\rm{H}2]$ and $[\rm{H}3]$.
Let $u:\cS \to \R$ be a bounded continuous viscosity subsolution of (\ref{HJa}). Let $(\rho_\epsilon)_\epsilon$ be a sequence of mollifiers defined on $\R$ as follows
\begin{displaymath}
\rho_\epsilon(x)= \frac{1}{\epsilon}\rho(\frac{x}{\epsilon}),
\end{displaymath}
where
\begin{displaymath}
\rho \in \cC^\infty(\R, \R^+), \quad \int_{\R}\rho(x)dx=1 \quad \hbox{and} \quad \supp(\rho)=[-1,1].
\end{displaymath}
We consider the function $u_\epsilon$ defined on $\cS$ by
\begin{displaymath}
u_\epsilon(x)= u * \rho_\epsilon(x)= \int_{\R}u((x_0-\tau)e_0+x')\rho_\epsilon(\tau) d\tau, \quad \quad \mbox{if } x=x_0e_0+x',
\end{displaymath}
where the decomposition of $x\in \cS$, $x=x_0e_0+x'$ is explained in \eqref{eq: decomposition_of_x2}.\\
Then, $u_\epsilon$ converges uniformly  to $u$ in $L^\infty(\cS, \R)$ and there exists a function $m:(0,+\infty) \to (0,+ \infty)$,  such that $\lim_{\epsilon \to 0}m(\epsilon)=0$ and the
function $u_\epsilon - m(\epsilon)$ is a viscosity subsolution of (\ref{HJa}) on a neighborhood of $\Ga$.
\end{lemma}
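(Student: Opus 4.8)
The plan is to exploit the translation-invariance of the junction $\cS$ in the direction $e_0$ together with the convexity of the Hamiltonians $H_i$ and $H_\Ga$ in the momentum variable. Since $u_\epsilon$ is obtained by convolving $u$ only in the $e_0$ variable (the variable tangent to the interface $\Ga$, which is common to every half-plane), the mollification interacts nicely with the geometric structure: for every $x = x_0 e_0 + x'$ the value $u_\epsilon(x)$ is an average of translates $u((x_0-\tau)e_0 + x')$, and each translate $x \mapsto u(\cdot - \tau e_0)$ is again a subsolution of \eqref{HJa} because the data $f_i,\ell_i$ do not depend on $x_0$ in the relevant way — more precisely, I must first record that the Hamiltonians satisfy a suitable $e_0$-translation compatibility so that a translate of a subsolution is a subsolution.

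The uniform convergence $u_\epsilon \to u$ in $L^\infty(\cS)$ is immediate from the uniform continuity of $u$ on the neighborhood $B(\Ga,r)\cap\cS$ guaranteed by Lemma~\ref{sec:prop-visc-sub-3}: one writes $|u_\epsilon(x)-u(x)| \le \int_{\R} |u((x_0-\tau)e_0+x') - u(x)|\rho_\epsilon(\tau)\,d\tau \le \omega_u(\epsilon)$, where $\omega_u$ is the modulus of continuity of $u$ and the support of $\rho_\epsilon$ is $[-\epsilon,\epsilon]$; this also provides the natural candidate for $m(\epsilon)$. The heart of the argument is to show that $u_\epsilon$ is \emph{almost} a subsolution: at a point $x$ where $u_\epsilon - \ph$ attains a local maximum, I would use the convexity of $p\mapsto H_i(x,p)$ (and of $H_\Ga$) together with Jensen's inequality to push the Hamiltonian inside the convolution integral. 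The standard strategy is to test each translate $u(\cdot-\tau e_0)$ against the shifted test function $\ph(\cdot+\tau e_0)$, obtain the subsolution inequality $\lambda u((x_0-\tau)e_0+x') + H(\cdots) \le 0$ for a.e.\ $\tau$, multiply by $\rho_\epsilon(\tau)$, integrate, and invoke convexity to recover an inequality for $u_\epsilon$ at $x$ — at the cost of a modulus term coming from the $x$-dependence of $f_i,\ell_i$ (the functions are only Lipschitz, resp.\ $\omega_\ell$-continuous, in the spatial variable via $[\rm{H}0]$, $[\rm{H}1]$). This spatial dependence is precisely what generates the correction $m(\epsilon)$: subtracting $m(\epsilon)$ from $u_\epsilon$ absorbs the error and restores the exact subsolution inequality.

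The main obstacle I anticipate is twofold. First, the convolution is performed only in the $e_0$ direction, so at interface points $x\in\Ga$ I must check that the junction condition involving $H_\Ga$ (hence the one-sided derivatives $\partial_{x_i}(\ph|_{\cP_i})$ and the $H_i^+$) is preserved under this partial mollification; here Remark~\ref{rmk: D_phi_Ga}, stating that the tangential component $\partial_{x_0}\ph$ is common to all half-planes, is essential, and the convexity/coercivity furnished by $[\rm{H}3]$ via Lemma~\ref{sec:hamiltonians} lets me control the normal derivatives. Second, producing a clean test function: given a test function $\ph$ for $u_\epsilon$ at $x$, one reconstructs test functions for the translates of $u$ by translation in $e_0$, and one must verify these remain in $\cR(\cS)$ and that the local maximum structure is inherited — a technical but routine bookkeeping step given the product structure of $\cS$.

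Concretely, I would organize the proof as follows. First establish uniform convergence and fix $m(\epsilon)=\omega_u(\epsilon)+$ (the Hamiltonian-error modulus). Second, prove the translation-compatibility of the subsolution property in the $e_0$ direction. Third, at an arbitrary local maximum of $u_\epsilon-\ph$ in $B(\Ga,r)\cap\cS$, separate the cases $x\in\cP_i\setminus\Ga$ and $x\in\Ga$; in each case apply Jensen's inequality to the convex Hamiltonian to transfer the a.e.-in-$\tau$ subsolution inequalities for $u$ to the single inequality for $u_\epsilon$, tracking the spatial-continuity error. Finally, absorb that error into $m(\epsilon)$ and conclude that $u_\epsilon-m(\epsilon)$ satisfies \eqref{eq:10}, i.e.\ is a subsolution of \eqref{HJa} on a neighborhood of $\Ga$.
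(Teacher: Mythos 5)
Your overall strategy (mollify only in the $e_0$ direction, exploit convexity of the Hamiltonians, absorb the error coming from the $x$-dependence of $f_i,\ell_i$ into $m(\epsilon)$) is indeed the content of the classical regularization results that the paper itself does not reprove but simply cites (P.-L.~Lions and Barles--Jakobsen), with the Lipschitz continuity of $u$ near $\Ga$ from Lemma~\ref{sec:prop-visc-sub-3} as the crucial input; so you are reconstructing the intended argument rather than taking a different route. However, the central step as you describe it would fail. You propose to ``test each translate $u(\cdot-\tau e_0)$ against the shifted test function $\ph(\cdot+\tau e_0)$, obtain the subsolution inequality for a.e.\ $\tau$, multiply by $\rho_\epsilon(\tau)$ and integrate.'' This is not legitimate: if $u_\epsilon-\ph$ has a local maximum at $\bar x$, what is maximized at $\bar x$ is the \emph{average} $\int\bigl(u(\cdot-\tau e_0)-\ph\bigr)\rho_\epsilon(\tau)\,d\tau$, and this does not imply that any individual translate $u(\cdot-\tau e_0)$, minus any shifted copy of $\ph$, has a local maximum at $\bar x$ or at any identifiable point. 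The viscosity inequalities ``for a.e.\ $\tau$'' that you want to integrate are therefore not available, because there is no test-function contact for the individual translates.

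The missing idea --- which is exactly what makes the cited results work, and why Lipschitz continuity is essential --- is the almost-everywhere characterization of Lipschitz subsolutions of convex Hamilton--Jacobi equations: (i) since $u$ is Lipschitz on $B(\Ga,r)\cap\cS$ with constant $C^*$, it is differentiable a.e.\ there, and the viscosity inequality $\lambda u(y)+H_i(y,Du(y))\le 0$ holds at every point of differentiability; (ii) for a.e.\ $x$ one has $Du_\epsilon(x)=\int Du(x-\tau e_0)\rho_\epsilon(\tau)\,d\tau$, so Jensen's inequality (convexity of $H_i$ in $p$), the estimate $|H_i(x,p)-H_i(x-\tau e_0,p)|\le L_f|\tau|\,|p|+\omega_\ell(|\tau|)$, and the gradient bound $|Du|\le C^*$ give the a.e.\ inequality $\lambda u_\epsilon+H_i(x,Du_\epsilon)\le m(\epsilon)$ with $m(\epsilon)$ of order $L_fC^*\epsilon+\omega_\ell(\epsilon)$; (iii) because $u_\epsilon$ is Lipschitz and $H_i$ is convex, the a.e.\ inequality upgrades back to the viscosity inequality in each open half-plane, and the junction condition on $\Ga$ (your first ``obstacle'') then requires a separate argument, which the paper leaves to the references. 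Note that the Lipschitz bound $C^*$ is what turns the error term $L_f|\tau|\,|p|$ into a genuine modulus at all; in your write-up, Lemma~\ref{sec:prop-visc-sub-3} is invoked only for the uniform convergence, whereas its decisive role is in steps (i)--(iii). Two smaller points: your opening claim that each translate is an exact subsolution ``because the data do not depend on $x_0$'' is false under [H0]--[H1] (no such invariance is assumed), although you correct it later; and uniform continuity of $u$ on a neighborhood of $\Ga$ only yields the uniform convergence $u_\epsilon\to u$ near $\Ga$, not on all of $\cS$ as stated in the lemma (a point the paper's own two-line proof also glosses over).
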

\begin{proof}
The uniform convergence of $u_\epsilon$  to $u$ in $L^\infty(\cS, \R)$ is classical because $u$ is bounded and continuous in $\cS$. The existence of a function $m: (0, +\infty) \to (0, +\infty)$ with $m(0^+)=0$ such that $u_\epsilon-m(\epsilon)$ is a subsolution of (\ref{HJa}) on a neighborhood of $\Ga$ was proved by P.L. Lions \cite{PLL} or Barles $\&$  Jakobsen \cite{BJconvergence}. A crucial information for obtaining this result is the fact that, according to Lemma \ref{sec:prop-visc-sub-3}, $u$ is Lipschiz continuous on $B(\Ga,r)\cap \cS$, for some \mbox{positive number $r$.}
\end{proof}


\begin{theorem}\label{sec:comparison-principle-1}
  Assume  $[\rm{H}0]$, $[\rm{H}1]$, $[\rm{H}2]$ and $[\rm{H}3]$. Let $u:  \cS\to \R$ be a bounded continuous viscosity subsolution of (\ref{HJa}),
 and $v: \cS\to \R$ be a  bounded viscosity supersolution of (\ref{HJa}). 
Then $u\le v$ in $\cS$.
\end{theorem}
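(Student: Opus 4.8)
The plan is to prove the comparison by control-theoretic arguments rather than by a pure PDE doubling, exploiting the two one-sided dynamic programming inequalities already prepared above. Assume for contradiction that $M:=\sup_{\cS}(u-v)>0$. Since both $u$ and $v$ are bounded, $M<\infty$, and it suffices to produce, for every $\epsilon'>0$, a point $x$ with $u(x)-v(x)>M-\epsilon'$ and a horizon $T$ along which both one-sided principles can be chained. The goal is to establish
\[
u(x)-v(x)\le e^{-\lambda T}\big(u(y^*(T))-v(y^*(T))\big)+o(1)\le e^{-\lambda T}M+o(1),
\]
so that letting $T\to+\infty$ and then $\epsilon'\to 0$ forces $M\le 0$, a contradiction. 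Here $y^*$ is a trajectory that is $\epsilon'$-optimal for the supersolution $v$ and along which $u$ stays sub-optimal.

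The two building blocks are as follows. For the supersolution, the super-optimality principle \eqref{eq:16} of Lemma \ref{sec:prop-visc-sub-1} gives, from any $x\in\cP_i\setminus\Ga$, a trajectory staying in $\cP_i$ up to its exit time along which $v$ decays at least at the rate of the running cost; for the subsolution, the sub-optimality \eqref{eq:17} of the same lemma gives the reverse inequality along \emph{every} such trajectory, and away from $\Ga$ these are exactly the standard one-sided dynamic programming principles (Remark \ref{IKsola}). Thus in the interiors of the half-planes the chaining would be immediate. The whole difficulty is concentrated at $\Ga$: I would need to (i) continue the $\epsilon'$-optimal trajectory of $v$ once it reaches $\Ga$, and (ii) guarantee that $u$ stays sub-optimal along it. For (ii) on $\Ga$ I would use Lemma \ref{lem subsol inequality on gamma}, which shows that $u|_\Ga$ is a subsolution of the tangential equation $\lambda w+H_\Ga^T(x,Dw)=0$ on $\Ga\cong\R$; the tangential regularization of Lemma \ref{lem regularization} turns $u$ into a function smooth in the $e_0$-direction that is a strict subsolution near $\Ga$ (after subtracting $m(\epsilon)$), which is what legitimizes using it as a one-dimensional test object and propagating a genuine decay.

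For (i), the mechanism is the dichotomy of Theorem \ref{sec:prop-visc-sub} applied to $v$ at the current point of $\Ga$. In case [A], $v$ admits a genuine escape trajectory $y_{x_k}$ into some $\cP_i$ on which \eqref{eq:18} holds; I would follow this trajectory, matching it with the interior sub-optimality \eqref{eq:17} of $u$, so that the chaining proceeds exactly as in the half-plane interiors. In case [B], $v$ satisfies the tangential super-inequality \eqref{eq:19}; combined with the tangential sub-inequality for $u$ from Lemma \ref{lem subsol inequality on gamma}, the pair $(u|_\Ga,v|_\Ga)$ is a sub/supersolution couple for the one-dimensional coercive equation $\lambda w+H_\Ga^T(x,Dw)=0$ on the line $\Ga$, for which the classical comparison principle applies; equivalently, one may continue with a tangential trajectory carried by the controls $a$ with $f_i(x,a)\cdot e_i=0$ realizing $H_\Ga^T$, along which $v$ is super-optimal and $u$ sub-optimal. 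In both cases the increment of $u-v$ over the step is controlled by $e^{-\lambda\cdot}$ times its value at the next point, and the steps assemble into the global estimate above.

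The main obstacle is precisely the treatment of $\Ga$, and within it the genuinely one-dimensional tangential comparison in case [B]: unlike the network setting of \cite{uniqueness2013}, where $\Ga$ is a single point and [B] reduces to a pointwise inequality, here $\Ga$ is a straight line and one must run a real comparison for the tangential Hamiltonian $H_\Ga^T$. The delicate points are to verify that the $\epsilon'$-optimal trajectory of $v$ can be constructed globally in time while alternating consistently between the [A]- and [B]-regimes, so that the sub-optimality of $u$ is never lost when the trajectory enters, slides along, or leaves $\Ga$, and to use the coercivity of the $H_i$ near $\Ga$ together with the Lipschitz bound of Lemma \ref{sec:prop-visc-sub-3} to keep all the estimates uniform. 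Once these are in place, the discount factor $e^{-\lambda T}\to 0$ closes the argument.
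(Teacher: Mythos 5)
Your proposal assembles the right ingredients (Lemmas \ref{lem regularization}, \ref{lem subsol inequality on gamma}, \ref{sec:prop-visc-sub-1} and the dichotomy of Theorem \ref{sec:prop-visc-sub}), but the way you assemble them has genuine gaps. The central one: Theorem \ref{sec:prop-visc-sub} is not a statement about arbitrary points of $\Ga$; it requires a test function $\phi\in\cR(\cS)$ such that $v-\phi$ has a local \emph{minimum} at the point in question. Along your $\epsilon'$-optimal trajectory there is no reason such a touching function exists at the point where the trajectory reaches $\Ga$, so the dichotomy cannot be invoked ``at the current point'' and your chaining through $\Ga$ never gets started. Relatedly, case [A] only provides the super-optimality inequality \eqref{eq:18} along a sequence $x_k\to\bar x$ (not at $\bar x$ itself), and case [B] is tied to one point and one test function: it does not make $v|_\Ga$ a viscosity supersolution of $\lambda w+H_\Ga^T(x,Dw)=0$ on the line, so neither the ``classical one-dimensional comparison'' you invoke nor a tangential super-optimality principle for $v$ (which is nowhere established in the paper) is available. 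Finally, your scheme presumes a global-in-time super-optimality principle for $v$ on the whole junction, with trajectories that enter, slide along, and leave $\Ga$; constructing such a principle is essentially as hard as the comparison theorem itself.

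The paper's proof avoids all of this by being purely static and local. After the tangential mollification (Lemma \ref{lem regularization}) and the perturbation $u_{\epsilon,\mu}=\mu(u_\epsilon-m(\epsilon))+(1-\mu)\psi$ with $\psi(x)=-|x|^2-M$, which forces the supremum of $u_{\epsilon,\mu}-v$ to be attained at some point $\bar x_{\epsilon,\mu}$ (attainment is a point your proposal also glosses over, since $u-v$ need not achieve its sup), one works only at this maximum point. If $\bar x_{\epsilon,\mu}\in\Ga$, the crucial trick is that the regularized subsolution itself, extended as $\ph(x)=u_{\epsilon,\mu}(x_0e_0)-Cx_i$ with $C$ large, is an admissible test function touching $v$ from below at $\bar x_{\epsilon,\mu}$; this is what legitimizes one single application of the dichotomy. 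Then in case [B] no comparison on $\Ga$ is needed at all: the supersolution inequality \eqref{eq:19} and the subsolution inequality of Lemma \ref{lem subsol inequality on gamma} hold at the same point with the same gradient $D(u_{\epsilon,\mu}|_\Ga)(\bar x_{\epsilon,\mu})$, and subtracting them gives $u_{\epsilon,\mu}(\bar x_{\epsilon,\mu})\le v(\bar x_{\epsilon,\mu})$ immediately. In case [A], the fixed horizon $\eta>0$ together with the maximality of $M_{\epsilon,\mu}$ yields $M_{\epsilon,\mu}\le M_{\epsilon,\mu}e^{-\lambda\eta}$, so no $T\to\infty$ limit and no concatenation of trajectories is ever required. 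If you want to salvage your plan, the step to add is precisely this: produce the touching test function for $v$ at a maximum point of the regularized difference, rather than trying to follow $v$'s near-optimal trajectories.
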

\begin{proof}
The strategy of proof adopted here is the one of Barles-Briani-Chasseigne \cite{barles2011bellman} in the proof of their theorem 4.1. It is adapted to the present context.
\\
Let $u_\epsilon$ be the approximation of $u$ given by Lemma \ref{lem regularization}.
It is a simple matter to check that there exists a positive real number $M$ such that the function $\psi(x)=-|x|^2 -M$ is a viscosity subsolution of  (\ref{HJa}). For $0<\mu<1$, $\mu$ close to $1$, 
the function $u_{\epsilon,\mu}=\mu (u_\epsilon-m(\epsilon))+(1-\mu) \psi$ is a viscosity subsolution of  (\ref{HJa}), which tends to $-\infty$ as $|x|$ tends to $+\infty$. Let $M_{\epsilon,\mu}$ be the maximal value of $u_{\epsilon,\mu} -v $ which is reached at
some point $\bar x_{\epsilon,\mu}$. We argue by contradiction assuming that $M_{\epsilon,\mu}>0$.
\begin{enumerate}
\item If $\bar x_{\epsilon,\mu}\notin \Ga$, then we introduce the function $u_{\epsilon,\mu}(x)-v(x)-d^2(x,\bar x_{\epsilon,\mu})$, where $d(.,.)$ is the geodesic distance defined by (\ref{geodesic_distance}), which has a strict maximum at $\bar x_{\epsilon,\mu}$, and we double the variables, i.e. for $0<\beta\ll 1$, we consider
\begin{displaymath}
 (x,y)\longmapsto u_{\epsilon,\mu}(x) -v(y)- d^2(x,\bar x_{\epsilon,\mu})- \frac {d^2(x,y)}{\beta^2}.
\end{displaymath}
 Classical arguments then lead to the conclusion that $ u_{\epsilon,\mu}(\bar x_{\epsilon,\mu}) -v(\bar x _{\epsilon,\mu})\le 0$, thus $M_{\epsilon,\mu}\le 0$, which is a contradiction.
\item If  $\bar x_{\epsilon,\mu}\in \Ga$, according to Lemma \ref{sec:prop-visc-sub-3}, $u_{\epsilon, \mu}$ is Lipschitz continuous in a neighborhood of $\Ga$. Then, with a similar argument as in the proof of Lemma \ref{lem subsol inequality on gamma}, we can construct a test function $\ph \in \cR(\cS)$ such that $\ph |_\Ga=u_{\epsilon, \mu}|_\Ga$ and $\ph$ remains below $u_{\epsilon, \mu}$ on a neighborhood of $\Ga$ (take for exemple $\ph(x)= u_{\epsilon, \mu}(x_0,0)-Cx_i$ if $x= x_0e_0+x_ie_i\in \cP_i$, with $C$  great enough). It is easy to check that $v-\ph$ has a local minimum at $\bar x_{\epsilon,\mu}$, from the assumption that $M_{\epsilon, \mu}>0$. Then, we can use Theorem \ref{sec:prop-visc-sub} and we have two possible cases:
\begin{description}
\item{[B]}  $ \lambda v(\bar x_{\epsilon,\mu}) + H_{\Ga}^T(\bar x_{\epsilon,\mu},D( u_{\epsilon,\mu}|_\Ga)(\bar x_{\epsilon,\mu})) \ge 0$. \\
Moreover, $u_{\epsilon,\mu}$ is a subsolution of (\ref{HJa}), $C^1$ on $\Ga$. Then, according to Lemma \ref{lem subsol inequality on gamma}, we have the inequality $ \lambda u_{\epsilon,\mu}(\bar x_{\epsilon,\mu}) + H_{\Ga}^T(\bar x_{\epsilon,\mu},D( u_{\epsilon,\mu}|_\Ga)(\bar x_{\epsilon,\mu})) \le 0$. Therefore, we obtain that $u_{\epsilon,\mu}(\bar x_{\epsilon,\mu}) \le v(\bar x _{\epsilon,\mu})$, thus   $M_{\epsilon,\mu}\le 0$, which is a contradiction.
\item {[A]} With the notations of Theorem \ref{sec:prop-visc-sub},  we have that 
  \begin{displaymath}
    v(x_k)\ge   \int_0^{\eta}   \ell_i(y_{x_k}(s),  \alpha^k_i(s)) e^{-\lambda s} ds + v (y_{x_k}(\eta)) e^{-\lambda \eta}.
  \end{displaymath}
Moreover, from    Lemma \ref{sec:prop-visc-sub-1},
  \begin{displaymath}
    u_{\epsilon,\mu}(x_k)\le   \int_0^{\eta}   \ell_i(y_{x_k}(s),  \alpha^k_i(s)) e^{-\lambda s} ds + u_{\epsilon,\mu} (y_{x_k}(\eta)) e^{-\lambda \eta}.
  \end{displaymath}
Therefore 
\begin{displaymath}
   u_{\epsilon,\mu}(x_k)- v(x_k) \le (u_{\epsilon,\mu} (y_{x_k}(\eta)) - v (y_{x_k}(\eta)) ) e^{-\lambda \eta}.
\end{displaymath}
Letting $k$ tend to $+\infty$, we find that $M_{\epsilon,\mu}\le M_{\epsilon,\mu} e^{-\lambda \eta}$, which implies that $ M_{\epsilon,\mu} \le 0$, which is a contradiction.
\end{description}
\end{enumerate}
We conclude by letting $\epsilon$ tend to $0$ and $\mu$ tend to $1$.
\end{proof}

\begin{theorem}
 Assume  $[\rm{H}0]$, $[\rm{H}1]$, $[\rm{H}2]$ and $[\rm{H}3]$. Then, the value function $v$ is the unique viscosity solution of (\ref{HJa}) in $\cS$.
\end{theorem}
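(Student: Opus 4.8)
The plan is to obtain uniqueness as an immediate consequence of the comparison principle (Theorem~\ref{sec:comparison-principle-1}), combined with the existence result (Theorem~\ref{sec:existence-uniqueness-3}) and the regularity of the value function (Proposition~\ref{sec:assumption}). All of the genuine analytic work has already been done; the remaining task is simply to assemble these pieces and exploit the symmetric structure of the comparison hypotheses.

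First I would record that, by Theorem~\ref{sec:existence-uniqueness-3} together with Proposition~\ref{sec:assumption}, the value function $v$ is a \emph{bounded continuous} viscosity solution of \eqref{HJa}. In particular $v$ is at once a bounded continuous subsolution and a bounded supersolution. Next, let $w:\cS\to\R$ be an arbitrary bounded viscosity solution of \eqref{HJa}. By the third item of Definition~\ref{netviscoa}, a viscosity solution is by definition continuous, so $w$ is bounded and continuous, and hence simultaneously a bounded continuous subsolution and a bounded supersolution of \eqref{HJa}.

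The key step is then to invoke Theorem~\ref{sec:comparison-principle-1} twice. Applying it with the bounded continuous subsolution $u:=w$ and the bounded supersolution $v$ gives $w\le v$ on $\cS$. Exchanging the roles of the two functions — applying the theorem with the bounded continuous subsolution $u:=v$ and the bounded supersolution $w$ — gives the reverse inequality $v\le w$ on $\cS$. Combining the two yields $v=w$, so the value function is the unique viscosity solution of \eqref{HJa}.

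I do not expect a real obstacle at this stage, since the comparison principle already carries the full difficulty. The only point deserving explicit mention is that both candidate functions are eligible to play the role of the \emph{continuous} subsolution required by Theorem~\ref{sec:comparison-principle-1}: this is automatic because continuity is built into the notion of viscosity solution in Definition~\ref{netviscoa}, so no additional regularity argument is needed to close the double comparison.
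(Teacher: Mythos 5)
Your proposal is correct and coincides with the argument the paper intends: the paper states this theorem without proof precisely because it follows at once from Theorem~\ref{sec:existence-uniqueness-3} (with Proposition~\ref{sec:assumption}) and a double application of the comparison principle of Theorem~\ref{sec:comparison-principle-1}, exactly as you assemble it. Your observation that continuity of the competing solution $w$ is built into Definition~\ref{netviscoa}, so that both functions qualify as the continuous subsolution in the comparison theorem, is the right (and only) point needing care, and you handle it correctly.
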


\begin{remark}
Under the assumptions $[\rm{H}0]$, $[\rm{H}1]$, $[\rm{H}2]$ and $[\rm{H}3]$ it is possible to prove a more general Comparison principle, where we do not assume the continuity of the subsolutions. The prove of a such Comparison principle is slightly more technical. Using that any subsolution of \eqref{HJa} is Lipschitz continuous in a neighborhood of $\Ga$, from Lemma \ref{sec:prop-visc-sub-3}, we are first able to prove a local Comparison principle similar to Theorem \ref{th: local comparison} and then to deduce a global Comparison principle similar to Theorem \ref{th: global comparison normal}. Essentially, we have to follow the proofs of Theorem \ref{th: local comparison} and Theorem \ref{th: global comparison normal}, with removing of the step of regularisation by sup-convolution.
\end{remark}

\section{Second case :  normal controllability near interface}\label{section_normal_controllability}

\subsection{The new framework}

We keep assumptions [H0], [H1], [H2] and we weaken the controllability assumption [H3] by only supposing normal controllability,
\begin{description}
\item{[$\tH$3] } There is a real number $\delta>0$ such that for any $i=1,\dots,N$ and for all $x\in\Ga$, 
  \begin{displaymath}
    [-\delta,\delta] \subset \left\lbrace f_i(x,a).e_i : a\in A_i \right\rbrace.
  \end{displaymath}
\end{description}

The following property is the counterpart of property \ref{prop: R_controllability} in the current framework.

\begin{property}
\label{prop: R_normal_controllability}
Assume  $[\rm{H}0]$ and $[\tH 3]$. Then, there exists $R>0$ a positive real number such that  for all $i=1,\dots,N$ and  $x\in B(\Ga,R)\cap \cP_i$
\begin{equation}
\label{eq: R_normal_controllability}
   [-\frac{\delta}{2},\frac{\delta}{2}] \subset \left\lbrace f_i(x,a).e_i : a\in A_i \right\rbrace.
\end{equation}
\end{property}

The dynamics $f$ and the cost function $\ell$ are defined on 
\begin{displaymath}
 M=\left\{(x,a);\; x\in \cS,\quad  a\in A_i  \hbox{ if } x\in \cP_i\backslash \Ga,  \hbox{ and } a\in \cup_{i=1}^N A_i   \hbox{ if } x \in \Ga \right\},
 \end{displaymath}
 as in (\ref{def_f}) and (\ref{def_l}). As above, we need to introduce the set of the admissible controlled trajectories. For this purpose, we recall that for $x\in \cS$,
\begin{displaymath}
  \tilde F(x)=\left\{ 
\begin{array}{ll}
F_i(x)\quad  &\hbox{if } x \hbox{ belongs to the open half-plane } \cP_i\backslash\Ga \\
\cup_{i=1}^N F_i(x) \quad  &\hbox{if } x\in \Ga,
\end{array}
\right. 
\end{displaymath}
\begin{equation}
  \label{eq:2}
Y_x=\left\{  y_x\in \rm{Lip}(\R^+; \cS)\;:\left|
    \begin{array}[c]{ll}
       \dot y_x(t)   \in \tilde F(y_x(t))  ,\quad& \hbox{for a.a. } t>0\\
       y_x(0)=x,
    \end{array}\right.  \right\}.
\end{equation}
 
\begin{theorem}\label{sec:optim-contr-probl_normal}
  Assume  $[\rm{H}0]$, $[\rm{H}2]$ and $[\tH3]$. Then 
  \begin{enumerate}
  \item For any $x\in \cS$, $Y_x$ is not empty.
   \item  For any $x\in \cS$, for each trajectory $y_x\in Y_x$, there exists a measurable function $\Phi:[0,+\infty)\to M$, $\Phi(t)=(\ph_1(t),\ph_2(t))$ with
    \begin{displaymath}
      (y_x(t),\dot y_x(t))= ( \ph_1(t), f (\ph_1(t),\ph_2(t))) ,\quad \hbox{for a.a. }t.
    \end{displaymath}
\item  Almost everywhere on $\{t: y_x(t)\in \Ga\}$, $f(y_x(t), \ph_2(t))\in \R e_0$.
\end{enumerate}
\end{theorem}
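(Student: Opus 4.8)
The plan is to follow the proof of Theorem~\ref{sec:optim-contr-probl} almost verbatim, observing that its second and third conclusions do not use the controllability assumption at all, so that only the non-emptiness of $Y_x$ (point~1) requires us to check that the weaker hypothesis $[\tH3]$ still suffices. Note that $[\rm{H}1]$ plays no role here, since the statement concerns trajectories and not costs.

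For point~2, I would apply the Filippov implicit function lemma (Theorem~\ref{sec:optim-contr-probl-1}) exactly as in the full-controllability case. Fix $y_x \in Y_x$ and set $\gamma(t) = (y_x(t), \dot y_x(t))$, which is measurable, together with $\Psi : M \to \R^d \times \R^d$, $\Psi(z,a) = (z, f(z,a))$, which is continuous on the closed set $M$ by Remark~\ref{rmk: f bounded continuous}. The inclusion $\dot y_x(t) \in \tilde F(y_x(t))$ says precisely that $\gamma(t) \in \Psi(M)$ for a.a.\ $t$, so the lemma furnishes a measurable $\Phi = (\ph_1,\ph_2) : [0,+\infty) \to M$ with $\Psi\circ\Phi(t) = \gamma(t)$ a.e., which is point~2 (and in particular $\ph_1 = y_x$ a.e.). Point~3 is then a tangency argument that is independent of the controllability. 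On $E = \{t : y_x(t) \in \Ga\}$ one has $y_x(t)\cdot e_i = 0$ for every $i$, because $\Ga = \R e_0$ and $e_i\cdot e_0 = 0$. Each map $t \mapsto y_x(t)\cdot e_i$ is Lipschitz and vanishes identically on $E$, hence its derivative $\dot y_x(t)\cdot e_i$ vanishes for a.a.\ $t \in E$. For such $t$ the velocity $\dot y_x(t) = f(y_x(t),\ph_2(t))$ lies in some $\R e_0 \times \R e_i$; since $e_0\cdot e_i = 0$, the $e_i$-coefficient of $\dot y_x(t)$ equals $\dot y_x(t)\cdot e_i = 0$, so $f(y_x(t),\ph_2(t)) \in \R e_0$ a.e.\ on $E$.

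The only place where $[\tH3]$ genuinely intervenes is point~1. The difference with the full-controllability case is that $[\tH3]$ no longer guarantees $0 \in F_i(x)$ for $x \in \Ga$, so the constant trajectory need not be admissible; instead $[\tH3]$ provides, for each $x \in \Ga$ and each $i$, a control $a$ with $f_i(x,a)\cdot e_i = 0$, i.e.\ a velocity $f_i(x,a) \in \R e_0$ tangent to $\Ga$. I would therefore build a viable trajectory by a standard differential-inclusion construction. By $[\rm{H}0]$ the map $\tilde F$ has nonempty compact values and is upper semicontinuous (and bounded by $M_f$, so solutions extend to all of $\R^+$), and by $[\rm{H}2]$ its values are convex on each open half-plane. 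For $x \in \cP_i\setminus\Ga$ one solves $\dot y \in F_i(y)$ inside the closed half-plane $\cP_i$, the viability condition at the relative boundary $\Ga$, namely $F_i(x)\cap T_{\cP_i}(x) \neq \emptyset$, being exactly the tangential velocity supplied by $[\tH3]$; for $x \in \Ga$ one solves the one-dimensional inclusion $\dot z \in F_i(z)\cap \R e_0$ on the line $\Ga$, whose right-hand side is nonempty by $[\tH3]$ and compact convex by $[\rm{H}2]$, producing a solution that stays on $\Ga \subset \cS$. In either case $Y_x \neq \emptyset$.

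I expect the main obstacle to be precisely this viability at the interface. Because $\tilde F$ fails to be convex-valued on $\Ga$ (there it is a union of convex sets lying in distinct half-planes), one cannot invoke a viability theorem directly on $\cS$; one must argue half-plane by half-plane and rely on $[\tH3]$ to keep a trajectory on $\Ga$, in place of the zero velocity used under $[\rm{H}3]$. The construction itself then proceeds by the usual limit of Euler polygons, passing to the limit via Ascoli--Arzel\`a and the convexity from $[\rm{H}2]$ (a closure theorem for differential inclusions) to recover a genuine solution of $\dot y \in \tilde F(y)$.
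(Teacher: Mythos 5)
Your treatment of points 2 and 3 coincides with the paper's own proof, which simply reuses the Filippov and Stampacchia arguments of Theorem \ref{sec:optim-contr-probl}; your observation that neither controllability nor $[\rm{H}1]$ plays any role there is exactly right. For point 1, however, you take a genuinely different route. The paper does not invoke viability theory at all: its Lemma \ref{lem: normal_controllability16} picks, at $x\in\Ga$, a control $a_i$ with $f_i(x,a_i).e_i=\delta$ --- a velocity pointing strictly \emph{into} $\cP_i$, rather than the tangential velocity you emphasize --- and solves the integral equation with this single frozen control. By the Lipschitz bound in $[\rm{H}0]$ one has $f_i(z,a_i).e_i\ge \delta/2$ on $B(x,\frac{\delta}{2L_f})\cap\cP_i$, so the (unique, by Cauchy--Lipschitz) solution remains in $\cP_i$ on a time interval of \emph{uniform} length $T=\frac{\delta}{2L_fM_f}$; for $x\in\cP_i\setminus\Ga$ one integrates an arbitrary frozen control until the trajectory possibly reaches $\Ga$, and then restarts with the inward control. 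Gluing these pieces, each of duration at least $T$, produces a globally defined element of $Y_x$. Your argument instead verifies the Nagumo tangency condition $F_i(z)\cap T_{\cP_i}(z)\neq\emptyset$ at points of $\Ga$ (supplied by $[\tH3]$) and appeals to a viability theorem of Haddad type on each closed half-plane, supplemented by the one-dimensional inclusion $\dot z\in F_i(z)\cap \R e_0$ on $\Ga$ when $x\in\Ga$ (this second case is in fact redundant, since the half-plane viability argument already covers starting points on $\Ga$). Both routes are correct: the paper's construction is more elementary and explicit --- it needs only Picard iteration for a Lipschitz ODE and yields a uniform time step for the gluing --- whereas yours trades that explicitness for the standard but heavier machinery of upper semicontinuous, compact convex valued differential inclusions (with $[\rm{H}2]$ entering through the convexity of $F_i(z)$ and the closure argument for the Euler polygons), and in exchange delivers a globally defined viable trajectory in one stroke, without any gluing.
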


Points 2 and 3 are proved exactly as in the Theorem \ref{sec:optim-contr-probl}. Point 1 is little bit more difficult to prove because with [$\tH$3], it may happen that $0\not\in \tilde F(x)$ when $x\in \Ga$. Here, point 1 is essentially a consequence of the following lemma.

\begin{lemma}\label{lem: normal_controllability16}
Assume  $[\rm{H}0]$, $[\rm{H}2]$ and $[\tH3]$.
There exists $T>0$ such that $\forall i\in \{1, \ldots, N\}$ and $\forall x\in \cP_i$, there exists $y_x\in \rm{Lip}([0,T]; \cP_i)$ a solution of the differential inclusion 
\begin{displaymath}
\left\lbrace \begin{array}{l}
 \dot y_x(t)   \in \tilde F(y_x(t))\\
 y_x(0)=x.
 \end{array}\right.
 \end{displaymath}
\end{lemma}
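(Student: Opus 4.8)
The plan is to construct, for each $i$ and each $x\in\cP_i$, a solution of the differential inclusion $\dot y\in F_i(y)$ issued from $x$ that never leaves $\cP_i$. Since $F_i(x)\subset\tilde F(x)$ for every $x\in\cP_i$ (on $\Ga$ this holds because $\tilde F(x)=\cup_{j}F_j(x)\supset F_i(x)$), such a curve is automatically a solution of $\dot y\in\tilde F(y)$ remaining in $\cP_i$, which is exactly what is required. The only genuine difficulty is the behaviour at the interface $\Ga$: there $0$ need not belong to $F_i(x)$, so the trajectory cannot be kept at rest and must instead be pushed into the open half-plane $\cP_i\setminus\Ga$. Normal controllability is precisely what makes this possible.

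Concretely, I would first record that $F_i$ is a well-behaved right-hand side: by $[\rm{H}0]$ the values $F_i(x)$ are nonempty and contained in $B(0,M_f)$, they are compact (continuous image of the compact $A_i$), and by $[\rm{H}2]$ they are convex, being the projection onto the first factor of the convex set $\FL_i(x)$; moreover $x\mapsto F_i(x)$ is continuous, hence upper semicontinuous. I would then verify the tangency (Nagumo) condition $F_i(x)\cap C_{\cP_i}(x)\neq\emptyset$ for every $x\in\cP_i$, where $C_{\cP_i}(x)$ denotes the contingent cone to $\cP_i$, equal to $\R e_0\times\R e_i$ if $x\in\cP_i\setminus\Ga$ and to $\R e_0\times\R^+ e_i$ if $x\in\Ga$. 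On the open half-plane the inclusion is trivial, since $C_{\cP_i}(x)$ is the whole plane $\R e_0\times\R e_i\supset F_i(x)$; on $\Ga$, assumption $[\tH3]$ supplies a control $a\in A_i$ with $f_i(x,a)\cdot e_i=\delta>0$, so that $f_i(x,a)\in F_i(x)\cap C_{\cP_i}(x)$. The classical viability theorem for such Marchaud maps then yields a solution $y_x:[0,+\infty)\to\cP_i$ of $\dot y_x\in F_i(y_x)$, $y_x(0)=x$, remaining in $\cP_i$; restricting it to $[0,T]$ for an arbitrary fixed $T>0$ gives the statement, with $T$ trivially independent of $x$ and $i$.

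The uniform existence time can also be seen directly, which clarifies where each hypothesis enters. Using Property \ref{prop: R_normal_controllability}, near $\Ga$ (on $B(\Ga,R)\cap\cP_i$) one may always select velocities whose normal component $f_i(\cdot,a)\cdot e_i$ equals $\delta/2>0$; along a trajectory built from such selections the coordinate $x_i$ is nondecreasing while it is small, so the curve never reaches $\Ga$ and in fact survives for all times, uniformly in the starting point. The main obstacle is exactly the construction of this viable curve across the discontinuity of the dynamics at $\Ga$: one must produce an admissible solution of the inclusion whose normal velocity has the correct sign at the boundary, without being able to invoke continuity of the field through $\Ga$. This is the step that rests on the convexity assumption $[\rm{H}2]$, which guarantees solvability of the inclusion without passing to relaxed controls (for instance through the Filippov selection of Theorem \ref{sec:optim-contr-probl-1}), and on the normal controllability $[\tH3]$, which furnishes the outward-pointing admissible velocity at every point of $\Ga$.
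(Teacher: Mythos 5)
Your proof is correct, but it takes a genuinely different route from the paper's. You reduce the lemma to the classical viability (Nagumo--Haddad) theorem: you check that $F_i$ is a Marchaud map on the closed set $\cP_i$ --- nonempty compact values by $[\rm{H}0]$, convex values obtained by projecting the convex set $\FL_i(x)$ of $[\rm{H}2]$, upper semicontinuity and boundedness --- and that the tangency condition $F_i(x)\cap C_{\cP_i}(x)\neq\emptyset$ holds, trivially off $\Ga$ and via the outward velocity supplied by $[\tH3]$ on $\Ga$; this yields a viable solution on $[0,+\infty)$, so any uniform $T$ works. The paper instead argues in a completely elementary way, with no appeal to viability theory and, notably, no use of $[\rm{H}2]$: starting from $x\in\Ga$ it freezes the single control $a_i$ given by $[\tH3]$ with $f_i(x,a_i).e_i=\delta$, observes from the Lipschitz bound $L_f$ in $[\rm{H}0]$ that $f_i(z,a_i).e_i\ge\delta/2$ on $B(x,\frac{\delta}{2L_f})\cap\cP_i$, and solves the resulting Lipschitz ODE by Cauchy--Lipschitz: since $|f_i|\le M_f$, the solution remains in that ball, hence in $\cP_i$, up to the explicit uniform time $T=\frac{\delta}{2L_fM_f}$; for $x\in\cP_i\setminus\Ga$ it follows an arbitrarily frozen control until either time $T$ elapses or the trajectory reaches $\Ga$, and then concatenates with the previous construction. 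Your route buys global-in-time existence and an arbitrary $T$ at the price of citing a substantial theorem and of genuinely using the convexity assumption $[\rm{H}2]$; the paper's route is self-contained, produces an explicit $T$, and shows in passing that the lemma holds without convexity (consistent with the paper's remark that $[\rm{H}2]$ only serves to avoid relaxed controls). Your supplementary ``direct'' sketch with selections of normal velocity $\delta/2$ is closer in spirit to the paper's argument, but as written it glosses over the regularity of the selection $x\mapsto a(x)$ --- precisely the difficulty that freezing a single control removes.
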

\begin{proof}
Take $x\in \cP_i$.
\begin{itemize}
\item \textbf{If $x\in \Ga$ :}
according to [$\tH$3], there exists $a_i\in A_i$ such that $f_i(x,a_i).e_i=\delta$. Then, from the Lipschitz property in [H0], we have
\begin{equation}\label{eq: normal1}
f_i(z,a_i).e_i\ge \frac{\delta}{2}, \quad \quad \forall z\in B(x,\frac{\delta}{2L_f})\cap \cP_i.
 \end{equation}
 As a consequence, if we set $T=\frac{\delta}{2L_fM_f}$, there exists a unique function $y_x:[0,T)\to \cP_i$ such that $y_x(t)=x+\int_0^tf_i(y_x(s),a_i)ds$ and $y_x(t)\in B(x,\frac{\delta}{2L_f})\cap \cP_i$ $\forall t\in (0,T]$.
 \item \textbf{If $x\in \cP_i\setminus\Ga$ :} then, we chose arbitrarily $a_i \in A_i$ and we consider $\bar y: [0,\bar T) \to \cP_i$ the maximal solution of the integral equation $y(t)=x+\int_0^tf_i(y(s),a_i)ds$. If $\bar T> T=\frac{\delta}{2L_fM_f}$ then we take $y_x=\bar y$. Otherwise $\bar y(\bar T)$ is well defined and belongs to $\Ga$, then we are reduced to the case where $x\in \Ga$.
\end{itemize}
\end{proof}

Now, we are able to prove Point 1 of Theorem \ref{sec:optim-contr-probl_normal}.

\begin{proof}
Let $x$ be in $\cP_i$, for some $i\in \{1, \ldots, N \}$. According to Lemma \ref{lem: normal_controllability16} we are able to build a sequence of positive reals $(T_n)_{n\in \N}$ and maps $y_n: [0,T_n]\to \cP_i$ such that, for any $n\in \N$
\begin{itemize}
\item[(i)] $y_n(0)=x$ and $\dot{y}_n(t)\in F_i(y_n(t))$, for a.a. $t\in (0,T_n]$
\item[(ii)] $y_n(t)\in \cP_i$, $\forall t\in (0,T_n]$
\item[(iii)] $T_{n+1}-T_n\ge T$, with $T$ given by Lemma  \ref{lem: normal_controllability16}
\item[(iv)] $\forall 0\le q \le p$, $y_q|_{[0,T_p]}\equiv y_p$
\end{itemize}
So, the map $y_x : \R_+ \to \cP_i$ defined by $y_x(t)=y_n(t)$ if $t\in [0,T_n]$ belongs to $Y_x$ and particularly Point 1 of Theorem \ref{sec:optim-contr-probl_normal} is true.
\end{proof}

So, as above, we can take the set of admissible controlled trajectories starting from the initial datum $x$:
\begin{displaymath}
\cT_x=\left\{
 ( y_x, \alpha)  \ds \in L_{\rm{Loc}}^\infty( \R^+; M): \left|
  \begin{array}[c]{l}
\ds  y_x\in \rm{Lip}(\R^+; \cS),  \\  \ds  y_x(t)=x+\int_0^t f( y_x(s), \alpha(s)) ds \quad  \hbox{ in } \R^+ 
  \end{array} \right. \right\}.
\end{displaymath}

Then, the cost functional $J$ and the value function $v$ are defined by
\begin{displaymath}
  J(x;( y_x, \alpha) )=\int_0^\infty \ell(y_x(t),\alpha(t)) e^{-\lambda t} dt,
\end{displaymath}
where $\lambda>0$, and
\begin{displaymath}
v(x)= \inf_{( y_x, \alpha)\in \cT_x}  J(x;( y_x, \alpha) ).
\end{displaymath}
Unlike in \S~\ref{section: classical_controllability}, we cannot use classical arguments to prove that $v$ is continuous, because we do not suppose [H3] any longer. The main problem is that with $[\tH3]$ we are no longer sure that for each $x,z$ close to $\Ga$, there exists an admissible trajectory $y_{x,z}\in \cT_x$ from $x$ to $z$. We will prove later that $v$ is continuous, but for the moment $v$ is a priori a discontinuous function. In order to deal with this a priori discontinuity, we use the following notions :

\begin{definition}
Let $u : \cS\to \R$ be a bounded function.
\begin{itemize}
\item The lower semi-continuous envelope of $u$ is defined by
\begin{displaymath}
u_\star(x)=\liminf_{z\to x}u(z).
\end{displaymath}
\item The upper semi-continuous envelope of $u$ is defined by
\begin{displaymath}
u^\star(x)=\limsup_{z\to x}u(z).
\end{displaymath}
\end{itemize}
\end{definition}

\subsection{Hamilton-Jacobi equation}

\begin{definition}
A bounded function $u:\cS\to\R$ is a discontinuous viscosity solution of \eqref{HJa} in $\cS$ if $u^\star$ is a subsolution  and $u_\star$ is a  supersolution of \eqref{HJa} in $\cS$.
\end{definition}

The next lemmas will be used to prove the existence result.

\begin{lemma}\label{lem: discontinuous_existence}
 Assume  $[\rm{H}0]$, $[\rm{H}1]$, $[\rm{H}2]$ and $[\tH3]$.There exists some constants $r>0$ and $C>0$ such that for all $x=x_0e_0+x_ie_i\in B(\Ga,r)\cap \left( \cP_i\setminus\Ga \right)$, there exists an admissible controlled trajectory $(y_x,\alpha_x)\in \cT_x$ such that $\tau_x\le C x_i$, where $\tau_x$ is the exit time  of $y_x$
from $\cP_i\setminus \Ga$.
\end{lemma}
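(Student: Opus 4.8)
The plan is to steer $x$ towards $\Ga$ along a trajectory whose normal component $y_x(t)\cdot e_i$ (which equals $dist(y_x(t),\Ga)$ as long as the trajectory stays in $\cP_i$) decreases at a uniform positive rate. Since this component starts at $x_i$, it must vanish after a time proportional to $x_i$, and that first vanishing time is precisely the exit time $\tau_x$. The tool that produces such a rate is the normal controllability $[\tH3]$ together with the Lipschitz regularity of $f_i$ in $[\rm{H}0]$, which allows me to realise the rate with a single \emph{constant} control, thus avoiding any measurable-selection step.

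More precisely, let $\bar x=x_0e_0\in\Ga$ be the orthogonal projection of $x$ onto $\Ga$, so that $|x-\bar x|=x_i$. By $[\tH3]$ there is a control $\bar a\in A_i$ with $f_i(\bar x,\bar a)\cdot e_i=-\delta$. The bound $|f_i(z,\bar a)-f_i(\bar x,\bar a)|\le L_f|z-\bar x|$ from $[\rm{H}0]$ then gives $f_i(z,\bar a)\cdot e_i\le -\frac{\delta}{2}$ for every $z\in\cP_i$ with $|z-\bar x|\le \rho:=\frac{\delta}{2L_f}$. I let $y_x$ be the solution of the autonomous equation $\dot y_x(t)=f_i(y_x(t),\bar a)$, $y_x(0)=x$, which exists and is unique by Cauchy--Lipschitz and remains in the affine plane $\R e_0\times\R e_i$ carrying $\cP_i$. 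As long as $y_x(t)$ stays in $B(\bar x,\rho)\cap\cP_i$ one has $\frac{d}{dt}\bigl(y_x(t)\cdot e_i\bigr)=f_i(y_x(t),\bar a)\cdot e_i\le -\frac{\delta}{2}$, so $y_x(t)\cdot e_i\le x_i-\frac{\delta}{2}t$, which forces $y_x(t)\cdot e_i$ to vanish at some first time $\tau_x\le \frac{2x_i}{\delta}$; on $[0,\tau_x)$ this component stays positive, i.e. $y_x(t)\in\cP_i\setminus\Ga$, while $y_x(\tau_x)\in\Ga$. This yields the announced bound with $C=\frac{2}{\delta}$.

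The one delicate point, and the step I expect to be the main obstacle, is the self-consistency of this bootstrap: I must check that the trajectory does not leave $B(\bar x,\rho)$ before reaching $\Ga$, since otherwise the rate estimate need not hold. Here I use that the speed bound $|\dot y_x|=|f_i(y_x,\bar a)|\le M_f$ is unconditional on $\cP_i$, so $|y_x(t)-\bar x|\le x_i+M_f t$ throughout the existence interval, hence $|y_x(t)-\bar x|\le x_i\bigl(1+\frac{2M_f}{\delta}\bigr)$ for $t\le \frac{2x_i}{\delta}$; this is $\le\rho$ as soon as $x_i\le r:=\frac{\rho}{1+2M_f/\delta}=\frac{\delta^2}{2L_f(\delta+2M_f)}$ (one may also impose $r\le R$, with $R$ from Property \ref{prop: R_normal_controllability}). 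Because every constant involved is of order one while both the distance to travel and the elapsed time are $O(x_i)$, this closure condition holds for small $x_i$, which is exactly the role of the radius $r$; the containment then retroactively justifies the rate estimate on all of $[0,\tau_x]$. Finally, to obtain a genuine element of $\cT_x$ I extend $y_x$ beyond $\tau_x$ by concatenating it, at the point $y_x(\tau_x)\in\Ga$, with any admissible controlled trajectory emanating from that point, whose existence is guaranteed by Point~1 of Theorem \ref{sec:optim-contr-probl_normal}; the resulting pair $(y_x,\alpha_x)$, with $\alpha_x\equiv\bar a$ on $[0,\tau_x]$, is admissible on all of $\R^+$ and has exit time $\tau_x\le Cx_i$. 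Alternatively, one can sidestep the drift bookkeeping by solving the differential inclusion $\dot y\in\{v\in F_i(y):v\cdot e_i\le -\frac{\delta}{2}\}$ furnished by Property \ref{prop: R_normal_controllability}, whose right-hand side is nonempty, convex and compact by $[\rm{H}2]$, recovering the control through the Filippov selection of Theorem \ref{sec:optim-contr-probl-1}; since $y\cdot e_i=dist(y,\Ga)$ then decreases monotonically, the trajectory automatically remains in $B(\Ga,R)$.
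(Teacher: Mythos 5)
Your proof is correct, and it is essentially the approach this paper relies on: the lemma is stated here without proof, but your argument --- use $[\tH3]$ at the projection $\bar x=x_0e_0$ to pick a control with normal velocity $-\delta$, use the Lipschitz bound in $[\rm{H}0]$ to retain normal velocity at most $-\delta/2$ on $B(\bar x,\delta/(2L_f))$, integrate, and close the bootstrap by observing that both the elapsed time and the distance travelled are $O(x_i)$ --- is the same scheme used in the paper's proof of the neighbouring Lemma \ref{lem: normal_controllability16} (there with ingoing rather than outgoing normal velocity). Your constants $C=2/\delta$ and $r=\delta^2/\bigl(2L_f(\delta+2M_f)\bigr)$, and the concatenation at $y_x(\tau_x)\in\Gamma$ via Theorem \ref{sec:optim-contr-probl_normal} to produce a genuine element of $\cT_x$, are all in order.
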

\begin{lemma}\label{lem: discontinuous_existence_2}
  Assume  $[\rm{H}0]$, $[\rm{H}1]$, $[\rm{H}2]$ and $[\tH3]$. For all $x\in \Ga$, $i\in \{1,\ldots,N\}$ and $a\in A_i$ such that $f_i(x,a).e_i\ge 0$, there exists a sequence $a_\epsilon\in A_i$ such that
 \begin{eqnarray}
  \label{maxsup:1_bis}
f_i(x,a_\epsilon).e_i&\geq& \delta \epsilon >0,\\
 \label{maxsup:f_bis}
\vert f_i(x,a_\epsilon)-f_i(x,a)\vert &\leq& 2M_f \epsilon,
\\
 \label{maxsup:ell_bis}
\vert \ell_i(x,a_\epsilon)- \ell_i(x,a)\vert& \leq & 2M_\ell \epsilon.
\end{eqnarray}
\end{lemma}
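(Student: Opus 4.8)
The plan is to realize $a_\epsilon$ as a convex-combination control, cashing in the convexity assumption $[\rm{H}2]$ together with the normal controllability $[\tH3]$; this is exactly the point of the remark after the assumptions that $[\rm{H}2]$ lets us avoid relaxed controls.

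First I would use $[\tH3]$ to select, for the given $x\in\Ga$ and index $i$, a control $\bar a\in A_i$ with $f_i(x,\bar a).e_i=\delta$. This is possible because $[\tH3]$ guarantees that $\delta$ belongs to $\{f_i(x,a).e_i:a\in A_i\}$; the control $\bar a$ supplies a strictly ingoing direction with a definite normal speed $\delta$. Then, for $\epsilon\in(0,1)$, I would form the point
\[
\bigl((1-\epsilon)f_i(x,a)+\epsilon f_i(x,\bar a),\;(1-\epsilon)\ell_i(x,a)+\epsilon\ell_i(x,\bar a)\bigr),
\]
which is a convex combination of the two points $(f_i(x,a),\ell_i(x,a))$ and $(f_i(x,\bar a),\ell_i(x,\bar a))$ of $\FL_i(x)$. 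By $[\rm{H}2]$ the set $\FL_i(x)$ is convex, so this point again lies in $\FL_i(x)$; since $\FL_i(x)$ is by definition the image of $A_i$ under $a\mapsto(f_i(x,a),\ell_i(x,a))$, there exists $a_\epsilon\in A_i$ realizing it, namely
\[
f_i(x,a_\epsilon)=(1-\epsilon)f_i(x,a)+\epsilon f_i(x,\bar a),\qquad \ell_i(x,a_\epsilon)=(1-\epsilon)\ell_i(x,a)+\epsilon\ell_i(x,\bar a).
\]

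Finally I would verify the three estimates by direct computation. For the normal speed, $f_i(x,a_\epsilon).e_i=(1-\epsilon)\,f_i(x,a).e_i+\epsilon\delta\ge\epsilon\delta>0$, using $f_i(x,a).e_i\ge0$ and $1-\epsilon>0$, which gives \eqref{maxsup:1_bis}. Since $f_i(x,a_\epsilon)-f_i(x,a)=\epsilon\bigl(f_i(x,\bar a)-f_i(x,a)\bigr)$, the triangle inequality together with the bound $|f_i|\le M_f$ from $[\rm{H}0]$ yields $|f_i(x,a_\epsilon)-f_i(x,a)|\le 2M_f\epsilon$, i.e. \eqref{maxsup:f_bis}; the estimate \eqref{maxsup:ell_bis} follows identically, using $\ell_i(x,a_\epsilon)-\ell_i(x,a)=\epsilon\bigl(\ell_i(x,\bar a)-\ell_i(x,a)\bigr)$ and the bound $|\ell_i|\le M_\ell$ from $[\rm{H}1]$.

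There is no genuine analytic obstacle here: once the two endpoints $a$ and $\bar a$ are identified, the whole statement is an immediate consequence of convexity and the uniform bounds. The only point requiring care is that the convex combination is taken in the \emph{joint} set $\FL_i(x)=\{(f_i(x,a),\ell_i(x,a))\}$ rather than separately for $f_i$ and $\ell_i$; this is what lets $[\rm{H}2]$ produce a single admissible control $a_\epsilon\in A_i$ controlling both the dynamics and the running cost simultaneously.
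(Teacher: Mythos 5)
Your proof is correct and follows essentially the same route as the paper's: use $[\tH3]$ to get a control $\bar a$ with $f_i(x,\bar a).e_i=\delta$, take the convex combination with weights $(1-\epsilon,\epsilon)$ in the joint set $\FL_i(x)$, and invoke the convexity assumption $[\rm{H}2]$ to realize it by some $a_\epsilon\in A_i$, after which the three estimates follow by direct computation.
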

\begin{proof}
From [$\tH$3] there exists $a_\delta\in A_i$ such that
$f_i(x,a_\delta).e_i=\delta$. From [H2],
\begin{displaymath}
\epsilon(f_i(x,a_\delta),\ell_i(x,a_\delta))+(1-\epsilon)(f_i(x,a),\ell_i(x,a))\in \FL_i(x)  
\end{displaymath}
for any $\epsilon\in[0,1]$. So, there exists $a_\epsilon\in A_i$ such that
\begin{displaymath}
\epsilon(f_i(x,a_\delta),\ell_i(x,a_\delta))+(1-\epsilon)(f_i(x,a),\ell_i(x,a))=
(f_i(x,a_\epsilon),\ell_i(x,a_\epsilon))  
\end{displaymath}
which has all the desired properties.
\end{proof}

\begin{corollary}
  \label{cor: discontinuous_existence_1}
  For any  $i\in\{1,\dots,N\}$, $x\in \Ga$ and  $p_i\in \R e_0 \times \R e_i$,
\begin{equation}
   \label{maxsup:2}
  \max_{a\in A_i \hbox{ s.t. }  f_i(x,a).e_i\ge 0} (-p_i f_i(x,a) -\ell_i(x,a)) =  \sup_{a\in A_i \hbox{ s.t. }  f_i(x,a).e_i> 0} (-p_i f_i(x,a) -\ell_i(x,a)) .
   \end{equation}
\end{corollary}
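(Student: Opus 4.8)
The plan is to read off this corollary directly from Lemma~\ref{lem: discontinuous_existence_2}, which is precisely the tool that lets one trade the non-strict constraint $f_i(x,a).e_i\ge 0$ for the strict one $f_i(x,a).e_i> 0$ at the cost of an arbitrarily small perturbation of the control. Throughout, fix $i$, $x\in\Ga$ and $p_i\in\R e_0\times\R e_i$, abbreviate $G(a)=-p_i f_i(x,a)-\ell_i(x,a)$, and introduce the two constraint sets
\[
S_{\ge}=\{a\in A_i:\ f_i(x,a).e_i\ge 0\},\qquad S_{>}=\{a\in A_i:\ f_i(x,a).e_i> 0\}.
\]

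First I would check that the left-hand side of \eqref{maxsup:2} is a genuine maximum. By $[\rm{H}0]$ the set $A_i$ is compact and $f_i,\ell_i$ are continuous, so $S_{\ge}$ is a compact subset of $A_i$; it is nonempty because $[\tH3]$ supplies a control $a$ with $f_i(x,a).e_i=\delta>0$. Since $G$ is continuous, $\max_{S_{\ge}}G$ is attained. The inequality ``$\ge$'' in \eqref{maxsup:2} is then immediate from the inclusion $S_{>}\subset S_{\ge}$, which forces $\sup_{S_{>}}G\le\max_{S_{\ge}}G$.

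The content lies in the reverse inequality. Let $a^*\in S_{\ge}$ be a maximizer, so that $G(a^*)=\max_{S_{\ge}}G$ while only $f_i(x,a^*).e_i\ge 0$ is known. Applying Lemma~\ref{lem: discontinuous_existence_2} to $a^*$ produces, for each $\epsilon\in(0,1)$, a control $a_\epsilon\in A_i$ with $f_i(x,a_\epsilon).e_i\ge\delta\epsilon>0$, hence $a_\epsilon\in S_{>}$, together with the estimates $|f_i(x,a_\epsilon)-f_i(x,a^*)|\le 2M_f\epsilon$ and $|\ell_i(x,a_\epsilon)-\ell_i(x,a^*)|\le 2M_\ell\epsilon$. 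Merging these two bounds into a single estimate on $G$ by Cauchy--Schwarz gives
\[
|G(a_\epsilon)-G(a^*)|\le |p_i|\,|f_i(x,a_\epsilon)-f_i(x,a^*)|+|\ell_i(x,a_\epsilon)-\ell_i(x,a^*)|\le (2M_f|p_i|+2M_\ell)\,\epsilon.
\]
Since $a_\epsilon\in S_{>}$, this yields $\sup_{S_{>}}G\ge G(a_\epsilon)\ge \max_{S_{\ge}}G-(2M_f|p_i|+2M_\ell)\epsilon$; letting $\epsilon\to0$ gives $\sup_{S_{>}}G\ge\max_{S_{\ge}}G$, and combining with the trivial inequality above establishes \eqref{maxsup:2}.

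I do not anticipate a real obstacle here: the essential work has already been carried out in Lemma~\ref{lem: discontinuous_existence_2}, and the only points needing a little care are the attainment of the maximum (compactness and nonemptiness of $S_{\ge}$) and the merging of the separate approximations of $f_i$ and $\ell_i$ into one estimate on $G$, where the coefficient $|p_i|$ appears but stays finite because $p_i$ is fixed.
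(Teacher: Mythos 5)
Your proof is correct and follows exactly the route the paper intends: the corollary is stated as a direct consequence of Lemma~\ref{lem: discontinuous_existence_2}, whose perturbed controls $a_\epsilon$ with $f_i(x,a_\epsilon).e_i\ge\delta\epsilon>0$ serve precisely to pass from the non-strict to the strict constraint up to an $O(\epsilon)$ error in $-p_i\cdot f_i(x,a)-\ell_i(x,a)$. Your additional checks (compactness and nonemptiness of the constraint set for attainment of the maximum, and the $(2M_f|p_i|+2M_\ell)\epsilon$ bound) fill in the details the paper leaves implicit.
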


\begin{theorem}\label{th: v_discontinuous_solution}
 Assume  $[\rm{H}0]$, $[\rm{H}1]$, $[\rm{H}2]$ and $[\tH3]$.
The value function $v$ defined in (\ref{eq:4}) is a  bounded  discontinuous viscosity solution of \eqref{HJa} in $ \cS$.
\end{theorem}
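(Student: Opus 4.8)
The plan is to prove the two halves of the statement separately: that the upper semicontinuous envelope $v^\star$ is a subsolution and that the lower envelope $v_\star$ is a supersolution of \eqref{HJa}. Boundedness is immediate, since $\ell$ is bounded by $M_\ell$ and $\lambda>0$ give $|v|\le M_\ell/\lambda$; and the dynamic programming principle \eqref{DPP} still holds under $[\rm{H}0]$, $[\rm{H}1]$ and $[\tH3]$, its proof needing only that $\cT_x\neq\emptyset$ (Point 1 of Theorem \ref{sec:optim-contr-probl_normal}) and the usual concatenation properties of admissible trajectories. At a point $x\in\cP_i\setminus\Ga$ the required inequalities reduce, by Remark \ref{IKsola}, to the standard notion of discontinuous viscosity sub/supersolution of $\lambda u+H_i(x,Du)=0$, and the classical Barles--Perthame arguments apply because $f_i,\ell_i$ are continuous on each $\cP_i$. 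Thus the whole difficulty is concentrated at the interface $\Ga$, and I would follow the strategy of Barles--Briani--Chasseigne \cite{barles2013bellman}, replacing the strong-controllability estimates of \S\ref{section: classical_controllability} by Property \ref{prop: R_normal_controllability} and the three facts Lemma \ref{lem: discontinuous_existence}, Lemma \ref{lem: discontinuous_existence_2} and Corollary \ref{cor: discontinuous_existence_1}.

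For the supersolution property of $v_\star$ at $\bar x\in\Ga$ I would argue by contradiction: if a test function $\ph\in\cR(\cS)$ touches $v_\star$ from below at $\bar x$ and $\lambda v_\star(\bar x)+H_\Ga(\bar x,D(\ph|_{\cP_1})(\bar x),\dots,D(\ph|_{\cP_N})(\bar x))<0$, then for some $\theta>0$ and every $i$ one has $-D\ph(\bar x,f_i(\bar x,a))-\ell_i(\bar x,a)\le -\lambda v_\star(\bar x)-\theta$ for all $a\in A_i$ with $f_i(\bar x,a).e_i\ge 0$. One then selects points $x_k\to\bar x$ realizing $v(x_k)\to v_\star(\bar x)$ and a near-optimal trajectory from each $x_k$; the delicate point, handled via Property \ref{prop: R_normal_controllability} and Lemma \ref{lem: discontinuous_existence}, is to control the parts of these trajectories that slide along or return to $\Ga$, since there only controls with $f_i.e_i\ge 0$ are admissible and the above inequality applies. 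Integrating the resulting differential inequality for $t\mapsto e^{-\lambda t}\ph(y(t))$ and comparing with the upper bound furnished by \eqref{DPP} yields a strictly positive gap contradicting $v_\star(\bar x)=\liminf_{z\to\bar x}v(z)$.

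The subsolution property of $v^\star$ at $\bar x\in\Ga$ is, I expect, the main obstacle. One must establish $\lambda v^\star(\bar x)+H_i^+(\bar x,D(\ph|_{\cP_i})(\bar x))\le 0$ for every $i$ and then maximize over $i$ to recover $H_\Ga$. Two phenomena make this hard: $v$ may be genuinely discontinuous across $\Ga$, so the $\limsup$ defining $v^\star(\bar x)$ need not be attained from within the particular half-plane $\cP_i$ under scrutiny; and the closed constraint $f_i.e_i\ge 0$ appearing in $H_i^+$ is awkward when one wants trajectories that actually remain in the open half-plane $\cP_i\setminus\Ga$. I would dispose of the second issue with Corollary \ref{cor: discontinuous_existence_1} and Lemma \ref{lem: discontinuous_existence_2}, which let me replace the control realizing $H_i^+(\bar x,\cdot)$ by controls $a_\epsilon$ with $f_i(\bar x,a_\epsilon).e_i\ge\delta\epsilon>0$ driving the trajectory strictly into $\cP_i\setminus\Ga$ at a cost $O(\epsilon)$ in $f$ and $\ell$; for such trajectories the interior near-optimality form of \eqref{DPP} applies and passing to the limit gives the inequality. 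The first issue I would address by using that $p\mapsto H_i^+(\bar x,p)$ is nonincreasing in the normal variable $p.e_i$ — so the test function can be tilted in the $e_i$ direction without destroying the maximum of $v^\star-\ph$ at $\bar x$ — together with the reachability estimate of Lemma \ref{lem: discontinuous_existence}, by which $\Ga$ is reached from any nearby $z\in\cP_j\setminus\Ga$ in time $\le C\,z_j$, tying the values approached from different half-planes to the values on $\Ga$.

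An alternative and perhaps cleaner route would mirror \S\ref{sec:existence-uniqueness-1}: first show, as in Proposition \ref{sec:existence} (whose argument rests only on the dynamic programming principle and the definition of the relaxed field $\fl$), that $v^\star$ and $v_\star$ are respectively a sub- and a supersolution of the relaxed equation \eqref{HJa2}, and then prove the discontinuous analogue of Lemma \ref{sec:existence-1}, namely $\sup_{(\zeta,\xi)\in\fl(x)}\{-D\ph(x,\zeta)-\xi\}=\max_{(\zeta,\xi)\in\FL(x)}\{-D\ph(x,\zeta)-\xi\}$ for all $\ph\in\cR(\cS)$ and $x\in\Ga$. Here the identification of $\fl(x)$ in Lemma \ref{sec:existence-2} has to be re-derived under $[\tH3]$, and this is exactly where Lemma \ref{lem: discontinuous_existence_2} and Corollary \ref{cor: discontinuous_existence_1} enter, to show that strictly ingoing relaxed velocities already generate the relevant part of $\fl(x)$. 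In either route the genuinely new phenomenon, and the hardest step, is the same: controlling how admissible trajectories and their accumulated costs behave as they approach $\Ga$ when only normal — rather than full — controllability is assumed.
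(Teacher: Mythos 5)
Your overall architecture --- proving separately that $v^\star$ is a subsolution and $v_\star$ a supersolution via the dynamic programming principle, with the difficulty concentrated at $\Ga$ --- is the paper's architecture, and your subsolution half is essentially the paper's proof: reduce to strictly ingoing controls via Corollary \ref{cor: discontinuous_existence_1} and Lemma \ref{lem: discontinuous_existence_2}, then from a point $x_\epsilon\in\cP_i\setminus\Ga$ realizing $v^\star(\bar x)$ concatenate the fast trajectory to $\Ga$ given by Lemma \ref{lem: discontinuous_existence} (exit time $\le C\,x_\epsilon.e_i$, hence cost $o_\epsilon(1)$) with the trajectory driven by the fixed strictly ingoing control into $\cP_k$, and apply \eqref{DPP} along it.

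The genuine gap is in the supersolution half. You correctly identify the delicate point --- near-optimal trajectories from $x_k$ that enter some $\cP_j$ and return to $\Ga$, or slide along $\Ga$ --- but the tools you invoke for it, Property \ref{prop: R_normal_controllability} and Lemma \ref{lem: discontinuous_existence}, cannot handle it: both assert the \emph{existence} of certain controls or trajectories, whereas here you must estimate the cost of \emph{arbitrary} trajectories produced by the infimum in \eqref{DPP}. The obstruction is that your contradiction hypothesis ($\lambda v_\star(\bar x)+H_\Ga<0$) constrains only controls with $f_j(\bar x,a).e_j\ge 0$, while the returning portions of a near-optimal trajectory are driven by controls with $f_j.e_j<0$, about which the hypothesis says nothing; the differential inequality for $t\mapsto e^{-\lambda t}\ph(y(t))$ therefore cannot be integrated along those pieces. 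What the paper does instead (Appendix \ref{appendix: v_discontinuous_solution}) is an averaging argument: by Stampacchia's theorem, over a time interval in which the trajectory enters and leaves $\cP_j\setminus\Ga$ the net normal displacement $\int f_j.e_j\,ds$ vanishes, so by the convexity assumption [H2] and the continuity in [H0]--[H1] the time-averaged pair of dynamics and cost lies within $o_\epsilon(1)+O(t)$ of $\FL_j(\bar x)\cap(\R e_0\times\R)$, whose elements have zero normal component and hence \emph{are} covered by $H_j^+$; segments lying on $\Ga$ itself are treated the same way using Point 3 of Theorem \ref{sec:optim-contr-probl_normal} (tangential dynamics a.e.\ on $\Ga$) together with the piecewise linearity of $(\zeta,\xi)\mapsto -D\ph(\bar x,\zeta)-\xi$, which lets one pass from the convex hull $\overline{\rm co}\bigl\{\bigcup_j \FL_j(\bar x)\cap(\R e_0\times\R)\bigr\}$ to a maximum over $j$. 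Without this mechanism your supersolution argument does not close; with it, your by-contradiction formulation becomes equivalent to the paper's direct estimate. The same remark applies to your alternative route: re-deriving Lemma \ref{sec:existence-2} under $[\tH3]$ requires exactly this averaging argument, not merely Lemma \ref{lem: discontinuous_existence_2} and Corollary \ref{cor: discontinuous_existence_1}.
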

\begin{proof}
  The proof being a bit long, we postpone it to the appendix \ref{appendix: v_discontinuous_solution}.
\end{proof}

We give now some results on the behavior of the Hamiltonians in the new framework.

\begin{lemma}\label{lem: normal_controllability3}
 Assume  $[\rm{H}0]$, $[\rm{H}1]$, $[\rm{H}2]$ and $[\tH3]$.
  Then, for $i= 1, \ldots, N$, we have
 \begin{equation}
 \label{eq: regularity of hi with respect to x}
 |H_i(x,p) -H_i(y,p) | \le L_f |x-y||p|+ \omega_\ell(|x-y|), \quad \hbox{for any } x, y\in \cP_i \hbox{ and }p \in \R e_0 \times \R e_i,
 \end{equation}
 and
  \begin{equation}
 \label{eq: regularity of hi with respect to p}
 |H_i(x,p) -H_i(x,q) | \le M_f |p-q|, \quad \hbox{for any } x\in \cP_i \hbox{ and }p, q \in \R e_0 \times \R e_i,
 \end{equation}
where $L_f, M_f$ and $\omega_\ell$ are defined in $[\rm{H}0]$ and $[\rm{H}1]$.\\
At last, if $C_M=\max \{M_f,M_l \}$,
\begin{equation}
 \label{eq coercivity with respect to p'}
 H_i(x,p_0e_0+p_ie_i) \ge \frac{\delta}{2} |p_i|-C_M(1+|p_0|),  \quad \hbox{for any } x\in B(\Ga,R)\cap \cP_i \hbox{ and } p_0,p_i\in\R ,
\end{equation}
 where $R>0$ is a positive number as in \eqref{eq: R_normal_controllability}.
\end{lemma}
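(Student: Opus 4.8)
The plan is to work directly from the representation $H_i(x,p)=\max_{a\in A_i}(-p\cdot f_i(x,a)-\ell_i(x,a))$, exploiting that $A_i$ is compact so that the maximum is attained. All three estimates are then obtained by the elementary ``compare a maximizer of one side against the other side'' technique, combined with the structural assumptions $[\rm{H}0]$ and $[\rm{H}1]$ and---for the coercivity---the normal controllability recorded in Property \ref{prop: R_normal_controllability}. Note that $[\rm{H}2]$ plays no role in these purely quantitative bounds.

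For the regularity in $x$, namely \eqref{eq: regularity of hi with respect to x}, I would fix $x,y\in\cP_i$ and $p$, and let $a^*\in A_i$ attain the maximum defining $H_i(x,p)$. Inserting $a^*$ into the expression defining $H_i(y,p)$ gives $H_i(x,p)-H_i(y,p)\le \bigl(-p\cdot f_i(x,a^*)-\ell_i(x,a^*)\bigr)-\bigl(-p\cdot f_i(y,a^*)-\ell_i(y,a^*)\bigr)$, whose right-hand side is bounded by $|p|\,|f_i(x,a^*)-f_i(y,a^*)|+|\ell_i(x,a^*)-\ell_i(y,a^*)|\le L_f|x-y|\,|p|+\omega_\ell(|x-y|)$ by the Lipschitz bound on $f_i$ in $[\rm{H}0]$ and the modulus of continuity in $[\rm{H}1]$. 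Exchanging the roles of $x$ and $y$ yields \eqref{eq: regularity of hi with respect to x}. The regularity in $p$, \eqref{eq: regularity of hi with respect to p}, is proved the same way: with $a^*$ a maximizer for $H_i(x,p)$ one writes $-p\cdot f_i(x,a^*)=-q\cdot f_i(x,a^*)+(q-p)\cdot f_i(x,a^*)$ and estimates the last term by $|p-q|\,|f_i(x,a^*)|\le M_f|p-q|$ using the uniform bound $|f_i|\le M_f$ from $[\rm{H}0]$; symmetry then finishes the argument.

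The coercivity \eqref{eq coercivity with respect to p'} is where the controllability assumption enters. Fix $x\in B(\Ga,R)\cap\cP_i$ and $p=p_0e_0+p_ie_i$. By Property \ref{prop: R_normal_controllability} we have $[-\tfrac{\delta}{2},\tfrac{\delta}{2}]\subset\{f_i(x,a)\cdot e_i:a\in A_i\}$, so I can select $a\in A_i$ with $f_i(x,a)\cdot e_i=-\tfrac{\delta}{2}\,\mathrm{sgn}(p_i)$, that is, chosen so that $-p_i\,(f_i(x,a)\cdot e_i)=\tfrac{\delta}{2}|p_i|$. Since $f_i(x,a)\in\R e_0\times\R e_i$ with $e_0\perp e_i$, one has $-p\cdot f_i(x,a)=-p_0\,(f_i(x,a)\cdot e_0)+\tfrac{\delta}{2}|p_i|$, and therefore $H_i(x,p)\ge -p\cdot f_i(x,a)-\ell_i(x,a)\ge \tfrac{\delta}{2}|p_i|-M_f|p_0|-M_\ell$. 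Bounding $M_f|p_0|+M_\ell\le C_M(1+|p_0|)$ with $C_M=\max\{M_f,M_\ell\}$ gives the claim.

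None of the three steps is genuinely hard; the only point requiring care is the coercivity, where one must make the correct sign choice for $f_i(x,a)\cdot e_i$ relative to $p_i$ and remember that the tangential contribution $-p_0\,(f_i(x,a)\cdot e_0)$ is merely controlled, not optimized. This is exactly what forces the term $C_M(1+|p_0|)$ and prevents a clean coercivity in the full variable $p$: normal controllability controls only the $e_i$-component of the dynamics, so no lower bound in $|p_0|$ can be extracted.
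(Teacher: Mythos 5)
Your proposal is correct and follows essentially the same route as the paper: the two regularity estimates come from the standard comparison of maximizers using $[\rm{H}0]$ and $[\rm{H}1]$, and the coercivity uses Property \ref{prop: R_normal_controllability} to pick a control whose normal component $f_i(x,a)\cdot e_i=\pm\tfrac{\delta}{2}$ has the sign opposite to $p_i$, with the tangential term and running cost merely bounded by $C_M(1+|p_0|)$. The only cosmetic difference is that the paper names two controls $a_1,a_2$ and splits into the cases $p_i>0$ and $p_i\le 0$, whereas you encode the same choice via $\mathrm{sgn}(p_i)$.
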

\begin{proof}
The proof of the Lemma is given in \cite{barles2013bellman} Lemma 7.1 We supply it for completeness.
Assumptions $[\rm{H}0]$ and $[\rm{H}1]$ yield \eqref{eq: regularity of hi with respect to x} and \eqref{eq: regularity of hi with respect to p}. For (\ref{eq coercivity with respect to p'}) we also have to use the  partial controllability assumption $[\tH3]$. Indeed, according to Property \ref{prop: R_normal_controllability} there exist some controls $a_1, a_2 \in A_i$ such that
\begin{displaymath}
-f_i(x,a_1).e_i=\frac{\delta}{2} >0, \quad -f_i(x,a_2).e_i=-\frac{\delta}{2}.
\end{displaymath}
Now we compute $H_i(x,p_0e_0+p_ie_i)$ assuming that $p_i>0$ (the other case is treated similarly).
\begin{displaymath}
\begin{array}{lll}
H_i(x,p_0e_0+p_ie_i) & \ge & -f_i(x,a_1).(p_0e_0+p_ie_i) -\ell_i(x,a_1) \\
& \ge & \frac{\delta}{2} |p_i| - f_i(x,a_1).p_0e_0  -\ell_i(x,a_1) \\
& \ge & \frac{\delta}{2} |p_i| - C_M|p_0|  -C_M, \\
\end{array}
\end{displaymath}
the last line coming from the boundedness of $f_i$ and $l_i$. This concludes the proof.
\end{proof}

\begin{lemma}
\label{lem: normal_controllability4}
 Assume  $[\rm{H}0]$, $[\rm{H}1]$, $[\rm{H}2]$ and $[\tH3]$. Then, for any $x,x' \in \Ga$, $i\in \{1, \ldots, N \}$ and $a\in A_i$ such that $f_i(x,a).e_i\ge 0$, there exists $a' \in A_i$ such that $f_i(x',a').e_i\ge 0$ and

\begin{equation}
\label{eq lem of lem 1}
|f_i(x,a)-f_i(x',a')|\le M|x-x'|,
\end{equation}
\begin{equation}
\label{eq lem of lem 2}
|\ell_i(x,a)-\ell_i(x',a')|\le \omega(|x-x'|),
\end{equation}
 where, if $M_f$, $L_f$, $M_\ell$, $\delta$ and $\omega_\ell$ are given by assumptions $[\rm{H}0]$, $[\rm{H}1]$ and $[\tH3]$.
 \begin{displaymath}
 M=L_f\left(1+2M_f\delta^{-1} \right) \quad \hbox{and } \quad \omega(t)=\omega_\ell(t)+2M_\ell L_f\delta^{-1}t \quad \hbox{for } t\ge 0.
 \end{displaymath}
\end{lemma}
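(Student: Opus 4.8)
The plan is to correct the naive choice $a'=a$. Taking $a'=a$ already yields the Lipschitz bounds from $[\rm{H}0]$ and $[\rm{H}1]$, but it may fail the constraint $f_i(x',a').e_i\ge 0$, since the normal component $f_i(\cdot,a).e_i$ is only Lipschitz and could have dropped below $0$ between $x$ and $x'$. I would repair this by mixing $a$ with a strongly inward-pointing control furnished by normal controllability $[\tH3]$, exactly in the spirit of the proof of Lemma~\ref{lem: discontinuous_existence_2}. The first step is the \emph{deficit estimate}: since $x,x'\in\Ga\subset\cP_i$, assumption $[\rm{H}0]$ gives $|f_i(x,a).e_i-f_i(x',a).e_i|\le L_f|x-x'|$, and as $f_i(x,a).e_i\ge0$ by hypothesis, the quantity $c:=[f_i(x',a).e_i]_-$ satisfies $c\le L_f|x-x'|$.

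I would then split into two regimes. If $L_f|x-x'|>\delta$, the conclusion is trivial: by $[\tH3]$ one picks any $a'\in A_i$ with $f_i(x',a').e_i\ge0$, and then $|f_i(x,a)-f_i(x',a')|\le 2M_f$ and $|\ell_i(x,a)-\ell_i(x',a')|\le 2M_\ell$; these are dominated by $M|x-x'|$ and $\omega(|x-x'|)$ respectively, because in this regime $M|x-x'|>\delta+2M_f>2M_f$ and $\omega(|x-x'|)\ge 2M_\ell L_f\delta^{-1}|x-x'|>2M_\ell$.

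The substantive case is $L_f|x-x'|\le\delta$, which guarantees $c\le\delta$. Using $[\tH3]$ I fix $a_\delta\in A_i$ with $f_i(x',a_\delta).e_i=\delta$, set $\theta=c/\delta\in[0,1]$, and invoke convexity $[\rm{H}2]$ to produce $a'\in A_i$ realizing the convex combination $(f_i(x',a'),\ell_i(x',a'))=\theta(f_i(x',a_\delta),\ell_i(x',a_\delta))+(1-\theta)(f_i(x',a),\ell_i(x',a))$. Reading off the $e_i$-component gives $f_i(x',a').e_i=\theta\delta+(1-\theta)f_i(x',a).e_i$, which equals $f_i(x',a).e_i\ge0$ when $c=0$ and equals $c^2/\delta\ge0$ when $c>0$; hence $f_i(x',a').e_i\ge0$ as required. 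For the quantitative bounds I estimate by the triangle inequality, splitting $f_i(x,a)-f_i(x',a')=\big(f_i(x,a)-f_i(x',a)\big)+\big(f_i(x',a)-f_i(x',a')\big)$: the first term is $\le L_f|x-x'|$ by $[\rm{H}0]$, while the second equals $\theta|f_i(x',a)-f_i(x',a_\delta)|\le 2M_f\theta\le 2M_fL_f\delta^{-1}|x-x'|$, summing to $M=L_f(1+2M_f\delta^{-1})$. The identical splitting for $\ell_i$, with $[\rm{H}1]$ on the first term and $\theta|\ell_i(x',a)-\ell_i(x',a_\delta)|\le 2M_\ell\theta\le 2M_\ell L_f\delta^{-1}|x-x'|$ on the second, yields $\omega(t)=\omega_\ell(t)+2M_\ell L_f\delta^{-1}t$.

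The main obstacle is not analytic but organizational: keeping the mixing weight admissible, $\theta\le1$, forces the threshold $L_f|x-x'|\le\delta$ and hence the two-regime argument, the large-distance regime being absorbed by the crude boundedness bounds $2M_f,2M_\ell$. Once this is set up, everything rests on the single observation that the perturbation introduced by the mixing is proportional to $\theta$, hence to the deficit $c\le L_f|x-x'|$, which is what produces the linear dependence on $|x-x'|$ in both estimates.
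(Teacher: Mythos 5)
Your proof is correct, and its core mechanism is the same as the paper's: estimate the deficit $c=[f_i(x',a).e_i]_-\le L_f|x-x'|$ using $[\rm{H}0]$, then repair the sign of the normal component by convex mixing (via $[\rm{H}2]$) with a strongly inward control $a_\delta$ furnished by $[\tH3]$, and observe that the perturbation is proportional to the mixing weight, hence to $c$. The only real difference is the choice of weight, and it is worth noting what the paper's choice buys: instead of putting weight $\theta=c/\delta$ on $a_\delta$, the paper puts weight
\begin{displaymath}
1-\overline{\mu}=\frac{-f_i(x',a).e_i}{\delta-f_i(x',a).e_i}=\frac{c}{\delta+c}
\end{displaymath}
on it. Since the deficit $c$ also appears in the denominator, this weight lies in $(0,1)$ automatically, for \emph{any} distance $|x-x'|$, so the paper needs no regime split; moreover the resulting control satisfies $f_i(x',a').e_i=0$ exactly (rather than your $c^2/\delta\ge 0$), and the bound $c/(\delta+c)\le c/\delta\le L_f|x-x'|\delta^{-1}$ yields the same constants $M$ and $\omega$. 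Your two-regime argument is a legitimate workaround — the far regime $L_f|x-x'|>\delta$ is indeed absorbed by the crude bounds $2M_f$, $2M_\ell$, and your dominance checks there are right — but it is purely a cost of normalizing by $\delta$ instead of $\delta+c$; renormalizing removes the case analysis entirely.
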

\begin{proof}
The proof follows the lines of that of \cite{barles2013bellman}, Lemma 7.4 : let $a\in A_i$ be such that $f_i(x,a).e_i\ge 0$. Fix $x' \in \Ga$, we have two possibilities. If $f_i(x',a).e_i\ge 0$ the conclusion follows easily because according to [H0] and [H1] we have respectively (\ref{eq lem of lem 1}) and (\ref{eq lem of lem 2}) with $a'=a$. Otherwise $f_i(x',a).e_i< 0$. By the partial controllability assumption in [$\tH$3] there exists a control $a_1 \in A_i$ such that $f_i(x',a_1).e_i= \delta$. We then set
\begin{displaymath}
\overline{\mu}:= \frac{\delta}{\delta-f_i(x',a).e_i}.
\end{displaymath}
Since $\overline{\mu}\in (0,1)$, by the convexity assumption in [H2], there exists a control $a'\in A_i$ such that
\begin{displaymath}
\overline{\mu}\left(f_i(x',a),\ell_i(x',a) \right)+ (1-\overline{\mu})\left(f_i(x',a_1),\ell_i(x',a_1) \right) = \left(f_i(x',a'),\ell_i(x',a') \right).
\end{displaymath}
By construction $f_i(x',a').e_i=0$ and then, in particular, $f_i(x',a').e_i \ge 0$. Moreover, since
\begin{displaymath}
(1-\overline{\mu})= \frac{-f_i(x',a).e_i}{\delta-f_i(x',a).e_i} \le \frac{f_i(x,a).e_i-f_i(x',a).e_i}{\delta-f_i(x',a).e_i}  \le \frac{L_f|x-x'|}{\delta},
\end{displaymath}
we have
\begin{displaymath}
\begin{array}{lll}
|f_i(x,a)-f_i(x',a')| & \le & |f_i(x,a)-f_i(x',a)| + |f_i(x',a)-f_i(x',a')| \\
& \le & L_f|x-x'| + (1-\bar \mu)|f_i(x',a)-f_i(x',a_1)|\\
& \le & L_f\left(1+2M_f\delta^{-1} \right)|x-x'|.
\end{array}
\end{displaymath}
This proves (\ref{eq lem of lem 1}). The same calculation with $\ell_i$ gives us (\ref{eq lem of lem 2}).
\end{proof}

\begin{remark}
\label{rmk: normal_controllability4}
In Lemma \ref{lem: normal_controllability4}, if $x\in \Ga$ and $a\in A_i$ are such that $f_i(x,a).e_i=0$, then we have the stronger result that $\forall x'\in \Ga$, $\exists a'\in A_i$ such that $f_i(x',a').e_i=0$ and the inequalities \eqref{eq lem of lem 1} and \eqref{eq lem of lem 2} are true. Indeed, if $f_i(x',a).e_i \le 0$ the proof of Lemma \ref{lem: normal_controllability4} directly provides a suitable $a'\in A_i$. We only need to specify the strategy in the case when $f_i(x',a).e_i > 0$. We consider $a_2\in A_i$, given by $[\tH3]$ such that $f_i(x',a_2).e_i=-\delta$. Then, we set $\tilde \mu=\frac{\delta}{f_i(x',a).e_i+\delta}\in (0,1)$ and we choose $a'\in A_i$, given by $[\rm{H}2]$, such that 
$\tilde \mu \left(f_i(x',a),\ell_i(x',a) \right)+ (1-\tilde \mu)\left(f_i(x',a_2),\ell_i(x',a_2) \right) = \left(f_i(x',a'),\ell_i(x',a') \right)$. With this choice for $a'\in A_i$, the same calculations as in the proof above, using $1-\tilde \mu= \frac{f_i(x',a).e_i}{f_i(x',a).e_i+\delta}=\frac{(f_i(x',a)-f_i(x,a)).e_i}{f_i(x',a).e_i+\delta}$, allow us to conclude.
\end{remark}

\begin{lemma}
\label{lem inequality on Hgamma+}
 Assume  $[\rm{H}0]$, $[\rm{H}1]$, $[\rm{H}2]$ and $[\tH3]$. The Hamiltonian $H_i^+$ defined in (\ref{eq:7bis}) has the following properties
 \begin{equation}
 \label{eq:normal_controllability01}
 H_i^+(x,p_0e_0+p_ie_i) \ge H_i^+(x,p_0e_0+q_ie_i) \quad  \forall x\in \Ga \hbox{ and }  \forall p_0,p_i,q_i\in \R  \hbox{ s.t } \; p_i \le q_i,
 \end{equation}
  \begin{equation}
 \label{eq pseudo coercivity with respect to p' of hi+}
 H^+_i(x,p_0e_0+p_ie_i) \ge -\delta p_i-C_M(1+|p_0|),  \quad \forall x\in \Ga  \;\; \hbox{ and }\;\; p_0,p_i\in \R,
 \end{equation}
 \begin{equation}
  \label{eq:tech02}
  |H_i^+(x,p)-H_i^+(y,p)| \le M|x-y||p|+\omega(|x-y|) \quad \forall x,y\in \Ga \hbox{ and }\;\; p \in \R e_0 \times \R e_i,
 \end{equation}
where $M$ and $\omega$ are defined in Lemma \ref{lem: normal_controllability4} and $C_M=\max\{M_f,M_\ell\}$.
\end{lemma}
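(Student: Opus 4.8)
The plan is to establish each of the three assertions directly from the definition \eqref{eq:7bis} of $H_i^+$, exploiting the sign constraint $f_i(x,a).e_i\ge 0$ that defines the feasible set, together with the normal controllability $[\tH3]$ and the transfer Lemma \ref{lem: normal_controllability4}. Throughout I would write $p=p_0e_0+p_ie_i$ and note that, since $A_i$ is compact and $f_i,\ell_i$ are continuous, the feasible set $\{a\in A_i:\,f_i(x,a).e_i\ge 0\}$ is compact and non-empty (it contains the control furnished by $[\tH3]$), so the maxima defining $H_i^+$ are attained.

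For the monotonicity \eqref{eq:normal_controllability01}, fix any admissible control $a$, i.e. $f_i(x,a).e_i\ge 0$, and expand the scalar product:
\begin{displaymath}
-(p_0e_0+p_ie_i)\cdot f_i(x,a)-\ell_i(x,a)=-p_0\,(f_i(x,a).e_0)-p_i\,(f_i(x,a).e_i)-\ell_i(x,a).
\end{displaymath}
Since $f_i(x,a).e_i\ge 0$ and $p_i\le q_i$, the term $-p_i\,(f_i(x,a).e_i)\ge -q_i\,(f_i(x,a).e_i)$, so the expression for $p_ie_i$ dominates the one for $q_ie_i$. As the feasible set $\{a:\,f_i(x,a).e_i\ge 0\}$ is the same for both, taking the maximum yields \eqref{eq:normal_controllability01}.

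For the pseudo-coercivity \eqref{eq pseudo coercivity with respect to p' of hi+}, I would simply insert the distinguished control: by $[\tH3]$ there is $a\in A_i$ with $f_i(x,a).e_i=\delta\ge 0$, which is feasible, whence
\begin{displaymath}
H_i^+(x,p_0e_0+p_ie_i)\ge -p_0\,(f_i(x,a).e_0)-\delta p_i-\ell_i(x,a).
\end{displaymath}
Bounding $|f_i(x,a).e_0|\le M_f\le C_M$ and $|\ell_i(x,a)|\le M_\ell\le C_M$ gives the claimed lower bound $-\delta p_i-C_M(1+|p_0|)$.

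The only assertion requiring a genuine input is the $x$-regularity \eqref{eq:tech02}, and here the key is Lemma \ref{lem: normal_controllability4}. By symmetry it suffices to bound $H_i^+(x,p)-H_i^+(y,p)$ from above. I would pick $a\in A_i$ attaining $H_i^+(x,p)$, so $f_i(x,a).e_i\ge 0$; Lemma \ref{lem: normal_controllability4} then produces $a'\in A_i$ with $f_i(y,a').e_i\ge 0$, $|f_i(x,a)-f_i(y,a')|\le M|x-y|$ and $|\ell_i(x,a)-\ell_i(y,a')|\le\omega(|x-y|)$. Since $a'$ is feasible for $H_i^+(y,\cdot)$,
\begin{displaymath}
H_i^+(x,p)-H_i^+(y,p)\le -p\cdot\bigl(f_i(x,a)-f_i(y,a')\bigr)-\bigl(\ell_i(x,a)-\ell_i(y,a')\bigr)\le |p|\,M|x-y|+\omega(|x-y|).
\end{displaymath}
Exchanging the roles of $x$ and $y$ gives the reverse bound, hence the absolute-value estimate \eqref{eq:tech02}. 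The main (and essentially only) obstacle is thus the controllability transfer of admissible controls from $x$ to $y$ preserving the constraint $f_i(\cdot).e_i\ge 0$ with Lipschitz control on the data; this is exactly what Lemma \ref{lem: normal_controllability4} delivers, while monotonicity and coercivity are immediate consequences of the feasibility constraint and $[\tH3]$.
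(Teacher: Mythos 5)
Your proof is correct and follows essentially the same route as the paper: the monotonicity and pseudo-coercivity follow directly from the feasibility constraint and the control furnished by $[\tH3]$ (which the paper treats as immediate or by reference to the proof of \eqref{eq coercivity with respect to p'}), and the regularity estimate \eqref{eq:tech02} is obtained exactly as in the paper by picking a maximizing control at $x$, transferring it to $y$ via Lemma \ref{lem: normal_controllability4}, and exchanging the roles of $x$ and $y$.
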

\begin{proof}
We skip the easy proof of (\ref{eq:normal_controllability01}). The proof of (\ref{eq pseudo coercivity with respect to p' of hi+}) is the same as that of (\ref{eq coercivity with respect to p'}) with the difference that here the dynamics leading out of $\cP_i$ are not allowed. Let us prove (\ref{eq:tech02}). Let $x,y$ be in $\Ga$ and $p$ be in $\R e_0 \times \R e_i$. First, there exists $a\in A_i$ such that $f_i(x,a).e_i\ge 0$ and $H_i^+(x,p)=-f_i(x,a).p-\ell_i(x,a)$. Then, we consider $a'\in A_i$ given by Lemma \ref{lem: normal_controllability4} such that $f_i(y,a').e_i\ge 0$ and the inequality (\ref{eq lem of lem 1}) and (\ref{eq lem of lem 2}) are satisfied. Therefore,
\begin{displaymath}
\begin{array}{lll}
 H_i^+(x,p)-H_i^+(y,p) & \le & \left(-f_i(x,a).p-\ell_i(x,a) \right)-\left(-f_i(y,a').p-\ell_i(y,a') \right)\\
 & = & \left(f_i(y,a')-f_i(x,a) \right). p+\left(\ell_i(y,a')-\ell_i(x,a) \right)\\
 & \le &  M|x-y||p|+\omega(|x-y|).
\end{array}
\end{displaymath}
We conclude the proof by exchanging the roles of $x$ and $y$.
\end{proof}
\begin{remark}
\label{rmk inequality on Hgamma+}
In view of the calculations above, if the maximum which defines $H_i^+(x,p_0 e_0+p_i e_i)$ is reached for some $a\in A_i$ such that $f_i(x,a).e_i=0$, then from the Remark \ref{rmk: normal_controllability4} we have the stronger inequality,
\begin{equation}
\label{eq: inequality on Hgamma+_bis}
H_i^+(x,p_0 e_0+p_i e_i)-H_i^+(y,p_0 e_0+p_i e_i) \le  M|x-y||p_0|+\omega(|x-y|),
\end{equation}
but without the modulus on the left side of the inequality.
\end{remark}

\subsection{Comparison principle and Uniqueness}

A key argument in the proof of the Comparison Principle in Theorem \ref{sec:comparison-principle-1} is the fact that the subsolutions of (\ref{HJa}) are Lipschitz continuous in a neighborhood of $\Ga$. We have not this property in the current framework the above method cannot be applied directly. We will first prove a local comparison principle by reducing ourselves to the case when a subsolution is Lipschitz continuous, and then we will deduce a global comparison principle.
For this purpose, we start by stating some useful lemmas.

\vspace{1em}

The following transformation will allow us to focus on the case when the subsolutions are locally Lipschitz continuous in a neighborhood of $\Ga$.

\begin{definition}
Let $u: \cS \to \R$ be a bounded, usc  function and $\alpha$ be a positive number. We define the sup-convolution of $u$ with respect to the $x_0$-variable by
\begin{equation}\label{eq: normal_controllability2}
 u_\alpha(x):=\max_{z_0\in \R}\left\lbrace u(z_0e_0 +x_ie_i) - \left(\frac{|z_0-x_0|^2}{\alpha^2}+\alpha\right)^{\frac{p}{2}} \right\rbrace, \quad \quad \mbox{if } x=x_0e_0+x_ie_i\in \cP_i,
\end{equation}
where $\alpha, p>0$ are positive numbers.
\end{definition}

We recall well known results on sup-convolution.

\begin{lemma}\label{lem: sup_convolution_supremum}
Let $u: \cS \to \R$ be a bounded  function and $\alpha, p$ be positive numbers. Then, for any $x \in \cS$ the supremum which defines $u_\alpha(x)$ is achieved at a point $z_0\in \R$ such that
\begin{equation}
\label{eq: control_on_z0}
(\frac{|z_0-x_0|^2}{\alpha^2}+\alpha)^{\frac{p}{2}} \le 2\ \parallel u \parallel_\infty + \alpha^{\frac{p}{2}}.
\end{equation}
\end{lemma}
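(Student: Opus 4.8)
The plan is to exploit the coercivity of the penalty term together with the trivial test point $z_0=x_0$. Write $P(z_0) := \left( \frac{|z_0-x_0|^2}{\alpha^2} + \alpha \right)^{p/2}$, so that $u_\alpha(x) = \sup_{z_0 \in \R} \{ u(z_0 e_0 + x_i e_i) - P(z_0) \}$ and, in particular, $P(x_0) = \alpha^{p/2}$.

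First I would check that the supremum is actually attained, which legitimizes the $\max$ written in the definition. For a fixed $x=x_0 e_0 + x_i e_i \in \cP_i$, the map $z_0 \mapsto z_0 e_0 + x_i e_i$ sends $\R$ into $\cP_i$, so $z_0 \mapsto u(z_0 e_0 + x_i e_i)$ is a bounded, upper semicontinuous function of $z_0 \in \R$ (when $x\in\Ga$ the formula reduces to convolving $u|_\Ga$, and the discussion is unchanged). Since $p>0$, we have $P(z_0) \to +\infty$ as $|z_0|\to+\infty$, hence the functional $z_0 \mapsto u(z_0 e_0 + x_i e_i) - P(z_0)$ tends to $-\infty$ at infinity; any maximizing sequence therefore stays in a compact interval, and upper semicontinuity guarantees that the maximum is reached at some $z_0\in\R$.

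The heart of the argument is then a single comparison. Evaluating the functional at the admissible choice $z_0=x_0$ gives the lower bound
\begin{equation*}
u_\alpha(x) \ge u(x) - P(x_0) = u(x) - \alpha^{p/2}.
\end{equation*}
Now let $z_0$ be a maximizer. By definition $u_\alpha(x) = u(z_0 e_0 + x_i e_i) - P(z_0)$, and combining this equality with the lower bound above yields
\begin{equation*}
P(z_0) = u(z_0 e_0 + x_i e_i) - u_\alpha(x) \le u(z_0 e_0 + x_i e_i) - u(x) + \alpha^{p/2} \le 2\,\|u\|_\infty + \alpha^{p/2},
\end{equation*}
which is exactly \eqref{eq: control_on_z0}.

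I expect no genuine obstacle here: the estimate is a one-line comparison once the penalty is abbreviated by $P$, and the only step deserving a word of care is the existence of the maximizer, which rests on the coercivity of $P$ (coming from $p>0$) and on the upper semicontinuity of $u$ inherited from the setting in which the sup-convolution is defined.
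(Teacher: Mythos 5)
Your proof is correct: the comparison with the test point $z_0=x_0$ yields exactly the bound \eqref{eq: control_on_z0}, and your attainment argument (coercivity of the penalty for $p>0$ plus upper semicontinuity of $u$, which you rightly note is part of the definition of the sup-convolution even though the lemma's statement only says ``bounded'') is the standard one. The paper states this lemma without proof as a well-known fact about sup-convolutions, and your argument is precisely the proof it implicitly relies on.
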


\begin{lemma}\label{lem: normal_controllability2}
Let $u: \cS \to \R$ be a bounded, usc  function. Then, for all $\alpha, p>0$, the sup-convolution $u_\alpha$ is locally Lipschitz continuous with respect to $x_0$ ; i.e. for any compact subset $K\subset \R^3$, there exists $C_K>0$ such that for all $ x=x_0 e_0+x_i e_i, y=y_0 e_0+x_i e_i\in K\cap \cS$, $|u_\alpha(x)-u_\alpha(y)|\le C_K|x_0-y_0|$.
\end{lemma}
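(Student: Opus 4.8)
The plan is to exploit the smoothness of the kernel
\[
g(s) = \left(\frac{s^2}{\alpha^2} + \alpha\right)^{\frac{p}{2}}
\]
together with the a priori bound on the maximizing point supplied by Lemma~\ref{lem: sup_convolution_supremum}. The first observation I would make is that, since $x = x_0 e_0 + x_i e_i$ and $y = y_0 e_0 + x_i e_i$ share the \emph{same} component $x_i e_i$ (hence lie in the same half-plane, or both on $\Ga$), the two quantities $u_\alpha(x)$ and $u_\alpha(y)$ are suprema of one and the same family of values $u(z_0 e_0 + x_i e_i)$, $z_0\in\R$, taken against kernels centered respectively at $x_0$ and $y_0$:
\[
u_\alpha(x) = \max_{z_0 \in \R}\left[u(z_0 e_0 + x_i e_i) - g(z_0 - x_0)\right], \qquad u_\alpha(y) = \max_{z_0 \in \R}\left[u(z_0 e_0 + x_i e_i) - g(z_0 - y_0)\right].
\]

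The key point, and the reason I would \emph{not} attempt to translate the maximizing point, is that $u$ is only usc, so any estimate comparing $u$ at two distinct points is unavailable. Instead I would keep the \emph{same} test point $z_0$ in both expressions. Let $z_0^x$ denote a maximizer in the definition of $u_\alpha(x)$; it exists because $z_0 \mapsto u(z_0 e_0 + x_i e_i) - g(z_0 - x_0)$ is usc, bounded above, and tends to $-\infty$ as $|z_0|\to\infty$ (as $g$ is coercive). Using $z_0^x$ merely as a competitor in the supremum defining $u_\alpha(y)$, the $u$-terms cancel exactly:
\[
u_\alpha(x) - u_\alpha(y) \le \left[u(z_0^x e_0 + x_i e_i) - g(z_0^x - x_0)\right] - \left[u(z_0^x e_0 + x_i e_i) - g(z_0^x - y_0)\right] = g(z_0^x - y_0) - g(z_0^x - x_0).
\]

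It then remains to bound $g(z_0^x - y_0) - g(z_0^x - x_0)$ by $C_K|x_0 - y_0|$, and here I would invoke Lemma~\ref{lem: sup_convolution_supremum}: the maximizer satisfies $(|z_0^x - x_0|^2/\alpha^2 + \alpha)^{p/2} \le 2\|u\|_\infty + \alpha^{p/2}$, which confines $z_0^x - x_0$ to a bounded interval depending only on $\alpha, p, \|u\|_\infty$. Since $x_0$ and $y_0$ range over the bounded projection of the compact set $K$, both arguments $z_0^x - x_0$ and $z_0^x - y_0$ then lie in a fixed interval $[-T,T]$ with $T = T(K,\alpha,p,\|u\|_\infty)$. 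On $[-T,T]$ the kernel $g$ is $\cC^\infty$ (because $s^2/\alpha^2 + \alpha \ge \alpha > 0$), hence Lipschitz with a constant $L_g = L_g(K,\alpha,p)$, so that
\[
g(z_0^x - y_0) - g(z_0^x - x_0) \le L_g\,\big|(z_0^x - y_0) - (z_0^x - x_0)\big| = L_g\,|x_0 - y_0|.
\]
Exchanging the roles of $x$ and $y$ (using a maximizer $z_0^y$, bounded by the same argument) gives the reverse bound, and setting $C_K = L_g$ yields $|u_\alpha(x) - u_\alpha(y)| \le C_K|x_0 - y_0|$.

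The only genuinely delicate point is the \emph{uniform} control of the maximizers over $K$: without the a priori estimate of Lemma~\ref{lem: sup_convolution_supremum}, the local Lipschitz constant of $g$ could not be chosen independently of the pair $(x,y)$. Everything else is the routine "difference of two suprema" argument, whose whole content is the exact cancellation of the $u$-terms when a common test point is used.
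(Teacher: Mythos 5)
Your proof is correct. Note that the paper does not actually prove this lemma: it is stated (together with Lemma~\ref{lem: sup_convolution_supremum}) as a recalled ``well known result'' on sup-convolutions, so there is no proof to compare against. Your argument --- using the maximizer $z_0^x$ as a common test point so the $u$-terms cancel exactly, invoking Lemma~\ref{lem: sup_convolution_supremum} to confine $z_0^x-x_0$ (hence also $z_0^x-y_0$, for $x,y\in K$) to a fixed interval $[-T,T]$, and then using the local Lipschitz bound of the smooth kernel $g(s)=\left(\frac{s^2}{\alpha^2}+\alpha\right)^{\frac{p}{2}}$ on that interval --- is precisely the standard proof that the paper implicitly relies on, and you correctly supply the upper semicontinuity plus coercivity justification for the existence of the maximizer, which is the one point where the usc hypothesis is genuinely used.
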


\begin{lemma}\label{lem: u_alpha_lipschitz}
Assume  $[\rm{H}0]$, $[\rm{H}1]$, $[\rm{H}2]$ and $[\tH3]$.
Let $R>0$ be a positive number as in \eqref{eq: R_normal_controllability}.
Let $u : \cS \to \R$ be a bounded, usc subsolution of (\ref{HJa}) in $\cS$ and $\alpha, p$ be some positive numbers. Then, for all $M>0$, $u_\alpha$ is Lipschiz continuous in $B_M(\Ga,R)\cap \cS$, where $B_M(\Ga,R):=\{ x=x_0e_0+x'\in B(\Ga,R): |x_0|\le M\}$.
\end{lemma}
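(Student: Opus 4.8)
The plan is to reproduce, for $u_\alpha$, the two-step scheme of Lemmas~\ref{sec:prop-visc-sub-4} and \ref{sec:prop-visc-sub-3}: first a gradient estimate, then the geodesic argument. The twist is that under $[\tH3]$ only the \emph{normal} coercivity \eqref{eq coercivity with respect to p'}--\eqref{eq pseudo coercivity with respect to p' of hi+} is available, so the tangential slope must be controlled separately, and for this I would use the Lipschitz continuity of $u_\alpha$ in $x_0$ provided by Lemma~\ref{lem: normal_controllability2}. To begin, fix the compact set $K=B_{2M}(\Ga,R)\cap\cS$ and let $C_K$ be the Lipschitz constant of $u_\alpha$ with respect to $x_0$ on $K$, given by Lemma~\ref{lem: normal_controllability2}.

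The central step is to show that $u_\alpha$ satisfies on $B_M(\Ga,R)\cap\cS$ exactly the estimates of Lemma~\ref{sec:prop-visc-sub-4}, namely \eqref{eq:36} off $\Ga$ and \eqref{eq:40} on $\Ga$, for a constant $C^\star$ depending only on $\delta$, $C_M$, $C_K$, $\lambda$ and $\|u\|_\infty$. Take $\phi\in\cR(\cS)$ such that $u_\alpha-\phi$ has a local maximum at $\bar x=\bar x_0e_0+\bar x_ie_i$, and let $\bar z_0$ realise the maximum defining $u_\alpha(\bar x)$; set $\tilde x=\bar z_0e_0+\bar x_ie_i$. Introducing the translated function $\tilde\phi(z_0e_0+x_ie_i)=\left(\frac{|z_0-\bar x_0|^2}{\alpha^2}+\alpha\right)^{p/2}+\phi(\bar x_0e_0+x_ie_i)$, the standard sup-convolution computation shows that $\tilde\phi\in\cR(\cS)$ (the penalty is smooth in $z_0$), that $u-\tilde\phi$ has a local maximum at $\tilde x$, and that $D(\tilde\phi|_{\cP_i})(\tilde x)=D(\phi|_{\cP_i})(\bar x)$: the normal components coincide because $\phi$ is evaluated at normal coordinate $\bar x_i$, and the tangential components coincide by the optimality in $x_0$. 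Writing $D(\phi|_{\cP_i})(\bar x)=p_0e_0+p_ie_i$, the maximality of $u_\alpha-\phi$ at $\bar x$ together with the $C_K$-Lipschitz continuity of $u_\alpha$ in $x_0$ yields $|p_0|\le C_K$.

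Next I would invoke the subsolution inequality for $u$ at $\tilde x$, noting that $\mathrm{dist}(\tilde x,\Ga)=|\bar x_i|=\mathrm{dist}(\bar x,\Ga)<R$, so $\tilde x\in B(\Ga,R)$. If $\bar x\in\cP_i\setminus\Ga$ then $\tilde x\in\cP_i\setminus\Ga$, and combining $H_i(\tilde x,p_0e_0+p_ie_i)\le\lambda\|u\|_\infty$ with the normal coercivity \eqref{eq coercivity with respect to p'} and $|p_0|\le C_K$ gives $\tfrac{\delta}{2}|p_i|\le C_M(1+C_K)+\lambda\|u\|_\infty$, hence \eqref{eq:36}. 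If $\bar x\in\Ga$ then $\tilde x\in\Ga$, the inequality involves $H_\Ga=\max_iH_i^+$, so each $H_i^+(\tilde x,p_0e_0+p_ie_i)\le\lambda\|u\|_\infty$; the one-sided estimate \eqref{eq pseudo coercivity with respect to p' of hi+} controls $-p_i$ and therefore bounds the negative part $[p_i]_-$, which together with $|p_0|\le C_K$ yields precisely \eqref{eq:40}.

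Finally, once $u_\alpha$ is a viscosity subsolution of $|Du|\le C^\star$ on $B_M(\Ga,R)\cap\cS$ in the sense of \eqref{eq:36}--\eqref{eq:40}, I would conclude Lipschitz continuity by repeating verbatim the geodesic argument of Lemma~\ref{sec:prop-visc-sub-3}, relying on the explicit formula of Lemma~\ref{lem: geodesic_distance}; the enlargement to $K=B_{2M}(\Ga,R)$ ensures the intermediate points of that argument remain where the gradient estimate holds, which is why the conclusion is stated on $B_M(\Ga,R)\cap\cS$. I expect the main obstacle to be the interface step: one must check carefully that $\tilde\phi$ is an admissible test-function with the correct tangential and normal derivatives, and exploit the fact that under $[\tH3]$ only the negative part of the normal slope is controllable — which is exactly why the interface estimate has the form \eqref{eq:40} rather than a full gradient bound.
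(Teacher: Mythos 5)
Your proof is correct and follows essentially the same route as the paper: transfer the test function from $u_\alpha$ at $\bar x$ to $u$ at $\bar z_0e_0+\bar x_ie_i$ via the sup-convolution penalty, invoke the subsolution inequality for $u$ there together with the coercivity estimates \eqref{eq coercivity with respect to p'} off $\Ga$ and \eqref{eq pseudo coercivity with respect to p' of hi+} on $\Ga$ (which is exactly why only $[p_i]_-$ is controlled at the interface), and conclude with the geodesic argument of Lemma \ref{sec:prop-visc-sub-3}. The only cosmetic difference is that you bound the tangential slope by identifying the penalty derivative with $\partial_{x_0}\phi(\bar x)$ and using the $x_0$-Lipschitz constant from Lemma \ref{lem: normal_controllability2}, whereas the paper bounds that same penalty derivative directly by an explicit constant $K(\alpha,p)$ obtained from Lemma \ref{lem: sup_convolution_supremum}; both rest on the same ingredients.
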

\begin{proof}
Fix $M>0$. To get that $u_\alpha$ is Lipschitz continuous in $B_M(\Ga,R)\cap \cS$, it is enough to show that there exists a positive number $C^\star(M,\alpha,p)$, which can depend to $M, \alpha$ and $p$, such that $u_\alpha$ is a subsolution of 
\begin{equation}\label{eq: of_subsolution_on_u_alpha}
|Du(x)|\le C^\star(M,\alpha,p) \; \mbox{ in } \; B_M(\Gamma,R)\cap \cS, 
\end{equation}
i.e. for any $x\in B_M(\Ga,R)$ and $\varphi \in \cR(\cS)$, such that $u_\alpha-\varphi$ has a local maximum point at $x$,
\begin{eqnarray}
\label{eq: of_subsolution_on_u_alpha2}
|D(\varphi|_{\cP_i})(x)| \le C^\star(M,\alpha,p) & \mbox{ if }  x\in (B_M(\Ga,R)\cap \cP_i)\setminus \Ga,\\
\label{eq: of_subsolution_on_u_alpha3}
\max_{i=1,\ldots,N}\left\lbrace|\partial_{x_0}\varphi (x)|+\left[\partial_{x_i}(\varphi |_{\cP_i}) (x)\right]_-\right\rbrace\le C^\star(M,\alpha,p)& \mbox{ if }  x\in B_M(\Ga,R)\cap\Ga,
\end{eqnarray}
where  for $x\in \R$, $[x]_-=\max\{0,-x\}$.\\
Indeed, if \eqref{eq: of_subsolution_on_u_alpha} is proved, then  the method used in the proof of Lemma \ref{sec:prop-visc-sub-3} allows us to get that $u_\alpha$ is Lipschitz continuous with Lipschitz constant $C^\star(M,\alpha,p)$.\\
Let us prove the existence of the constant $C^\star(M,\alpha,p)$.\\
According to Lemma \ref{lem: normal_controllability2}, there exists a constant $C(M,\alpha,p)$ such that $u_\alpha$ is $C(M,\alpha,p)$-Lipschitz continuous with respect to the $x_0$-variable. A direct consequence of this is that $u_\alpha$ is a subsolution of 
\begin{equation}\label{eq: lip_ineq_h3tilde1}
|\partial_{x_0}u(x)|\le C(M,\alpha,p) \;\; \mbox{ in } \; B_M(\Ga,R)\cap \cS.
\end{equation}
i.e. for any $x\in B_M(\Ga,R)\cap \cS$ and $\varphi \in \cR(\cS)$, such that $u_\alpha-\varphi$ has a local maximum \mbox{point at $x$}
\begin{displaymath}
|\partial_{x_0}\varphi(x)| \le C(M,\alpha,p).
\end{displaymath}
It remains to get information on $|\partial_{x_i}(u_\alpha|_{\cP_i})|$ in a viscosity sense. Let $\bar x= \bar x_0 e_0+\bar x_i e_i\in B_M(\Ga,R)\cap \cP_i$ and $\varphi \in \cR(\cS)$ be such that $u_\alpha-\varphi$ has a local maximum at $\bar x$. According to Lemma \ref{lem: sup_convolution_supremum}, there exists $\bar z_0\in \R$ such that 
\begin{equation}\label{eq: normal_controllability6}
u_\alpha(\bar x):= u(\bar z_0e_0+\bar x_ie_i)- \left(\frac{|\bar z_0-\bar x_0|^2}{\alpha^2}+\alpha\right)^{\frac{p}{2}}
\end{equation}
and
\begin{equation}\label{eq: proof_u_alpha_lip1bis}
\left(\frac{|\bar z_0-\bar x_0|^2}{\alpha^2}+\alpha\right)^{\frac{p}{2}} \le  2\parallel u \parallel_\infty +\alpha^{\frac{p}{2}}.
\end{equation}
Then, if we set $\varphi_\alpha : z_0 e_0+x_i e_i \longmapsto (\frac{| z_0-\bar x_0|^2}{\alpha^2}+\alpha)^{\frac{p}{2}}+\varphi(\bar x_0e_0+x_i e_i)$, we can check that $\varphi_\alpha$ belongs to $\cR(\cS)$ and that $u-\varphi_\alpha$ has a local maximum at $\bar z= \bar z_0 e_0+\bar x_i e_i$. Since $u$ is a subsolution of \eqref{HJa} in $\cS$, we deduce that
\begin{equation}\label{eq: proof_u_alpha_lip1}
\lambda u(\bar z)+\sup_{ (\zeta,\xi) \in \FL(\bar z)}\{-D\ph_\alpha(\bar z,\zeta) -\xi\}\le 0.
\end{equation}
\textbf{If $\bar x\in (B_M(\Ga,R)\cap \cP_i)\setminus \Ga$:}  The equation \eqref{eq: proof_u_alpha_lip1} can be rewritten as follows
\begin{equation}\label{eq: proof_u_alpha_lip2}
\lambda u(\bar z)+H_i(\bar z_0e_0+\bar x_ie_i, \frac{p(\bar z_0-\bar x_0)}{\alpha^2}\left(\frac{|\bar x_0-\bar z_0|^2}{\alpha^2}+\alpha\right)^{\frac{p}{2}-1}e_0 + \partial_{x_i} \left(\ph|_{\cP_i} \right)(\bar x_0e_0+\bar x_ie_i)e_i ) \le 0.
\end{equation}
Then, according to \eqref{eq coercivity with respect to p'} in Lemma \ref{lem: normal_controllability3}
\begin{equation}\label{eq: proof_u_alpha_lip3}
\lambda u(\bar z)+ \frac{\delta}{2} |\partial_{x_i} \left(\ph|_{\cP_i} \right)(\bar x)|-C_M\left(1+\frac{p|\bar z_0-\bar x_0|}{\alpha^2}\left(\frac{|\bar x_0-\bar z_0|^2}{\alpha^2}+\alpha\right)^{\frac{p}{2}-1}\right) \le 0.
\end{equation}
But from \eqref{eq: proof_u_alpha_lip1bis}, the term $\frac{p|\bar z_0-\bar x_0|}{\alpha^2}\left(\frac{|\bar x_0-\bar z_0|^2}{\alpha^2}+\alpha\right)^{\frac{p}{2}-1}$ is bounded by a constant $K(\alpha,p)=\frac{p\sqrt{\left(2 \parallel u\parallel_\infty + \alpha^{\frac{p}{2}} \right)^{\frac{2}{p}}-\alpha}}{\alpha}\left(2 \parallel u\parallel_\infty + \alpha^{\frac{p}{2}} \right)^{1-\frac{2}{p}}$, so that we get
\begin{equation}\label{eq: proof_u_alpha_lip4}
 |\partial_{x_i} \left(\ph|_{\cP_i} \right)(\bar x)| \le \frac{2}{\delta}\left(\lambda \parallel u \parallel_\infty+C_M\left(1+K(M,\alpha,p)\right) \right).
\end{equation}
\textbf{If $\bar x\in B_M(\Ga,R)\cap\Ga$:} 
 equation \eqref{eq: proof_u_alpha_lip1} can be rewritten as follows
\begin{equation}\label{eq: proof_u_alpha_lip5}
\begin{array}{lll}
\lambda u(\bar z)+H_\Ga\left(\bar z, \frac{p|\bar z_0-\bar x_0|}{\alpha^2}\left(\frac{|\bar x_0-\bar z_0|^2}{\alpha^2}+\alpha\right)^{\frac{p}{2}-1}e_0+ \partial_{x_1}\left(\ph|_{\cP_1} \right)(\bar x)e_1,\right. \ldots\\
 \quad \quad \quad \quad \quad \quad \quad \quad \left. \ldots, \frac{p|\bar z_0-\bar x_0|}{\alpha^2}\left(\frac{|\bar x_0-\bar z_0|^2}{\alpha^2}+\alpha\right)^{\frac{p}{2}-1}e_0+ \partial_{x_N}\left(\ph|_{\cP_N} \right)(\bar x)e_N \right) \le 0.
\end{array}
\end{equation}
Particularly, if we fix $i\in \{1,\ldots,N\}$, \eqref{eq: proof_u_alpha_lip5} implies
\begin{equation}\label{eq: proof_u_alpha_lip6}
\lambda u(\bar z)+H_i^+\left(\bar z, \frac{p|\bar z_0-\bar x_0|}{\alpha^2}\left(\frac{|\bar x_0-\bar z_0|^2}{\alpha^2}+\alpha\right)^{\frac{p}{2}-1}e_0 + \partial_{x_i}\left(\ph|_{\cP_i} \right)(\bar x)e_i \right) \le 0,
\end{equation}
and thanks to \eqref{eq pseudo coercivity with respect to p' of hi+} in Lemma \ref{lem inequality on Hgamma+} and  \eqref{eq: proof_u_alpha_lip1bis} we deduce that
\begin{equation}\label{eq: proof_u_alpha_lip7}
 -\partial_{x_i} \left(\ph|_{\cP_i} \right)(\bar x) \le \frac{1}{\delta}\left(\lambda \parallel u \parallel_\infty+C_M\left(1+K(M,\alpha,p)\right) \right),
\end{equation}
with the same constant $K(M,\alpha,p)$ as in the case where $\bar x\in B_M(\Ga,R)\cap\cP_i$.\\
Finally, according to \eqref{eq: lip_ineq_h3tilde1}, \eqref{eq: proof_u_alpha_lip4} and \eqref{eq: proof_u_alpha_lip7} we have that $u_\alpha$ is a subsolution of  \eqref{eq: of_subsolution_on_u_alpha} in  $B_M(\Gamma,R)\cap \cS$ with the constant $C^\star(M,\alpha,p)=\sqrt{ C(M,\alpha,p)^2+\frac{4}{\delta^2}\left(\lambda \parallel u \parallel_\infty+C_M\left(1+K(M,\alpha,p)\right) \right)^2}$. 
This concludes the proof.
\end{proof}

\begin{lemma}\label{lem: sup_convolution}
 Assume  $[\rm{H}0]$, $[\rm{H}1]$, $[\rm{H}2]$, $[\tH3]$ and $[\rm{H}4]$. Let $y_0$ be in $\Ga$ and $R>0$ be as in \eqref{eq: R_normal_controllability}.
We denote by $Q$ the set $\cS \cap B(y_0,R)$.
Let $u : \cS \to \R$ be a bounded, usc subsolution of (\ref{HJa}) in $Q$. Then, for all $\alpha, p>0$ small enough, if we set
\begin{equation}\label{def:Q_alpha_set}
Q_\alpha :=\left\lbrace x\in Q : \rm{dist}(x,\partial Q)> \alpha\sqrt{(2\parallel u \parallel_\infty+\alpha^{\frac{p}{2}})^{2/p}-\alpha} \right\rbrace,
\end{equation}
 the sup-convolution $u_\alpha$ defined in (\ref{eq: normal_controllability2}), is Lipschitz continuous in $Q_\alpha$ and there exists $m:(0,+\infty)\to (0,+\infty)$ such that $\lim_{\alpha \to 0}m(\alpha)=0$ and $u_\alpha-m(\alpha)$ is a subsolution of (\ref{HJa}) in $Q_\alpha$.
\end{lemma}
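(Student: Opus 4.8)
The plan is to prove the two assertions separately, the Lipschitz regularity being essentially a restatement of Lemma \ref{lem: u_alpha_lipschitz} and the subsolution property resting on the "magic property" of the sup-convolution. For the first point, since $Q=\cS\cap B(y_0,R)$ is bounded there is $M>0$ with $Q\subset B_M(\Ga,R)\cap\cS$, so it suffices to run the argument of Lemma \ref{lem: u_alpha_lipschitz}. The only thing to check is that the subsolution property of $u$, here assumed only in $Q$, is available where it is used: by Lemma \ref{lem: sup_convolution_supremum} every maximizer $\bar z_0$ defining $u_\alpha(\bar x)$ satisfies $|\bar z_0-\bar x_0|\le \eta_\alpha$ with $\eta_\alpha:=\alpha\sqrt{(2\|u\|_\infty+\alpha^{p/2})^{2/p}-\alpha}$, and the very definition of $Q_\alpha$ forces the associated point $\bar z=\bar z_0e_0+\bar x_ie_i$ to stay in $Q$. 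The coercivity \eqref{eq coercivity with respect to p'} and the pseudo-coercivity \eqref{eq pseudo coercivity with respect to p' of hi+} then bound the test-function gradients by $C^\star(M,\alpha,p)$, giving Lipschitz continuity on $Q_\alpha$.

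For the subsolution property, fix $\bar x\in Q_\alpha$ and $\varphi\in\cR(\cS)$ such that $u_\alpha-\varphi$ has a local maximum at $\bar x$, let $\bar z_0$ be the maximizer of $u_\alpha(\bar x)$, and set $\bar z=\bar z_0e_0+\bar x_ie_i\in Q$, $s=\bar z_0-\bar x_0$ and $g(s)=(\frac{s^2}{\alpha^2}+\alpha)^{p/2}$. I would introduce the $e_0$-translated test function $\Phi(w_0e_0+w_ie_i)=\varphi((w_0-\bar z_0+\bar x_0)e_0+w_ie_i)+g(s)$, and verify the standard facts: $\Phi\in\cR(\cS)$ (the translation is along $\Ga$, hence maps each $\cP_j$ to itself and preserves the stratum of $\bar x$), $u-\Phi$ has a local maximum at $\bar z$, and $D(\Phi|_{\cP_j})(\bar z)=D(\varphi|_{\cP_j})(\bar x)$ for every $j$. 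Together with the identity $u_\alpha(\bar x)=u(\bar z)-g(s)$, the subsolution inequality for $u$ at $\bar z$ then transfers to $\bar x$, the only cost being the error produced by moving the space variable of the Hamiltonian from $\bar z$ to $\bar x$ along the purely tangential vector $-se_0$, with $|s|\le\eta_\alpha\to0$.

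The heart of the matter is that this error is a modulus $m(\alpha)\to0$ that is uniform in $(\bar x,\varphi)$. At an interior point $\bar x\in(\cP_i\setminus\Ga)\cap Q_\alpha$, writing $p=D(\varphi|_{\cP_i})(\bar x)=p_0e_0+p_ie_i$, the regularity estimate \eqref{eq: regularity of hi with respect to x} gives
\[\lambda u_\alpha(\bar x)+H_i(\bar x,p)\le -\lambda g(s)+L_f|s|\,(|p_0|+|p_i|)+\omega_\ell(|s|).\]
The decisive elementary inequality is $|s|\,|p_0|=|s\,g'(s)|\le p\,g(s)$, which follows from $s\,g'(s)=p\bigl(g(s)-\alpha(\tfrac{s^2}{\alpha^2}+\alpha)^{p/2-1}\bigr)$ and is exactly what the unusual penalty is designed to produce; meanwhile \eqref{eq coercivity with respect to p'} bounds $|p_i|\le\frac2\delta(C_M(1+|p_0|)+\lambda\|u\|_\infty)$. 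Substituting, the $g(s)$-terms combine with coefficient $-\lambda+L_fp(1+\tfrac{2C_M}{\delta})$, which is negative as soon as $p$ is small, so they are absorbed by $-\lambda g(s)$, leaving $\frac{2L_f}{\delta}(C_M+\lambda\|u\|_\infty)|s|+\omega_\ell(|s|)$. This defines $m(\alpha):=\lambda^{-1}\bigl[\frac{2L_f}{\delta}(C_M+\lambda\|u\|_\infty)\eta_\alpha+\omega_\ell(\eta_\alpha)\bigr]\to0$.

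I expect the interface case $\bar x\in\Ga\cap Q_\alpha$ to be the main obstacle. There $\bar z=\bar z_0e_0\in\Ga$, and for each $i$ the subsolution inequality reads $\lambda u(\bar z)+H_i^+(\bar z,p^{(i)})\le0$, the tangential part $p_0$ being common to all $i$ by Remark \ref{rmk: D_phi_Ga}; the tangential estimate $|s|\,|p_0|\le p\,g(s)$ is unchanged, and the negative part $[p_i]_-$ is handled by the pseudo-coercivity \eqref{eq pseudo coercivity with respect to p' of hi+}. The genuinely delicate point is the positive part $[p_i]_+$, for which the crude bound $L_f|s|\,|p_i|$ is useless. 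Here I would use the monotonicity \eqref{eq:normal_controllability01} of $H_i^+$ in the normal variable together with the sharp one-sided estimate of Remark \ref{rmk inequality on Hgamma+}: once $p_i$ passes the minimizing threshold $\min\Delta_{i,\bar z,p_0e_0}$ the maximum defining $H_i^+$ is attained at a tangential control, so the space-shift error no longer involves $|p_i|$, while below that threshold $|p_i|$ is controlled by a multiple of $1+|p_0|$. Combining the two regimes with the same absorption of the $g(s)$-terms for $p$ small gives the same modulus $m(\alpha)$ for each $i$, and taking the maximum over $i$ recovers the $H_\Ga$-inequality, so that $u_\alpha-m(\alpha)$ is a subsolution of \eqref{HJa} in $Q_\alpha$.
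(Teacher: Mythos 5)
Your proposal follows the paper's own proof very closely: Lipschitz continuity via Lemma \ref{lem: u_alpha_lipschitz}, the first-order condition $\partial_{x_0}\varphi(\bar x)=g'(s)$ (the paper's \eqref{eq: normal_controllability7bis}), absorption of the $g(s)$-terms by $-\lambda g(s)$ for $p$ small, control of $[p_i]_-$ by \eqref{eq pseudo coercivity with respect to p' of hi+}, and a threshold dichotomy for $[p_i]_+$ using point 4 of Lemma \ref{sec:hamiltonians} together with Remark \ref{rmk inequality on Hgamma+} --- this is exactly the paper's case analysis (a), (b), (c). The only cosmetic difference is your choice of test function at $\bar z$: you translate $\varphi$ by $se_0$, whereas the paper uses $\tilde\varphi(z_0e_0+z')=\varphi(\bar x_0e_0+z')+\bigl(\tfrac{|z_0-\bar x_0|^2}{\alpha^2}+\alpha\bigr)^{p/2}$; thanks to the first-order condition both produce the same gradient at $\bar z$, and note that your key identity $|s|\,|p_0|=|s\,g'(s)|$ is precisely that first-order condition, so you should state it explicitly rather than fold it silently into the "standard facts".

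There is, however, one genuine slip in the interface case: you anchor the threshold at $\bar z$, writing $\min\Delta_{i,\bar z,p_0e_0}$, while the argument requires it at $\bar x$. The quantity you must estimate is $H_i^+(\bar x,p)-H_i^+(\bar z,p)$ from above (the viscosity inequality holds at $\bar z$, the conclusion is wanted at $\bar x$), and the one-sided estimate \eqref{eq: inequality on Hgamma+_bis} of Remark \ref{rmk inequality on Hgamma+} needs the maximum defining $H_i^+$ to be attained at a tangential control at the \emph{first} argument, i.e.\ at $\bar x$; tangential attainment at $\bar z$ does not transfer, since by monotonicity $H_i^+(\bar z,p_0e_0)\ge H_i^+(\bar z,p)$ goes the wrong way. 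With your threshold, the regime $\min\Delta_{i,\bar z,p_0e_0}\le p_i<\min\Delta_{i,\bar x,p_0e_0}$ is covered by neither of your two justifications: the maximizer at $\bar x$ need not be tangential there, and your bound ``$|p_i|\le C(1+|p_0|)$ below the threshold'' was argued only below the $\bar z$-threshold. The fix is one character: use $\Delta_{i,\bar x,p_0e_0}$ --- this is the paper's $\tilde p_{i,\bar x,\alpha}$ of \eqref{eq: def_upper_bound}, bounded as in \eqref{eq: estimation_p_i_x_alpha} --- after which both of your regimes close exactly as you describe (below the threshold, $p_i$ is bounded by a multiple of $1+|p_0|$ via the coercivity \eqref{eq coercivity with respect to p'}; at or above it, point 4 of Lemma \ref{sec:hamiltonians} gives tangential attainment at $\bar x$ and Remark \ref{rmk inequality on Hgamma+} applies).
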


\begin{proof}
First, according to Lemma \ref{lem: u_alpha_lipschitz} it is clear that $u_\alpha$ is Lipschitz continuous in $Q_\alpha$. It remains to find $m:\R_+ \to \R_+$ such that $u_\alpha-m(\alpha)$ is a subsolution of (\ref{HJa}) in $Q_\alpha$ and to check that $m$ has the desired limit as $\alpha$ tends to $0$. For this purpose, we consider $\varphi \in \cR(\cS)$ such that $u_\alpha-\varphi$ has a local maximum point at $\bar x=\bar x_0 e_0+\bar x_i e_i$. According to Lemma \ref{lem: sup_convolution_supremum}, there exists $\bar z_0\in \R$ such that 
\begin{equation}\label{eq: normal_controllability6bisbis}
u_\alpha(\bar x)= u(\bar z_0e_0+\bar x_ie_i)- \left(\frac{|\bar z_0-\bar x_0|^2}{\alpha^2}+\alpha\right)^{\frac{p}{2}}
\end{equation}
and
\begin{equation}\label{eq: proof_u_alpha_lip1bisbis}
|\bar x_0-\bar z_0|\le \alpha\sqrt{(2\parallel u \parallel_\infty+\alpha^{\frac{p}{2}})^{\frac{2}{p}}-\alpha}.
\end{equation}
From \eqref{eq: normal_controllability6bisbis}  the function  $x_0\mapsto u(\bar z_0e_0+\bar x_ie_i)-\left(\frac{|\bar z_0- x_0|^2}{\alpha^2}+\alpha\right)^{\frac{p}{2}} -\varphi(x_0e_0+ \bar x_ie_i)$ has a local maximum at $\bar x_0$ and we deduce that
\begin{equation}\label{eq: normal_controllability7bis}
\partial_{x_0}\ph(\bar x)=\frac{p(\bar z_0-\bar x_0)}{\alpha^2}\left( \frac{|\bar z_0-\bar x_0|^2}{\alpha^2}+\alpha\right)^{\frac{p}{2}-1}.
\end{equation}
On the other hand, since $\bar x\in Q_\alpha$, the inequality \eqref{eq: proof_u_alpha_lip1bisbis} implies that $\bar z=\bar z_0e_0+\bar x_ie_i\in Q$ and we can use that $u$ is a subsolution of (\ref{HJa}) in $Q$ :
\\
\textbf{1.} If $\bar x\in \cP_i \setminus \Ga$: then, $\bar z =\bar z_0e_0+\bar x_ie_i$ also belongs to $\cP_i \setminus \Ga$ and using the test function $\tilde{\ph}: z=z_0e_0+z' \longmapsto \ph(\bar x_0e_0+z')+\left( \frac{|z_0-\bar x_0|^2}{\alpha^2}+\alpha\right)^{\frac{p}{2}}$, where we recall that $z'$ denotes $z_je_j$ if $z\in \cP_j$, we have the viscosity inequality
\begin{equation}\label{eq: normal_controllability9bis}
\lambda u(\bar z) + H_i\left(\bar z, \frac{p(\bar z_0-\bar x_0)}{\alpha^2}\left( \frac{|\bar z_0-\bar x_0|^2}{\alpha^2}+\alpha\right)^{\frac{p}{2}-1}e_0 + \partial_{x_i} \left(\ph|_{\cP_i} \right)(\bar x)e_i \right) \le 0.
\end{equation}
Combining the previous results, \eqref{eq: normal_controllability6bisbis}, \eqref{eq: normal_controllability7bis} and \eqref{eq: normal_controllability9bis}, we get
\begin{displaymath}
\lambda u_\alpha(\bar x)+ \lambda \left( \frac{|\bar z_0-\bar x_0|^2}{\alpha^2}+\alpha\right)^{\frac{p}{2}} +H_i(\bar z, D \left(\ph|_{\cP_i} \right)(\bar x) ) \le 0.
\end{displaymath}
Then, according to (\ref{eq: regularity of hi with respect to x}) we have
\begin{equation}
\label{eq demo u_alpha lipschitz 1}
\lambda u_\alpha(\bar x)+ H_i(\bar x, D \left(\ph|_{\cP_i} \right)(\bar x) ) \le -\lambda \left( \frac{|\bar z_0-\bar x_0|^2}{\alpha^2}+\alpha\right)^{\frac{p}{2}}+ L_f |\bar x_0-\bar z_0| | D \left(\ph|_{\cP_i} \right)(\bar x)|+ \omega_\ell(|\bar x_0-\bar z_0|).
\end{equation}
But, from \eqref{eq coercivity with respect to p'} and \eqref{eq: normal_controllability9bis}
\begin{displaymath}
| \partial_{x_i} \left(\ph|_{\cP_i} \right)(\bar x)| \le \frac{2}{\delta}\left(\lambda \parallel u \parallel_\infty +C_M\left(1+\frac{p|\bar z_0-\bar x_0|}{\alpha^2}\left( \frac{|\bar z_0-\bar x_0|^2}{\alpha^2}+\alpha\right)^{\frac{p}{2}-1}\right) \right).
\end{displaymath}
Then, from \eqref{eq: normal_controllability7bis} and (\ref{eq demo u_alpha lipschitz 1}) we get
\begin{displaymath}
\begin{array}{l}
\lambda u_\alpha(\bar x)+ H_i(\bar x, D \left(\ph|_{\cP_i} \right)(\bar x) ) \le \frac{|\bar z_0-\bar x_0|^2}{\alpha^2}\left(\frac{|\bar z_0-\bar x_0|^2}{\alpha^2}+\alpha \right)^{\frac{p}{2}-1}\left( pL_f\left(1+\frac{2C_M}{\delta} \right)-\lambda \right)\\
\quad \quad \quad \quad \quad \quad \quad \quad \quad \quad \quad \quad \quad \quad +|\bar x_0-\bar z_0|\frac{2L_f}{\delta}\left(\lambda \parallel u \parallel_\infty +C_M\right)+\omega_\ell(|\bar x_0-\bar z_0|),
\end{array}
\end{displaymath}
and for $p$ small enough, we get
\begin{displaymath}
\begin{array}{l}
\lambda u_\alpha(\bar x)+ H_i(\bar x, D \left(\ph|_{\cP_i} \right)(\bar x) ) \le |\bar x_0-\bar z_0|\frac{2L_f}{\delta}\left(\lambda \parallel u \parallel_\infty +C_M\right)+\omega_\ell(|\bar x_0-\bar z_0|).
\end{array}
\end{displaymath}
Finally, according to \eqref{eq: proof_u_alpha_lip1bisbis} the right hand side of the latter inequality gives us $m(\alpha)$. 
\\
\textbf{2.} If $\bar x\in \Ga$:  $\bar x_i =0$ and $\bar z=\bar z_0e_0+\bar x_ie_i$ also belongs to $\Ga$. Using  $\tilde{\ph}: z=z_0e_0+z'  \longmapsto \ph(\bar x_0e_0+z')+\left(\frac{|z_0-\bar x_0|^2}{\alpha^2}+\alpha\right)^{\frac{p}{2}}$ as a test function, we have
\begin{equation}\label{eq: normal_controllability5bis}
\begin{array}{l}
\lambda u(\bar z) + H_\Ga\left(\bar z, \frac{p(\bar z_0-\bar x_0)}{\alpha^2}\left(\frac{|\bar z_0-\bar x_0|^2}{\alpha^2}+\alpha\right)^{\frac{p}{2}-1}e_0+ \partial_{x_1}\left(\ph|_{\cP_1} \right)(\bar x)e_1, \ldots \right.\\
\left. \quad \quad \quad \quad \quad \quad \ldots, \frac{p(\bar z_0-\bar x_0)}{\alpha^2}\left(\frac{|\bar z_0-\bar x_0|^2}{\alpha^2}+\alpha\right)^{\frac{p}{2}-1}e_0+ \partial_{x_N}\left(\ph|_{\cP_N} \right)(\bar x)e_N \right) \le 0.
\end{array}
\end{equation}
As in the case when $\bar x \in \cP_i\setminus \Ga$, we deduce from \eqref{eq: normal_controllability6bisbis}, \eqref{eq: normal_controllability7bis} and \eqref{eq: normal_controllability5bis} that we have
\begin{equation}\label{eq: normal_controllability6bis}
\lambda u_\alpha(\bar x)+ \lambda \left(\frac{|\bar z_0-\bar x_0|^2}{\alpha^2}+\alpha\right)^{\frac{p}{2}} + H_\Ga(\bar z, D\left( \ph|_{\cP_1}\right) (\bar x), \ldots,  D\left( \ph|_{\cP_N}\right) (\bar x) ) \le 0.
\end{equation}
Let $i\in \{1, \ldots ,N \}$ be such that $ H_\Ga(\bar x, D\left( \ph|_{\cP_1}\right) (\bar x), \ldots,  D\left( \ph|_{\cP_N}\right) (\bar x) ) =  H_i^+(\bar x, D \left(\ph|_{\cP_i}\right)(\bar x) )$. Then,  Lemma \ref{lem inequality on Hgamma+} and \eqref{eq: normal_controllability6bis} give us
\begin{equation}
\label{eq: u_alpha subsolution seconde stape 2}
\begin{array}{lll}
\lambda u_\alpha(\bar x) + H_\Ga(\bar x, D\left( \ph|_{\cP_1}\right) (\bar x), \ldots,  D\left( \ph|_{\cP_N}\right) (\bar x)  ) & \le & -\lambda \left(\frac{|\bar z_0-\bar x_0|^2}{\alpha^2}+\alpha\right)^{\frac{p}{2}}+ M|\bar x_0-\bar z_0|| D \left(\ph|_{\cP_i}\right)(\bar x)  |\\
& & + \omega(|\bar x_0-\bar z_0|),
\end{array}
\end{equation}
where $M$ and $\omega(.)$ are specified in Lemma \ref{lem inequality on Hgamma+}.
On the other hand, from \eqref{eq pseudo coercivity with respect to p' of hi+} in Lemma \ref{lem inequality on Hgamma+} and \eqref{eq: normal_controllability5bis}
\begin{equation}
\label{eq: upper_estimation_on_d_phi1}
-\partial_{x_i} \left(\ph|_{\cP_i} \right)(\bar x) \le \frac{1}{\delta}\left(\lambda \parallel u \parallel_\infty +C_M\left(1+\frac{p|\bar z_0-\bar x_0|}{\alpha^2}\left(\frac{|\bar z_0-\bar x_0|^2}{\alpha^2}+\alpha\right)^{\frac{p}{2}-1}\right) \right).
\end{equation}
This information does not allow us to conclude directly with \eqref{eq: u_alpha subsolution seconde stape 2}.
We need to get a control from above on $\partial_{x_i}\left(\ph|_{\cP_i} \right)(\bar x)$.
For this purpose, we introduce the real number $\tilde p_{i,\bar x,\alpha}$ defined as follow
\begin{equation}
\label{eq: def_upper_bound}
\tilde p_{i,\bar x,\alpha}:= \max\{p_i\in \R : \ph_{i,\bar x,\alpha}(p_i)=\min_{d\in \R}\{\ph_{i,\bar x,\alpha}(d)\}\}
\end{equation}
where
\begin{displaymath}
\ph_{i,\bar x,\alpha}(d)=H_i(\bar x,\frac{p(\bar z_0-\bar x_0)}{\alpha^2}\left(\frac{|\bar z_0-\bar x_0|^2}{\alpha^2}+\alpha\right)^{\frac{p}{2}-1} e_0+d e_i).
\end{displaymath}
We claim that if we note $ h_{i,\bar x}:= \min_{d\in \R}\{H_i(\bar x,d e_i)\}$, we have the following estimation
\begin{equation}
\label{eq: estimation_p_i_x_alpha}
\tilde p_{i,\bar x,\alpha} \le \frac{2}{\delta}\left( h_{i,\bar x}+C_M\left(1+\frac{p|\bar z_0-\bar x_0|}{\alpha^2}\left(\frac{|\bar z_0-\bar x_0|^2}{\alpha^2}+\alpha\right)^{\frac{p}{2}-1}\right) + M_f\frac{p|\bar z_0-\bar x_0|}{\alpha^2}\left(\frac{|\bar z_0-\bar x_0|^2}{\alpha^2}+\alpha\right)^{\frac{p}{2}-1}\right).
\end{equation}
To prove this inequality, it is enough to show that for any real number $q$ larger than the right member of \eqref{eq: estimation_p_i_x_alpha}, $\ph_{i,\bar x,\alpha}(q)> \min_{d\in \R}\left\lbrace\ph_{i,\bar x,\alpha}(d)\right\rbrace$.
\\
Take $q>\frac{2}{\delta}\left( h_{i,\bar x}+C_M\left(1+\frac{p|\bar z_0-\bar x_0|}{\alpha^2}\left(\frac{|\bar z_0-\bar x_0|^2}{\alpha^2}+\alpha\right)^{\frac{p}{2}-1}\right) + M_f\frac{p|\bar z_0-\bar x_0|}{\alpha^2}\left(\frac{|\bar z_0-\bar x_0|^2}{\alpha^2}+\alpha\right)^{\frac{p}{2}-1}\right)$. Then, from the coercivity property of $H_i$ with respect to $p_i$, see \eqref{eq coercivity with respect to p'}, we have
\begin{equation}
\label{eq: estimation_p_i_x_alpha2}
\begin{array}{lll}
H_i(\bar x,\frac{p(\bar z_0-\bar x_0)}{\alpha^2}\left(\frac{|\bar z_0-\bar x_0|^2}{\alpha^2}+\alpha\right)^{\frac{p}{2}-1} e_0+q e_i)
& \ge & \frac{\delta}{2} |q|-C_M\left(1+\frac{p|\bar z_0-\bar x_0|}{\alpha^2}\left(\frac{|\bar z_0-\bar x_0|^2}{\alpha^2}+\alpha\right)^{\frac{p}{2}-1}\right)\\ 
& > &  h_{i,\bar x}+ M_f\frac{p|\bar z_0-\bar x_0|}{\alpha^2}\left(\frac{|\bar z_0-\bar x_0|^2}{\alpha^2}+\alpha\right)^{\frac{p}{2}-1}.
\end{array}
\end{equation}
But, if we consider $d\in \R$ such that $ H_i(\bar x, de_i)=h_{i,\bar x}$, by definition of $p_{i,\bar x,\alpha}$ we have
\begin{equation}
\label{eq: estimation_p_i_x_alpha3bis}
H_i(\bar x,\frac{p(\bar z_0-\bar x_0)}{\alpha^2}\left(\frac{|\bar z_0-\bar x_0|^2}{\alpha^2}+\alpha\right)^{\frac{p}{2}-1} e_0+\tilde p_{i,\bar x,\alpha}  e_i) \le H_i(\bar x,\frac{p(\bar z_0-\bar x_0)}{\alpha^2}\left(\frac{|\bar z_0-\bar x_0|^2}{\alpha^2}+\alpha\right)^{\frac{p}{2}-1} e_0+d e_i),
\end{equation}
and from \eqref{eq: regularity of hi with respect to p}
\begin{equation}
\label{eq: estimation_p_i_x_alpha3}
H_i(\bar x,\frac{p(\bar z_0-\bar x_0)}{\alpha^2}\left(\frac{|\bar z_0-\bar x_0|^2}{\alpha^2}+\alpha\right)^{\frac{p}{2}-1} e_0+d  e_i) \le  h_{i,\bar x} + M_f\frac{p|\bar z_0-\bar x_0|}{\alpha^2}\left(\frac{|\bar z_0-\bar x_0|^2}{\alpha^2}+\alpha\right)^{\frac{p}{2}-1}.
\end{equation}
Finally, \eqref{eq: estimation_p_i_x_alpha3bis} and \eqref{eq: estimation_p_i_x_alpha3} lead to
\begin{equation}
\label{eq: estimation_p_i_x_alpha4}
H_i(\bar x,\frac{p(\bar z_0-\bar x_0)}{\alpha^2}\left(\frac{|\bar z_0-\bar x_0|^2}{\alpha^2}+\alpha\right)^{\frac{p}{2}-1} e_0+ p_{i,\bar x,\alpha}  e_i) \le  h_{i,\bar x} + M_f\frac{p|\bar z_0-\bar x_0|}{\alpha^2}\left(\frac{|\bar z_0-\bar x_0|^2}{\alpha^2}+\alpha\right)^{\frac{p}{2}-1},
\end{equation}
and from \eqref{eq: estimation_p_i_x_alpha2} we infer that
\begin{displaymath}
H_i(\bar x,\frac{p(\bar z_0-\bar x_0)}{\alpha^2}\left(\frac{|\bar z_0-\bar x_0|^2}{\alpha^2}+\alpha\right)^{\frac{p}{2}-1} e_0+ q e_i)  > H_i(\bar x,\frac{p(\bar z_0-\bar x_0)}{\alpha^2}\left(\frac{|\bar z_0-\bar x_0|^2}{\alpha^2}+\alpha\right)^{\frac{p}{2}-1} e_0+\tilde p_{i,\bar x,\alpha}  e_i),
\end{displaymath}
which proves \eqref{eq: estimation_p_i_x_alpha}.\\
\\
We consider three cases :
\\
\textbf{(a)} If $\partial_{x_i} \left(\ph|_{\cP_i} \right)(\bar x)\le 0$: then, \eqref{eq: upper_estimation_on_d_phi1} yields
\begin{displaymath}
|\partial_{x_i} \left(\ph|_{\cP_i} \right)(\bar x)| \le \frac{1}{\delta}\left(\lambda \parallel u \parallel_\infty +C_M\left(1+\frac{p|\bar z_0-\bar x_0|}{\alpha^2}\left(\frac{|\bar z_0-\bar x_0|^2}{\alpha^2}+\alpha\right)^{\frac{p}{2}-1}\right) \right),
\end{displaymath}
and therefore \eqref{eq: normal_controllability7bis} and \eqref{eq: u_alpha subsolution seconde stape 2} imply
\begin{displaymath}
\begin{array}{l}
\lambda u_\alpha(\bar x) + H_\Ga(\bar x, D\left( \ph|_{\cP_1}\right) (\bar x), \ldots,  D\left( \ph|_{\cP_N}\right) (\bar x)  )  \le -\lambda\left(\frac{|\bar z_0-\bar x_0|^2}{\alpha^2} +\alpha\right)^{\frac{p}{2}}+\omega(|\bar x_0-\bar z_0|)
\\
+|\bar x_0-\bar z_0|M\left[\frac{p|\bar z_0-\bar x_0|}{\alpha^2}\left(\frac{|\bar z_0-\bar x_0|^2}{\alpha^2} +\alpha\right)^{\frac{p}{2}-1}+\frac{1}{\delta}\left(\lambda \parallel u \parallel_\infty +C_M\left(1+\frac{p|\bar z_0-\bar x_0|}{\alpha^2}\left(\frac{|\bar z_0-\bar x_0|^2}{\alpha^2}+\alpha\right)^{\frac{p}{2}-1}\right) \right)\right],
\end{array}
\end{displaymath}
and then
\begin{equation}
\label{eq: u_alpha_subsolution_(a)}
\begin{array}{lll}
\lambda u_\alpha(\bar x) + H_\Ga(\bar x, D\left( \ph|_{\cP_1}\right) (\bar x), \ldots,  D\left( \ph|_{\cP_N}\right) (\bar x)  ) & \le & \left(\frac{|\bar z_0-\bar x_0|^2}{\alpha^2} +\alpha\right)^{\frac{p}{2}}\left(-\lambda+ pM\left(1+\frac{C_M}{\delta} \right) \right)\\
& & +\frac{M}{\delta}|\bar z_0-\bar x_0|\left(\lambda \parallel u \parallel_\infty+C_M \right)+\omega(|\bar x_0-\bar z_0|).
\end{array}
\end{equation}
Finally, if $p$ is small enough, \eqref{eq: u_alpha_subsolution_(a)} implies
\begin{equation}
\label{eq: u_alpha_subsolution_(a)2}
\begin{array}{lll}
\lambda u_\alpha(\bar x) + H_\Ga(\bar x, D\left( \ph|_{\cP_1}\right) (\bar x), \ldots,  D\left( \ph|_{\cP_N}\right) (\bar x)  )  & \le & \frac{M}{\delta}|\bar z_0-\bar x_0|\left(\lambda \parallel u \parallel_\infty+C_M \right)+\omega(|\bar x_0-\bar z_0|),
\end{array}
\end{equation}
and from \eqref{eq: proof_u_alpha_lip1bisbis} the right hand side of the latter inequality gives us $m(\alpha)$. 
\\
\textbf{(b)} If $0<\partial_{x_i} \left(\ph|_{\cP_i} \right)(\bar x)\le \tilde p_{i,\bar x,\alpha}$ (case which never occurs if $p_{i,\bar x,\alpha}\le 0$): in this case,  \eqref{eq: normal_controllability7bis} and \eqref{eq: u_alpha subsolution seconde stape 2} give us
\begin{displaymath}
\begin{array}{ll}
\lambda u_\alpha(\bar x) + H_\Ga(\bar x, D\left( \ph|_{\cP_1}\right) (\bar x), &\ldots,  D\left( \ph|_{\cP_N}\right) (\bar x)  )   \le   -\lambda \left(\frac{|\bar z_0-\bar x_0|^2}{\alpha^2}+\alpha \right)^{\frac{p}{2}}\\
&+M\frac{p|\bar z_0-\bar x_0|^2}{\alpha^2}\left(\frac{|\bar z_0-\bar x_0|^2}{\alpha^2}+\alpha \right)^{\frac{p}{2}-1}+M|\bar x_0 -\bar z_0|\tilde p_{i,\bar x,\alpha}+\omega(|\bar x_0-\bar z_0|),
\end{array}
\end{displaymath}
and according to \eqref{eq: estimation_p_i_x_alpha} we get
\begin{equation}
\label{eq: u_alpha_subsolution_(b)}
\begin{array}{l}
\lambda u_\alpha(\bar x) + H_\Ga(\bar x, D\left( \ph|_{\cP_1}\right) (\bar x), \ldots,  D\left( \ph|_{\cP_N}\right) (\bar x)  )\le \frac{2M}{\delta}|\bar x_0 -\bar z_0|( h_{i,\bar x}+C_M)+\omega(|\bar x_0-\bar z_0|)  \\
\quad \quad \quad \quad \quad \quad \quad \quad \quad \quad \quad \quad \quad \quad \quad \quad \quad \quad + \left(\frac{|\bar z_0-\bar x_0|^2}{\alpha^2}+\alpha \right)^{\frac{p}{2}}\left(-\lambda + pM\left(1+\frac{2}{\delta}\left(C_M+M_f\right) \right) \right).
\end{array}
\end{equation}
Finally, for $p$ small enough, \eqref{eq: u_alpha_subsolution_(b)} gives us
\begin{equation}
\label{eq: u_alpha_subsolution_(b)2}
\begin{array}{lll}
\lambda u_\alpha(\bar x) + H_\Ga(\bar x, D\left( \ph|_{\cP_1}\right) (\bar x), \ldots,  D\left( \ph|_{\cP_N}\right) (\bar x)  )  & \le &\frac{2M}{\delta}|\bar x_0 -\bar z_0|( h_{i,\bar x}+C_M)+\omega(|\bar x_0-\bar z_0|),
\end{array}
\end{equation}
and since $h_{i,\bar x}$ is uniformly bounded with respect to $\bar x \in Q$, from \eqref{eq: proof_u_alpha_lip1bisbis} the right hand side of the latter inequality gives us $m(\alpha)$.
\\
\textbf{(c)} If $0 \le \max\{0,\tilde p_{i,\bar x,\alpha}\} < \partial_{x_i} \left(\ph|_{\cP_i} \right)(\bar x)$: in this case, we can not conclude with the inequality \eqref{eq: u_alpha subsolution seconde stape 2}. We need to find a more precise estimate.\\
We recall that $i\in \{1, \ldots, N \}$ is such that $ H_\Ga(\bar x, D\left( \ph|_{\cP_1}\right) (\bar x), \ldots,  D\left( \ph|_{\cP_N}\right) (\bar x) ) =  H_i^+(\bar x, D \left(\ph|_{\cP_i}\right)(\bar x) )$. 
Since $\tilde p_{i,\bar x,\alpha} < \partial_{x_i} \left(\ph|_{\cP_i} \right)(\bar x)$, according to point 4 in Lemma \ref{sec:hamiltonians}, we know that the maximum which defines $H_i^+(\bar x, D(\varphi|_{\cP_i}))(\bar x)$ is reached for one $a\in A_i$ such that $f_i(\bar x,a).e_i=0$. Then, we can apply \eqref{eq: inequality on Hgamma+_bis} in Remark \ref{rmk inequality on Hgamma+} and \eqref{eq: normal_controllability6bis} implies
\begin{displaymath}
\begin{array}{l}
\lambda u_\alpha(\bar x) + H_\Ga(\bar x, D\left( \ph|_{\cP_1}\right) (\bar x), \ldots,  D\left( \ph|_{\cP_N}\right) (\bar x)  )\le -\lambda \left(\frac{|\bar z_0-\bar x_0|^2}{\alpha^2}+\alpha \right)^{\frac{p}{2}}+\omega(|\bar x_0-\bar z_0|)\\
\quad \quad \quad \quad \quad \quad \quad \quad \quad \quad \quad \quad \quad \quad \quad \quad \quad \quad + M\frac{p|\bar z_0-\bar x_0|^2}{\alpha^2}\left(\frac{|\bar z_0-\bar x_0|^2}{\alpha^2}+\alpha \right)^{\frac{p}{2}-1}.
\end{array}
\end{displaymath}
Then, for $p$ small enough, we deduce that
\begin{equation}
\label{eq: proof_u_alpha_subsolution_finer_inequality}
\lambda u_\alpha(\bar x) + H_\Ga(\bar x, D\left( \ph|_{\cP_1}\right) (\bar x), \ldots,  D\left( \ph|_{\cP_N}\right) (\bar x)  )\le \omega(|\bar x_0-\bar z_0|),
\end{equation}
and according to \eqref{eq: proof_u_alpha_lip1bisbis} the right hand side of the latter inequality gives us $m(\alpha)$.
\end{proof}

Finally, let us state the counterpart of Lemma \ref{lem regularization} in the present context.

\begin{lemma}\label{lem: normal_controllability_approximation}
 Assume  $[\rm{H}0]$, $[\rm{H}1]$ and $[\rm{H}2]$. For $y_0\in \Ga$ let $R>0$ be as in \eqref{eq: R_normal_controllability}. We set $Q=B(y_0,R)\cap \cS$.
Let $u:\cS \to \R$ be a bounded, Lipschitz continuous viscosity subsolution of (\ref{HJa}) in $Q$. Let $(\rho_\epsilon)_\epsilon$ be a sequence of mollifiers defined on $\R$.
We consider the function $u_\epsilon$ defined on $Q_\epsilon:=\left\lbrace x\in Q : dist(x,\partial Q)> \epsilon \right\rbrace$ by
\begin{displaymath}
u_\epsilon(x_0e_0+x')= u * \rho_\epsilon(x_0e_0+x')= \int_{\R}u((x_0-\tau)e_0+x')\rho_\epsilon(\tau) d\tau.
\end{displaymath}
We recall that the decomposition of $x\in \cS$, $x=x_0e_0+x'$, is explained in \eqref{eq: decomposition_of_x2}.\\
Then, $\parallel u_\epsilon-u \parallel_{L^\infty(Q_\epsilon)}$ tends to $0$ as $\epsilon$ tends to $0$  and there exists a function $\tilde{m}:(0,+\infty) \to (0,+ \infty)$  such that $\lim_{\epsilon \to 0}\tilde{m}(\epsilon)=0$ and the
function $u_\epsilon - \tilde{m}(\epsilon)$ is a viscosity subsolution of (\ref{HJa}) in $Q_{\epsilon}$.
\end{lemma}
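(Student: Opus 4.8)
The plan is to reproduce, in this localized and only normally controllable setting, the smoothing argument behind Lemma \ref{lem regularization} (due to P.-L. Lions and to Barles--Jakobsen), checking that its two structural ingredients are available: each $H_i$, and hence $H_i^+$ and $H_\Ga$, is convex in the momentum variable, and the geometry is invariant under translations in the $e_0$ direction, so that mollifying only in $x_0$ is compatible with the structure of $\cS$. The uniform convergence is immediate: since $u$ is Lipschitz continuous on $Q$ with some constant $L_u$ and $\supp(\rho_\epsilon)=[-\epsilon,\epsilon]$, for every $x=x_0e_0+x'\in Q_\epsilon$ the translate $(x_0-\tau)e_0+x'$ stays in $Q$ whenever $|\tau|\le\epsilon$, whence
\[
|u_\epsilon(x)-u(x)|\le \int_{-\epsilon}^{\epsilon}\bigl|u((x_0-\tau)e_0+x')-u(x_0e_0+x')\bigr|\,\rho_\epsilon(\tau)\,d\tau\le L_u\,\epsilon ,
\]
so $\|u_\epsilon-u\|_{L^\infty(Q_\epsilon)}\to 0$.

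For the subsolution property I would first set up the averaging. Writing $u^\tau:=u(\cdot-\tau e_0)$, the map $x\mapsto x-\tau e_0$ is an isometry sending each $\cP_i$ onto itself and $\Ga$ onto itself while fixing the directions $e_0,\dots,e_N$; hence, for $|\tau|\le\epsilon$, $u^\tau$ is a viscosity subsolution on $Q_\epsilon$ of the equation obtained from (\ref{HJa}) by replacing $H_i(x,\cdot)$ and $H_\Ga(x,\cdot)$ with $H_i(x-\tau e_0,\cdot)$ and $H_\Ga(x-\tau e_0,\cdot)$, the momenta produced by any test function being \emph{unchanged} under the shift. Since $u_\epsilon=\int u^\tau\rho_\epsilon(\tau)\,d\tau$ is a probability average of the $u^\tau$ and the Hamiltonians are convex in $p$, a Jensen inequality lets one pass the Hamiltonian inside the average, at the price of the $x$-moduli (\ref{eq: regularity of hi with respect to x}) of Lemma \ref{lem: normal_controllability3} and (\ref{eq:tech02}) of Lemma \ref{lem inequality on Hgamma+}; because $u$, hence $u_\epsilon$, is Lipschitz, the momenta at any contact point are bounded a priori, so the resulting error is of order $L_u\epsilon+\omega(\epsilon)$, and this is what defines $\tilde m(\epsilon)$.

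Away from $\Ga$ this is routine: on each $\cP_i\cap Q_\epsilon$ the Lipschitz subsolution $u$ satisfies $\lambda u+H_i(x,Du)\le 0$ almost everywhere by convexity of $H_i$; differentiating under the integral sign gives $Du_\epsilon=\int Du(\cdot-\tau e_0)\rho_\epsilon\,d\tau$ a.e., and Jensen together with (\ref{eq: regularity of hi with respect to x}) yields $\lambda u_\epsilon+H_i(x,Du_\epsilon)\le L_fL_u\epsilon+\omega_\ell(\epsilon)$ a.e.; as $u_\epsilon$ is Lipschitz and $H_i$ convex, this almost-everywhere inequality is equivalent to the viscosity subsolution inequality with the same right-hand side, so one may take $\tilde m(\epsilon)=\lambda^{-1}(L_fL_u\epsilon+\omega_\ell(\epsilon))$ in the interior.

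The main obstacle is the junction condition at $\Ga$, which is not captured by any almost-everywhere statement and must be verified genuinely in the viscosity sense. Given $\bar x\in\Ga\cap Q_\epsilon$ and $\ph\in\cR(\cS)$ with $u_\epsilon-\ph$ having a local maximum at $\bar x$, the averaging principle gives $\lambda u_\epsilon(\bar x)+\int H_\Ga(\bar x-\tau e_0,D(\ph|_{\cP_1})(\bar x),\dots)\rho_\epsilon(\tau)\,d\tau\le 0$, and, taking $i$ to realize $H_\Ga(\bar x,\dots)=H_i^+(\bar x,D(\ph|_{\cP_i})(\bar x))$, it remains to compare $H_i^+(\bar x,\cdot)$ with its $\tau$-average. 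Here the delicate point, absent under the full controllability $[\rm{H}3]$, is that the maximum condition only forces $\partial_{x_i}(\ph|_{\cP_i})(\bar x)\ge -L_u$ (a one-sided bound into $\cP_i$), so the normal momentum may be arbitrarily large and positive, a regime in which (\ref{eq:tech02}) is useless and only the one-sided modulus (\ref{eq: inequality on Hgamma+_bis}) of Remark \ref{rmk inequality on Hgamma+} is available. I would therefore reproduce the trichotomy of Lemma \ref{lem: sup_convolution}, splitting according to whether $\partial_{x_i}(\ph|_{\cP_i})(\bar x)$ is $\le 0$, lies in $(0,\tilde p_{i,\bar x}]$, or exceeds $\max\{0,\tilde p_{i,\bar x}\}$, where $\tilde p_{i,\bar x}$ is the largest minimizer of $d\mapsto H_i(\bar x,\cdot+de_i)$: in the first two cases the normal momentum is controlled through the coercivity (\ref{eq coercivity with respect to p'}), so (\ref{eq:tech02}) produces an $O(\epsilon)$ error; in the last case point 4 of Lemma \ref{sec:hamiltonians} forces the maximizer defining $H_i^+(\bar x,\cdot)$ to be attained at a control with $f_i(\bar x,a)\cdot e_i=0$, so the sharp one-sided estimate (\ref{eq: inequality on Hgamma+_bis}) applies with no modulus needed on the bad side and the discrepancy is again $O(\epsilon)$. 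Collecting the three cases bounds $\lambda u_\epsilon(\bar x)+H_\Ga(\bar x,D(\ph|_{\cP_1})(\bar x),\dots)$ by some $\lambda\tilde m(\epsilon)$ with $\tilde m(\epsilon)\to 0$, i.e. $u_\epsilon-\tilde m(\epsilon)$ is a subsolution at $\Ga$; together with the interior case and the uniform convergence this concludes the proof.
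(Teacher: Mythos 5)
Your overall strategy is the same as the paper's: the paper's own proof of this lemma is a two-line reference back to Lemma \ref{lem regularization}, i.e.\ to the classical mollification argument of P.-L.~Lions and Barles--Jakobsen for convex Hamiltonians and Lipschitz subsolutions, the only change being that the Lipschitz continuity is now an assumption instead of a consequence of $[\rm{H}3]$. Your treatment of the uniform convergence and of the interior points (a.e.\ inequality, Jensen, and the equivalence between a.e.\ and viscosity subsolutions for Lipschitz functions and convex $H_i$) is correct, and your trichotomy at $\Ga$ --- splitting according to the position of $\partial_{x_i}(\ph|_{\cP_i})(\bar x)$ relative to $0$ and to the largest minimizer $\tilde p_{i,\bar x}$, and invoking point 4 of Lemma \ref{sec:hamiltonians} together with the one-sided estimate \eqref{eq: inequality on Hgamma+_bis} of Remark \ref{rmk inequality on Hgamma+} in the last regime --- is exactly the mechanism the paper deploys in the proof of Lemma \ref{lem: sup_convolution}, so that part is sound and is in fact more explicit than anything the paper writes for the present lemma.

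There is, however, one genuine gap: the step you call the ``averaging principle'', namely that at a local maximum point $\bar x\in\Ga$ of $u_\epsilon-\ph$ one has $\lambda u_\epsilon(\bar x)+\int H_\Ga(\bar x-\tau e_0, D(\ph|_{\cP_1})(\bar x),\dots)\rho_\epsilon(\tau)\,d\tau\le 0$, is asserted but never proved, and it is precisely where the Euclidean argument you are importing breaks down on a junction. Jensen's inequality needs momenta of the individual translates $u^\tau$ at or near $\bar x$; in the open half-planes these are supplied by a.e.\ differentiability, but $\Ga$ has zero measure in $\cS$, the junction condition has no almost-everywhere formulation (as you yourself observe), and a maximum of the average $u_\epsilon-\ph$ does not yield maximum points of the individual $u^\tau-\ph$ to which the subsolution property of $u$ could be applied. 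So your case analysis resolves the second difficulty (comparing $H_\Ga(\bar x-\tau e_0,\cdot)$ with $H_\Ga(\bar x,\cdot)$ when the normal slopes are only bounded from below), but the inequality it is applied to is left unjustified. A way to close the gap, consistent with the control-theoretic spirit of the paper, is to avoid Hamiltonians altogether at this stage: each translate $u^\tau$ satisfies the sub-optimality principle \eqref{eq:17} of Lemma \ref{sec:prop-visc-sub-1} along trajectories of the $\tau$-shifted dynamics; comparing, via $[\rm{H}0]$, $[\rm{H}1]$ and the Lipschitz bound on $u$, the trajectory issued from $x-\tau e_0$ with the translate by $-\tau e_0$ of the trajectory issued from $x$, one gets that $u_\epsilon$ satisfies an approximate sub-optimality principle with error $t\,\tilde m(\epsilon)$, and the viscosity inequality at $\bar x\in\Ga$ (junction condition included, tested along trajectories with $f_i(\bar x,a).e_i\ge 0$) then follows from the usual short-time expansion. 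Note that this route never evaluates the Hamiltonians at the unbounded normal momenta, so it also renders the trichotomy unnecessary.
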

\begin{proof}
 The proof is similar to that of Lemma \ref{lem regularization}. The difference is that we assume that $u$ is Lipschitz continuous in $Q$. This is no longer a consequence of the assumption [H3].
\end{proof}

The following theorem is a local Comparison Principle.
\begin{theorem}
\label{th: local comparison} 
 Assume  $[\rm{H}0]$, $[\rm{H}1]$, $[\rm{H}2]$ and $[\tH3]$. Let $u$ be a bounded, usc subsolution of (\ref{HJa}) in $\cS$ and $v$ be a bounded, lsc supersolution of (\ref{HJa}) in $\cS$. Let $R>0$ be as in \eqref{eq: R_normal_controllability}.
Let $y_0\in \Ga$ be fixed.
Then, if we set $Q=B(y_0,R)\cap \cS$, we have
\begin{equation}
\label{eq: lem local comparison}
\parallel (u-v)_+ \parallel_{L^\infty(Q)} \le \parallel (u-v)_+ \parallel_{L^\infty(\partial Q)}.
\end{equation}
\end{theorem}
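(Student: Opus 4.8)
The argument follows the Barles--Briani--Chasseigne scheme already used for the fully controllable case in Theorem~\ref{sec:comparison-principle-1}, the novelty being that under $[\tH3]$ a subsolution of \eqref{HJa} need not be Lipschitz near $\Ga$, so Lemma~\ref{sec:prop-visc-sub-3} is unavailable and the comparison cannot be run on $u$ directly. The plan is to first replace $u$ by a Lipschitz, $x_0$--smooth subsolution on a slightly smaller domain, then argue by contradiction on this regularized subsolution, and finally let the regularization parameters go to zero. For the first step I would apply successively the sup--convolution of Lemma~\ref{lem: sup_convolution} and the mollification of Lemma~\ref{lem: normal_controllability_approximation}: for $\alpha,p,\epsilon>0$ small, set $w=u_{\alpha,\epsilon}-m(\alpha)-\tilde m(\epsilon)$, where $u_\alpha$ is the $x_0$--sup-convolution of $u$ and $u_{\alpha,\epsilon}$ its $x_0$--mollification. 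By these two lemmas $w$ is Lipschitz continuous and $C^1$ along $\Ga$ on a domain $Q_{\alpha,\epsilon}\uparrow Q$, it is a subsolution of \eqref{HJa} there, and $w\to u$ with $m(\alpha),\tilde m(\epsilon)\to0$ (recall $u=u^\star$ is usc, so $u_\alpha\to u$).

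Next I would fix $\alpha,\epsilon$ and set $M_{\alpha,\epsilon}=\sup_{\overline{Q_{\alpha,\epsilon}}}(w-v)$, assuming for contradiction that $M_{\alpha,\epsilon}>\parallel(w-v)_+\parallel_{L^\infty(\partial Q_{\alpha,\epsilon})}$, so that the positive maximum is attained at an interior point $\bar x$. If $\bar x\in\cP_i\setminus\Ga$, the equation is the standard one (Remark~\ref{IKsola}); adding $-d^2(x,\bar x)$ (with $d$ the geodesic distance \eqref{geodesic_distance}) to make the maximum strict and doubling the variables with the penalization $d^2(x,y)/\beta^2$, the viscosity inequalities for the Lipschitz subsolution $w$ and the supersolution $v$ lead, by the classical argument, to $M_{\alpha,\epsilon}\le0$, a contradiction, exactly as in case~1 of Theorem~\ref{sec:comparison-principle-1}.

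The delicate case, and the main obstacle, is $\bar x\in\Ga$. Here I would use that $w$ is Lipschitz and $C^1$ on $\Ga$ to build $\phi\in\cR(\cS)$ with $\phi|_\Ga=w|_\Ga$ and $\phi\le w$ near $\Ga$ (e.g.\ $\phi(x)=w(x_0e_0)-Cx_i$ for $x=x_0e_0+x_ie_i\in\cP_i$, with $C$ large), so that $v-\phi$ has a local minimum at $\bar x$. Theorem~\ref{sec:prop-visc-sub} then gives the alternative [A] or [B]. In case [B], $\lambda v(\bar x)+H_{\Ga}^T(\bar x,D(\phi|_\Ga)(\bar x))\ge0$, while Lemma~\ref{lem subsol inequality on gamma} (valid for the Lipschitz subsolution $w$ by Remark~\ref{rmk subsol inequality on gamma}), applied to $\psi=w|_\Ga$, yields $\lambda w(\bar x)+H_{\Ga}^T(\bar x,D(w|_\Ga)(\bar x))\le0$; since $D(\phi|_\Ga)=D(w|_\Ga)$, subtracting gives $w(\bar x)\le v(\bar x)$, hence $M_{\alpha,\epsilon}\le0$. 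In case [A], the superoptimality \eqref{eq:18} for $v$ along the sequence $x_k\to\bar x$ in $\cP_i\setminus\Ga$ is combined with the suboptimality \eqref{eq:17} of Lemma~\ref{sec:prop-visc-sub-1} for $w$ on the same trajectory, giving $w(x_k)-v(x_k)\le(w-v)(y_{x_k}(\eta))e^{-\lambda\eta}$; letting $k\to\infty$ and using $v(x_k)\to v(\bar x)$ together with the continuity of $w$ yields $M_{\alpha,\epsilon}\le M_{\alpha,\epsilon}e^{-\lambda\eta}$, so again $M_{\alpha,\epsilon}\le0$. In all cases we contradict $M_{\alpha,\epsilon}>0$, whence $\parallel(w-v)_+\parallel_{L^\infty(Q_{\alpha,\epsilon})}\le\parallel(w-v)_+\parallel_{L^\infty(\partial Q_{\alpha,\epsilon})}$.

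Finally I would pass to the limit $\epsilon\to0$ then $\alpha\to0$. Using $u_{\alpha,\epsilon}\to u_\alpha\to u$, $m(\alpha),\tilde m(\epsilon)\to0$, $Q_{\alpha,\epsilon}\uparrow Q$, and the upper semicontinuity of $(u-v)_+$ to control the boundary terms $\parallel(w-v)_+\parallel_{L^\infty(\partial Q_{\alpha,\epsilon})}$ by $\parallel(u-v)_+\parallel_{L^\infty(\partial Q)}$, one recovers \eqref{eq: lem local comparison}. The two points requiring care are precisely this control of the boundary contribution as the domain expands to $Q$, and the verification in case [A] that the trajectory $y_{x_k}(\cdot)$ remains in $Q_{\alpha,\epsilon}$ on $[0,\eta]$ (by stopping it at its exit time from $Q_{\alpha,\epsilon}$ if necessary) so that the bound $(w-v)(y_{x_k}(\eta))\le M_{\alpha,\epsilon}$ may legitimately be used.
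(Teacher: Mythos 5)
Your proposal follows essentially the same route as the paper's own proof: sup-convolution (Lemma \ref{lem: sup_convolution}) followed by mollification (Lemma \ref{lem: normal_controllability_approximation}) to produce a Lipschitz subsolution that is $C^1$ along $\Ga$ on $Q_{\alpha,\epsilon}$, then a contradiction argument at an interior positive maximum split according to the alternative [A]/[B] of Theorem \ref{sec:prop-visc-sub} (with Lemma \ref{lem subsol inequality on gamma} and Remark \ref{rmk subsol inequality on gamma} handling case [B], and the sub/superoptimality inequalities handling case [A]), and finally the two-stage limit $\epsilon\to 0$, $\alpha\to 0$ with the boundary terms controlled via semicontinuity. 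The two delicate points you flag at the end are precisely where the paper spends its effort as well, so your argument is correct and matches the paper's.
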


\begin{proof}
\textbf{Step 1 :} By assuming $[\tH3]$ instead of $[\rm{H}3]$, we lose the lipschitz continuity of $u$ in a neighborhood of $\Ga$, which was an important property to prove Theorem \ref{sec:comparison-principle-1}. The first step consists therefore of regularizing the subsolution so that it becomes Lipschitz continuous. Take $\alpha,p>0$ two positive numbers and consider $u_\alpha$ ($=u_{\alpha,p}$) the sup-convolution of $u$ with respect to the $x_0$-variable defined in \eqref{eq: normal_controllability2}.
We chose $\alpha,p$ small enough so that Lemma \ref{lem: sup_convolution} can be applied. Thus, from Lemma \ref{lem: sup_convolution}, we know that $u_\alpha$ is Lipschitz continuous in $Q_\alpha$ and that there exists $m:(0,+\infty)\to (0,+\infty)$ such that $\lim_{\alpha \to 0}m(\alpha)=0$ and $u_\alpha-m(\alpha)$ is a subsolution of (\ref{HJa}) in $Q_\alpha$. The definition of the set $Q_\alpha$ ($=Q_{\alpha,p}$) is given in \eqref{def:Q_alpha_set}.\\
\textbf{Step 2 :} We are now able to follow the proof of Theorem \ref{sec:comparison-principle-1}. The next step consists of a second regularization of the subsolution $u$ which this time produces a $\cC^1$ function in $\Ga$.
 Let $Q_{\alpha,\epsilon}$ be the set defined by
\begin{displaymath}
Q_{\alpha,\epsilon}:=\left\lbrace x\in Q : dist(x,\partial Q)>\alpha\sqrt{(2\parallel u \parallel_\infty+\alpha^{\frac{p}{2}})^{2/p}-\alpha}+\epsilon \right\rbrace.
\end{displaymath}
We consider the function $u_{\alpha,\epsilon}$ defined on $Q_{\alpha,\epsilon}$ by
\begin{displaymath}
u_{\alpha,\epsilon}(x_0e_0+x')= u_\alpha * \rho_\epsilon(x_0e_0+x')= \int_{\R}u_\alpha((x_0-\tau)e_0+x')\rho_\epsilon(\tau) d\tau,
\end{displaymath}
where $\rho_\epsilon$ is a sequence of mollifiers defined on $\R$.
It is clear that $u_{\alpha,\epsilon}$ is a $\cC^1$ function in $\Ga\cap Q_{\alpha,\epsilon}$. Besides, from Lemma \ref{lem: normal_controllability_approximation},
\mbox{$\parallel u_{\alpha,\epsilon}-u_\alpha \parallel_{L^\infty(Q_{\alpha,\epsilon})}$} tends to $0$ as $\epsilon$ tends to $0$ and there exists a function $\tilde{m}:(0,+\infty) \to (0,+ \infty)$,  such that $\lim_{\epsilon \to 0}\tilde{m}(\epsilon)=0$ and such that the
function $u_{\alpha,\epsilon}- m(\alpha) - \tilde{m}(\epsilon)$ is a viscosity subsolution of (\ref{HJa}) in $Q_{\alpha,\epsilon}$.\\
\textbf{Step 3 :} Let us prove that
\begin{equation}\label{eq: normal_controllability10}
\parallel (u_{\alpha,\epsilon}-m(\alpha)-\tilde{m}(\epsilon)-v)_+ \parallel_{L^\infty(Q_{\alpha,\epsilon})} \le \parallel (u_{\alpha,\epsilon}-m(\alpha)-\tilde{m}(\epsilon)-v)_+ \parallel_{L^\infty(\partial Q_{\alpha,\epsilon})},
\end{equation}
for a fixed pair $(\alpha,\epsilon)$ of positive numbers.\\
Let $M_{\alpha,\epsilon}$ be the supremum of $u_{\alpha,\epsilon}-m(\alpha)-\tilde{m}(\epsilon)-v$ on $Q_{\alpha,\epsilon}$. The latter is reached for some $\bar x_{\alpha,\epsilon}\in \overline{Q}_{\alpha,\epsilon}$, because the function $u_{\alpha,\epsilon}-m(\alpha)-\tilde{m}(\epsilon)-v$ is usc. If $M_{\alpha,\epsilon}\le 0$, then we clearly have $\parallel (u_{\alpha,\epsilon}-m(\alpha)-\tilde{m}(\epsilon)-v)_+ \parallel_{L^\infty(Q_{\alpha,\epsilon})} \le \parallel (u_{\alpha,\epsilon}-m(\alpha)-\tilde{m}(\epsilon)-v)_+ \parallel_{L^\infty(\partial Q_{\alpha,\epsilon})}$. So, we assume that $M_{\alpha,\epsilon}> 0$ and we want to show that $\bar x_{\alpha,\epsilon}\in \partial Q_{\alpha,\epsilon}$. Assume by contradiction that $\bar x_{\alpha,\epsilon}\not\in \partial Q_{\alpha,\epsilon}$. Then, $\bar x_{\alpha,\epsilon}$ is a local maximum of $u_{\alpha,\epsilon}-m(\alpha)-\tilde{m}(\epsilon)-v$.
\begin{enumerate}
\item If $\bar x_{\alpha,\epsilon}\notin \Ga$ : The usual doubling of variables method, with the auxiliary function $\psi_{\beta}(x,y)= u_{\alpha,\epsilon}(x)-m(\alpha)-\tilde{m}(\epsilon) -v(y)- d^2(x,\bar x_{\alpha,\epsilon})- \frac {d^2(x,y)}{\beta^2}$, leads us to a contradiction.
 \item If  $\bar x_{\alpha,\epsilon}\in \Ga$ : According to Lemma \ref{lem: sup_convolution}, $u_{\alpha,\epsilon}$ is Lipschitz continuous and $\mathcal{C}^1$ with respect to $x_0$ in $\bar Q_{\alpha, \epsilon}$. Then, with a similar argument as in the proof of Lemma \ref{lem subsol inequality on gamma} we can construct a test-function $\ph \in \cR(\cS)$ such that $\ph |_\Ga=u_{\alpha,\epsilon}|_\Ga$ and $\ph$ remains below $u_{\alpha,\epsilon}$ in a neighborhood of $\Ga$ (take for example $\ph(x_0e_0+x_ie_i)= u_{\alpha,\epsilon}(x_0e_0)-Cx_i$ with $C$  great enough). It is easy to check that $v-\ph$ has a local minimum at $\bar x_{\alpha,\epsilon}$. Then, we can use Theorem \ref{sec:prop-visc-sub}, which holds with [$\tH$3], and we have two possible cases:
\begin{description}
\item{[B]}  $ \lambda v(\bar x_{\alpha,\epsilon}) + H_{\Ga}^T(\bar x_{\alpha,\epsilon},D \left(u_{\alpha,\epsilon}|_\Ga\right)(\bar x_{\alpha,\epsilon})) \ge 0$. \\
Moreover, $u_{\alpha,\epsilon}-m(\alpha)-\tilde{m}(\epsilon)$ is a subsolution of (\ref{HJa}) which is $C^1$ on $\Ga$. Then, according to Lemma \ref{lem subsol inequality on gamma}, which can be apply here from Remark \ref{rmk subsol inequality on gamma}, we have the inequality $ \lambda (u_{\alpha,\epsilon}(\bar x_{\alpha,\epsilon})-m(\alpha)-\tilde{m}(\epsilon) )+  H_{\Ga}^T(\bar x_{\alpha,\epsilon},D \left(u_{\alpha,\epsilon}|_\Ga\right)(\bar x_{\alpha,\epsilon})) \le 0$. Therefore, we obtain that $M_{\alpha,\epsilon}=u_{\alpha,\epsilon}(\bar x_{\alpha,\epsilon})-m(\alpha)-\tilde{m}(\epsilon)  - v(\bar x _{\alpha,\epsilon})\le 0$, which is a contradiction.
\item {[A]} With the notations of Theorem \ref{sec:prop-visc-sub},  we have that 
  \begin{displaymath}
    v(x_k)\ge   \int_0^{\eta}   \ell_i(y_{x_k}(s),  \alpha^k_i(s)) e^{-\lambda s} ds + v (y_{x_k}(\eta)) e^{-\lambda \eta}.
  \end{displaymath}
Moreover, from  Lemma \ref{sec:prop-visc-sub-1}, which holds with [$\tH$3],
  \begin{displaymath}
    u_{\alpha,\epsilon}(x_k)- m(\alpha)- \tilde{m}(\epsilon) \le   \int_0^{\eta}   \ell_i(y_{x_k}(s),  \alpha^k_i(s)) e^{-\lambda s} ds + (u_{\alpha,\epsilon} (y_{x_k}(\eta))- m(\alpha)- \tilde{m}(\epsilon) ) e^{-\lambda \eta}.
  \end{displaymath}
Therefore 
\begin{displaymath}
    u_{\alpha,\epsilon}(x_k)- m(\alpha)- \tilde{m}(\epsilon)- v(x_k) \le ( u_{\alpha,\epsilon} (y_{x_k}(\eta))- m(\alpha)- \tilde{m}(\epsilon) - v (y_{x_k}(\eta)) ) e^{-\lambda \eta}.
\end{displaymath}
Letting $k$ tend to $+\infty$, we find that $M_{\alpha,\epsilon}\le M_{\alpha,\epsilon}e^{-\lambda \eta}$, therefore that $ M_{\alpha,\epsilon}\ \le 0$, which is a contradiction.
\end{description}
\end{enumerate}
\textbf{Step 4 :} In order to prove the final result, we have to pass to the limit as $\epsilon$ tends to $0$ and then as $\alpha$ tends to $0$. Let $\alpha>0$ be fixed. Let $\epsilon_0$ be a strictly positive number and $y$ be in $Q_{\alpha,\epsilon_0}$. Then, for all $0< \epsilon < \epsilon_0$ we have that
\begin{equation}\label{eq: normal_controllability14}
(u_{\alpha,\epsilon}(y)-m(\alpha)-\tilde{m}(\epsilon)-v(y))_+ \le \parallel (u_{\alpha}-m(\alpha)-\tilde{m}(\epsilon)-v)_+ \parallel_{L^\infty(\partial Q_{\alpha,\epsilon})}.
\end{equation}
However, $\limsup_{\epsilon\to 0}\parallel(u_{\alpha,\epsilon}-m(\alpha)-\tilde{m}(\epsilon)-v)_+ \parallel_{L^\infty(\partial Q_{\alpha,\epsilon})} \le \parallel (u_\alpha-m(\alpha)-v)_+ \parallel_{L^\infty(\partial Q_\alpha)}$. Indeed, the supremum $\parallel(u_{\alpha,\epsilon}-m(\alpha)-\tilde{m}(\epsilon)-v)_+ \parallel_{L^\infty(\partial Q_{\alpha,\epsilon})}$ is reached for some $x_{\alpha,\epsilon}$ in $\partial Q_{\alpha,\epsilon}$. 
Thus, for any  subsequence such that $ \parallel (u_{\alpha,\epsilon}-m(\alpha)-\tilde{m}(\epsilon)-v)_+ \parallel_{L^\infty(\partial Q_{\alpha,\epsilon})} $ converges to a limit $\bar \ell$ when $\epsilon$ tends to $0$, we can assume that $x_{\alpha,\epsilon}$ converges to some $\bar x_{\alpha}$ that belongs to $\partial Q_\alpha$ when $\epsilon$ tends to $0$. Therefore, since $\parallel u_{\alpha,\epsilon}-u_\alpha \parallel_{L^\infty(Q_{\alpha,\epsilon})}$ tends to $0$ as $\epsilon$ tends to $0$, since $u_\alpha$ is continuous in $Q_\alpha$ and from the lower-semi-continuity of $v$, we have that $\bar \ell\le (u_\alpha(\bar x_\alpha)-m(\alpha)-v(\bar x_\alpha))_+ \le \parallel (u_\alpha-m(\alpha)-v)_+ \parallel_{L^\infty(\partial Q_\alpha)}$. Therefore, by the pointwise convergence of $u_{\alpha,\epsilon}$ to $u_\alpha$, passing to the $\limsup$ as $\epsilon$ tends to $0$ in (\ref{eq: normal_controllability14}) we deduce
\begin{displaymath}
(u_{\alpha}(y)-m(\alpha)-v(y))_+ \le \parallel (u_\alpha-m(\alpha)-v)_+ \parallel_{L^\infty(\partial Q_\alpha)}.
\end{displaymath}
The above inequality is true for all $y\in Q_{\alpha,\epsilon_0}$, with $\epsilon_0$ arbitrarily chosen, then
\begin{displaymath}
\parallel(u_\alpha-m(\alpha)-v)_+ \parallel_{L^\infty( Q_{\alpha})} \le \parallel (u_{\alpha}-m(\alpha)-v)_+\parallel_{L^\infty(\partial Q_\alpha)}.
\end{displaymath}
We are left with taking the limit as $\alpha$ tends to $0$.\\
Fix now $\alpha_0$ and $y \in Q_{\alpha_0}$. For all $0<\alpha \le \alpha_0$ we have
\begin{equation}
\label{eq: proof local comparison the end}
(u_{\alpha}(y)-m(\alpha)-v(y))_+ \le \parallel (u_{\alpha}-m(\alpha)-v)_+ \parallel_{L^\infty(\partial Q_{\alpha})}.
\end{equation} 
As above, we have that $\limsup_{\alpha \to 0}\parallel (u_{\alpha}-m(\alpha)-v)_+ \parallel_{L^\infty(\partial Q_{\alpha})} \le \parallel (u-v)_+ \parallel_{L^\infty(\partial Q)}$. 
 Indeed, the supremum $\parallel (u_{\alpha}-m(\alpha)-v)_+ \parallel_{L^\infty(\partial Q_{\alpha})}$ is reached for some $x_{\alpha}$ in $\partial Q_{\alpha}$.
Thus, for any  subsequence such that $ \parallel (u_{\alpha}-m(\alpha)-v)_+ \parallel_{L^\infty(\partial Q_{\alpha})} $ converges to a limit $\ell$ as $\alpha$ tends to $0$, we can assume that $x_{\alpha}$ converges to $\bar x$ which belongs to $\partial Q$ when $\epsilon$ tends to $0$. But, from the properties of the sup-convolution, the fact that $u$ is upper-semi-continuous, continuous with respect to $x_0$ and the fact that $v$ is lower-semi-continuous it is easy to check that necessarily $\ell \le (u(\bar x)-v(\bar x))_+ \le \parallel (u-v)_+ \parallel_{L^\infty(\partial Q)}$.
Therefore, by the pointwise convergence of $u_\alpha$ to $u$, passing to the $\limsup$ as $\alpha$ tends to $0$ in (\ref{eq: proof local comparison the end}) we deduce
\begin{displaymath}
(u(y)-v(y))_+ \le \parallel (u-v)_+ \parallel_{L^\infty(\partial Q)}, \quad \forall y \in Q_{\alpha_0}.
\end{displaymath}
The above inequality is true $\forall y\in Q_{\alpha_0}$, with $\alpha_0$ arbitrarily chosen, then
\begin{displaymath}
\parallel (u-v)_+ \parallel_{L^\infty( Q)}\le \parallel (u-v)_+ \parallel_{L^\infty(\partial Q)}.
\end{displaymath}
\end{proof}

We are now able to prove the following global Comparison Principle.
\begin{theorem}
\label{th: global comparison normal} 
 Assume  $[\rm{H}0]$, $[\rm{H}1]$, $[\rm{H}2]$ and $[\tH3]$. Let $u$ be a bounded, usc subsolution of (\ref{HJa}) in $\cS$ and $v$ be a bounded, lsc supersolution of (\ref{HJa}) in $\cS$. Then, $u\le v$ in $\cS$.
\end{theorem}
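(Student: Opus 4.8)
The plan is to argue by contradiction and to reduce the global inequality to the local comparison principle of Theorem~\ref{th: local comparison} together with the classical comparison in the open half-planes, the passage from local to global being made possible by the discount $\lambda>0$. Assume $\sup_\cS(u-v)>0$. Since $\cS$ is unbounded while $u,v$ are only bounded, $u-v$ need not attain its supremum, so I would first penalise in order to force a maximiser. Take $g\in\cR(\cS)$ defined by $g(x)=e^{k\langle x\rangle}$ with $\langle x\rangle=(1+|x|^2)^{1/2}$ and $0<k\le\lambda/M_f$; using the $p$-Lipschitz bound \eqref{eq: regularity of hi with respect to p} (and its analogue for $H_i^+$) one checks that $\lambda\sigma g\ge M_f\sigma|Dg|$, so that for every $\sigma>0$ the function $v_\sigma:=v+\sigma g$ is again a bounded-below, lsc supersolution of \eqref{HJa} with $v_\sigma\to+\infty$ at infinity. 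Then $u-v_\sigma$ is usc, bounded above and tends to $-\infty$, hence attains a global maximum $M_\sigma$ at some $\bar x_\sigma\in\cS$; since $v_\sigma\to v$ pointwise as $\sigma\to0$, one has $M_\sigma>0$ for $\sigma$ small, and it suffices to contradict this.

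Next I would split according to the position of the maximiser. If $\bar x_\sigma\in\cP_i\setminus\Ga$, then near $\bar x_\sigma$ the equation is a standard convex Hamilton--Jacobi equation (Remark~\ref{IKsola}), and the usual doubling of variables $u(x)-v_\sigma(y)-|x-y|^2/\beta^2$, combined with the modulus estimate \eqref{eq: regularity of hi with respect to x} of Lemma~\ref{lem: normal_controllability3} to absorb the Hamiltonian difference, yields $\lambda M_\sigma\le0$, a contradiction. If $\bar x_\sigma\in\Ga$, I would invoke Theorem~\ref{th: local comparison} on $Q=B(\bar x_\sigma,R)\cap\cS$, with $R$ as in \eqref{eq: R_normal_controllability}; concretely, this amounts to re-running the interface mechanism of that theorem. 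One regularises $u$ near $\bar x_\sigma$ by the $x_0$-sup-convolution of Lemma~\ref{lem: sup_convolution}, which makes it Lipschitz up to an error $m(\alpha)\to0$, and then by the mollification of Lemma~\ref{lem: normal_controllability_approximation}, which makes it $\cC^1$ on $\Ga$ up to $\tilde m(\epsilon)\to0$; for the resulting subsolution the tangential inequality of Lemma~\ref{lem subsol inequality on gamma} becomes available through Remark~\ref{rmk subsol inequality on gamma}, and the alternative [A]/[B] of Theorem~\ref{sec:prop-visc-sub}, valid under $[\tH3]$, applies. In branch [B] one combines the supersolution inequality $\lambda v_\sigma(\bar x_\sigma)+H_\Ga^T(\bar x_\sigma,\cdot)\ge0$ with the tangential subsolution inequality to get directly $u(\bar x_\sigma)\le v_\sigma(\bar x_\sigma)$; in branch [A] one pits the superoptimality of $v_\sigma$ against the suboptimality of the regularised subsolution (Lemma~\ref{sec:prop-visc-sub-1}, also valid under $[\tH3]$) to obtain $M_\sigma\le M_\sigma e^{-\lambda\eta}$. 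Either way $M_\sigma\le0$, contradicting $M_\sigma>0$.

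It then remains to let the auxiliary parameters tend to their limits, namely $\epsilon\to0$ and $\alpha\to0$ (so that $\tilde m(\epsilon),m(\alpha)\to0$) in the interface case and finally $\sigma\to0$, to conclude $u\le v$ on all of $\cS$. I expect the genuine difficulty to lie entirely in the interface case on the unbounded set. Away from $\Ga$ everything is classical, but at $\Ga$ the loss of Lipschitz regularity of $u$ under the weak assumption $[\tH3]$ forbids a direct use of the tangential subsolution inequality, which is exactly why the two-step regularisation underlying Theorem~\ref{th: local comparison} is unavoidable; at the same time, the coercive penalisation $v_\sigma$ plays a double role, simultaneously guaranteeing the existence of a maximiser and confining it to a single ball $B(\bar x_\sigma,R)$ on which the controllability radius $R$ of \eqref{eq: R_normal_controllability} is available, so that the local comparison can be invoked at all. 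The delicate points to verify are that $v_\sigma$ is genuinely a supersolution across $\Ga$ (so that $H_\Ga=\max_i H_i^+$ is handled componentwise) and that the errors $m(\alpha),\tilde m(\epsilon)$ are controlled uniformly enough to survive the passage to the limit.
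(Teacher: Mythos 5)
Your global architecture (penalise so that a maximiser exists, classical doubling of variables off $\Ga$, appeal to the local result at $\Ga$) is the same as the paper's, and your penalisation $v_\sigma=v+\sigma e^{k\langle x\rangle}$ with $k\le\lambda/M_f$ is a legitimate variant of the paper's device (the paper instead replaces $u$ by the convex combination $u_\mu=\mu u+(1-\mu)\psi$ with $\psi(x)=-K-\sqrt{1+|x|^2}$). However, the interface case does not close as written, and that case is the heart of the theorem. If you invoke Theorem \ref{th: local comparison} as a black box on $Q=B(\bar x_\sigma,R)\cap\cS$, you only obtain
\begin{equation*}
\| (u-v_\sigma)_+\|_{L^\infty(Q)}\;\le\;\|(u-v_\sigma)_+\|_{L^\infty(\partial Q)} .
\end{equation*}
Since $\bar x_\sigma$ is a \emph{global} maximiser of $u-v_\sigma$, both sides of this inequality lie between $M_\sigma$ and $M_\sigma$: nothing prevents $u-v_\sigma$ from being equal, or arbitrarily close, to $M_\sigma$ somewhere on $\partial Q$, so the estimate degenerates to $M_\sigma\le M_\sigma$ and yields no contradiction. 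If instead you ``re-run the interface mechanism'' of that theorem, the same obstruction reappears one level down: the regularised function $u_{\alpha,\epsilon}-m(\alpha)-\tilde m(\epsilon)$ is a subsolution only on the shrunken set $Q_{\alpha,\epsilon}$, so the maximum of $u_{\alpha,\epsilon}-m(\alpha)-\tilde m(\epsilon)-v_\sigma$ over $\overline Q_{\alpha,\epsilon}$ may sit on $\partial Q_{\alpha,\epsilon}$, where neither the doubling argument nor the alternative [A]/[B] of Theorem \ref{sec:prop-visc-sub} applies. Your branches [A] and [B] are stated at $\bar x_\sigma$ itself, but after regularisation the relevant maximiser is a different point, which you have no means of forcing into the interior.

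What is missing is a strictness mechanism, and this is precisely what the paper adds. The convex combination $u_\mu$ is a \emph{strict} subsolution, with slack $-(1-\mu)$; this slack is then spent on a quadratic localisation: $\bar u_\mu(x)=u_\mu(x)-|x-x_\mu|^2(1-\mu)^2$ is still a subsolution (the gradient perturbation costs at most $2rM_f(1-\mu)^2$, absorbed by the slack for $\mu$ close to $1$), while on $\partial Q_\mu$ it sits strictly below the central value by $r^2(1-\mu)^2$. Applying Theorem \ref{th: local comparison} to the pair $(\bar u_\mu,v)$ then gives $M_\mu\le M_\mu-r^2(1-\mu)^2$, an actual contradiction. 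Your construction could be repaired along the same lines: for $k<\lambda/M_f$ strictly, $v_\sigma$ is a strict supersolution with slack $\sigma(\lambda-kM_f)g(x)$, bounded below by a positive constant on $Q$, so $v_\sigma+c|x-\bar x_\sigma|^2$ remains a supersolution for $c$ small enough and restores the strict gap on $\partial Q$ needed to contradict the local estimate --- but this step is absent from your proposal. A secondary point: your $v_\sigma$ is unbounded, whereas Theorem \ref{th: local comparison} is stated for bounded supersolutions, so you would also have to check that only boundedness of the supersolution on $Q$ (together with the lower bound required by Theorem \ref{sec:prop-visc-sub}) is actually used in its proof.
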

\begin{proof}
The first step consists in a localization of the problem. For a some positive number $K$, we consider the function $\psi(x):=-K-\sqrt{1+|x|^2}$. It is easy to check that for $K\ge \frac{M_f+M_l+1}{\lambda}$,  $\psi$ satisfies the viscosity inequality
\begin{displaymath}
\lambda \psi +\sup_{(\xi,\zeta)\in \FL(x)}\{-D\psi(x).\xi-\zeta\} \le -1.
\end{displaymath}
Then, if we set, for $\mu \in (0,1)$, $u_\mu=\mu u+(1-\mu)\psi$, by convexity properties, we have that
\begin{displaymath}
\lambda u_\mu +\sup_{(\xi,\zeta)\in \FL(x)}\{-Du_\mu(x).\xi-\zeta\} \le -(1-\mu),
\end{displaymath}
where the above inequality is to be understood in the sense of the viscosity. In particular, $u_\mu$ is a subsolution of (\ref{HJa}) in $\cS$. We set
$
M_\mu:=\sup_{x\in \cS}\{ u_\mu(x)-v(x) \}
$. Since $u_\mu(x)$ is usc and tends to $-\infty$ as $|x|$ tends to $+\infty$ and since $v$ is bounded, lsc the above supremum is reached at some $x_\mu\in \cS$. We argue by contradiction, assuming that $M:=\sup_{x\in \cS}\{ u(x)-v(x) \}>0$. Then, since $M_\mu$ tends to $M$ as $\mu$ tends to $1$, for $\mu$ close enough to $1$ we have $M_\mu>0$. We fix such a $\mu$ and we distinguish two cases.
\begin{enumerate}
\item If $x_\mu \in \cP_i\setminus \Ga$ for some $i\in \{1, \ldots, N \}$, then a classical doubling variables method leads to a contradiction.
\item If $x_\mu \in \Ga$, then we are going to obtain a contradiction from Theorem \ref{th: local comparison}. Let $r>0$ be small enough such that for all $i\in \{1, \ldots, N\}$ and $x\in B(\Ga,r)\cap \cP_i$
 \begin{displaymath}
 [-\frac{\delta}{2},\frac{\delta}{2}]\subset \left\lbrace f_i(x,a).e_i : a\in A_i \right\rbrace.
 \end{displaymath}
 We set $Q_\mu:=B(x_\mu,r)\cap \cS$ and we consider the function $\bar u_\mu$ defined in $\cS$ by  $\bar u_\mu(x)=u_\mu(x)-|x-x_\mu|^2(1-\mu)^2$. It is easy to check that if $\mu$ is close enough to $1$, $\bar u_\mu$ is a subsolution of (\ref{HJa}) in $\cS$. Indeed, a direct computation gives
\begin{displaymath}
\lambda \bar u_\mu(x) + \sup_{(\xi,\zeta)\in \FL(x)}\{ -D\bar u_\mu(x,\xi)-\zeta \} \le -(1-\mu)+2rM_f(1-\mu)^2,
\end{displaymath}
and the right hand side of this inequality is clearly negative if $\mu \in (0,1)\cap [1-\frac{1}{2rM_f},1)$. Then, we apply Theorem \ref{th: local comparison} with $Q=Q_\mu$ and the pair of sub/supersolution $(u_\mu,v)$ : this leads to
\begin{equation}\label{eq: normal_controllability15}
M_\mu=u_\mu(x_\mu)-v(x_\mu)=\bar u_\mu(x_\mu)-v(x_\mu) \le \parallel (\bar u_\mu -v)_+ \parallel_{L^\infty(\partial Q_\mu)}.
\end{equation}
However, if $x\in \partial Q_\mu$
\begin{displaymath}
\bar u_\mu(x)-v(x)=u_\mu(x)-v(x)-r^2(1-\mu)^2 \le M_\mu-r^2(1-\mu)^2<M_\mu,
\end{displaymath}
in contradiction with (\ref{eq: normal_controllability15}).
\end{enumerate}
Finally, we deduce that $M\le 0$ and the proof is complete.
\end{proof}

As a consequence, we have the following result of uniqueness and regularity.

\begin{theorem}
 Assume  $[\rm{H}0]$, $[\rm{H}1]$, $[\rm{H}2]$ and $[\tH3]$. Then, the value function $v$ is continuous and is the unique viscosity solution of (\ref{HJa}) in $\cS$.
\end{theorem}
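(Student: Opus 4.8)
The plan is to deduce both continuity and uniqueness directly from the two main results already established in this section, namely the existence of a discontinuous viscosity solution (Theorem~\ref{th: v_discontinuous_solution}) and the global comparison principle (Theorem~\ref{th: global comparison normal}). The point is that all the analytic difficulty---the sup-convolution regularization, the local comparison, and the delicate estimates on the Hamiltonians under $[\tH3]$---has already been absorbed into these statements, so that the final argument is a short bootstrap requiring no new estimate.

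First I would establish continuity. By Theorem~\ref{th: v_discontinuous_solution}, $v$ is a bounded discontinuous viscosity solution, which by definition means that $v^\star$ is a bounded usc subsolution of \eqref{HJa} and $v_\star$ is a bounded lsc supersolution of \eqref{HJa}. Applying Theorem~\ref{th: global comparison normal} with the subsolution $v^\star$ and the supersolution $v_\star$ yields $v^\star \le v_\star$ on $\cS$. On the other hand, the elementary inequalities $v_\star(x) \le v(x) \le v^\star(x)$ hold for every $x\in\cS$ by the very definition of the semicontinuous envelopes (the constant sequence $z\equiv x$ is admissible in both the $\liminf$ and the $\limsup$). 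Combining these two facts gives $v_\star = v = v^\star$ on $\cS$, so that $v$ coincides with its lower and upper semicontinuous envelopes and is therefore continuous.

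Next I would prove uniqueness, in fact among all bounded discontinuous viscosity solutions. Let $w$ be any bounded discontinuous viscosity solution of \eqref{HJa}; then $w^\star$ is a subsolution and $w_\star$ is a supersolution. Since $v$ is now known to be continuous, $v = v^\star = v_\star$ is simultaneously a bounded usc subsolution and a bounded lsc supersolution of \eqref{HJa}. Two applications of Theorem~\ref{th: global comparison normal} then give $w^\star \le v_\star = v$ (comparing the subsolution $w^\star$ with the supersolution $v$) and $v = v^\star \le w_\star$ (comparing the subsolution $v$ with the supersolution $w_\star$). Using $w_\star \le w^\star$ we conclude $v \le w_\star \le w^\star \le v$, hence $w_\star = w^\star = v$, so that $w = v$ and $w$ is continuous. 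In particular $v$ is the unique viscosity solution.

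As for the main difficulty, there is essentially none left at this stage: the entire weight of the proof rests on Theorem~\ref{th: global comparison normal}, whose proof in turn relies on the local comparison Theorem~\ref{th: local comparison} and the sup-convolution machinery of Lemma~\ref{lem: sup_convolution}. The only points requiring care are to invoke the comparison principle with the correct pairing of semicontinuous envelopes (usc subsolution against lsc supersolution) and to keep in mind that $v_\star\le v\le v^\star$ holds unconditionally, which is precisely what forces the collapse of the two envelopes once the reverse inequality $v^\star\le v_\star$ is furnished by comparison.
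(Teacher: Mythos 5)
Your proposal is correct and follows essentially the same route as the paper: the paper likewise applies Theorem~\ref{th: global comparison normal} to the pair $(v^\star,v_\star)$ furnished by Theorem~\ref{th: v_discontinuous_solution} to get $v^\star\le v_\star$, concludes $v=v^\star=v_\star$ is continuous, and notes that uniqueness is an immediate consequence of the comparison principle. Your write-up merely makes explicit the double application of comparison for uniqueness (in fact yielding the slightly stronger statement of uniqueness among bounded discontinuous solutions), which the paper leaves as ``clear.''
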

\begin{proof}
It is clear that Theorem \ref{th: global comparison normal}  implies the uniqueness for the Hamilton-Jacobi equation (\ref{HJa}). We have just to prove that $v$ is a solution of this equation. But, according to Theorem \ref{th: v_discontinuous_solution},  $v$ is a discontinuous solution of (\ref{HJa}) in $\cS$. From  Theorem \ref{th: global comparison normal} applied to the pair of sub/supersolution $(v^\star,v_\star)$, we deduce $v^\star\le v_\star$ in $\cS$. Finally, $v=v^\star=v_\star$ and $v$ is continuous.
\end{proof}

\section{Extension to a more general framework with additional dynamics and cost at the interface}
\label{sec:extens-more-gener}
With either $[\rm{H}3]$ or $[\tH3]$, it is possible to extend all the results presented above to the case when there are additional dynamics and cost at the interface. Since the framework is more general with $[\tH3]$, we only discuss this case. We keep the setting from $\S$ \ref{section_normal_controllability} except that we take into account a set of controls $A_0$, a dynamics $f_0: \Ga \times A_0 \longmapsto \R e_0$ and a running cost $\ell_0 : \Ga \times A_0 \longmapsto \R$. The assumptions made on $A_0$, $f_0$ and $\ell_0$ are the following.
\begin{itemize}
\item[(i)]$A_0$ is a non empty compact subset of the metric space $A$, disjoint from the other sets $A_i$, $i\in \{1,\ldots,N\}$.
\item[(ii)] The function $f_0$ satisfies the same boundedness and regularity properties as the functions $f_i$, $i\in \{1,\ldots, N\}$, described in $[\rm{H}0]$.
\item[(ii)] The function $\ell_0$ satisfies the same boundedness and regularity properties as the functions $\ell_i$, $i\in \{1,\ldots, N\}$, described in $[\rm{H}1]$.
\end{itemize}
We define
\begin{displaymath}
M=\left\lbrace (x,a): x\in \cS, a\in A_i \mbox{ if } x\in \cS\setminus \Ga, \mbox{ and } a\in \cup_{i=0}^N A_i \mbox{ if } x\in \Ga  \right\rbrace,
\end{displaymath}
the dynamics
\begin{displaymath}
\forall (x,a)\in M,\quad\quad   f(x, a)=\left\{
    \begin{array}[c]{ll}
      f_i(x,a) \quad &\hbox{ if } x\in \cP_i\backslash \Ga, i\in \{1,\ldots,N\} \\
      f_i(x,a) \quad &\hbox{ if } x\in \Ga \hbox{ and }     a\in A_i,  i\in \{0,1,\ldots,N\},
    \end{array}
\right.
\end{displaymath}
and the running cost
\begin{displaymath}
\forall (x,a)\in M,\quad\quad   \ell(x, a)=\left\{
    \begin{array}[c]{ll}
      \ell_i(x,a) \quad &\hbox{ if } x\in \cP_i\backslash \Ga, i\in \{1,\ldots,N\} \\
      \ell_i(x,a) \quad &\hbox{ if } x\in \Ga \hbox{ and }     a\in A_i,  i\in \{0,1,\ldots,N\}.
    \end{array}
\right.
\end{displaymath}
The infinite horizon optimal control problem is then given by \eqref{eq:3} and \eqref{eq:4}. Then, we consider the Hamilton-Jacobi equation \eqref{HJa} with the new definition of $\FL(x)$ :
\begin{displaymath}
     \FL(x)=\left\{ 
\begin{array}{ll}
\FL_i(x)\quad  &\hbox{if } x \hbox{ belongs to } \cP_i\backslash\Ga \\
\FL_0(x)\cup \bigcup_{i=1,\dots,N} \FL_i^+(x) \quad  &\hbox{if }x \in \Ga,
\end{array}\right.
\end{displaymath}
where for $x\in \Ga$, $\FL_0(x)=\left\lbrace (f_0(x,a), \ell_0(x,a)) : a\in A_0 \right\rbrace$.
The notion of viscosity sub and supersolutions of \eqref{HJa} can be also defined as in \eqref{eq:5} and \eqref{eq:6}. We obtain that the value function is discontinuous viscosity solution of \eqref{HJa} in $\cS$ in the same manner as above, by passing by the relaxed Hamilton-Jacobi equation \eqref{HJa2}. Note that the key result to pass from \eqref{HJa} to \eqref{HJa2}, Lemma \ref{sec:existence-2}, in the present framework becomes
  \begin{displaymath}
  \begin{array}[c]{lll}
      \fl(x)= &  \FL(x) & \hbox{if } x\in \cS\backslash \Ga,\\
       \fl(x)= &  \bigcup_{i=1,\dots,N}  \overline{\rm co} \left\{\FL_0(x)\cup \FL_i^+(x) \cup \bigcup_{j\not=i} \Bigl(\FL_j(x)\cap (\R e_0\times \R) \Bigr)     \right\} & \hbox{if } x\in \Ga.
  \end{array}
  \end{displaymath}
The proof of this can be made in the same way as above.\\
As previously, we have some equivalent definitions for \eqref{eq:5} and \eqref{eq:6} given by :
\begin{itemize}
\item An upper semi-continuous function $u:\cS\to\R$ is a subsolution of \eqref{HJa} in $\cS$
 if for any $x\in\cS$, any $\ph\in\cR(\cS)$ s.t. $u-\ph$ has a local maximum point at $x$, then
 \begin{displaymath}
\begin{array}[c]{ll}
\lambda u(x)+ H_i(x, D\left( \ph|_{\cP_i}\right) (x))  \le 0  \quad &\hbox{if } x\in \cP_i\backslash \Ga,\\
\lambda u(x)+ H_\Ga\left( x,D\left( \ph|_{\Ga}\right) (x), D\left( \ph|_{\cP_1}\right) (x), \dots, D\left( \ph|_{\cP_N}\right) (x) \right)  \le 0  \quad &\hbox{if } x\in \Ga.
\end{array}
 \end{displaymath}
\item  A  lower semi-continuous function $u:\cS\to\R$ is a  supersolution of \eqref{HJa} if for any $x\in\cS$,
 any $\ph\in\cR(\cS)$ s.t. $u-\ph$ has a local minimum point at $x$, then
\begin{displaymath}
\begin{array}[c]{ll}
\lambda u(x)+ H_i(x, D\left( \ph|_{\cP_i}\right) (x))   \ge 0  \quad &\hbox{if } x\in \cP_i\backslash \Ga,\\
\lambda u(x)+ H_\Ga\left( x,D\left( \ph|_{\Ga}\right) (x), D\left( \ph|_{\cP_1}\right) (x), \dots, D\left( \ph|_{\cP_N}\right) (x) \right)   \ge 0  \quad &\hbox{if } x\in \Ga.
\end{array}
\end{displaymath}
\end{itemize}
Where the new definition of $H_{\Ga}: \Ga \times \R e_0 \times \left(\prod_{i=1, \dots , N}(\R e_0 \times \R e_i) \right) \to \R$ is given by
\begin{displaymath}
H_{\Ga}(x,p_0,p_1,\ldots,p_N)= \max\left\lbrace H_0(x,p_0) \max_{i=1,\dots,N} \;H_{i}^+(x,p_i)\right\rbrace,
\end{displaymath}
where the Hamiltonians $H_i^+$ are defined in \eqref{eq:7bis} and the Hamiltonian $H_0:\Ga \times \R e_0 \longmapsto \R$, is defined by
\begin{displaymath}
H_0(x,p_0)= \max_{a\in A_0}\left(-f_0(x,a). p_0-\ell_0(x,a) \right).
\end{displaymath}
The tangential Hamiltonian at $\Ga$, $H_\Ga^T : \Ga \times \R e_0 \longmapsto \R$ is also slightly changed,
\begin{equation}
  \label{eq:9}
H_{\Ga}^T(x,p)=\max \left\lbrace   H_0(x,p), \max_{i=1,\dots,N} \;H_{\Ga,i}^T(x,p)\right\rbrace,
\end{equation}
where the Hamiltonians $H_{\Ga,i}^T$ are defined in \eqref{eq:9bis}.\\
With these new definitions, all the results proved in $\S$ \ref{section_normal_controllability} hold with obvious modifications of the proofs. In particular,
\begin{itemize}
\item[$\bullet$] a subsolution of the present problem is also a subsolution of the former problem. So Lemma \ref{lem subsol inequality on gamma} (with remark \ref{rmk subsol inequality on gamma}) and equation \eqref{eq:17} in Lemma \ref{sec:prop-visc-sub-1} hold.
\item[$\bullet$] Lemma \ref{sec:hamiltonians} holds since it only involves the Hamiltonians $H_i$, $H^+_i$ and $H_{\Ga,i}^T$.
\item[$\bullet$] The proof of Theorem \ref{sec:prop-visc-sub} can still be used. In particular, with the choice of $(q_i)_{i=1,\ldots,N}$ made in this proof, we have the identity
\begin{displaymath}
H_\Ga(\bar x, ,D(\phi|_{\Ga})(\bar x)+q_1e_1,\ldots,D(\phi|_{\cP_N})(\bar x)+q_Ne_N)=H_\Ga^T(\bar x,D(\phi|_\Gamma)(\bar x)).
\end{displaymath}
\item[$\bullet$] The proofs of the regularisation results, Lemma \ref{lem: sup_convolution} and Lemma \ref{lem: normal_controllability_approximation}, are unchanged.
\item[$\bullet$] The proofs of the Comparison principles, Theorem \ref{th: local comparison}  and Theorem \ref{th: global comparison normal} , are unchanged.
\end{itemize}

\appendix
\section{Proof of Lemma \ref{sec:existence-2}}\label{sec: appendix1}

\begin{proof}
The proof of the equality $\fl(x)=  \FL(x)$ for $x\in \cS \backslash \Ga$ is standard (see \cite{MR1484411}, Lemma 2.41, page 129), and the inclusions
$ \overline{\rm co} \left\{ \FL_i^+(x) \cup \bigcup_{j\not=i} \Bigl(\FL_j(x)\cap (\Ga\times \R) \Bigr)\right\} \subset   \fl(x)$  for $x \in \Ga$ and $i\in \{1,\dots,N\}$ are proved by explicitly 
constructing trajectories, see \cite{MR3057137}. We skip this part. This leads to 
 \begin{displaymath}
  \begin{array}[c]{rll}
     \FL(x)= &  \fl(x) & \hbox{if } x\in \cS\backslash \Ga,\\
        \bigcup_{i=1,\dots,N}  \overline{\rm co} \left\{ \FL_i^+(x) \cup \bigcup_{j\not=i} \Bigl(\FL_j(x)\cap (\Ga\times \R) \Bigr)     \right\} \subset & \fl(x) &  \hbox{if } x\in \Ga.
  \end{array}
  \end{displaymath}
  \\
We now prove the reverse inclusion.
Let $x\in \Ga$.  For any $(\zeta,\mu)\in \fl(x)$, there exists a sequence of admissible trajectories $ ( y_{n},\alpha_n)\in \cT_x$ and a sequence of times $t_n\to 0^+$ such that 
\begin{displaymath}
 \lim_{n\to \infty} \frac{1}{t_n}\int_0^{t_n}f(y_{n}(t),\a_n(t))dt=\zeta,
 \quad \hbox{and } \ds \lim_{n\to \infty} \frac{1}{t_n}\int_0^{t_n}\ell(y_{n}(t),\a_n(t))dt=\mu.
 \end{displaymath}
 First, remark that by cronstruction $\zeta$ necessarily belongs to $\R e_0 \times \R^+ e_i$, for some $i\in \{1,\ldots,N\}$.
 
 \begin{itemize}
\item
If $\zeta \notin \R e_0$, then there exists an index $i$ in $\{1,\dots, N\}$ such that $\zeta \in (\R e_0 \times \R^+ e_i) \backslash \R e_0$: in this case, $y_n(t_n)\in \cP_i \backslash \Ga$. Hence,
\begin{equation}
  \label{eq:13}
y_n(t_n)= x+ \sum_{j=1}^N  \int_0^{t_n} f_j(y_n(t),\alpha_n(t)) 1_{y_n(t)\in \cP_j\backslash \Ga }dt + \int_0^{t_n} f(y_n(t),\alpha_n(t)) 1_{y_n(t)\in \Ga }dt,
\end{equation}
with
\begin{equation}
  \label{eq:A1}
  \begin{array}[c]{ll}
  \ds  \int_0^{t_n} f_j(y_n(t),\alpha_n(t)) 1_{y_n(t)\in \cP_j\backslash \Ga}.e_jdt=0 & \hbox{ if } j\not=i,\\
   \ds \int_0^{t_n} f_i(y_n(t),\alpha_n(t)) 1_{y_n(t)\in \cP_i\backslash \Ga}.e_idt= y_n(t_n).e_i.
  \end{array}
  \end{equation}
These identities are a consequence of Stampacchia's theorem: consider for example $j\in \{1, \dots, N\}$ and the function  $ \kappa_j: y\mapsto y 1_{y\in \cP_j\backslash \Ga}.e_j$.  
It is easy to check that $t\mapsto \kappa_j(y_n(t))$ belongs to $W^{1,\infty}(0,t_n)$ and that its weak derivative coincides almost everywhere with $ t\mapsto  f_j(y_n(t),\alpha_n(t))  1_{y_n(t)\in \cP_j\backslash \Ga }.e_j$.
This implies (\ref{eq:A1}).\\
For $j=1,\dots,N$, let $t_{j,n}$ be defined by 
\begin{displaymath}
  t_{j,n}= \left|\Bigl \{ t\in [0,t_n]:   y_n(t)\in \cP_j\backslash \Ga \Bigr \}\right|.
\end{displaymath}
If $j\not=i$ and $ t_{j,n}>0$ then
\begin{displaymath}
  \begin{split}
    \frac 1 { t_{j,n}} \left(    \int_0^{t_n} f_j(y_n(t),\alpha_n(t)) 1_{y_n(t)\in \cP_j\backslash \Ga }dt,  \int_0^{t_n} \ell_j(y_n(t),\alpha_n(t)) 1_{y_n(t)\in \cP_j\backslash \Ga }dt\right) \\
= \frac 1 { t_{j,n}}  \left(    \int_0^{t_n} f_j(x,\alpha_n(t)) 1_{y_n(t)\in \cP_j\backslash \Ga }dt,  \int_0^{t_n} \ell_j(x,\alpha_n(t)) 1_{y_n(t)\in \cP_j\backslash \Ga }dt\right) +o(1)
 \end{split}
\end{displaymath}
 where $o(1)$ is a vector tending to $0$ as $n\to \infty$.  Therefore,  the distance of \\ 
$  \frac 1 { t_{j,n}} \left(   \int_0^{t_n} f_j(y_n(t),\alpha_n(t)) 1_{y_n(t)\in \cP_j\backslash \Ga}dt,  \int_0^{t_n} \ell_j(y_n(t),\alpha_n(t)) 1_{y_n(t)\in \cP_j\backslash \Ga }dt \right)
$ to the set $\FL_j(x)$ tends to $0$. Moreover, according to (\ref{eq:A1}), we have that $\int_0^{t_n} f_j(y_n(t),\alpha_n(t)) 1_{y_n(t)\in \cP_j\backslash \Ga}.e_jdt=0$. 
Hence,  the distance of 
$  \frac 1 { t_{j,n}} \left(    e_j\int_0^{t_n} f_j(y_n(t),\alpha_n(t)) 1_{y_n(t)\in \cP_j\backslash \Ga }dt,  \int_0^{t_n} \ell_j(y_n(t),\alpha_n(t)) 1_{y_n(t)\in \cP_j\backslash \Ga }dt \right)
$
 to the set $\Bigl(\FL_j(x)\cap (\R e_0\times \R) \Bigr)$ tends to zero as $n$ tends to $\infty$.
\\
 If the set $\{t: y_n(t)\in \Ga\}$ has a nonzero measure, then
\begin{displaymath}
  \begin{split}
 \frac 1 {|\{t: y_n(t)\in \Ga\} |}  \left(  \int_0^{t_n} f(y_n(t),\alpha_n(t)) 1_{ y_n(t)\in \Ga } dt      ,   \int_0^{t_n} \ell(y_n(t),\alpha_n(t)) 1_{y_n(t)\in \Ga } dt   \right)\\
  = \frac 1 {|\{t: y_n(t)\in \Ga\} |}  \left(   \int_0^{t_n} f(x,\alpha_n(t)) 1_{ y_n(t)\in \Ga }dt,  \int_0^{t_n} \ell(x,\alpha_n(t)) 1_{y_n(t)\in \Ga }dt\right) +o(1)
   \end{split}
\end{displaymath}
Therefore, the distance of $\frac 1 {|\{t: y_n(t)\in \Ga\} |}  \left(  \int_0^{t_n} f(y_n(t),\alpha_n(t)) 1_{ y_n(t)\in \Ga } dt      ,   \int_0^{t_n} \ell(y_n(t),\alpha_n(t)) 1_{y_n(t)\in \Ga } dt   \right)$ to the set $ \overline{\rm co}
 \left\{ \bigcup_{j=1}^N \FL_j(x)\right\}$ tends to zero as n tends to $\infty$. Moreover, from \mbox{theorem \ref{sec:optim-contr-probl}},  $f(y_n(t), \alpha_n(t))\in \R e_0$ almost everywhere on $\{t: y_n(t)\in \Ga\}$. Therefore, the distance of $\frac 1 {|\{t: y_n(t)\in \Ga\} |}  \left(  \int_0^{t_n} f(y_n(t),\alpha_n(t)) 1_{ y_n(t)\in \Ga } dt      ,   \int_0^{t_n} \ell(y_n(t),\alpha_n(t)) 1_{y_n(t)\in \Ga } dt   \right)$ to the set \\ $ \overline{\rm co}
 \left\{ \bigcup_{j=1}^N \Bigl( \FL_j(x) \cap (\R e_0 \times \R) \Bigr)\right\}$ tends to zero as n tends to $\infty$.
\\
Finally, we know that $ T_{i,n}>0$.
\begin{displaymath}
  \begin{split}
    \frac 1 { t_{i,n}} \left(    \int_0^{t_n} f_i(y_n(t),\alpha_n(t)) 1_{y_n(t)\in \cP_i\backslash \Ga }dt,  \int_0^{t_n} \ell_i(y_n(t),\alpha_n(t)) 1_{y_n(t)\in \cP_i\backslash \Ga }dt\right) \\ =
 \frac 1 { t_{i,n}}   \left( \int_0^{t_n} f_i(x,\alpha_n(t)) 1_{y_n(t)\in \cP_i\backslash \Ga}dt ,   \int_0^{t_n} \ell_i(x,\alpha_n(t)) 1_{y_n(t)\in \cP_i\backslash \Ga }dt \right)+o(1)
 \end{split}
\end{displaymath}
so  the distance of\\ 
$  \frac 1 { t_{i,n}} \left(   \int_0^{t_n} f_i(y_n(t),\alpha_n(t)) 1_{y_n(t)\in \cP_i\backslash \Ga }dt,  \int_0^{t_n} \ell_i(y_n(t),\alpha_n(t)) 1_{y_n(t)\in \cP_i\backslash \Ga}dt \right)
$
 to the set $\FL^+_i(x)$ tends to zero as $n$ tends to $\infty$.
\\
Combining all the observations above, we see that  the distance of  \\
$\left( \frac{1}{t_n}\int_0^{t_n}f(y_{n}(t),\a_n(t))dt,  \frac{1}{t_n}\int_0^{t_n}\ell(y_{n}(t),\a_n(t))dt \right) $ to 
$ \overline{\rm co} \left\{ \FL_i^+(x) \cup \bigcup_{j\not=i} \Bigl(\FL_j(x)\cap (\R e_0\times \R) \Bigr)     \right\}$ tends to $0$ as $n\to \infty$.
Therefore $(\zeta,\mu)\in  \overline{\rm co} \left\{ \FL_i^+(x) \cup \bigcup_{j\not=i} \Bigl(\FL_j(x)\cap (\R e_0 \times \R) \Bigr)     \right\}$.
\item If $\zeta\in \R e_0$, either there exists $i$ such that $y_n(t_n)\in \cP_i\backslash\Ga$ or $y_n(t_n)\in \Ga$:
 \\
$\bullet$ If $y_n(t_n)\in \cP_i\backslash \Ga$, then we
 can make exactly the same argument as above and conclude that 
 $(\zeta,\mu)\in  \overline{\rm co} \left\{ \FL_i^+(x) \cup \bigcup_{j\not=i} \Bigl(\FL_j(x)\cap (\R e_0 \times \R) \Bigr)     \right\}$. 
Since $\zeta\in \R e_0$, we have in fact that $(\zeta,\mu)\in  \overline{\rm co} \bigcup_{j=1}^N \Bigl(\FL_j(x)\cap (\R e_0 \times \R) \Bigr)$. 

\bigskip

 $\bullet$ if  $y_n(t_n)\in \Ga$, according to Stampacchia theorem, we have that \\ $ \ds  \int_0^{t_n} f_j(y_n(t),\alpha_n(t)) 1_{y_n(t)\in \cP_j\backslash \Ga}.e_jdt=0$ for all $j=1,\dots, N$. We can repeat the argument above, and obtain that
 $(\zeta,\mu)\in  \overline{\rm co}  \left\{ \bigcup_{j=1}^N \Bigl(\FL_j(x)\cap (\R e_0 \times \R) \Bigr)\right\}$.
\end{itemize}
\end{proof}

\section{Proof of Theorem \ref{th: v_discontinuous_solution}}\label{appendix: v_discontinuous_solution}

\begin{proof}
First, remark that in this proof, the notation $o_\epsilon(1)$ will denote an application independent of $t$, which tends to $0$ as $\epsilon$ tends to $0$ and that for $k\in \N^\star$ the notation $O(t^k)$ will denote an application independent of $\epsilon$, such that $\frac{O(t^k)}{t^k}$ remains bounded as $t$ tends to $0$.
\textbf{Show that $v_\star$ is a supersolution of \eqref{HJa} :}   for any $x\in\cS$, let $(x_\epsilon)_{\epsilon>0}$ be a sequence such that $x_\epsilon$ tends to $x$ when $\epsilon$ tends to $0$ and $v(x_\epsilon)$ tends to $v_\star(x)$ when $\epsilon$ tends to $0$. Let $\varphi$ be in $\cR(\cS)$ such that $v_\star-\varphi$ has a local minimum at $x$, i.e. there exists $r>0$ such that 
\begin{equation}\label{eq: proof_discontinuous_existence_1}
\forall y\in B(x,r)\cap \cS, \quad \quad v_\star(x)-\varphi(x) \le v_\star(y)-\varphi(y).
\end{equation}
From the dynamic programming principle (Proposition \ref{prop:dpp}), for any $\epsilon>0$ and $t>0$, there exists $(\bar y_{\epsilon,t},\bar \alpha_{\epsilon,t})\in \cT_{x_\epsilon}$ such that
\begin{displaymath}
\begin{array}{lll}
v(x_\epsilon) & \ge & \int_0^t\ell(\bar y_{\epsilon,t}(s),\bar \alpha_{\epsilon,t}(s))e^{-\lambda s}ds+e^{-\lambda t}v(\bar y_{\epsilon,t}(t))-\epsilon\\
 & \ge & \int_0^t\ell(\bar y_{\epsilon,t}(s),\bar \alpha_{\epsilon,t}(s))e^{-\lambda s}ds +e^{-\lambda t}v_\star(\bar y_{\epsilon,t}(t))-\epsilon.\\
\end{array}
\end{displaymath}
Then, according to \eqref{eq: proof_discontinuous_existence_1}, for $\epsilon$ and $t>0$ small enough we have
\begin{displaymath}
v(x_\epsilon)-v_\star(x)    \ge  \int_0^t\ell(\bar y_{\epsilon,t}(s),\bar \alpha_{\epsilon,t}(s))e^{-\lambda s}ds+ v_\star(x)(e^{-\lambda t}-1)+ (\varphi(\bar y_{\epsilon,t}(t))-\varphi(x))e^{-\lambda t} -\epsilon.
\end{displaymath}
Using that $v(x_\epsilon)-v_\star(x)=o_{\epsilon}(1)$, that $\int_0^t\ell(\bar y_{\epsilon,t}(s),\bar \alpha_{\epsilon,t}(s))e^{-\lambda s}ds= \int_0^t\ell(\bar y_{\epsilon,t}(s),\bar \alpha_{\epsilon,t}(s))ds +O(t^2) $ and that $(\varphi(\bar y_{\epsilon,t}(t))-\varphi(x))e^{-\lambda t}=(\varphi(\bar y_{\epsilon,t}(t))-\varphi(x))+t o_{\epsilon}(1)+O(t^2)$,  we finally obtain
\begin{equation}\label{eq: proof_discontinuous_existence_3}
0   \ge  \int_0^t\ell(\bar y_{\epsilon,t}(s),\bar \alpha_{\epsilon,t}(s))ds+ v_\star(x)(e^{-\lambda t}-1)+ \varphi(\bar y_{\epsilon,t}(t))-\varphi(x)+t o_{\epsilon}(1)+O(t^2)+o_\epsilon(1).
\end{equation}
\begin{itemize}
\item \textbf{If $x\in \cP_i\setminus \Ga$ :} since $x_\epsilon$ tends  to $x$ belonging to $\cP_i\setminus \Ga$ as $\epsilon$ tends to $0$ and since the dynamic $f$ is bounded, see remark \ref{rmk: f bounded continuous}, there exists $\bar t>0$ such that for any $t\in (0,\bar t)$, $\bar y_{\epsilon,t}(s)\in \cP_i\setminus\Ga$ for any $s\in (0,t)$. Then, the inequality \eqref{eq: proof_discontinuous_existence_3} can be rewritten as follows
\begin{displaymath}
\begin{array}{lll}
0  & \ge & \displaystyle{ \int_0^t\ell_i(\bar y_{\epsilon,t}(s),\bar \alpha_{\epsilon,t}(s))+D(\varphi|_{\cP_i})(\bar y_{\epsilon,t}(s)).f_i(\bar y_{\epsilon,t}(s),\bar \alpha_{\epsilon,t}(s))ds+v_\star(x)(e^{-\lambda t}-1)}\\
& & \displaystyle{ +\varphi(x_\epsilon)-\varphi(x)+t o_{\epsilon}(1)+O(t^2)+o_\epsilon(1)}.\\
\end{array}
\end{displaymath}
Using that $\varphi(x_\epsilon)-\varphi(x)=o_{\epsilon}(1)$ and that $D(\varphi|_{\cP_i})(\bar y_{\epsilon,t}(t))=D(\varphi|_{\cP_i})(x)+o_{\epsilon}(1)+O(t)$, we get that
\begin{equation}\label{eq: proof_discontinuous_existence_2}
\begin{array}{lll}
0 & \ge &  \displaystyle{\int_0^t\ell_i(\bar y_{\epsilon,t}(s),\bar \alpha_{\epsilon,t}(s))+ D(\varphi|_{\cP_i})(x).f_i(\bar y_{\epsilon,t}(s),\bar \alpha_{\epsilon,t}(s))ds+ v_\star(x)(e^{-\lambda t}-1)}\\
& &\displaystyle{ +t o_{\epsilon}(1)+O(t^2)+o_\epsilon(1)}.
\end{array}
\end{equation}
It is easy to check that $\frac{1}{t}\left(\int_0^tf_i(\bar y_{\epsilon,t}(s),\bar \alpha_{\epsilon,t}(s))ds,\int_0^t\ell_i(\bar y_{\epsilon,t}(s),\bar \alpha_{\epsilon,t}(s))ds \right)$ is at a distance to $\FL_i(x)$ of the order of $o_{\epsilon}(1)+O(t)$. Thus,
\begin{displaymath}
\begin{array}{lll}
0 & \ge &\displaystyle{ v_\star(x)(e^{-\lambda t}-1)-t\left(\max_{(\xi,\zeta)\in \FL_i(x)}\{-D(\varphi|_{\cP_i})(x,\xi)-\zeta\}\right) +t o_{\epsilon}(1)+O(t^2)+o_\epsilon(1)}.
\end{array}
\end{displaymath}
Finally, dividing the latter inequality by $t$, taking the limit as $\epsilon$ tends to $0$ and  in a second time the limit as $t$ tends to $0$ we get the desired inequality
\begin{displaymath}
\lambda v_\star(x)+\max_{(\xi,\zeta)\in \FL_i(x)}\{-D(\varphi|_{\cP_i})(x,\xi)-\zeta\} \ge 0.
\end{displaymath}
\item \textbf{If $x\in \Ga$ and $x_\epsilon\in \cP_i\setminus \Ga$ :} Let $\tau_{\epsilon,t}>0$ be the exit time of $\bar y_{\epsilon,t}$ from $\cP_i\setminus \Ga$. Up to the extraction of a subsequence, we may assume that either $\tau_{\epsilon,t}\le t$ for all $\epsilon>0$ or that $\tau_{\epsilon,t}> t$ for all $\epsilon$.

\textbf{If $\tau_{\epsilon,t} > t$ :} The same calculations as in the case where $x\in \cP_i\setminus \Ga$ give us \eqref{eq: proof_discontinuous_existence_2}. As above $\frac{1}{t}\left(\int_0^tf_i(\bar y_{\epsilon,t}(s),\bar \alpha_{\epsilon,t}(s))ds,\int_0^t\ell_i(\bar y_{\epsilon,t}(s),\bar \alpha_{\epsilon,t}(s))ds \right)$ is at a distance to $\FL_i(x)$ of the order of $o_{\epsilon}(1)+O(t)$. But this time, we have that $\int_0^tf_i(\bar y_{\epsilon,t}(s),\bar \alpha_{\epsilon,t}(s))ds.e_i \ge -x_\epsilon.e_i=o_\epsilon(1)$ for all $t,\epsilon>0$ and then we have the more specific information that

$\frac{1}{t}\left(\int_0^tf_i(\bar y_{\epsilon,t}(s),\bar \alpha_{\epsilon,t}(s))ds,\int_0^t\ell_i(\bar y_{\epsilon,t}(s),\bar \alpha_{\epsilon,t}(s))ds \right)$ is at a distance to $\FL^+_i(x)$ of the order of $\displaystyle{\frac{o_{\epsilon}(1)}{t}}+o_{\epsilon}(1)+O(t)$. Finally, \eqref{eq: proof_discontinuous_existence_2} give us as desired
\begin{displaymath}
\lambda v_\star(x)+\max_{i\in \{1,\ldots, N\}}\max_{(\xi,\zeta)\in \FL^+_i(x)}\{-D(\varphi|_{\cP_i})(x,\xi)-\zeta\} \ge 0.
\end{displaymath}
\textbf{If $\tau_{\epsilon,t} \le t$ :} Then, the inequality \eqref{eq: proof_discontinuous_existence_3} can be written as follow
\begin{equation}\label{eq: proof_discontinuous_existence_4}
\begin{array}{lll}
0  & \ge & \displaystyle{ \int_0^{\tau_{\epsilon,t}}\ell_i(\bar y_{\epsilon,t}(s),\bar \alpha_{\epsilon,t}(s))+D(\varphi|_{\cP_i})(\bar y_{\epsilon,t}(s)).f_i(\bar y_{\epsilon,t}(s),\bar \alpha_{\epsilon,t}(s))ds}\\
& & \displaystyle{ +\sum_{j=1}^N\int_{\tau_{\epsilon,t}}^t 1_{\{\bar y_{\epsilon,t}(s)\in \cP_j\backslash \Ga \}}\left[ \ell_j(\bar y_{\epsilon,t}(s),\bar \alpha_{\epsilon,t}(s))+D(\varphi|_{\cP_j})(\bar y_{\epsilon,t}(s)).f_j(\bar y_{\epsilon,t}(s),\bar \alpha_{\epsilon,t}(s))\right]ds}\\
& & \displaystyle{+ \int_{\tau_{\epsilon,t}}^t 1_{\{\bar y_{\epsilon,t}(s)\in \Ga \}}\left[\ell(\bar y_{\epsilon,t}(s),\bar \alpha_{\epsilon,t}(s))+D\left(\varphi|_\Ga\right)(\bar y_{\epsilon,t}(s)).f(\bar y_{\epsilon,t}(s),\bar \alpha_{\epsilon,t}(s))\right]ds}\\
& & +v_\star(x)(e^{-\lambda t}-1)\displaystyle{ +\varphi(x_\epsilon)-\varphi(x)+t o_{\epsilon}(1)+O(t^2)+o_\epsilon(1)}.
\end{array}
\end{equation}
Note that to obtain the third line of this inequality, we use Point 3 of Theorem \ref{sec:optim-contr-probl_normal}. To obtain the supersolution inequality, we have to deal with each term of the inequality \eqref{eq: proof_discontinuous_existence_4} individually.

$\rightarrow$ Let $j\in \{1,\ldots,N\}$ be such that $\bar y_{\epsilon,t}(t)\not\in \cP_j\setminus \Ga$. Then $\displaystyle{\int_{\tau_{\epsilon,t}}^t 1_{\{\bar y_{\epsilon,t}(s)\in \cP_j\backslash \Ga \}}f_j(\bar y_{\epsilon,t}(s),\bar \alpha_{\epsilon,t}(s)).e_jds=0 }$ and

$\displaystyle{\frac{1}{|\{s:\bar y_{\epsilon,t}(s)\in \cP_j\backslash \Ga \}|}\left(\int_{\tau_{\epsilon,t}}^t 1_{\{\bar y_{\epsilon,t}(s)\in \cP_j\backslash \Ga\} }f_j(\bar y_{\epsilon,t}(s),\bar \alpha_{\epsilon,t}(s))ds, \int_{\tau_{\epsilon,t}}^t 1_{\{\bar y_{\epsilon,t}(s)\in \cP_j\backslash \Ga\} }\ell_j(\bar y_{\epsilon,t}(s),\bar \alpha_{\epsilon,t}(s))ds\right)}$ is at a distance to $\FL_i(x)\cap \left(\R e_0 \times \R \right)$ of the order of $o_\epsilon(1)+O(t)$. Thus, we get
\begin{equation}\label{eq: proof_discontinuous_existence_5}
\begin{array}{l}
\displaystyle{\int_{\tau_{\epsilon,t}}^t 1_{\{\bar y_{\epsilon,t}(s)\in \cP_j\backslash \Ga \}}\left[ \ell_j(\bar y_{\epsilon,t}(s),\bar \alpha_{\epsilon,t}(s))+D(\varphi|_{\cP_j})(\bar y_{\epsilon,t}(s)).f_j(\bar y_{\epsilon,t}(s),\bar \alpha_{\epsilon,t}(s))\right]ds} \\
 \ge |\{s:\bar y_{\epsilon,t}(s)\in \cP_j\backslash \Ga \}|\left(-\max_{(\xi,\zeta)\in \FL_j(x)\cap \left(\R e_0 \times \R \right)}\left\lbrace -D\varphi(x,\xi)-\zeta \right\rbrace +o_\epsilon(1)+O(t)\right).
\end{array}
\end{equation}
$\rightarrow$ If there exists one $k\in \{1,\ldots,N\}$, such that $\bar y_{\epsilon,t}(t)\in \cP_k\setminus \Ga$. In this case, we have that $\displaystyle{\int_{\tau_{\epsilon,t}}^t 1_{\{\bar y_{\epsilon,t}(s)\in \cP_k\backslash \Ga \}}f_k(\bar y_{\epsilon,t}(s),\bar \alpha_{\epsilon,t}(s)).e_kds=\bar y_{\epsilon,t}(t).e_k>0 }$ and then that

$\displaystyle{\frac{1}{|\{s:\bar y_{\epsilon,t}(s)\in \cP_k\backslash \Ga \}|}\left(\int_{\tau_{\epsilon,t}}^t 1_{\{\bar y_{\epsilon,t}(s)\in \cP_k\backslash \Ga \}}f_k(\bar y_{\epsilon,t}(s),\bar \alpha_{\epsilon,t}(s))ds, \int_{\tau_{\epsilon,t}}^t 1_{\{\bar y_{\epsilon,t}(s)\in \cP_k\backslash \Ga \}}\ell_k(\bar y_{\epsilon,t}(s),\bar \alpha_{\epsilon,t}(s))ds\right)}$ is at a distance  to $\FL_k^+(x)$ of the order of $o_\epsilon(1)+O(t)$. Therefore, we get that
\begin{equation}\label{eq: proof_discontinuous_existence_6}
\begin{array}{l}
\displaystyle{\int_{\tau_{\epsilon,t}}^t 1_{\{\bar y_{\epsilon,t}(s)\in \cP_k\backslash \Ga \}}\left[ \ell_k(\bar y_{\epsilon,t}(s),\bar \alpha_{\epsilon,t}(s))+D(\varphi|_{\cP_k})(\bar y_{\epsilon,t}(s)).f_k(\bar y_{\epsilon,t}(s),\bar \alpha_{\epsilon,t}(s))\right]ds} \\
 \ge |\{s:\bar y_{\epsilon,t}(s)\in \cP_k\backslash \Ga \}|\left(-\max_{(\xi,\zeta)\in \FL_k^+(x)}\left\lbrace -D\varphi(x,\xi)-\zeta \right\rbrace +o_\epsilon(1)+O(t)\right).
\end{array}
\end{equation}
$\rightarrow$  From Point 3 of Theorem \ref{sec:optim-contr-probl_normal}, we know that $f(\bar y_{\epsilon,t}(s),\bar \alpha_{\epsilon,t}(s))\in \R e_0$ almost everywhere on $\{s:\bar y_{\epsilon,t}(s)\in \Ga \}$. Therefore, 

$\displaystyle{\frac{1}{|\{s:\bar y_{\epsilon,t}(s)\in \Ga \}|}\left(\int_{\tau_{\epsilon,t}}^t 1_{\{\bar y_{\epsilon,t}(s)\in \Ga\} }f(\bar y_{\epsilon,t}(s),\bar \alpha_{\epsilon,t}(s))ds, \int_{\tau_{\epsilon,t}}^t 1_{\{\bar y_{\epsilon,t}(s)\in \Ga\} }\ell(\bar y_{\epsilon,t}(s),\bar \alpha_{\epsilon,t}(s))ds\right)}$ is at a distance to $\overline{\rm co} \left\lbrace \cup_{j=1}^N \FL_j(x)\cap\left(\R e_0\times \R \right)\right\rbrace$ of the order of $o_\epsilon(1)+O(t)$ and then
\begin{equation}\label{eq: proof_discontinuous_existence_7}
\begin{array}{l}
\displaystyle{\int_{\tau_{\epsilon,t}}^t 1_{\{\bar y_{\epsilon,t}(s)\in \Ga \}}\left[ \ell(\bar y_{\epsilon,t}(s),\bar \alpha_{\epsilon,t}(s))+D_\Ga\varphi(\bar y_{\epsilon,t}(s)).f(\bar y_{\epsilon,t}(s),\bar \alpha_{\epsilon,t}(s))\right]ds} \\
 \ge |\{s:\bar y_{\epsilon,t}(s)\in \Ga \}|\left(-\max_{(\xi,\zeta)\in \overline{\rm co} \left\lbrace \cup_{j=1}^N \FL_j(x)\cap\left(\R e_0\times \R \right)\right\rbrace}\left\lbrace -D\varphi(x,\xi)-\zeta \right\rbrace +o_\epsilon(1)+O(t)\right).
\end{array}
\end{equation}
However, from the piecewise linearity of the function $(\xi,\zeta)\mapsto-D\varphi(x,\xi)-\zeta$ we have that
\begin{displaymath}
\max_{(\xi,\zeta)\in \overline{\rm co} \left\lbrace \cup_{j=1}^N \FL_j(x)\cap\left(\R e_0\times \R \right)\right\rbrace}\left\lbrace -D\varphi(x,\xi)-\zeta \right\rbrace =\max_{j\in \{1,\ldots,N\}}\left\lbrace\max_{(\xi,\zeta)\in  \FL_j(x)\cap\left(\R e_0\times \R \right)}\left\lbrace -D\varphi(x,\xi)-\zeta \right\rbrace\right\rbrace.
\end{displaymath}
Therefore, \eqref{eq: proof_discontinuous_existence_7} give us finally
\begin{equation}\label{eq: proof_discontinuous_existence_8}
\begin{array}{l}
\displaystyle{\int_{\tau_{\epsilon,t}}^t 1_{\{\bar y_{\epsilon,t}(s)\in \Ga \}}\left[ \ell(\bar y_{\epsilon,t}(s),\bar \alpha_{\epsilon,t}(s))+D_\Ga\varphi(\bar y_{\epsilon,t}(s)).f(\bar y_{\epsilon,t}(s),\bar \alpha_{\epsilon,t}(s))\right]ds} \\
\ge  \displaystyle{|\{s:\bar y_{\epsilon,t}(s)\in \Ga \}|\left(-\max_{j\in \{1,\ldots,N\}}\left\lbrace\max_{(\xi,\zeta)\in  \FL_j(x)\cap\left(\R e_0\times \R \right)}\left\lbrace -D\varphi(x,\xi)-\zeta \right\rbrace\right\rbrace + o_\epsilon(1)+O(t)\right)}.
\end{array}
\end{equation}
$\rightarrow$ It remains to deal with $\displaystyle{ \int_0^{\tau_{\epsilon,t}}\ell_i(\bar y_{\epsilon,t}(s),\bar \alpha_{\epsilon,t}(s))+D(\varphi|_{\cP_i})(\bar y_{\epsilon,t}(s)).f_i(\bar y_{\epsilon,t}(s),\bar \alpha_{\epsilon,t}(s))ds}$.
It is the term the most tricky one because $\displaystyle{\int_0^{\tau_{\epsilon,t}} f_i(\bar y_{\epsilon,t}(s),\bar \alpha_{\epsilon,t}(s)).e_ids=-x_\epsilon.e_i<0 }$ generates some outgoing directions. To conclude, we have to use that $x_\epsilon.e_i=o_{\epsilon}(1)$. As a consequence, up to the extraction of a subsequence, we may assume that either $\displaystyle{\lim_{\epsilon \to 0}\frac{|x_\epsilon.e_i|}{\tau_{\epsilon,t}}=0}$ or $\displaystyle{C_1 \le\frac{|x_\epsilon.e_i|}{\tau_{\epsilon,t}}\le C_2}$, for all $\epsilon$, for some positive constants $C_1,C_2$.

If $\displaystyle{\lim_{\epsilon \to 0}\frac{|x_\epsilon.e_i|}{\tau_{\epsilon,t}}=0}$, it is simple to see that $\displaystyle{\frac{1}{\tau_{\epsilon,t}} \left(\int_0^{\tau_{\epsilon,t}}f_i(\bar y_{\epsilon,t}(s),\bar \alpha_{\epsilon,t}(s))ds, \int_0^{\tau_{\epsilon,t}}\ell_i(\bar y_{\epsilon,t}(s),\bar \alpha_{\epsilon,t}(s))ds\right)}$ is at a distance to $ \FL_i(x)\cap\left(\R e_0\times \R \right)$ of the order of $o_\epsilon(1)+O(t)$. Then, we get
\begin{equation}\label{eq: proof_discontinuous_existence_9}
\begin{array}{l}
\displaystyle{\int_0^{\tau_{\epsilon,t}}\ell_i(\bar y_{\epsilon,t}(s),\bar \alpha_{\epsilon,t}(s))+D(\varphi|_{\cP_i})(\bar y_{\epsilon,t}(s)).f_i(\bar y_{\epsilon,t}(s),\bar \alpha_{\epsilon,t}(s))ds} \\
 \ge \tau_{\epsilon,t}\left(-\max_{(\xi,\zeta)\in \FL_i(x)\cap\left(\R e_0\times \R \right)}\left\lbrace -D\varphi(x,\xi)-\zeta \right\rbrace +o_\epsilon(1)+O(t)\right).
\end{array}
\end{equation}
If $\displaystyle{C_1 \le \frac{|x_\epsilon.e_i|}{\tau_{\epsilon,t}}\le C_2}$, we can directly prove that
\begin{equation}\label{eq: proof_discontinuous_existence_10}
\displaystyle{ \int_0^{\tau_{\epsilon,t}}\ell_i(\bar y_{\epsilon,t}(s),\bar \alpha_{\epsilon,t}(s))+D(\varphi|_{\cP_i})(\bar y_{\epsilon,t}(s)).f_i(\bar y_{\epsilon,t}(s),\bar \alpha_{\epsilon,t}(s))ds= O(x_\epsilon.e_i)=o_{\epsilon}(1).}
\end{equation}
Finally, if we put together all the informations of \eqref{eq: proof_discontinuous_existence_5}, \eqref{eq: proof_discontinuous_existence_6}, \eqref{eq: proof_discontinuous_existence_8}, \eqref{eq: proof_discontinuous_existence_9}, \eqref{eq: proof_discontinuous_existence_10} and the fact that  $\varphi(x_\epsilon)-\varphi(x)=o_{\epsilon}(1)$, the inequality \eqref{eq: proof_discontinuous_existence_3} gives us
\begin{displaymath}
0 \ge -t\max_{j\in \{1,\ldots,N\}}\{\max_{(\xi,\zeta)\in \FL^+_j(x)}\{-D(\varphi|_{\cP_j})(x).\xi-\zeta\}\}
+v_\star(x)(e^{-\lambda t}-1)+to_{\epsilon}(1)+O(t^2)+o_\epsilon(1).
\end{displaymath}
Dividing this last inequality by $t$, taking the limit as $\epsilon$ tends to $0$ and  in a second time the limit as $t$ tends to $0$ we get the wanted inequality
\begin{displaymath}
\lambda v_\star(x)+\max_{(\xi,\zeta)\in \FL(x)}\{-D\varphi(x,\xi)-\zeta\} \ge 0.
\end{displaymath}
\item \textbf{If $x\in \Ga$ and $x_\epsilon\in  \Ga$ :} In this case, the inequality \eqref{eq: proof_discontinuous_existence_3} can be written as follow
\begin{displaymath}
\begin{array}{lll}
0  & \ge & \displaystyle{ +\sum_{j=1}^N\int_0^t 1_{\{\bar y_{\epsilon,t}(s)\in \cP_j\backslash \Ga \}}\left[ \ell_j(\bar y_{\epsilon,t}(s),\bar \alpha_{\epsilon,t}(s))+D(\varphi|_{\cP_j})(\bar y_{\epsilon,t}(s)).f_j(\bar y_{\epsilon,t}(s),\bar \alpha_{\epsilon,t}(s))\right]ds}\\
& & \displaystyle{+ \int_0^t 1_{\{\bar y_{\epsilon,t}(s)\in \Ga \}}\left[\ell(\bar y_{\epsilon,t}(s),\bar \alpha_{\epsilon,t}(s))+D_\Ga\varphi(\bar y_{\epsilon,t}(s)).f(\bar y_{\epsilon,t}(s),\bar \alpha_{\epsilon,t}(s))\right]ds}\\
& & +v_\star(x)(e^{-\lambda t}-1)\displaystyle{ +\varphi(x_\epsilon)-\varphi(x)+to_{\epsilon}(1)+O(t^2)+o_\epsilon(1)}.\\
\end{array}
\end{displaymath}
The same study term by term that in the previous case gives us the desired result.
\end{itemize}

\textbf{Show that $v^\star$ is a subsolution of \eqref{HJa} :}   for $x\in \cS$ let $(x_\epsilon)_{\epsilon>0}$ be a sequence such that $x_\epsilon$ tends to $x$ as $\epsilon$ tends to $0$ and $v(x^\epsilon)$ tends to $v^\star(x)$ as $\epsilon$ tends to $0$. Let $\varphi$ be in $\cR(\cS)$ such that $v^\star-\varphi$ has a local maximum at $x$, i.e. there exists $r>0$ such that 
\begin{equation}\label{eq: proof_discontinuous_existence_11}
\forall y\in B(x,r)\cap \cS, \quad \quad v^\star(x)-\varphi(x) \ge v^\star(y)-\varphi(y).
\end{equation}
To prove that $v^\star$ is a subsolution, according to Corollary \ref{cor: discontinuous_existence_1}, it is enough to show that for any $k\in \{1,\ldots,N\}$,
\begin{equation}\label{eq: proof_discontinuous_existence_13}
v^\star(x)+\sup_{a\in A_k \hbox{ s.t. }  f_k(x,a).e_k> 0} (-D(\varphi|_{\cP_k})(x). f_k(x,a) -\ell_k(x,a)) \le 0.
\end{equation}
Let $\bar a\in A_k$ be such that $f_k(x,a).e_k=\delta_{\bar a}>0$.
\begin{itemize}
\item \textbf{If $x_\epsilon \in \cP_i\setminus \Ga$ :}
For all $\epsilon>0$, let $(\bar y_\epsilon,\bar \alpha_\epsilon)\in \cT_{x_\epsilon}$ be an admissible controlled trajectory given by Lemma \ref{lem: discontinuous_existence} and  $\bar \tau_\epsilon$ $(\le C x_\epsilon.e_i)$ its exit time from $\cP_i\setminus \Ga$. So, we consider $\tilde y : [0, \tilde t)\to \cP_k$ the maximal solution of
the integral equation $\bar y(t) = \bar y_\epsilon(\bar \tau_\epsilon)+\int_0^t f_k(\bar y(s),\bar a) ds$. According to the assumptions [H0] and [$\tH$3], we can check that $\tilde t\ge \frac{\delta_{\bar a}}{2L_fM_f}$. Then, we introduce $(y_\epsilon,\alpha_\epsilon)$ the admissible controlled trajectory of $\cT_{x_\epsilon}$ defined in $[0,\bar \tau_\epsilon+\tilde t)$ as follow
\begin{displaymath}
(y_\epsilon(t),\alpha_\epsilon(t))= \left\lbrace
\begin{array}{lll}
(\bar y_\epsilon(t),\bar \alpha_\epsilon(t)) & \mbox{if} & t < \bar \tau_\epsilon,\\
 (\bar y(t-\bar \tau_\epsilon),\bar \a) & \mbox{if} &T\le  t \ge \bar \tau_\epsilon.\\
\end{array}
\right.
\end{displaymath}
Note that by construction, for $t\in (\bar \tau_\epsilon,\tilde t)$ small enough, we have $y_\epsilon(s)\in \cP_i\setminus \Ga$ for $s\in (0,\bar \tau_\epsilon)$ and $y_\epsilon(s)\in \cP_k\setminus \Ga$ for $s\in (\bar \tau_\epsilon, t)$. In the sequel we will consider such a $t$.

From the dynamic programming principle, Proposition \ref{prop:dpp}, we have 
\begin{displaymath}
\begin{array}{lll}
v(x_\epsilon) & \le & \int_0^t\ell(y_{\epsilon}(s), \alpha_{\epsilon}(s))e^{-\lambda s}ds+e^{-\lambda t}v(y_{\epsilon}(t))\\
 & \le &\int_0^t\ell(y_{\epsilon}(s), \alpha_{\epsilon}(s))e^{-\lambda s}ds+e^{-\lambda t}v^\star(y_{\epsilon}(t)).\\
\end{array}
\end{displaymath}
Then, according to \eqref{eq: proof_discontinuous_existence_11}, for $\epsilon$ small enough we have
\begin{displaymath}
v(x_\epsilon)-v^\star(x)    \le  \int_0^t\ell(y_{\epsilon}(s), \alpha_{\epsilon}(s))e^{-\lambda s}ds+ v^\star(x)(e^{-\lambda t}-1)+ (\varphi(y_{\epsilon}(t))-\varphi(x))e^{-\lambda t}.
\end{displaymath}
By similar arguments as above, we can deduce from this inequality the following
\begin{displaymath}
0   \le  \int_0^t\ell( y_{\epsilon,t}(s),\alpha_{\epsilon,t}(s))ds+ v^\star(x)(e^{-\lambda t}-1)+ \varphi( y_{\epsilon,t}(t))-\varphi(x)+t o_{\epsilon}(1)+O(t^2)+o_\epsilon(1).
\end{displaymath}
And by construction of the admissible controlled trajectory $(y_\epsilon,\alpha_\epsilon)$, this inequality can be written as follows
\begin{equation}\label{eq: proof_discontinuous_existence_12}
\begin{array}{lll}
0  & \le & \displaystyle{ \int_0^{\bar \tau_{\epsilon}}\ell_i(y_{\epsilon}(s),\alpha_{\epsilon}(s))+D(\varphi|_{\cP_i})(y_{\epsilon}(s)).f_i(y_{\epsilon}(s),\alpha_{\epsilon}(s))ds}\\
& &\displaystyle{ +\int^t_{\bar \tau_{\epsilon}}\ell_k(y_{\epsilon}(s),\bar a)+D(\varphi|_{\cP_k})(y_{\epsilon}(s)).f_k(y_{\epsilon}(s),\bar a)ds}\\
& & +v^\star(x)(e^{-\lambda t}-1)\displaystyle{ +\varphi(x_\epsilon)-\varphi(x)+t o_{\epsilon}(1)+O(t^2)+o_\epsilon(1)}.\\
\end{array}
\end{equation}
So, using that
$\displaystyle{ \int_0^{\bar \tau_{\epsilon}}\ell_i(y_{\epsilon}(s),\alpha_{\epsilon}(s))+D(\varphi|_{\cP_i})(y_{\epsilon}(s)).f_i(y_{\epsilon}(s),\alpha_{\epsilon}(s))ds=o_{\epsilon}(1)}$

and
$\displaystyle{\frac{1}{t-\bar \tau_\epsilon}\int^t_{\bar \tau_{\epsilon}}\ell_k(y_{\epsilon}(s),\bar a)+D(\varphi|_{\cP_k})(y_{\epsilon}(s)).f_k(y_{\epsilon}(s),\bar a)ds}$

$\displaystyle{=D(\varphi|_{\cP_k})(x).f_k(x,\bar a)+\ell_k(x,\bar a)+o_{\epsilon}(1)+O(t)}$.
Thus, the inequality \eqref{eq: proof_discontinuous_existence_12} gives us
\begin{displaymath}
\begin{array}{lll}
0 & \ge &\displaystyle{ v^\star(x)(e^{-\lambda t}-1)+t\left(D(\varphi|_{\cP_k})(x).f_k(x,\bar a)+ \ell_k(x,\bar a)\right)+t\circ_{\epsilon}(1)+O(t^2)+o_\epsilon(1)}.
\end{array}
\end{displaymath}
Dividing this last inequality by $t$, taking the limit as $\epsilon$ tends to $0$ and  in a second time the limit as $t$ tends to $0$ we give us
\begin{displaymath}
\lambda v^\star(x)-D(\varphi|_{\cP_k})(x).f_k(x,\bar a)- \ell_k(x,\bar a) \le 0.
\end{displaymath}
Since this inequality is true for any $\bar a \in A_k$ such that $f_k(x,a).e_k>0$, we finally get \eqref{eq: proof_discontinuous_existence_13}.
\item \textbf{If $x_\epsilon \in \Ga$ :} Then, the same argument as above can be used. The only difference is that for the construction of the admissible controlled trajectory $(y_\epsilon,\alpha_\epsilon)$ we do not need to join $x_\epsilon$ to $\Ga$. Consequently, the calculations that follow are slightly simpler.
\end{itemize}
\end{proof}

\paragraph{Acknowledgement}
The author was partially funded  by the ANR  project ANR-12-BS01-0008-01.

\bibliographystyle{amsplain}
\bibliography{uniqueness}

\providecommand{\bysame}{\leavevmode\hbox to3em{\hrulefill}\thinspace}
\providecommand{\MR}{\relax\ifhmode\unskip\space\fi MR }
\providecommand{\MRhref}[2]{%
  \href{http://www.ams.org/mathscinet-getitem?mr=#1}{#2}
}
\providecommand{\href}[2]{#2}
\begin{thebibliography}{10}

\bibitem{MR3057137}
Yves Achdou, Fabio Camilli, Alessandra Cutr{\`{\i}}, and Nicoletta Tchou,
  \emph{Hamilton--{J}acobi equations constrained on networks}, NoDEA Nonlinear
  Differential Equations Appl. \textbf{20} (2013), no.~3, 413--445.
  \MR{3057137}

\bibitem{uniqueness2013}
Yves Achdou, Salom{\'{e}} Oudet, and Nicoletta Tchou, \emph{{H}amilton-{J}acobi
  equations for optimal control on junctions and networks}, hal preprint
  hal:00847210 (2013).

\bibitem{MR3160441}
Luigi Ambrosio and Jin Feng, \emph{On a class of first order
  {H}amilton-{J}acobi equations in metric spaces}, J. Differential Equations
  \textbf{256} (2014), no.~7, 2194--2245. \MR{3160441}

\bibitem{MR1484411}
Martino Bardi and Italo Capuzzo-Dolcetta, \emph{Optimal control and viscosity
  solutions of {H}amilton-{J}acobi-{B}ellman equations}, Systems \& Control:
  Foundations \& Applications, Birkh\"auser Boston Inc., Boston, MA, 1997, With
  appendices by Maurizio Falcone and Pierpaolo Soravia. \MR{1484411
  (99e:49001)}

\bibitem{BJconvergence}
G.~Barles and E.R. Jakobsen, \emph{On the convergence rate of approximation
  schemes for hamilton-jacobi-bellman equations}, M2AN Math. Model. Numer.
  Anal. \textbf{36} (2002), no.~1, 33--54.

\bibitem{barles2011bellman}
Guy Barles, Ariela Briani, and Emmanuel Chasseigne, \emph{A {B}ellman approach
  for two-domains optimal control problems in ${R}^{N}$}, arXiv preprint
  arXiv:1112.3727 (2011).

\bibitem{barles2013bellman}
\bysame, \emph{A {B}ellman approach for regional optimal control problems in
  ${R}^{N}$}, hal preprint hal:00825778 (2013).

\bibitem{MR3062576}
Fabio Camilli, Dirk Schieborn, and Claudio Marchi, \emph{Eikonal equations on
  ramified spaces}, Interfaces Free Bound. \textbf{15} (2013), no.~1, 121--140.
  \MR{3062576}

\bibitem{MR951880}
I.~Capuzzo-Dolcetta and P.-L. Lions, \emph{Hamilton-{J}acobi equations with
  state constraints}, Trans. Amer. Math. Soc. \textbf{318} (1990), no.~2,
  643--683. \MR{MR951880 (90g:49021)}

\bibitem{MR1713859}
H.~Frankowska and S.~Plaskacz, \emph{Hamilton-{J}acobi equations for infinite
  horizon control problems with state constraints}, Calculus of variations and
  optimal control ({H}aifa, 1998), Res. Notes Math., vol. 411, Chapman \&
  Hall/CRC, Boca Raton, FL, 2000, pp.~97--116. \MR{MR1713859 (2001a:49027)}

\bibitem{MR1794772l}
\bysame, \emph{Semicontinuous solutions of {H}amilton-{J}acobi-{B}ellman
  equations with degenerate state constraints}, J. Math. Anal. Appl.
  \textbf{251} (2000), no.~2, 818--838. \MR{MR1794772 (2001k:49067)}

\bibitem{MR3271253}
Y.~Giga, N.~Hamamuki, and A.~Nakayasu, \emph{{E}ikonal equations in metric
  spaces}, Trans. Amer. Math. Soc. (2014), no.~1, 49--–66. \MR{MR3271253}

\bibitem{Hermosilla_Zidani2014}
Cristopher Hermosilla and Hasnaa Zidani, \emph{{I}nfinite {H}orizon {P}roblems
  on {S}tratifiable {S}tate {C}onstraints {S}ets}, hal preprint hal:00955921
  (2014).

\bibitem{imbert2013vertex}
Cyril Imbert and R{\'e}gis Monneau, \emph{Flux-limited solutions for
  quasi-convex hamilton-jacobi equations on networks}, arXiv preprint
  arXiv:1306.2428 (2013).

\bibitem{MR3023064}
Cyril Imbert, R{\'e}gis Monneau, and Hasnaa Zidani, \emph{A {H}amilton-{J}acobi
  approach to junction problems and application to traffic flows}, ESAIM
  Control Optim. Calc. Var. \textbf{19} (2013), no.~1, 129--166. \MR{3023064}

\bibitem{IK}
H.~Ishii and S.~Koike, \emph{A new formulation of state constraint problems for
  first-order {PDE}s}, SIAM J. Control Optim. \textbf{34} (1996), no.~2,
  554--571. \MR{MR1377712 (97a:49040)}

\bibitem{ishii2013short}
Hitoshi Ishii, \emph{A short introduction to viscosity solutions and the large
  time behavior of solutions of hamilton--jacobi equations}, Hamilton-Jacobi
  Equations: Approximations, Numerical Analysis and Applications, Springer,
  2013, pp.~111--249.

\bibitem{PLL}
P.L Lions, \emph{Generalized solutions of hamilton-jacobi equations}, Research
  Notes in Mathematics 69, Pitman, Boston, 1982.

\bibitem{MR0208590}
E.~J. McShane and R.~B. Warfield, Jr., \emph{On {F}ilippov's implicit functions
  lemma}, Proc. Amer. Math. Soc. \textbf{18} (1967), 41--47. \MR{0208590 (34
  \#8399)}

\bibitem{Nakayasu}
A.~Nakayasu, \emph{{O}n metric viscosity solutions for {H}amilton-{J}acobi
  equations of evolution type}, arXiv preprint arXiv:1407.7634 (2014).

\bibitem{Sch}
D.~Schieborn, \emph{Viscosity solutions of {H}amilton-{J}acobi equations of
  {E}ikonal type on ramified spaces}, Ph.D. thesis, Univ. T\"ubingen, 2006.

\bibitem{MR3018167}
Dirk Schieborn and Fabio Camilli, \emph{Viscosity solutions of {E}ikonal
  equations on topological networks}, Calc. Var. Partial Differential Equations
  \textbf{46} (2013), no.~3-4, 671--686. \MR{3018167}

\bibitem{MR838056}
H.~M. Soner, \emph{Optimal control with state-space constraint. {I}}, SIAM J.
  Control Optim. \textbf{24} (1986), no.~3, 552--561. \MR{MR838056 (87e:49029)}

\bibitem{MR861089}
\bysame, \emph{Optimal control with state-space constraint. {II}}, SIAM J.
  Control Optim. \textbf{24} (1986), no.~6, 1110--1122. \MR{MR861089
  (87k:49021)}

\bibitem{MR0241975}
Michel Valadier, \emph{Sous-diff\'erentiels d'une borne sup\'erieure et d'une
  somme continue de fonctions convexes}, C. R. Acad. Sci. Paris S\'er. A-B
  \textbf{268} (1969), A39--A42. \MR{0241975 (39 \#3310)}

\end{thebibliography}

\end{document}